\def \nin{\notin}
\newtheorem{theorem}{Theorem}[section]
\newtheorem{lemma}[theorem]{Lemma}
\theoremstyle{definition}
\newtheorem{definition}[theorem]{Definition}
\theoremstyle{remark}
\newtheorem{remark}[theorem]{Remark}
\numberwithin{equation}{section}
\begin{document}

\title[Well-posedness and regularity of  Generalized  Navier-Stokes equations]
{Well-posedness and regularity of Generalized Navier-Stokes
equations in some Critical $Q-$spaces}

\author{Pengtao Li}
\address{School of Mathematics, Peking University, Beijing, 100871, China}
\curraddr{Department of Mathematics and Statistics, Memorial
University of Newfoundland, St. John's, NL A1C 5S7, Canada}
\email{li\_ptao@163.com}

\author{Zhichun Zhai}
\address{Department of Mathematics and Statistics, Memorial University of Newfoundland, St. John's, NL A1C 5S7, Canada}
\curraddr{}
 \email{a64zz@mun.ca}
\thanks{Project supported in part  by Natural Science and
Engineering Research Council of Canada.}

\subjclass[2000]{Primary 35Q30; 76D03; 42B35; 46E30}

\date{}

\keywords{Navier-Stokes equations; Well-posedness; Regularity;
Carleson measures; Tent spaces; Duality; Atomic decomposition;
$Q_{\alpha}^{\beta}(\mathbb{R}^{n})$ }

\begin{abstract} We study the well-posedness and  regularity  of
the generalized Navier-Stokes equations with  initial data
  in a  new critical space
 $Q_{\alpha;\infty}^{\beta,-1}(\mathbb{R}^{n})=\nabla\cdot(Q_{\alpha}^{\beta}(\mathbb{R}^{n}))^{n}, \beta\in(\frac{1}{2},1)$
  which is larger than some known critical homogeneous Besov spaces.
 Here  $Q_{\alpha}^{\beta}(\mathbb{R}^{n})$ is a space
  defined as the set of all measurable functions with
 $$\sup(l(I))^{2(\alpha+\beta-1)-n}\int_{I}\int_{I}\frac{|f(x)-f(y)|^{2}}{|x-y|^{n+2(\alpha-\beta+1)}}dxdy<\infty$$
 where  the supremum is taken over all cubes $I$ with the edge length $l(I)$
 and the edges parallel to the coordinate axes in $\mathbb{R}^{n}.$
 In order to study the well-posedness and regularity,
  we give a Carleson measure  characterization of
$Q_{\alpha}^{\beta}(\mathbb{R}^{n})$  by  investigating a new type
of tent spaces and an atomic decomposition of the predual for
$Q_{\alpha}^{\beta}(\mathbb{R}^{n}).$ In addition, our regularity
 results   apply to the incompressible Navier-Stokes
equations with initial data in $Q_{\alpha;\infty}^{1,
-1}(\mathbb{R}^{n}).$

\end{abstract}

\maketitle

\tableofcontents \pagenumbering{arabic}

 \vspace{0.1in}

 \section{Introduction}
This paper considers  the well-posedness and regularity of the
generalized
 Navier-Stokes equations on the half-space
$\mathbb{R}^{1+n}_{+}=(0,\infty)\times\mathbb{R}^{n},$ $n\geq 2:$
\begin{equation}\label{eq1e}
\left\{\begin{array} {l@{\quad \quad}l}
\partial_{t}u+(-\triangle)^{\beta}u+(u\cdot \nabla)u-\nabla p=0,
& \hbox{in}\ \mathbb{R}^{1+n}_{+};\\
\nabla \cdot u=0, & \hbox{in}\ \mathbb{R}^{1+n}_{+};\\
u|_{t=0}=a, &\hbox{in}\ \mathbb{R}^{n} \end{array} \right.
\end{equation}
with $\beta\in(1/2,1).$ Here $(-\triangle)^{\beta}$ is the
fractional Laplacian with respect to $x$ defined by
$$
\widehat{(-\triangle)^{\beta}u}(t,\xi)
 =|\xi|^{2\beta}\widehat{u}(t,\xi).
 $$

Note that the following scaling
 \begin{equation}\label{eq2e}
 u_{\lambda}(t,x)=\lambda^{2\beta-1}u(\lambda^{2\beta}t,\lambda x),
\ \
 p_{\lambda}(t,x)= \lambda^{4\beta-2}p(\lambda^{2\beta}t,\lambda x),\ \
 a_{\lambda}(x)= \lambda^{2\beta-1} a(\lambda x)
\end{equation}
 is particularly significant for  equations (\ref{eq1e}).

The fractional Laplacian operator appears in a wide class of
physical systems and engineering problems, including L$\acute{e}$vy
flights, stochastic interfaces and anomalous diffusion problems. In
fluid mechanics, the fractional Laplacian is often applied to
describe many complicated phenomenons via partial differential
equations.

When $\beta=1,$ equations (\ref{eq1e}) become the classical
Navier-Stokes equations. A natural approach in studying the
solutions   is to iterate the corresponding operator
$v\longrightarrow
e^{t\triangle}u_{0}-\int_{0}^{t}e^{(t-s)\triangle}P(u\otimes v)ds$
and to find a fixed point. This solution is called mild solution.
For the classical Navier-Stokes equations, this approach was
pioneered by Kato and Fujita.  The existence of mild solutions and
their regularity  have been established locally in time and global
for small initial data in various functional spaces,
 for example, see Kato \cite{T Kato}, Cannone
\cite{M Cannone}, Giga-Miyakawa \cite{Y Giga T Miyakawa},
Koch-Tataru \cite{H. Koch D. Tataru}, Germain-Pavlovic-Staffilani
\cite{P. Germain N. Pavlovic G.Staffilani}
 and the references therein.

For equations (\ref{eq1e}),   J. L. Lions
 \cite{J L Lions}  proved the
global  existence of  the classical solutions when $\beta\geq
\frac{5}{4}$ in dimensional $3.$
  Similar result holds  for general  dimension $n$ if $\beta\geq
\frac{1}{2}+\frac{n}{4},$ see Wu \cite{J Wu 1}.  For the important
case $\beta< \frac{1}{2}+\frac{n}{4},$  Wu in \cite{J Wu 2}-\cite{J
Wu 3} established the global existence  for equations (\ref{eq1e})
in the homogeneous Besov spaces
$\dot{B}^{1+\frac{n}{p}-2\beta}_{p,q}$
 for $1\leq q\leq \infty$ and for either $1/2<\beta$ and $p=2$ or $1/2<\beta\leq 1$
and  $2< p<\infty$ and  in $\dot{B}_{2,\infty}^{r}$
 with $r>\max\{1,1+\frac{n}{p}-2\beta\}.$ For   the corresponding  regularity
 criteria, we refer the readers to  Wu  \cite{J Wu 4}.

Our work originates mainly from two observations in mathematics. At
first, it is worth pointing out that most of the function spaces
listed above are critical spaces. A space is called critical for
equations (\ref{eq1e}) if it is invariant under the scaling
\begin{equation}\label{scl 2}
f_{\lambda}(x)=\lambda^{2\beta-1}f(\lambda x).
\end{equation}
 For
example, the spaces $\dot{L}^{2}_{\frac{n}{2}-1}, L^{n},
\dot{B}^{-1+\frac{n}{p}}_{p|{p<\infty},\infty}$ and $ BMO^{-1} $
 are critical spaces for $\beta=1.$
 For general positive $\beta,$ $\dot{B}_{2,1}^{1+\frac{n}{2}-2\beta}$ and
 $\dot{B}_{2,\infty}^{1+\frac{n}{2}-2\beta}$  are critical
spaces. We see that $\dot{L}^{2}_{\frac{n}{2}-1}\hookrightarrow
L^{n}\hookrightarrow\dot{B}^{-1+\frac{n}{p}}_{p|{p<\infty},\infty}\hookrightarrow
BMO^{-1}$.  $BMO^{-1}$ is the largest critical space for $\beta=1$
among the spaces listed above where such existence results are
available. This fact inspires us to find a larger critical space for
general $\beta>0$ which includes
$\dot{B}^{1+\frac{n}{2}-2\beta}_{2,1}$ and has a structure similar
to $BMO^{-1}$. With the small initial value in the space, the
corresponding global existence result holds.

On the other hand, in \cite{J. Xiao 1}, Xiao introduced a new space
$Q_{\alpha;\infty}^{-1}(\mathbb{R}^{n})$
 to replace $BMO^{-1}$ in  \cite{H. Koch D.
Tataru} and   generalized Koch-Tataru's global existence result for
the classical Navier-Stokes equations. Here
$Q_{\alpha;\infty}^{-1}(\mathbb{R}^{n})$ is
$\nabla\cdot(Q_{\alpha}(\mathbb{R}^{n}))^{n}$
 and  $Q_{\alpha}(\mathbb{R}^{n})$  is the space of all measurable
complex-valued functions $f$ on $\mathbb{R}^{n}$ satisfying
\begin{equation}
\|f\|_{Q_{\alpha}(\mathbb{R}^{n})}=\sup_{I}\left(l(I))^{2\alpha-n}\int_{I}\int_{I}\frac{|f(x)-f(y)|^{2}}{|x-y|^{n+2\alpha}}dxdy\right)^{1/2}<\infty
\end{equation}
where  the supremum is taken over all cubes $I$ with the edge length
$l(I)$
 and the edges parallel to the coordinate axes in $\mathbb{R}^{n}.$
It is easy to see that $Q_{\alpha;\infty}^{-1}$ and $BMO^{-1}$  are
invariant under the scaling $f(x)\mapsto \lambda f(\lambda x)$ which
is corresponding to the classical  Navier-Stokes equations. If we
want to generalize the results of Koch-Tataru \cite{H. Koch D.
Tataru} and Xiao \cite{J. Xiao 1} to the fractional case, we should
find a class of spaces such that their derivative spaces are
invariant under the scaling (\ref{scl 2}).

The above two observations suggest us introducing the following
spaces.
\begin{definition}\label{Q space} For $\alpha\in (-\infty,\beta)$  and $\beta\in (1/2,1),$  we define
$Q_{\alpha}^{\beta}(\mathbb{R}^{n})$ be the set of all measurable
complex-valued functions $f$ on $\mathbb{R}^{n}$ satisfying
\begin{equation}\label{def Q a b}
\|f\|_{Q_{\alpha}^{\beta}(\mathbb{R}^{n})}=\sup_{I}\left(l(I))^{2(\alpha+\beta-1)-n}\int_{I}
\int_{I}\frac{|f(x)-f(y)|^{2}}{|x-y|^{n+2(\alpha-\beta+1)}}dxdy\right)^{1/2}<\infty
\end{equation}
where  the supremum is taken over all cubes $I$ with the edge length
$l(I)$
 and the edges parallel to the coordinate axes in $\mathbb{R}^{n}.$

\end{definition}
\begin{remark}
Obviously, when $\beta=1$ and $\alpha<0,$ $Q_{\alpha}^{\beta}=BMO.$
Thus our well-posedness result  generalizes that of Koch-Tataru
\cite{H. Koch D. Tataru} and  Xiao  \cite{J. Xiao 1} to the
fractional equations. It is easy to show that
$Q_{\alpha}^{\beta}(\mathbb{R}^{n})$ is invariant in the following
sense: for any $f\in Q_{\alpha}^{\beta}(\mathbb{R}^{n})$,
$$\|f(\cdot)\|_{Q_{\alpha}^{\beta}(\mathbb{R}^{n})}=\|\lambda^{2\beta-2}f(\lambda\cdot+x_{0})\|_{Q_{\alpha}^{\beta}(\mathbb{R}^{n})},\quad
 \lambda>0\ \hbox{and}\ x_{0}\in \mathbb{R}^{n}.$$
\end{remark}
Xiao in \cite{J. Xiao 1} characterized  $Q_{\alpha}(\mathbb{R}^{n})$
 equivalently as
\begin{equation}\label{semi}
\|f\|^{2}_{Q_{\alpha}(\mathbb{R}^{n})}=\sup_{x\in\mathbb{R}^{n},
r\in (0,\infty)}r^{2\alpha-n} \int_{0}^{r^{2}}\int_{|y-x|<r}|\nabla
e^{t\triangle} f(y)|^{2}t^{-\alpha}dtdy<\infty.
\end{equation}
 The advantage of this equivalent characterization is the occurrence of  $e^{t\triangle}$
    which generates the mild  solutions
 for the classical Navier-Stokes equations.
Note that  the mild  solutions for equations (\ref{eq1e}) can be
represented as
 $$u(t,x)=e^{-t(-\triangle)^{\beta}}a(x)-\int_{0}^{t}e^{-(t-s)(-\triangle)^{\beta}}P\nabla(u\otimes u)ds,$$
 where
$$e^{-t(-\triangle)^{\beta}}f(x)
:=K_{t}^\beta(x)\ast f(x)\ \text{ with }
 \widehat{K_{t}^{\beta}}(\xi)=e^{-t|\xi|^{2\beta}}$$
 and $P$ is the Helmboltz-Weyl projection:
 $$P=\{P_{j,k}\}_{j,k=1,\cdots,n}=\{\delta_{j,k}+R_{j}R_{k}\}_{j,k=1,\cdots,n}$$
 with $\delta_{j,k}$ being the Kronecker symbol and
 $R_{j}=\partial_{j}(-\triangle)^{-1/2}$ being the Riesz transform.
 Thus, we should characterize
  $Q_{\alpha}^{\beta}(\mathbb{R}^{n})$ by
  the semigroup $e^{-t(-\triangle)^{\beta}}.$
In fact,  we  prove  that $f\in Q_{\alpha}^{\beta}$ if and only if
 \begin{equation}\label{char 1}
 \sup_{x\in\mathbb{R}^{n},r\in(0,\infty)}r^{2\alpha-n+2\beta-2}
\int_{0}^{r^{2\beta}}\int_{|y-x|<r}|\nabla
e^{-t(-\triangle)^{\beta}}
f(y)|^{2}t^{-\frac{\alpha}{\beta}}dydt<\infty.
\end{equation}

It is easy to to check that for each $j=1,\cdots,n,$
$\partial_{j}K_{1}^{\beta}(x):=\phi_{j}(x)$ is a  $C^{\infty}$
real-valued function on $\mathbb{R}^{n}$  satisfying  the
properties:
\begin{equation}\label{prp psi}
\phi_{j}\in L^{1}(\mathbb{R}^{n}),\ |\phi_{j}(x)|\lesssim
(1+|x|)^{-(n+1)},\
 \int_{\mathbb{R}^{n}}\phi_{j}(x)dx=0\ \hbox{and}\
(\phi_{j})_{t}(x)=t^{-n}\phi_{j}\left(\frac{x}{t}\right).
\end{equation}
This observation leads us to characterize
$Q_{\alpha}^{\beta}(\mathbb{R}^{n})$ by a general $C^{\infty}$
real-valued function $\phi$ on $\mathbb{R}^{n}$ with the properties
(\ref{prp psi}) as
 \begin{equation}\label{eq cha q a b}
 f\in
Q_{\alpha}^{\beta}(\mathbb{R}^{n})\Longleftrightarrow \sup_{x\in
\mathbb{R}^{n},
r\in(0,\infty)}r^{2\alpha-n+2\beta-2}\int_{0}^{r}\int_{|y-x|<r}|f\ast
\phi_{t}(y)|^{2}t^{-(1+2(\alpha-\beta+1))}dtdy<\infty,
\end{equation}
 i.e.,  $d\mu_{f,\phi,\alpha,\beta}(t,x)=|f\ast \phi_{t}(x)|^{2}t^{-(1+2(\alpha-\beta+1))}dtdy$  is a
  $1-2(\alpha+\beta-1)/n-$ Carleson measure.

  In order to get  (\ref{eq cha q a b}), inspired by
 Coifman-Meyer-Stein \cite{Coifman Meyer Stein} and Dafni-Xiao \cite{G.
Dafni J. Xiao},  we  introduce new tent spaces
$T^{1}_{\alpha,\beta}$ and  $T^{\infty}_{\alpha,\beta},$  then
 define a space $HH^{1}_{-\alpha,\beta}(\mathbb{R}^{n})$ as
  a subspace of distributions in $\dot{L}^{2}_{-\frac{n}{2}+2(\beta-1)}.$
  Finally, we  identify $Q_{\alpha}^{\beta}(\mathbb{R}^{n})$  with
   the dual space of $HH^{1}_{-\alpha,\beta}(\mathbb{R}^{n}).$

In order to establish the equivalent norm (\ref{char 1}) of the
space $Q^{\beta}_{\alpha}$ in Definition \ref{Q space}, we need the
notation of Hausdorff capacity(see \cite{Adams} and \cite{D. Yang W.
Yuan}). By the definition of Hausdorff capacity, we should assume
$\alpha+\beta-1\geq0$ to guarantee
$\Lambda^{\infty}_{n-2(\alpha+\beta-1)}$ is meaningful. However if
we define the space $Q^{\beta}_{\alpha}$ by (\ref{char 1}) directly,
the proofs for the well-posedness and regularity still holds without
the restriction that $\alpha+\beta-1\geq0$.

 We now give the organization of this paper. In Section 2,   we
introduce some notation and  some  facts about homogeneous Besov
spaces,  Hausdorff capacity and Carleson measures. In Section 3, in
order to establish (\ref{eq cha q a b}), we introduce a new type of
tent spaces, their atomic decompositions and the predual space of
$Q_{\alpha}^{\beta}(\mathbb{R}^{n}).$ The proofs of the main
theorems in this section
  are similar to that of Dafni-Xiao \cite{G. Dafni J. Xiao}. For
the completeness, we provide the details.  In Section 4,  we
establish the well-posedness of the generalized Navier-Stokes
equations in  a new  critical space
$Q_{\alpha;\infty}^{\beta,-1}(\mathbb{R}^{n})$ which  is the
derivative spaces of $Q_{\alpha}^{\beta}(\mathbb{R}^{n})$ and
contains all known critical homogeneous Besov spaces for equations
(\ref{eq1e}). In Section 5, we establish the regularity of the
global solutions to equations (\ref{eq1e}) with the initial value in
$Q_{\alpha,\infty}^{\beta, -1}(\mathbb{R}^{n})$ for $\beta\in
(1/2,1].$

 In \cite{D. Yang W. Yuan}, D. Yang and W. Yuan introduced two new classes of
  spaces, i.e. $\dot{F}^{-s,\tau/q}_{p',q'}$ and
  $F\dot{H}^{s,\tau}_{p,q},$ and studied their dual relation. It's
  worth mentioning that when dealing with the duality relation
   our method is different from that in \cite{D. Yang W. Yuan}.
  Because when $p\neq q,$ the atomic decomposition of the tent space
  $F\dot{T}^{-s,\tau/q}_{p,q}$ is not completely known, the  authors do not
  invoke it in \cite{D. Yang W. Yuan}. For our case, because  $Q_{\alpha}^{\beta}=\dot{F}^{\alpha-\beta+1,\frac{1}{2}-\frac{\alpha+\beta-1}{n}}_{2,2},$
    we can apply the atomic
  decompositions of the tent space $T^{1}_{\alpha,\beta}$ and Hardy-Hausdorff space $HH^{1}_{-\alpha,\beta}$
to prove the dual relation between $Q_{\alpha}^{\beta}$ and
$HH^{1}_{-\alpha,\beta}.$ See also \cite{D. Yang W. Yuan}, Remark
5.2, Page 2080.

\vspace{0.1in} \noindent
 {\bf{Acknowledgements.}} We would like to thank our  supervisor Professor Jie Xiao
  for suggesting the problem and kind  encouragement.

\section{Notation and Preliminaries}
In this paper the symbols $\mathbb{C},\mathbb{Z} $  and $\mathbb{N}$
 denote the sets of all complex numbers, integers and natural
 numbers, respectively. For $n\in \mathbb{N},$ $\mathbb{R}^{n}$ is
 the $n-$dimensional Euclidean space, with Euclidean norm dented by
 $|x|$ and the Lebesgue measure by $dx.$ $\mathbb{R}^{1+n}_{+}$ is
 the  upper half-space $\{(t,x)\in \mathbb{R}^{1+n}_{+}: t>0, x\in \mathbb{R}^{n}\}$
 with Lebesgue measure denoted by $dtdx.$

A ball in $\mathbb{R}^{n}$ with center $x$ and radius $r$ will be
  denoted by $B=B(x,r),$  its Lebesgue measure by $|B|.$ 
   A cube in $\mathbb{R}^{n}$ will always mean a cube in $\mathbb{R}^{n}$ with side parallel to the coordinate axes.
   The sidelength of a cube $I$ will be denoted by $l(I).$
   Similarly,  its  volume
    will be  denoted by $|I|.$

  The symbol $U\lesssim V$ means that there exists a positive
  constant  $C$ such that $U\leq
  CV$. $U\approx V$ means $U\lesssim V$ and $V\lesssim U.$ For convenience, the positive constants $C$
  may change from one line to another and usually depend
  on the dimension $n,$ $\alpha,$ $\beta$
  and other fixed parameters.

   The characteristic function of a set $A$ will be denoted by
   $1_{A}.$ For  $\Omega\subset \mathbb{R}^{n},$ the space
   $C_{0}^{\infty}(\Omega)$ consists of all smooth functions with
   compact support in $\Omega.$ The Schwartz class of rapidly
   decreasing functions and its dual will be denoted by
   $\mathscr{S}(\mathbb{R}^{n})$ and $\mathscr{S}'(\mathbb{R}^{n}),$ respectively.
    For a
   function $f\in \mathscr{S}(\mathbb{R}^{n}),$  $\widehat{f}$
   means  the Fourier transform of $f.$

   We use $\mathscr{S}_{0}$ to denote the following subset of $\mathscr{S},$
   $$\mathscr{S}_{0}=\left\{\phi\in \mathscr{S}: \int_{\mathbb{R}^{n}}\psi(x)x^{\gamma}dx=0,|\gamma|=0,1,2,\cdots\right\},$$
where $x^{\gamma}=x_{1}^{\gamma^{1}}x_{2}^{\gamma^{2}} \cdots
x_{n}^{\gamma^{n}},|\gamma|=\gamma_{1}+\gamma_{2}+\cdots+\gamma_{n}.$
Its dual
 $\mathscr{S}'_{0}=\mathscr{S}'/\mathscr{S}^{\bot}_{0}=\mathscr{S}'/\mathcal {P},$
 where $\mathcal {P}$ is the space of multinomials.

We introduce a dyadic partition of $\mathbb{R}^{n}.$ For each
$j\in\mathbb{Z},$ we let
$$D_{j}=\{\xi\in \mathbb{R}^{n}: 2^{j-1}<|\xi|\leq 2^{j+1}\}.$$
We choose $\phi_{0}\in\mathscr{S}(\mathbb{R}^{n})$ such that
$\hbox{supp}(\phi_{0})=\{\xi:2^{-1}<|\xi|\leq 2\}\ \hbox{and}\
 \phi_{0}>0 \ \hbox{on}\ D_{0}.$
Let
$$\phi_{j}(\xi)=\phi_{0}(2^{-j}\xi)\quad \hbox{and}\quad \widehat{\Psi_{j}}(\xi)=\frac{\phi_{j}(\xi)}{\sum_{j}\phi_{j}(\xi)}.$$
Then $\Psi_{j}\in\mathscr{S}$ and
$\widehat{\Psi_{j}}(\xi)=\widehat{\Psi_{0}}(2^{-j}\xi), \
\hbox{supp}(\widehat{\Psi_{j}})\subset D_{j},
 \Psi_{j}(x)=2^{jn}\Psi_{0}(2^{j}x).$
 Moreover,
 \begin{equation}\label{eq2e}
\sum_{k=-\infty}^{\infty}\widehat{\Psi_{k}}(\xi)=\left\{\begin{array}
{l@{\quad \quad}l}
 1,\ \hbox{if}\ \xi\in \mathbb{R}^{n}\backslash \{0\}, \\
 0,\ \hbox{if}\ \xi=0.
 \end{array}
  \right.
\end{equation}
To define the homogeneous Besov spaces, we let
$$\Delta_{j}f=\Psi_{j}\ast f, j=0,\pm1,\pm2,\cdots.$$
For $s\in \mathbb{R}^{n}$ and $1\leq p,q\leq \infty,$ we define
homogeneous Besov spaces $\dot{B}^{s}_{p,q}$ be the set of all $f\in
\mathscr{S}'_{0}$ with
$$\|f\|_{\dot{B}^{s}_{p,q}}=\left(\sum_{j=-\infty}^{\infty}(2^{js}\|\Delta_{j}f\|_{L^{p}})^{q}\right)^{1/q}<\infty \quad \hbox{for}\ q<\infty,$$
$$\|f\|_{\dot{B}^{s}_{p,q}}=\sup_{-\infty<j<\infty}2^{js}\|\Delta_{j}f\|_{L^{p}}<\infty \quad \hbox{for}\ q=\infty.$$
When  $0<s<1,$ we have the following equivalent characterization. If
$1\leq p,q<\infty$, then $f\in\dot{B}^{s}_{p,q}$ is equivalent to
\begin{equation}\label{De Besov}
\int_{\mathbb{R}^{n}}\left(\int_{\mathbb{R}^{n}}|f(x+y)-f(x)|^{p}dx\right)^{q/p}\frac{dy}{|y|^{n+qs}}<\infty;
\end{equation}
if $0<s<1$ and $1\leq p<q=\infty,$ $f\in\dot{B}^{s}_{p,q}$ amounts
to
\begin{equation}\label{de 2 Besov}
\sup_{y\in\mathbb{R}^{n}}|y|^{-s}\left(\int_{\mathbb{R}^{n}}|f(x+y)-f(x)|^{p}\right)^{1/p}<\infty.
\end{equation}
We refer to Peetre \cite{J Peetre} and Triebel \cite{H. Triebel} for
more information. The usual homogeneous Sobolev space
$\dot{L}^{2}_{s}$ defined by
$\dot{L}^{2}_{s}=(-\triangle)^{-s/2}L^{2}$ is a special type of the
homogeneous Besov space. That is,
$\dot{B}^{s}_{2,2}=\dot{L}^{2}_{s}.$

The homogenous Besov spaces obey the following inclusion relations
 (see \cite{Berg Lofstrom}). 
\begin{theorem}\label{besov emdeing th}
 Let $s\in \mathbb{R}$ and $p,q\in[1,\infty].$\\
(i) If $1\leq q_{1}\leq q_{2}\leq \infty,$ then
$\dot{B}^{s}_{p,q_{1}}(\mathbb{R}^{n})\subseteq\dot{B}^{s}_{p,q_{2}}(\mathbb{R}^{n});$\\
(ii) If $1\leq p_{1}\leq p_{2}\leq \infty$ and
$s_{1}=s_{2}+n\left(\frac{1}{p_{1}}-\frac{1}{p_{2}}\right)$, then
$\dot{B}^{s_{1}}_{p_{1},q}\subseteq\dot{B}^{s_{2}}_{p_{2},q}.$
\end{theorem}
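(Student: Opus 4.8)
The statement is the standard homogeneous Besov embedding, and the plan is to reduce both parts to two elementary ingredients adapted to the Littlewood--Paley decomposition $\Delta_j f=\Psi_j\ast f$ fixed in this section: the monotonicity of the $\ell^q$ norms in $q$, and a Bernstein-type inequality exploiting that $\widehat{\Delta_j f}$ is supported in the annulus $D_j\subset\{|\xi|\le 2^{j+1}\}$. No density or approximation argument is needed, since everything will be proved as a norm inequality on all of $\mathscr{S}'_0$.

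For part (i), fix $f\in\dot{B}^s_{p,q_1}$ and put $a_j=2^{js}\|\Delta_j f\|_{L^p}$. Since $1\le q_1\le q_2\le\infty$, the elementary inequality $\|(a_j)_j\|_{\ell^{q_2}}\le\|(a_j)_j\|_{\ell^{q_1}}$ (interpreted in the supremum sense when $q_2=\infty$) gives at once $\|f\|_{\dot{B}^s_{p,q_2}}\le\|f\|_{\dot{B}^s_{p,q_1}}$, hence $\dot{B}^s_{p,q_1}\subseteq\dot{B}^s_{p,q_2}$.

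For part (ii), the key step is the Bernstein inequality: if $g\in\mathscr{S}'$ has $\widehat{g}$ supported in $\{|\xi|\le A2^{j}\}$ for a fixed $A$, then for $1\le p_1\le p_2\le\infty$ one has $\|g\|_{L^{p_2}}\lesssim 2^{jn(1/p_1-1/p_2)}\|g\|_{L^{p_1}}$. I would prove this by choosing $\chi\in\mathscr{S}$ with $\widehat{\chi}\equiv 1$ on $\{|\xi|\le A\}$, setting $\chi_j(x)=2^{jn}\chi(2^{j}x)$ so that $g=\chi_j\ast g$, and applying Young's convolution inequality with exponents satisfying $1+\tfrac1{p_2}=\tfrac1r+\tfrac1{p_1}$; the scaling identity $\|\chi_j\|_{L^r}=2^{jn(1-1/r)}\|\chi\|_{L^r}=2^{jn(1/p_1-1/p_2)}\|\chi\|_{L^r}$ then yields the claim. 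Applying this with $g=\Delta_j f$ (whose spectrum lies in $D_j$) and multiplying by $2^{js_2}$ gives $2^{js_2}\|\Delta_j f\|_{L^{p_2}}\lesssim 2^{j(s_2+n(1/p_1-1/p_2))}\|\Delta_j f\|_{L^{p_1}}=2^{js_1}\|\Delta_j f\|_{L^{p_1}}$ by the hypothesis $s_1=s_2+n(1/p_1-1/p_2)$; taking $\ell^q$ norms in $j$ finishes the proof.

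The main obstacle is essentially confined to the Bernstein inequality: producing the reproducing multiplier $\chi$ with the correct support and tracking the scaling of its $L^r$ norm, and then reading Young's inequality and the $\ell^q$ monotonicity correctly in the endpoint cases $p_1=1$, $p_2=\infty$, or $q=\infty$. Everything else is routine bookkeeping with the dyadic partition set up above, and one may alternatively simply invoke \cite{Berg Lofstrom} for the same conclusion.
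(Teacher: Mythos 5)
Your proof is correct. The paper does not actually prove this theorem---it quotes the inclusions directly from \cite{Berg Lofstrom}---so there is no argument in the text to compare against; your two ingredients, the monotonicity $\|(a_j)\|_{\ell^{q_2}}\le\|(a_j)\|_{\ell^{q_1}}$ applied to $a_j=2^{js}\|\Delta_j f\|_{L^p}$ and the Bernstein inequality obtained from $\Delta_j f=\chi_j\ast\Delta_j f$ (valid since $\widehat{\chi_j}\equiv 1$ on a neighborhood of $D_j$) together with Young's inequality and the scaling $\|\chi_j\|_{L^r}=2^{jn(1-1/r)}\|\chi\|_{L^r}$, constitute exactly the standard textbook route, and you carry it out correctly, including the endpoint cases $p_1=1$, $p_2=\infty$, $q=\infty$.
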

We recall the definition of fractional Carleson measures (see
Essen-Janson-Peng-Xiao \cite{M. Essen S. Janson L. Peng J. Xiao})
and their connection with  Hausdorff capacity established by
Dafni-Xiao in
 \cite{G. Dafni J. Xiao}.

\begin{definition}
For $p>0,$ we say that a Borel measure $\mu$ on
$\mathbb{R}^{1+n}_{+}$ is a $p-$Carleson measure provided that
\begin{equation}\label{norm carleson measure}
|\|\mu|\|_{p}=\sup\frac{\mu(S(I))}{(l(I))^{np}}<\infty
\end{equation}
 where the supremun is taken over all Carleson boxes
$S(I)=\{(t,x): x\in I, t\in (0,l(I))\}.$
\end{definition}

Obviously, the $1-$Carleson measures are the usual Carleson
measures. On the other hand, similar to the case $p=1,$ if we denote
by
$$T(E)=\{(t,x)\in \mathbb{R}^{1+n}_{+}: B(x,t)\subset E\}$$
 the tent based on the set $E\subset \mathbb{R}^{n},$  then a Borel measure $\mu$ on $\mathbb{R}^{1+n}_{+}$ is a
$p-$Carleson measre if and only if $|\mu|(T(B))\leq C|B|^{p}$ holds
for all balls $B\subset \mathbb{R}^{n}.$   That is to say
$p-$Carleson measures can be equivalently defined in terms of tents
over balls.

We recall some  definitions and properties  about   Hausdorff
capacity (see Adams \cite{Adams}, Dafni-Xiao \cite{G. Dafni J. Xiao}
and Yang-Yuan \cite{D. Yang W. Yuan}).
\begin{definition}
Let $d\in (0,n]$ and $E\subset \mathbb{R}^{n}.$\\
 (i) The
$d-$dimensional Hausdorff capacity of $E$ is defined by
\begin{equation}\label{HC}
\Lambda_{d}^{(\infty)}(E);=\inf\left\{\sum\limits_{j}r_{j}^{d}:
E\subset \cup_{j=1}^{\infty}B(x_{j},r_{j})\right\},
\end{equation}
where the infimum is taken over all covers of $E$ by countable
families of open (closed) balls with radii $r_{j}.$\\

(ii) The capacity  $\widetilde{\Lambda}_{d}^{(\infty)}(E)$ in the
sense of Choquet  is defined by
$$\widetilde{\Lambda}_{d}^{(\infty)}(E):=\inf\left\{\sum_{j}l(I_{j})^{d}: E\subset \left(\cup_{j=1}^{\infty}I_{j}\right)^{o}\right\},$$
where $B^{o}$ denotes the interior of $B$ and the infimum ranges only over covers of $E$ by dyadic cubes.\\
(iii) For a function $f:\mathbb{R}^{n}\longrightarrow [0,\infty],$
 we define
$$\int_{\mathbb{R}^{n}}fd\Lambda_{d}^{(\infty)}:=\int_{0}^{\infty}\Lambda_{d}^{(\infty)}(\{x\in \mathbb{R}^{n}: f(x)>\lambda\})d\lambda.$$
\end{definition}

\begin{remark}
(i) $\Lambda_{d}^{(\infty)}$ is not a capacity in the sense of
Choquent. But, its dyadic counterpart
$\widetilde{\Lambda}_{d}^{(\infty)}$ is a capacity since it is
monotone, vanishes on the empty set, and satisfies the strong
subadditivity condition
$$\widetilde{\Lambda}_{d}^{(\infty)}(E_{1}\cup E_{2})+\widetilde{\Lambda}_{d}^{(\infty)}(E_{1}\cap E_{2})
\leq
\widetilde{\Lambda}_{d}^{(\infty)}(E_{1})+\widetilde{\Lambda}_{d}^{(\infty)}(E_{2}),$$
as well as the  continuity conditions  (see  Admas \cite{Adams}):
$$\widetilde{\Lambda}_{d}^{(\infty)}\left(\cap_{i}
K_{i}\right)=\lim\limits_{i\longrightarrow
\infty}\widetilde{\Lambda}_{d}^{(\infty)}(K_{i}),\   \{K_{i}\}\
\hbox{a decreasing sequence of compact sets},$$
$$\widetilde{\Lambda}_{d}^{(\infty)}\left(\cup_{i}
K_{i}\right)=\lim\limits_{i\longrightarrow
\infty}\widetilde{\Lambda}_{d}^{(\infty)}(K_{i}),\  \{K_{i}\}\
\hbox{an increasing sequence of sets}.$$
  (ii) There exist positive
 constants $C_{1}(n,d)$  and $C_{2}(n,d)$ such that
\begin{equation}\label{capar Capaci}
C_{1}(n,d)\Lambda_{d}^{(\infty)}(E)\leq
\widetilde{\Lambda}_{d}^{(\infty)}(E)\leq
C_{2}(n,d)\Lambda_{d}^{(\infty)}(E)\ \hbox{for all }\ E\subset
\mathbb{R}^{n}.
\end{equation}
(iii) The integral with respect to
$\widetilde{\Lambda}_{d}^{(\infty)}(E)$ satisfies  Fatou's lemma

\begin{equation}\label{Fatou lemma}
\int_{\mathbb{R}^{n}}\liminf
f_{n}d\widetilde{\Lambda}_{d}^{(\infty)}\leq \liminf
f_{n}\int_{\mathbb{R}^{n}}d\widetilde{\Lambda}_{d}^{(\infty)}.
\end{equation}
\end{remark}

For $x\in\mathbb{R}^{n}$, let
$\Gamma(x)=\{(y,t)\in\mathbb{R}^{n+1}_{+}: |y-x|<t\}$ be the cone at
$x$. Define the nontangential maximal function $N(f)$ of a
measurable function on $\mathbb{R}^{n+1}_{+}$ by
$$N(f)(x):=\sup_{(y,t)\in\Gamma(x)}|f(y,t)|.$$
In  \cite{G. Dafni J. Xiao}, Dafni-Xiao characterized the fractional
Carleson measures as follows.
\begin{theorem}(\cite[Theorem 4.2]{G. Dafni J. Xiao})
Let $d\in (0,n]$ and $\mu$ be a Borel measure on
$\mathbb{R}^{1+n}_{+}.$ Then $\mu$ is a $d/n-$Carleson measure if
and only if the inequality
\begin{equation}\label{eq46}
\int_{\mathbb{R}^{1+n}_{+}}|f(t,y)|d|\mu|\leq
A\int_{\mathbb{R}^{n}}N(f)d\Lambda_{d}^{(\infty)}
\end{equation}
holds for all Borel measurable functions $f$ on
$\mathbb{R}^{1+n}_{+}.$ If this is the case then in (\ref{eq46}) the
constant $A\approx|\|\mu|\|_{d/n}$ which is defined by (\ref{norm
carleson measure}).
\end{theorem}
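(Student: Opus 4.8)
The plan is to prove the two implications separately, using the duality between the nontangential maximal function and Hausdorff capacity that is built into the definition of the capacitary integral $\int_{\mathbb{R}^n} g\, d\Lambda_d^{(\infty)}$ via the layer-cake formula. For the direction ``\eqref{eq46} $\Rightarrow$ $\mu$ is a $d/n$-Carleson measure'', I would test \eqref{eq46} on the simplest possible functions: given a ball $B=B(x_0,r)\subset\mathbb{R}^n$, take $f=\mathbf{1}_{T(B)}$, the indicator of the tent over $B$. Then the left-hand side is exactly $|\mu|(T(B))$. For the right-hand side, one checks that $N(\mathbf{1}_{T(B)})(x)$ is supported in $B$ (if the cone $\Gamma(x)$ meets $T(B)$ then $x\in B$) and is bounded by $1$, so $\int_{\mathbb{R}^n} N(\mathbf{1}_{T(B)})\, d\Lambda_d^{(\infty)}\le \Lambda_d^{(\infty)}(B)\lesssim r^d = |B|^{d/n}$ up to dimensional constants. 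Hence $|\mu|(T(B))\lesssim A\,|B|^{d/n}$ for every ball, which by the tents-over-balls reformulation of $p$-Carleson measures recorded just before the theorem is equivalent to $|\|\mu|\|_{d/n}\lesssim A$.

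For the converse direction ``$\mu$ a $d/n$-Carleson measure $\Rightarrow$ \eqref{eq46}'', I would argue by the standard good-$\lambda$/distribution-function method. Fix a nonnegative Borel function $f$ on $\mathbb{R}^{1+n}_+$ (reducing to $f\ge 0$ and then to $|f|$ is harmless) and set $O_\lambda=\{x\in\mathbb{R}^n: N(f)(x)>\lambda\}$, an open set. The key geometric fact is that if $(t,y)\in\mathbb{R}^{1+n}_+$ satisfies $f(t,y)>\lambda$, then every $x$ with $|x-y|<t$ has $N(f)(x)>\lambda$, i.e.\ $B(y,t)\subset O_\lambda$, which says precisely $(t,y)\in T(O_\lambda)$. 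Therefore $\{(t,y): f(t,y)>\lambda\}\subset T(O_\lambda)$, and so
\begin{equation}
\int_{\mathbb{R}^{1+n}_+} f\, d|\mu| = \int_0^\infty |\mu|\big(\{f>\lambda\}\big)\, d\lambda \le \int_0^\infty |\mu|\big(T(O_\lambda)\big)\, d\lambda .
\end{equation}
Now cover $O_\lambda$ efficiently by balls $\{B(x_j,r_j)\}$ with $\sum_j r_j^d \le 2\,\Lambda_d^{(\infty)}(O_\lambda)$; then $T(O_\lambda)\subset \bigcup_j T(B(x_j,r_j))$ because a tent is monotone in its base, and the Carleson condition gives $|\mu|(T(O_\lambda))\le \sum_j |\mu|(T(B(x_j,r_j))) \le |\|\mu|\|_{d/n}\sum_j |B(x_j,r_j)|^{d/n} \approx |\|\mu|\|_{d/n}\sum_j r_j^d \le 2\,|\|\mu|\|_{d/n}\,\Lambda_d^{(\infty)}(O_\lambda)$. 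Integrating in $\lambda$ and using the layer-cake definition of the capacitary integral again yields
\begin{equation}
\int_{\mathbb{R}^{1+n}_+} f\, d|\mu| \lesssim |\|\mu|\|_{d/n}\int_0^\infty \Lambda_d^{(\infty)}\big(\{N(f)>\lambda\}\big)\, d\lambda = |\|\mu|\|_{d/n}\int_{\mathbb{R}^n} N(f)\, d\Lambda_d^{(\infty)},
\end{equation}
which is \eqref{eq46} with $A\approx |\|\mu|\|_{d/n}$; combined with the first direction this also pins down the constant.

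The main technical point to be careful about — the step I expect to be the real obstacle — is the passage $T(O_\lambda)\subset\bigcup_j T(B(x_j,r_j))$ and the subadditivity of $|\mu|$ over this (countable) cover, together with the subtlety that $\Lambda_d^{(\infty)}$ itself is only countably subadditive rather than a genuine Choquet capacity. One must make sure the covering of $O_\lambda$ can be taken countable and that swapping the order of ``take tents'' and ``take union'' is legitimate; here the elementary inclusion $B(y,t)\subset\bigcup_j B(x_j,r_j)$ does \emph{not} immediately give $(t,y)\in T(B(x_j,r_j))$ for a single $j$, so one should instead argue directly at the level of $\mu$: bound $|\mu|(T(O_\lambda))$ by first replacing $O_\lambda$ by a slightly larger open set that is a union of dyadic cubes and invoking the equivalence \eqref{capar Capaci} between $\Lambda_d^{(\infty)}$ and the dyadic capacity $\widetilde\Lambda_d^{(\infty)}$, for which the cover is by dyadic cubes whose tents behave well. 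This is exactly the device that makes the proof in \cite{G. Dafni J. Xiao} go through, and I would follow it, using the strong subadditivity and the continuity properties of $\widetilde\Lambda_d^{(\infty)}$ recorded in the Remark above to control $|\mu|$ on the tent over a countable union of dyadic cubes. All remaining estimates — the bound $N(\mathbf{1}_{T(B)})\le\mathbf{1}_B$, the comparison $|B|^{d/n}\approx r^d$, and the measurability of $O_\lambda$ — are routine.
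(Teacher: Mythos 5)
Your proposal is correct, and it follows the same route as the source: the paper itself gives no proof of this statement but simply cites \cite[Theorem 4.2]{G. Dafni J. Xiao}, whose argument is exactly the two steps you describe (testing \eqref{eq46} on $\mathbf{1}_{T(B)}$ for the necessity, and the layer-cake/good-$\lambda$ argument with $\{f>\lambda\}\subset T(O_\lambda)$ for the sufficiency). You correctly identify the one genuinely non-trivial ingredient — that $T(O_\lambda)$ must be covered by enlarged Carleson boxes over dyadic cubes with $\sum_j l(I_j)^d\lesssim\widetilde\Lambda_d^{(\infty)}(O_\lambda)$, rather than by tents over an arbitrary ball cover — and this is precisely Dafni--Xiao's Lemma 4.1, which the present paper also invokes without proof in the proof of Theorem \ref{atomde T}; relying on it here is therefore consistent with the paper's conventions, though your write-up does not reprove it.
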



 \section{Carleson Measure Characterization of $Q_{\alpha}^{\beta}(\mathbb{R}^{n})$ } \label{sec 2}
In this section,  we  establish   the equivalent characterization
(\ref{eq cha q a b}). We first give some basic properties of
$Q_{\alpha}^{\beta}(\mathbb{R}^{n}).$ Then inspired by
 Coifman-Meyer-Stein \cite{Coifman Meyer Stein} and Dafni-Xiao \cite{G.
Dafni J. Xiao},    we introduce new tent spaces
$T^{1}_{\alpha,\beta}$ and $T^{\infty}_{\alpha,\beta}.$ Finally, we
obtain the predual space of $Q_{\alpha}^{\beta}(\mathbb{R}^{n}).$

\subsection{Basic Properties of $Q_{\alpha}^{\beta}(\mathbb{R}^{n})$}

\begin{lemma}\label{eq def}
Let $-\infty<\alpha$ and $\max\{\alpha,1/2\}<\beta<1.$ Then $f\in
Q_{\alpha}^{\beta}(\mathbb{R}^{n})$ if and only if
\begin{equation}\label{eqieq def}
\sup_{I}(l(I))^{-n+2(\alpha+\beta-1)}\int_{|y|<l(I)}\int_{I}\left|f(x+y)-f(x)\right|^{2}\frac{dxdy}{|y|^{n+2(\alpha-\beta+1)}}<\infty.
\end{equation}
\end{lemma}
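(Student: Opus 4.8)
The plan is to show that the two suprema in \eqref{def Q a b} and \eqref{eqieq def} are comparable by exploiting the change of variables $y = x' - x$ and carefully accounting for the difference between integrating the second variable over the cube $I$ versus over the ball $|y| < l(I)$. Write $A_I(f)$ for the expression under the supremum in \eqref{def Q a b} (the double integral over $I\times I$) and $B_I(f)$ for the one in \eqref{eqieq def}. First I would prove $B_I(f) \lesssim A_{3I}(f)$ (with $3I$ the cube concentric with $I$ and three times the side length): for $x \in I$ and $|y| < l(I)$ one has $x + y \in 3I$, so
\[
\int_{|y|<l(I)}\int_I \frac{|f(x+y)-f(x)|^2}{|y|^{n+2(\alpha-\beta+1)}}\,dx\,dy
\le \int_{3I}\int_{3I}\frac{|f(u)-f(x)|^2}{|u-x|^{n+2(\alpha-\beta+1)}}\,dx\,du,
\]
and since $l(3I)^{-n+2(\alpha+\beta-1)} \approx l(I)^{-n+2(\alpha+\beta-1)}$ this gives $B_I(f) \lesssim \|f\|_{Q_\alpha^\beta}^2$. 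The reverse direction is the substantive one: given the bound \eqref{eqieq def}, I want to recover the original double integral over $I \times I$. Here the point is that if $x, x' \in I$ then $|x - x'| < \sqrt{n}\, l(I)$, which is \emph{not} quite $< l(I)$; so I would cover $I$ by finitely many (a dimensional constant number of) subcubes $I_k$ of side length $l(I)/(2\sqrt n)$, note that within each $I_k$ the difference vector $y = x' - x$ has $|y| < l(I_k) \le l(I)$ when both points lie in the same subcube, and use translation to handle pairs of points in adjacent subcubes as well. Summing the finitely many contributions and using $l(I_k) \approx l(I)$ recovers $A_I(f) \lesssim \sup_J B_J(f)$.

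A cleaner way to organize the reverse inequality, which I would actually use, is to fix a large constant $M$ (say $M = 2\sqrt n$) and first establish the auxiliary equivalence that $\|f\|_{Q_\alpha^\beta}^2 \approx \sup_I (l(I))^{-n+2(\alpha+\beta-1)} \int_{|y|<M l(I)} \int_I |f(x+y)-f(x)|^2 \,|y|^{-n-2(\alpha-\beta+1)}\,dx\,dy$; the passage from radius $l(I)$ to radius $Ml(I)$ in the $y$-integral costs only a constant because one can split the annulus $l(I) \le |y| < Ml(I)$ into $\lesssim \log M$ dyadic shells, and on the shell $2^j l(I) \le |y| < 2^{j+1}l(I)$ the integral over $x \in I$ of $|f(x+y)-f(x)|^2$ is controlled, after translating $x \mapsto x$, by $B_{I'}(f)$ for a cube $I'$ of side $\sim 2^j l(I)$ containing both $I$ and $I + y$; the extra power of $l(I)$ versus $l(I')$ is absorbed because the weight $|y|^{-n-2(\alpha-\beta+1)} \approx (2^j l(I))^{-n-2(\alpha-\beta+1)}$ produces the needed scaling. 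This reduces everything to the elementary covering argument of the previous paragraph.

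I expect the main obstacle to be bookkeeping with the exponents: one must check that the homogeneity of the normalizing factor $(l(I))^{-n+2(\alpha+\beta-1)}$ against the weight $|y|^{-n-2(\alpha-\beta+1)}$ is exactly what makes the rescaling $I \mapsto \lambda I$, $y \mapsto \lambda y$ leave the quantity invariant — this is precisely the invariance already recorded in the remark after Definition \ref{Q space}, and it is what guarantees that enlarging cubes or the $y$-range by a bounded factor only changes constants. A secondary technical point is that the double integrals may be infinite a priori, so strictly speaking I would first argue the equivalence for, say, $f$ locally in $L^2$ (which both conditions force on bounded sets once either supremum is finite) and treat the general measurable case by noting that finiteness of either side already implies $f \in L^2_{\mathrm{loc}}$. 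No delicate analysis is needed beyond Fubini and the finite covering of a cube by smaller cubes; the lemma is essentially the fractional-Sobolev-type reformulation, and the proof is of the same flavor as the classical equivalence \eqref{De Besov} for $\dot B^s_{p,q}$ recalled in Section 2.
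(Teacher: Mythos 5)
Your proof is correct in substance, and the easy direction ($B_I(f)\lesssim A_{3I}(f)$, i.e. the integral in \eqref{eqieq def} over $I$ is dominated by the integral in \eqref{def Q a b} over $3I$) is exactly the paper's argument. For the converse, however, you have made the problem harder than it is. The paper disposes of it in one line: since the supremum in \eqref{eqieq def} ranges over \emph{all} cubes, for $x,x'\in I$ the vector $y=x'-x$ satisfies $|y|<\sqrt n\,l(I)=l(\sqrt n\,I)$ and $x\in I\subset \sqrt n\,I$, so after the substitution the $I\times I$ integral of \eqref{def Q a b} is at most the \eqref{eqieq def}--integral associated with the dilated cube $\sqrt n\,I$, and the two normalizing factors $(l(I))^{-n+2(\alpha+\beta-1)}$ and $(l(\sqrt n\,I))^{-n+2(\alpha+\beta-1)}$ differ only by a dimensional constant. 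No covering of $I$ by subcubes and no dyadic decomposition of an annulus is needed. Two further remarks. First, the intermediate claim in your covering argument --- that if $x,x'$ lie in the same subcube $I_k$ then $|y|=|x'-x|<l(I_k)$ --- is false for $n\ge 2$, since two points of a cube of side $\ell$ can be at distance up to $\sqrt n\,\ell$; this is precisely the $\sqrt n$ loss that forces one to dilate the cube, and it is repaired only by the reorganized argument of your second paragraph (where you pass to a cube $I'$ of side comparable to $2^{j}l(I)$ and invoke the scaling of the normalization). That reorganized version does go through, since there are only finitely many shells and the exponent $n-2(\alpha+\beta-1)$ is positive, so your overall proof stands; but as written the first paragraph alone would not. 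Second, your closing worry about a priori infiniteness of the integrals is moot: all integrands are nonnegative, so both sides of each inequality are well defined in $[0,\infty]$ and the comparisons hold without any preliminary local integrability reduction.
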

\begin{proof}
If the double integrals (\ref{def Q a b}) and (\ref{eqieq def}) are
denoted by $U_{1}(I)$ and $U_{2}(I),$ respectively, then by the
change of  variable $y\longrightarrow x+y$ and simple geometry one
obtains $U_{1}(I)\leq U_{2}(\sqrt{n}I)$ and $U_{2}(I)\leq
U_{1}(3I).$
\end{proof}

\begin{theorem}\label{eqvali norm} Let $-\infty<\alpha$ and $\max\{\alpha,1/2\}<\beta<1.$ Then\\
 (i) $Q_{\alpha}^{\beta}(\mathbb{R}^{n})$ is
decreasing in $\alpha$ for a fixed $\beta,$ i.e.
$$Q_{\alpha_{1}}^{\beta}(\mathbb{R}^{n})\subseteq Q_{\alpha_{2}}^{\beta}(\mathbb{R}^{n}),
 \hbox{if}\ \alpha_{2}\leq \alpha_{1};$$
(ii) If $\alpha\in(-\infty,\beta-1),$ then
$$Q_{\alpha}^{\beta}(\mathbb{R}^{n})
=Q^{\beta}_{-\frac{n}{2}+\beta-1}(\mathbb{R}^{n}):=BMO^{\beta}(\mathbb{R}^{n}).$$
\end{theorem}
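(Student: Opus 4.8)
The plan is to establish the two assertions separately, both resting on the equivalent seminorm from Lemma~\ref{eq def}, namely the integral $U_2(I)$ over increments $|f(x+y)-f(x)|^2$ with weight $|y|^{-n-2(\alpha-\beta+1)}$ on the region $\{|y|<l(I),\ x\in I\}$, normalized by $(l(I))^{-n+2(\alpha+\beta-1)}$.

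For part (i), fix $\beta$ and take $\alpha_2\le\alpha_1$. First I would observe that the weight factors obey, for $|y|<l(I)$,
$$
|y|^{-n-2(\alpha_2-\beta+1)} \;=\; |y|^{-n-2(\alpha_1-\beta+1)}\,|y|^{2(\alpha_1-\alpha_2)} \;\le\; (l(I))^{2(\alpha_1-\alpha_2)}\,|y|^{-n-2(\alpha_1-\beta+1)},
$$
since $\alpha_1-\alpha_2\ge0$ and $|y|<l(I)$. Combining this pointwise bound with the corresponding prefactor identity $(l(I))^{-n+2(\alpha_2+\beta-1)} = (l(I))^{-n+2(\alpha_1+\beta-1)}(l(I))^{-2(\alpha_1-\alpha_2)}$, the two powers of $l(I)$ cancel and one gets $U_2^{(\alpha_2)}(I)\le U_2^{(\alpha_1)}(I)$ after integrating. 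Taking the supremum over cubes $I$ yields $\|f\|_{Q^\beta_{\alpha_2}}\lesssim\|f\|_{Q^\beta_{\alpha_1}}$, hence the inclusion. This step is essentially a one-line monotonicity-of-weights argument and should present no difficulty.

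For part (ii), the claim is that once $\alpha<\beta-1$ the space stabilizes and coincides with $BMO^\beta := Q^\beta_{-n/2+\beta-1}$. The natural strategy is: by part (i) it suffices to show that for every $\alpha<\beta-1$ one has $Q^\beta_{\alpha}\subseteq Q^\beta_{-n/2+\beta-1}$ (the reverse inclusion is immediate from (i) since $-n/2+\beta-1\le\alpha$ when $n\ge2$ and $\alpha<\beta-1$ — one should check the inequality $-n/2+\beta-1 \le \alpha$ is guaranteed; in fact for $n\ge 2$, $-n/2+\beta-1\le-1+\beta-1=\beta-2<\beta-1$, so any $\alpha\in(-\infty,\beta-1)$ with $\alpha\ge -n/2+\beta-1$ is covered, and for $\alpha<-n/2+\beta-1$ inclusion goes the other way, so the two together pin down equality on the whole range $\alpha<\beta-1$, consistent with the first reduction covering the genuinely new content). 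Concretely: for $\alpha<\beta-1$ we have $\alpha-\beta+1<0$, so the exponent $n+2(\alpha-\beta+1)$ can be $<n$, and I would split the inner $y$-integral at scale $l(I)$ and use the classical fact that a $Q^\beta_\alpha$ bound with a negative-order difference weight forces $f$ to have $BMO$-type oscillation control. The cleanest route: show directly that the $Q^\beta_\alpha$-finiteness with $\alpha<\beta-1$ implies $\sup_I (l(I))^{-n}\int_I\int_I |f(x)-f(y)|^2\,dxdy<\infty$ (i.e.\ $f\in BMO$ via John--Nirenberg / the $L^2$ mean-oscillation characterization), because when $\alpha-\beta+1<0$ the weight $|x-y|^{-n-2(\alpha-\beta+1)}$ is bounded below by $c\,(l(I))^{-n-2(\alpha-\beta+1)}$ on $I\times I$, and then the prefactor $(l(I))^{2(\alpha+\beta-1)-n}$ combines to give exactly $(l(I))^{-2n}\cdot(l(I))^{n}= (l(I))^{-n}$ times the raw double integral; conversely a $BMO$ function satisfies the reverse estimate by the same bound in the other direction together with a dyadic-annulus decomposition of $\{|x-y|<l(I)\}$ and summing a geometric series in the scale (which converges precisely because $\alpha-\beta+1<0$). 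Matching constants both ways identifies $Q^\beta_\alpha$ with $BMO$, and noting $BMO = Q^\beta_{-n/2+\beta-1}$ (the value $\alpha=-n/2+\beta-1$ makes $\alpha-\beta+1=-n/2$, the borderline exponent recovering plain $BMO$) closes part (ii).

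The main obstacle I anticipate is part (ii), specifically the convergence bookkeeping in the dyadic-annulus decomposition and, more subtly, verifying that membership really collapses to $BMO$ rather than to some strictly smaller space: one must be careful that the lower bound $|x-y|\ge$ (small) on most of $I\times I$ does not lose essential information, and that the geometric series $\sum_{j\ge0} 2^{-2j(\beta-1-\alpha)}\cdot 2^{jn}\cdot 2^{-jn}$ — or whichever exact exponent arises after carefully tracking the annulus volume against the weight and the normalization — indeed converges, which is exactly where the hypothesis $\alpha<\beta-1$ is used and where the result would fail for $\alpha\ge\beta-1$. I would double-check the exponent arithmetic at the start, since a sign error there is the most likely pitfall; once the geometric-series exponent is confirmed negative, the rest is routine.
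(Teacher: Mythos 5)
Part (i) of your proposal is correct and is essentially the paper's argument (monotonicity of the weight $|x-y|^{-n-2(\alpha-\beta+1)}$ against $l(I)$), and your dyadic-annulus argument for the inclusion $BMO^{\beta}\subseteq Q^{\beta}_{\alpha}$ when $\alpha<\beta-1$ is sound: the geometric series $\sum_{j\ge0}2^{2j(\alpha-\beta+1)}$ converges exactly because $\alpha-\beta+1<0$, which reproduces the paper's Case 1.

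There is, however, a genuine gap in the other direction of (ii). Your key claim is that for $\alpha<\beta-1$ the weight $|x-y|^{-n-2(\alpha-\beta+1)}$ is bounded below by $c\,(l(I))^{-n-2(\alpha-\beta+1)}$ on $I\times I$. This is true only when the exponent $-n-2(\alpha-\beta+1)$ is nonpositive, i.e.\ when $\alpha\ge -\frac{n}{2}+\beta-1$ --- precisely the range in which the inclusion $Q^{\beta}_{\alpha}\subseteq BMO^{\beta}$ already follows from part (i) and carries no new content. In the genuinely new range $\alpha<-\frac{n}{2}+\beta-1$ the exponent is strictly positive, the weight vanishes on the diagonal, and the claimed lower bound fails there; the $Q^{\beta}_{\alpha}$ seminorm then gives no direct control of the near-diagonal part of $\int_I\int_I|f(x)-f(y)|^2\,dx\,dy$, and your fallback dyadic decomposition produces the divergent sum $\sum_{j\ge0}2^{js}$ with $s=-n-2(\alpha-\beta+1)>0$. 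The paper closes this gap with an averaging device you do not have: it inserts $\int_I\min\{|x-z|^{s},|y-z|^{s}\}\,dz\gtrsim (l(I))^{n+s}$ (the set of $z\in I$ with $\min(|x-z|,|y-z|)>\tfrac18 l(I)$ has measure $\ge\tfrac12|I|$), then bounds $|f(x)-f(y)|^2\min\{|x-z|^{s},|y-z|^{s}\}\lesssim |f(x)-f(z)|^2|x-z|^{s}+|f(z)-f(y)|^2|y-z|^{s}$ and integrates in $z$, which converts the diagonal singularity of $|f(x)-f(y)|^2$ into off-diagonal increments controlled by $\|f\|_{Q^{\beta}_{\alpha}}$. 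Some such third-point argument is needed; without it your Case 2 does not go through.

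A secondary but real error: the limit space is not plain $BMO$. Your exponent bookkeeping $(l(I))^{-2n}\cdot(l(I))^{n}=(l(I))^{-n}$ is incorrect; combining the prefactor $(l(I))^{2(\alpha+\beta-1)-n}$ with $(l(I))^{-n-2(\alpha-\beta+1)}$ gives $(l(I))^{-2n+4\beta-4}$, so the quantity being controlled is $\sup_I|I|^{-2+4(\beta-1)/n}\int_I\int_I|f(x)-f(y)|^2\,dx\,dy\approx\sup_I|I|^{-1+4(\beta-1)/n}\int_I|f-f(I)|^2\,dx$, i.e.\ the space $BMO^{\beta}=Q^{\beta}_{-\frac{n}{2}+\beta-1}$ of the statement (a Campanato-type space, coinciding with $BMO$ only for $\beta=1$). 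This is the sign/exponent slip you yourself flagged as the likely pitfall, and it must be corrected before the two inclusions can be matched.
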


\begin{proof}
(i) Suppose $\alpha_{1}\leq\alpha_{2}$. If $f\in
Q^{\beta}_{\alpha_{2}}(\mathbb{R}^{n})$, then for any cube $I$ we
have
\begin{eqnarray*}
\int_{I}\int_{I}\frac{|f(x)-f(y)|^{2}}{|x-y|^{n+2\alpha_{1}-2\beta+2}}dxdy
&\lesssim&
[l(I)]^{2(\alpha_{2}-\alpha_{1})}\int_{I}\int_{I}\frac{|f(x)-f(y)|^{2}}{|x-y|^{n+2\alpha_{2}-2\beta+2}}dxdy\\
&\lesssim&
[l(I)]^{n-2\alpha_{1}-2\beta+2}\|f\|_{Q^{\beta}_{\alpha_{2}}}^{2}.
\end{eqnarray*}

(ii) We divide the discussion into two cases.

{\it Case 1:} $-\frac{n}{2}+\beta-1\leq\alpha<\beta-1.$ By (i), we
have $Q^{\beta}_{\alpha}(\mathbb{R}^{n})\subseteq
Q^{\beta}_{-\frac{n}{2}+\beta-1}(\mathbb{R}^{n})=BMO^{\beta}(\mathbb{R}^{n})$.
On the other hand, if $f\in BMO^{\beta}(\mathbb{R}^{n})$ and $I$ is
a cube, then for every $y\in\mathbb{R}^{n}$ with $|y|<l(I)$,
\begin{eqnarray*}
\int_{I}|f(x+y)-f(x)|^{2}dx
&\lesssim& \int_{I}\left(|f(x+y)-f(2I)|^{2}+|f(x)-f(2I)|^{2}\right)dx\\
&\lesssim&\int_{2I}|f(x)-f(2I)|^{2}dx\lesssim|I|^{1-\frac{4(\beta-1)}{n}}\|f\|^{2}_{BMO^{\beta}(\mathbb{R}^{n})},
\end{eqnarray*}
since we can get easily
\begin{eqnarray*}
\|f\|^{2}_{Q^{\beta}_{-\frac{n}{2}+\beta-1}}
&\approx&\sup_{I}|I|^{-1+\frac{4(\beta-1)}{n}}\int_{I}|f(x)-f(I)|^{2}dx
\end{eqnarray*}
with $f(I)=|I|^{-1}\int_{I}f(x)dx$ being the mean value of $f$ over
the cube $I.$

Hence
\begin{eqnarray*}
\int_{|y|<l(I)}\int_{I}\frac{|f(x+y)-f(x)|^{2}}{|y|^{n+2\alpha-\beta+2}}dxdy
&\leq&
[l(I)]^{n+4-4\beta}\|f\|_{BMO^{\beta}}^{2}\int_{|y|<l(I)}\frac{dy}{|y|^{n+2\alpha-2\beta+2}}\\
&\leq& [l(I)]^{n-2\alpha-2\beta+2}\|f\|_{BMO^{\beta}}^{2}.
\end{eqnarray*}
This tells us $BMO^{\beta}(\mathbb{R}^{n})\subseteq
Q_{\alpha}^{\beta}(\mathbb{R}^{n}).$

 {\it Case II:}
$\alpha\in(-\infty,-\frac{n}{2}+\beta-1]$. In this case,
$BMO^{\beta}(\mathbb{R}^{n})\subseteq
Q^{\beta}_{\alpha}(\mathbb{R}^{n}).$ If $f\in
Q^{\beta}_{\alpha}(\mathbb{R}^{n})$, let $I$ be a cube. If $x,y\in
I$, then the set $\{z\in I: \min(|x-z|,|y-z|)>\frac{1}{8}l(I)\}$ has
measure at least $\frac{1}{2}|I|$ and thus for
$-2\alpha-n+2\beta-2>0$,
\begin{eqnarray*}
\int_{I}\min\left\{|x-z|^{-2\alpha-n+2\beta-2},|y-z|^{-2\alpha-n+2\beta-2}\right\}dz
\geq C[l(I)]^{-2\alpha-n+2\beta-2}[l(I)]^{n}\geq
C[l(I)]^{-2\alpha+2\beta-2}.
\end{eqnarray*}
 Hence we can get
\begin{eqnarray*}
&&[l(I)]^{-2n+4\beta-4}\int_{I}\int_{I}|f(x)-f(y)|^{2}dxdy\\
&\leq&
[l(I)]^{2\alpha-2n+2\beta-2}\int_{I}\int_{I}\int_{I}|f(x)-f(y)|^{2}
\min\{|x-z|^{-2\alpha-n+2\beta-2},|y-z|^{-2\alpha-n+2\beta-2}\}dxdydz\\
&\lesssim&(l(I))^{-n+2(\alpha+\beta-1)}\int_{I}\int_{I}\frac{|f(x)-f(y)|^{2}}{|x-y|^{n+2(\alpha-\beta+1)}}dxdy\lesssim\|f\|^{2}_{Q_{\alpha}^{\beta}(\mathbb{R}^{n})}.
\end{eqnarray*}
Thus $Q_{\alpha}^{\beta}(\mathbb{R}^{n})\subseteq
BMO^{\beta}(\mathbb{R}^{n}).$
 This
completes the proof of Lemma \ref{eqvali norm}.
\end{proof}

In the following, we establish the connection between
$Q_{\alpha}^{\beta}(\mathbb{R}^{n})$ and homogeneous Besov spaces.

\begin{theorem}\label{connection Q B}
Let $n\geq 2$ and   $\max\{1/2,\alpha\}<\beta<1.$\\
(i) If $1\leq q\leq 2$ and $\alpha+\beta-1>0$, then
$\dot{B}^{\alpha-\beta+1}_{\frac{n}{\alpha+\beta-1},q}
(\mathbb{R}^{n})\subseteq
Q_{\alpha}^{\beta}(\mathbb{R}^{n}).$\\
(ii) Let  $1\leq q\leq \infty,$ $\gamma_{1}>(\alpha-\beta+1)$ and
$\gamma_{2}>0.$
  If
$\gamma_{1}-\gamma_{2}=2-2\beta,$ then
$\dot{B}^{\gamma_{1}}_{n/\gamma_{2},q}\subseteq
Q_{\alpha}^{\beta}(\mathbb{R}^{n}).$
\end{theorem}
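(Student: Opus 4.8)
My plan is to deduce both inclusions from a single scaling computation, using only Lemma~\ref{eq def}, the Besov embeddings of Theorem~\ref{besov emdeing th}, and the modulus-of-continuity descriptions (\ref{De Besov})--(\ref{de 2 Besov}); the two parts differ only in which of those descriptions is invoked. Write $s=\alpha-\beta+1$ and $d=\alpha+\beta-1$, so that in (\ref{eqieq def}) the weight is $|y|^{-n-2s}$ and the normalization is $l(I)^{2d-n}$; note that $s-d=2-2\beta$, that $0<s<1$ in the setting of (i), and that $d<2\beta-1<1\le n/2$ whenever $d>0$ (here $n\ge2$ and $\alpha<\beta<1$ are used). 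The key remark is this: if $f\in\dot B^{\gamma_1}_{n/\gamma_2,q}$ with $\gamma_2\le n/2$, set $p:=n/\gamma_2\ge2$ and $G(y):=\|f(\cdot+y)-f(\cdot)\|_{L^p(\mathbb R^n)}$; then Hölder's inequality on a cube $I$ gives, for $|y|<l(I)$, $\int_I|f(x+y)-f(x)|^2\,dx\le|I|^{1-2/p}G(y)^2=l(I)^{n-2\gamma_2}G(y)^2$, and because $\gamma_1-\gamma_2=2-2\beta=s-d$ one has $l(I)^{2d-n}\cdot l(I)^{n-2\gamma_2}=l(I)^{2(d-\gamma_2)}=l(I)^{2(s-\gamma_1)}$. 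Hence the $I$-term of (\ref{eqieq def}) is at most $l(I)^{2(s-\gamma_1)}\int_{|y|<l(I)}G(y)^2|y|^{-n-2s}\,dy$, and the whole problem reduces to controlling this weighted integral.

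For part (ii) I would first reduce the parameters. Since $\dot B^{\gamma_1}_{n/\gamma_2,q}\hookrightarrow\dot B^{\gamma_1}_{n/\gamma_2,\infty}$ by Theorem~\ref{besov emdeing th}(i), it is enough to treat $q=\infty$; then Theorem~\ref{besov emdeing th}(ii), applied with $\gamma_2$ decreased to some $\gamma_2'\in(\max\{0,d\},\min\{\gamma_2,2\beta-1\})$ and $\gamma_1'=\gamma_2'+(2-2\beta)$, lets me also assume $0<\gamma_1<1$ and $p:=n/\gamma_2\ge2$, while keeping $\gamma_1-\gamma_2=2-2\beta$ and $\gamma_1>\alpha-\beta+1$; this interval is non-empty exactly because $\alpha<\beta$ forces $d<2\beta-1$. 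Now (\ref{de 2 Besov}) gives $G(y)\lesssim\|f\|_{\dot B^{\gamma_1}_{p,\infty}}|y|^{\gamma_1}$, so, since $\gamma_1>s$, $\int_{|y|<l(I)}G(y)^2|y|^{-n-2s}\,dy\lesssim\|f\|_{\dot B^{\gamma_1}_{p,\infty}}^2\,l(I)^{2\gamma_1-2s}$; the prefactor $l(I)^{2(s-\gamma_1)}$ cancels this power of $l(I)$ exactly, so the $I$-term is $\lesssim\|f\|_{\dot B^{\gamma_1}_{p,\infty}}^2$ uniformly in $I$, whence $f\in Q_\alpha^\beta$.

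For part (i) we sit exactly at the endpoint $\gamma_1=s$, $\gamma_2=d$, so $p=n/d>2$ and the prefactor $l(I)^{2(s-\gamma_1)}$ equals $1$; the radial integral $\int_{|y|<l(I)}|y|^{2\gamma_1-n-2s}\,dy$ that was used in (ii) now diverges, so the pointwise control of $G$ is no longer available. Instead, since $1\le q\le2$, Theorem~\ref{besov emdeing th}(i) gives $f\in\dot B^s_{p,2}$, and because $0<s<1$ the characterization (\ref{De Besov}) with exponent $q=2$ says precisely that $\int_{\mathbb R^n}G(y)^2|y|^{-n-2s}\,dy<\infty$. Dropping the restriction $|y|<l(I)$ then bounds the $I$-term of (\ref{eqieq def}) by this single finite integral, uniformly in $I$, so $f\in Q_\alpha^\beta$.

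The step I expect to require the most care is the endpoint nature of (i): the argument of (ii) fails there through a logarithmic divergence, and recovering the inclusion genuinely uses the summability $q\le2$, which is what allows one to pass to $\dot B^s_{p,2}$ and invoke the global $L^2$-Gagliardo integral (\ref{De Besov}); this is also why $q\le2$ cannot be relaxed in (i). The only other slightly fiddly point is the bookkeeping in the reduction for (ii) — checking that $\gamma_1'\in(0,1)$, $n/\gamma_2'\ge2$, $\gamma_1'>\alpha-\beta+1$ and $\gamma_1'-\gamma_2'=2-2\beta$ can all be secured simultaneously — which hinges on the strict inequality $\alpha<\beta$ and on $n\ge2$.
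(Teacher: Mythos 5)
Your proposal is correct and follows essentially the same route as the paper: both parts reduce via the Besov embeddings to $q=2$ (resp.\ $q=\infty$ with $\gamma_1,\gamma_2$ shifted into $(0,1)$), apply H\"older on the cube to produce the factor $l(I)^{n-2\gamma_2}$, and then invoke (\ref{De Besov}) for the endpoint case (i) and (\ref{de 2 Besov}) for (ii), together with Lemma~\ref{eq def}. Your parameter bookkeeping in the reduction for (ii) is in fact slightly more explicit than the paper's, but the argument is the same.
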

\begin{remark} Similar results  hold for $\beta=1,$ see Essen-Janson-Peng-Xiao \cite[Theorem 2.7]{M. Essen S. Janson L. Peng J.
Xiao}.
\end{remark}
\begin{proof}
(i) It follows form  (i) of Theorem  \ref{besov emdeing th} that
$\dot{B}^{\alpha-\beta+1}_{\frac{n}{\alpha+\beta-1},q}(\mathbb{R}^{n})\subseteq
\dot{B}^{\alpha-\beta+1}_{\frac{n}{\alpha+\beta-1},2}(\mathbb{R}^{n}),$
so we can assume $q=2.$ For $f\in
\dot{B}^{\alpha-\beta+1}_{\frac{n}{\alpha+\beta-1},2}(\mathbb{R}^{n}).$
H\"{o}lder's inequality implies that for any cube $I$ in
$\mathbb{R}^{n},$
\begin{eqnarray*}
&&\int_{|y|<l(I)}\int_{I}|f(x+y)-f(x)|^{2}\frac{dy}{|y|^{2+2(\alpha-\beta+1)}}\\
&\lesssim&|I|^{(n-2(\alpha+\beta-1))/n}\int_{\mathbb{R}^{n}}
\left(\int_{I}|f(x+y)-f(x)|^{\frac{n}{(\alpha+\beta-1)}}dx\right)^{2(\alpha+\beta-1)/n}\frac{dy}{|y|^{n+2(\alpha-\beta+1)}}.
\end{eqnarray*}
This estimate, Lemma \ref{eq def} and (\ref{De Besov}) imply that
$f\in Q_{\alpha}^{\beta}(\mathbb{R}^{n}).$\\
 (ii) According to (ii) of Theorem  \ref{besov emdeing th}, we have
 $\dot{B}^{\gamma_{1}}_{n/\gamma_{2},q}\subset\dot{B}^{\theta_{1}}_{n/\theta_{2},q}$ for
  $\gamma_{1}>\theta_{1}$, $\gamma_{2}>\theta_{2}$ and $\gamma_{1}-\gamma_{2}=\theta_{1}-\theta_{2}$. Thus we can
  suppose that $\gamma_{1}<1$ and $\gamma_{2}<1.$
  Assume that $f\in
\dot{B}^{\gamma_{1}}_{n/\gamma_{2},q}\subseteq\dot{B}^{\gamma_{1}}_{n/\gamma_{2},\infty}.$
For any cube $I$ in $\mathbb{R}^{n},$     H\"{o}lder's inequality
and (\ref{de 2 Besov}) tell us
\begin{eqnarray*}
&&\int_{|y|<l(I)}\int_{I}|f(x+y)-f(x)|^{2}dx\frac{dy}{|y|^{n+2(\alpha-\beta+1)}}\\
&\lesssim&|I|^{(n-2\gamma_{2})/n}\int_{|y|<l(I)}\left(\int_{I}|f(x+y)-f(x)|^{n/\gamma_{2}}\right)^{2\gamma_{2}/n}\frac{dy}{|y|^{n+2(\alpha-\beta+1)}}\\
&\lesssim&(l(I))^{n-2\gamma_{2}}\int_{|y|<l(I)}|y|^{2\gamma_{1}-n-2(\alpha-\beta+1)}dy\|f\|^{2}_{\dot{B}^{\gamma_{1}}_{n/\gamma_{2},\infty}(\mathbb{R}^{n})}
\\
&\lesssim&
(l(I))^{n-2(\alpha+\beta-1)}\|f\|^{2}_{\dot{B}^{\gamma_{1}}_{n/\gamma_{2},\infty}(\mathbb{R}^{n})}.
\end{eqnarray*}
Thus $f\in Q_{\alpha}^{\beta}(\mathbb{R}^{n}).$
\end{proof}

\subsection{New Tent Spaces}
We introduce new tent spaces motivated by similar arguments in
Dafni-Xiao   \cite{G. Dafni J. Xiao}.

\begin{definition}
For $\alpha>0$ and  $\max\{1/2,\alpha\}<\beta<1$ with
$\alpha+\beta-1\geq 0$, we define $T_{\alpha,\beta}^{\infty}$ be the
class of all Lebesgue measurable functions $f$ on
$\mathbb{R}^{1+n}_{+}$ with
$$\|f\|_{T_{\alpha,\beta}^{\infty}}=\sup_{ B\subset \mathbb{R}^{n}}
\left(\frac{1}{|B|^{1-{2(\alpha+\beta-1)}/{n}}}\int_{T(B)}|f(t,y)|^{2}\frac{dtdy}{t^{1+2(\alpha-\beta+1)}}\right)^{1/2}<\infty,$$
where $B$ runs over all balls in $\mathbb{R}^{n}.$
\end{definition}
\begin{definition}
For $\alpha>0$ and  $\max\{1/2,\alpha\}<\beta<1$ with
$\alpha+\beta-1\geq 0$,  a function $a$ on $\mathbb{R}^{1+n}_{+}$ is
said to be a $T^{1}_{\alpha,\beta}-$atom provided there exists a
ball $B\subset \mathbb{R}^{n}$ such that $a$ is supported in the
tent $T(B)$ and satisfies
$$\int_{T(B)}|a(t,y)|^{2}\frac{dtdy}{t^{1-2(\alpha-\beta+1)}}\leq
\frac{1}{|B|^{1-{2(\alpha+\beta-1)}/{n}}}.$$\
\end{definition}

\begin{definition} For $\alpha>0$ and  $\max\{1/2,\alpha\}<\beta<1$ with $\alpha+\beta-1\geq0$ the space $T_{\alpha,\beta}^{1}$ consists of all
measurable functions $f$ on $\mathbb{R}^{1+n}_{+}$ with
$$\|f\|_{T^{1}_{\alpha,\beta}}=
\inf_{\omega}\left(\int_{\mathbb{R}^{1+n}_{+}}|f(t,x)|^{2}\omega^{-1}(t,x)\frac{dtdx}{t^{1-2(\alpha-\beta+1)}}\right)^{1/2}<\infty,$$
where the infimum is taken over all nonnegative Borel measurable
functions $\omega$ on $\mathbb{R}^{1+n}_{+}$ with
$$\int_{\mathbb{R}^{n}}N\omega d\Lambda^{\infty}_{n-2(\alpha+\beta-1)}\leq 1$$
and with the restriction that $\omega$ is allowed to vanish only
where $f$ vanishes.
\end{definition}

\begin{lemma} \label{equiva metric}
If $\sum_{j}\|g_{j}\|_{T_{\alpha,\beta}^{1}}<\infty,$ then
$g=\sum_{j}g_{j}\in T_{\alpha,\beta}^{1}$ with
$$\|g\|_{T_{\alpha,\beta}^{1}}\leq \sqrt{C_{1}^{-1}(n,d)C_{2}(n,d)}\sum_{j}\|g_{j}\|_{T_{\alpha,\beta}^{1}},$$
where $C_{1}(n,d), C_{2}(n,d)$ are the constants in (\ref{capar
Capaci}).
\end{lemma}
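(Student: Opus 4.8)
The plan is to adapt the weight/duality argument of Coifman--Meyer--Stein and Dafni--Xiao: for each $g_j$ I would select an almost-optimal weight $\omega_j$ realizing the infimum defining $\|g_j\|_{T^1_{\alpha,\beta}}$ up to an error, then amalgamate the $\omega_j$ into a single weight $\omega$ adapted to $g=\sum_j g_j$, and finally estimate separately the two ingredients of the $T^1_{\alpha,\beta}$-norm, namely the ``capacity mass'' $\int_{\mathbb{R}^n}N\omega\,d\Lambda^{(\infty)}_{d}$ and the ``energy'' $\int_{\mathbb{R}^{1+n}_+}|g|^2\omega^{-1}\,\frac{dtdx}{t^{1-2(\alpha-\beta+1)}}$, where I abbreviate $d=n-2(\alpha+\beta-1)$.

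Concretely, I would fix $\varepsilon>0$, set $\lambda_j=\|g_j\|_{T^1_{\alpha,\beta}}+\varepsilon 2^{-j}$ so that $\Lambda:=\sum_j\lambda_j=\sum_j\|g_j\|_{T^1_{\alpha,\beta}}+\varepsilon<\infty$ and $\lambda_j>0$ for every $j$, and then invoke the definition of the $T^1_{\alpha,\beta}$-norm to choose, for each $j$, a nonnegative Borel function $\omega_j$ on $\mathbb{R}^{1+n}_+$, vanishing only where $g_j$ vanishes, with $\int_{\mathbb{R}^n}N\omega_j\,d\Lambda^{(\infty)}_{d}\le 1$ and $\int_{\mathbb{R}^{1+n}_+}|g_j|^2\omega_j^{-1}\,\frac{dtdx}{t^{1-2(\alpha-\beta+1)}}\le\lambda_j^2$. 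I would put $\sigma=\sum_j\lambda_j\omega_j$ and, after the capacity bound below, normalize $\omega=M^{-1}\sigma$ with $M=C_1^{-1}(n,d)C_2(n,d)\,\Lambda$.

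The capacity side is the heart of the matter (and the source of the constant in the statement). Since $N$ is positively homogeneous and $N(\sum_j h_j)\le\sum_j Nh_j$ for nonnegative $h_j$, one has $N\sigma\le\sum_j\lambda_j N\omega_j$ pointwise. The delicate point is that the distribution-function integral against $\Lambda^{(\infty)}_{d}$ is not literally subadditive, because $\Lambda^{(\infty)}_{d}$ is only countably subadditive as a set function; I would therefore route the estimate through the dyadic Choquet capacity $\widetilde\Lambda^{(\infty)}_{d}$, which by the Remark in Section~2 is strongly subadditive and continuous along monotone sequences, hence has a countably subadditive and positively homogeneous Choquet integral. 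Combining monotonicity, this subadditivity, and the two-sided comparison (\ref{capar Capaci}) gives
\begin{equation*}
\int_{\mathbb{R}^n}N\sigma\,d\Lambda^{(\infty)}_{d}
\le C_1^{-1}(n,d)\int_{\mathbb{R}^n}N\sigma\,d\widetilde\Lambda^{(\infty)}_{d}
\le C_1^{-1}(n,d)\sum_j\lambda_j\int_{\mathbb{R}^n}N\omega_j\,d\widetilde\Lambda^{(\infty)}_{d}
\le C_1^{-1}(n,d)C_2(n,d)\sum_j\lambda_j=M ,
\end{equation*}
so that $\int_{\mathbb{R}^n}N\omega\,d\Lambda^{(\infty)}_{d}\le1$, i.e. $\omega$ is admissible for $g$; moreover $\omega$ vanishes only where every $g_j$, hence $g$, vanishes.

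For the energy side I would use a pointwise Cauchy--Schwarz inequality, writing $g_j=(g_j/\sqrt{\lambda_j\omega_j})\cdot\sqrt{\lambda_j\omega_j}$ with the convention $0/0=0$ at points where some $\omega_j$ vanishes (such points lie in $\{g_j=0\}$), to obtain $|g|^2\le\sigma\sum_j\lambda_j^{-1}\omega_j^{-1}|g_j|^2$, hence $|g|^2\omega^{-1}=M\,|g|^2\sigma^{-1}\le M\sum_j\lambda_j^{-1}\omega_j^{-1}|g_j|^2$. Integrating and using the choice of the $\omega_j$,
\begin{equation*}
\int_{\mathbb{R}^{1+n}_+}|g|^2\omega^{-1}\,\frac{dtdx}{t^{1-2(\alpha-\beta+1)}}
\le M\sum_j\lambda_j^{-1}\int_{\mathbb{R}^{1+n}_+}|g_j|^2\omega_j^{-1}\,\frac{dtdx}{t^{1-2(\alpha-\beta+1)}}
\le M\sum_j\lambda_j^{-1}\lambda_j^2=M\Lambda .
\end{equation*}
Thus $g\in T^1_{\alpha,\beta}$ with $\|g\|^2_{T^1_{\alpha,\beta}}\le M\Lambda=C_1^{-1}(n,d)C_2(n,d)\Lambda^2$, and letting $\varepsilon\to0$ yields $\|g\|_{T^1_{\alpha,\beta}}\le\sqrt{C_1^{-1}(n,d)C_2(n,d)}\sum_j\|g_j\|_{T^1_{\alpha,\beta}}$. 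The only genuine obstacle is the failure of subadditivity of the Hausdorff-capacity integral $\int(\cdot)\,d\Lambda^{(\infty)}_{d}$; passing to $\widetilde\Lambda^{(\infty)}_{d}$ and paying the factor $\sqrt{C_1^{-1}C_2}$ through (\ref{capar Capaci}) is precisely what produces the constant recorded in the statement, and the whole scheme parallels the completeness argument for the tent spaces of Dafni--Xiao \cite{G. Dafni J. Xiao}.
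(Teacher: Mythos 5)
Your argument is correct and is essentially the proof the paper has in mind: the paper itself gives no details here, deferring to Dafni--Xiao \cite[Lemma 5.3]{G. Dafni J. Xiao}, whose scheme (near-optimal weights $\omega_j$, the amalgam $\sigma=\sum_j\lambda_j\omega_j$, pointwise Cauchy--Schwarz for the energy, and sublinearity of the Choquet integral against the dyadic capacity $\widetilde\Lambda^{(\infty)}_{d}$ combined with (\ref{capar Capaci}) for the capacity mass) is exactly what you reproduce. Your accounting of where the constant $\sqrt{C_1^{-1}(n,d)C_2(n,d)}$ enters is also the correct one.
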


\begin{proof} The proof of this lemma is similar to that of Dafni-Xiao \cite[Lemma 5.3]{G. Dafni J.
Xiao}.
\end{proof}
\begin{theorem} \label{atomde T}
Let $\alpha>0$ and  $\max\{1/2,\alpha\}<\beta<1$ with $\alpha+\beta-1\geq 0$,  then\\
(i) $f\in T^{1}_{\alpha,\beta}$ if and only if there is a sequence
of $T^{1}_{\alpha,\beta}-$atoms $a_{j}$ and  an $l^{1}-$sequence
$\{\lambda_{j}\}$ such that $f=\sum_{j}\lambda_{j}a_{j}.$ Moreover
$$\|f\|_{T_{\alpha,\beta}^{1}}\approx\inf\left\{\sum_{j}|\lambda_{j}|: f=\sum_{j}\lambda_{j}a_{j}\right\}$$
where the infimum is taken over all possible atomic decompositions
of $f\in T^{1}_{\alpha,\beta}.$ The right-hand side thus defines a
norm on $T^{1}_{\alpha,\beta}$ which makes it into a Banach space.\\
(ii) The inequality
\begin{equation}\label{atom t ineq}
\int_{\mathbb{R}^{1+n}_{+}}|f(t,y)g(t,y)|\frac{dtdy}{t}\leq
C\|f\|_{T^{1}_{\alpha,\beta}}\|g\|_{T_{\alpha,\beta}^{\infty}}
\end{equation} holds for all $f\in T^{1}_{\alpha,\beta}$ and $g\in
T^{\infty}_{\alpha,\beta}.$\\
(iii) The Banach space  dual of $T_{\alpha,\beta}^{1}$ can be
identified with $T_{\alpha,\beta}^{\infty}$ under the following
pairing
$$\langle f,g\rangle=\int_{\mathbb{R}^{1+n}_{+}}f(t,y)g(t,y)\frac{dtdy}{t}.$$
\end{theorem}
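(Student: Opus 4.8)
\textbf{Proof proposal for Theorem \ref{atomde T}.}

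The plan is to follow the classical Coifman--Meyer--Stein tent space scheme as adapted by Dafni--Xiao, carrying the three parts in the order (i) $\Rightarrow$ (ii) $\Rightarrow$ (iii), since each feeds the next. For part (i), the ``if'' direction is the easy half: given $f=\sum_j\lambda_j a_j$ with each $a_j$ a $T^1_{\alpha,\beta}$-atom supported in $T(B_j)$, I would take the test weight $\omega=\sum_j|\lambda_j|\,\|\lambda\|_{\ell^1}^{-1}\mathbf{1}_{T(B_j)}$-type competitor — more precisely a normalized sum of the indicator functions $\mathbf{1}_{B_j}$ lifted by the nontangential maximal function — and check that $\int_{\mathbb{R}^n}N\omega\,d\Lambda^\infty_{n-2(\alpha+\beta-1)}\lesssim\sum_j|\lambda_j|$ using the subadditivity of $\widetilde\Lambda^\infty_{n-2(\alpha+\beta-1)}$ together with the comparison \eqref{capar Capaci}, while the atom size condition $\int_{T(B_j)}|a_j|^2\frac{dtdy}{t^{1-2(\alpha-\beta+1)}}\le|B_j|^{-1+2(\alpha+\beta-1)/n}$ makes the weighted $L^2$ integral bounded by $\sum_j|\lambda_j|$. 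This gives $\|f\|_{T^1_{\alpha,\beta}}\lesssim\sum_j|\lambda_j|$, and Lemma \ref{equiva metric} guarantees the series genuinely converges in $T^1_{\alpha,\beta}$. For the ``only if'' direction, given $f\in T^1_{\alpha,\beta}$ with near-optimal weight $\omega$, I would slice the half-space by the level sets $O_k=\{x:\ (\text{a suitable maximal/averaging function of }\omega)(x)>2^k\}$, perform a Whitney decomposition of each $O_k$ into dyadic cubes, and define atoms by restricting $f$ to the ``annular'' tent regions $T(B_{k})\setminus T(B_{k+1})$ associated to these cubes; the coefficients $\lambda_j$ come out as powers of $2^k$ times $\Lambda^\infty_{n-2(\alpha+\beta-1)}(O_k)^{1/2}$-type quantities, and $\sum_j|\lambda_j|\lesssim\int N\omega\,d\Lambda^\infty_{n-2(\alpha+\beta-1)}\le1$ by the layer-cake formula for the Hausdorff-capacity integral.

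For part (ii), the inequality \eqref{atom t ineq} follows by first proving it for a single $T^1_{\alpha,\beta}$-atom $a$ supported in $T(B)$: by Cauchy--Schwarz in the measure $\frac{dtdy}{t}$, split as $t^{-1}=t^{-(1-2(\alpha-\beta+1))/2}\cdot t^{-(1+2(\alpha-\beta+1))/2}$, so that
\begin{equation*}
\int_{T(B)}|a g|\frac{dtdy}{t}\le\left(\int_{T(B)}|a|^2\frac{dtdy}{t^{1-2(\alpha-\beta+1)}}\right)^{1/2}\left(\int_{T(B)}|g|^2\frac{dtdy}{t^{1+2(\alpha-\beta+1)}}\right)^{1/2}\le\frac{1}{|B|^{1/2-{(\alpha+\beta-1)}/{n}}}\cdot|B|^{1/2-(\alpha+\beta-1)/n}\|g\|_{T^\infty_{\alpha,\beta}}.
\end{equation*}
Then for general $f=\sum_j\lambda_j a_j$ one sums, using the atomic decomposition from (i) and taking the infimum over decompositions to replace $\sum_j|\lambda_j|$ by $\|f\|_{T^1_{\alpha,\beta}}$; a small density/monotone-convergence argument (the integrand is nonnegative) justifies interchanging sum and integral.

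For part (iii), every $g\in T^\infty_{\alpha,\beta}$ induces a bounded functional on $T^1_{\alpha,\beta}$ via the stated pairing by part (ii), with operator norm $\lesssim\|g\|_{T^\infty_{\alpha,\beta}}$. Conversely, given $L\in(T^1_{\alpha,\beta})^*$, I would test $L$ against functions supported in a fixed tent $T(B)$: on such a tent the $T^1_{\alpha,\beta}$-norm is comparable (up to the factor $|B|^{1/2-(\alpha+\beta-1)/n}$) to the weighted $L^2(T(B),\frac{dtdy}{t^{1-2(\alpha-\beta+1)}})$-norm, so $L$ is represented on each $T(B)$ by some $g\in L^2_{\mathrm{loc}}$; these local representatives patch together consistently to a single $g$ on $\mathbb{R}^{1+n}_+$, and testing $L$ against the extremal functions $g\mathbf{1}_{T(B)}\cdot t^{2(\alpha-\beta+1)}$ recovers exactly $\int_{T(B)}|g|^2\frac{dtdy}{t^{1+2(\alpha-\beta+1)}}\lesssim\|L\|^2|B|^{1-2(\alpha+\beta-1)/n}$, i.e.\ $\|g\|_{T^\infty_{\alpha,\beta}}\lesssim\|L\|$. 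I expect the main obstacle to be the ``only if'' half of the atomic decomposition in (i): one must verify that the Whitney/level-set construction produces true atoms with the correct normalization exponent $1-2(\alpha+\beta-1)/n$ and that the bookkeeping of coefficients matches the Hausdorff-capacity integral $\int N\omega\,d\Lambda^\infty_{n-2(\alpha+\beta-1)}$ — this is where the non-standard weight $t^{1-2(\alpha-\beta+1)}$ and the fractional dimension $n-2(\alpha+\beta-1)$ interact, and where one needs the Choquet-capacity properties \eqref{capar Capaci}, the strong subadditivity, and Fatou's lemma \eqref{Fatou lemma} rather than plain measure theory. The arguments otherwise parallel Dafni--Xiao \cite{G. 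Dafni J. Xiao} closely, and I would cite that source for the routine estimates while spelling out the steps where the exponents differ.
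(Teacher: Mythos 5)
Your proposal is correct and, for parts (i) and (iii), follows essentially the same route as the paper: the ``if'' half of (i) via an explicit test weight adapted to each atom's tent together with Lemma \ref{equiva metric} (the paper uses a weight with polynomial tail $(r/\rho)^{n-2(\alpha+\beta-1)+\varepsilon}$ rather than your sharp indicator, but both satisfy the capacity bound); the ``only if'' half via level sets $E_k=\{N\omega>2^k\}$, a dyadic covering with controlled capacity, difference regions $T_{j,k}$, and Cauchy--Schwarz against $\omega\le 2^{k+1}$; and (iii) via Riesz representation on each $L^2(T(B),t^{-1}dtdx)$ followed by testing against the extremal function. The one genuine divergence is part (ii): you deduce \eqref{atom t ineq} from the atomic decomposition by an atom-by-atom Cauchy--Schwarz, whereas the paper proves it directly from the definition of $\|f\|_{T^1_{\alpha,\beta}}$, using that $|g|^2t^{-1-2(\alpha-\beta+1)}dtdx$ is a fractional Carleson measure and invoking the capacitary inequality \eqref{eq46} to bound $\int\omega|g|^2t^{-1-2(\alpha-\beta+1)}dtdx\lesssim\|g\|^2_{T^\infty_{\alpha,\beta}}\int N\omega\,d\Lambda^{(\infty)}_{n-2(\alpha+\beta-1)}$, then taking the infimum over admissible $\omega$. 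The paper's route keeps (ii) independent of the atomic decomposition; yours is equally valid (and is the classical Coifman--Meyer--Stein argument) provided you use the disjoint-support decomposition from (i) so the sum--integral interchange is by Tonelli. Two small slips to fix in (iii): the extremal test function should carry the weight $t^{-2(\alpha-\beta+1)}$, not $t^{+2(\alpha-\beta+1)}$, so that the pairing produces $\int_{T(B)}|g|^2\,t^{-1-2(\alpha-\beta+1)}dtdy$; and you must truncate to $T^{\varepsilon}(B)=T(B)\cap\{t>\varepsilon\}$ before testing, since a priori $g$ is only locally in $L^2(t^{-1}dtdx)$ and the extra factor $t^{-2(\alpha-\beta+1)}$ is singular at $t=0$ when $\alpha>\beta-1$.
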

\begin{proof}
(i) Let $a$ be a $T^{1}_{\alpha,\beta}$ atom. Then we can find a
ball $B=B(x_{B},r)\subset \mathbb{R}^{n}$ such that
$\hbox{supp}(a)\subset T(B)$ and
$$\int_{T(B)}|a(t,y)|^{2}\frac{dtdy}{t^{1-2(\alpha-\beta+1)}}\leq \frac{1}{|B|^{1-{2(\alpha+\beta-1)}/{n}}}.$$
Fix $\varepsilon >0$ and define
 $$\omega(t,x)=\kappa r^{-n+2(\alpha+\beta-1)} \min\left\{1,(\frac{r}{\sqrt{|x-x_{B}|^{2}+t^{2}}})^{n-2(\alpha+\beta-1)+\varepsilon}\right\},$$
 where $\sqrt{|x-x_{B}|^{2}+t^{2}}$ is the distance between $(t,x)$
 and $(0,x_{B}).$ 
For $x\in \mathbb{R}^{n},$ the distance in $\mathbb{R}^{1+n}_{+}$
from the cone $\Gamma(x)$ to $(0,x_{B})$ is
$\frac{|x-x_{B}|}{\sqrt{2}}.$ So
\begin{eqnarray*}
N\omega(x)&=&\sup_{(t,y)\in \Gamma(x)}\left|\kappa
r^{-n+2(\alpha+\beta-1)}
\min\left\{1,\left(\frac{r}{\sqrt{|x-x_{B}|^{2}+t^{2}}}\right)^{n-2(\alpha+\beta-1)+\varepsilon}\right\}\right|\\
&\leq&\kappa
r^{-n+2(\alpha+\beta-1)}\min\left\{1,(\frac{\sqrt{2}r}{{|x-x_{B}|}})^{n-2(\alpha+\beta-1)+\varepsilon}\right\}.
\end{eqnarray*}
Thus
\begin{eqnarray*}
\kappa^{-1}\int_{\mathbb{R}^{n}}N\omega
d\Lambda^{\infty}_{n-2(\alpha+\beta-1)}\leq
\int_{0}^{\infty}\Lambda^{\infty}_{n-2(\alpha+\beta-1)}(\{x:
N\omega(x)>\lambda\})d\lambda.
\end{eqnarray*}
If $\lambda<N\omega(x),$ then
$ |x-x_{B}|\leq
\sqrt{2}\left(\frac{r^{\varepsilon}}{\lambda}\right)^{\frac{1}{n-2(\alpha+\beta-1)+\varepsilon}}.$
 Meanwhile, $\lambda<N\omega(x)\leq \kappa
 r^{-n+2(\alpha+\beta-1)},$ so we obtain
\begin{eqnarray*}
\kappa^{-1}\int_{\mathbb{R}^{n}}N\omega
d\Lambda^{(\infty)}_{n-2(\alpha+\beta-1)}
\leq\int_{0}^{r^{-n+2(\alpha+\beta-1)}}\left(\frac{r^{\varepsilon}}{\lambda}\right)^{\frac{n-2(\alpha+\beta-1)}{n-2(\alpha+\beta-1)+\varepsilon}}d\lambda
\lesssim1.
\end{eqnarray*}
Morover, on $T(B)$ we have
$\omega^{-1}(t,x)=r^{n-2(\alpha+\beta-1)}.$
 By the definition of $T^{1}_{\alpha,\beta}-$atom, we get
 $$\int_{T(B)}|a(t,y)|^{2}\omega^{-1}(t,x)\frac{dtdx}{t^{1-2(\alpha-\beta+1)}}\lesssim 1.$$
 Thus $a\in T^{1}_{\alpha.\beta}$ with $\|a\|_{T_{\alpha,\beta}^{1}}\lesssim
 1.$ For any sum $\sum_{j}\lambda_{j}a_{j}$ with  $\|\{\lambda_{j}\}\|_{l_{1}}=\sum|\lambda_{j}|<\infty$
 and $T^{1}_{\alpha,\beta}-$atoms $a_{j}$,  Lemma \ref{equiva
 metric} implies that
 the sum converges in the quasi-norm to $f\in T_{\alpha,\beta}^{1}$
 with
 $\|f\|_{T^{1}_{\alpha,\beta}}\lesssim\sum_{j}|\lambda_{j}|.$

 Conversely, suppose that $f\in T^{1}_{\alpha,\beta}.$ There exists
 a Borel measurable function $\omega\geq 0$ on
 $\mathbb{R}^{1+n}_{+}$ such that
 $$\int_{\mathbb{R}^{1+n}_{+}}|f(t,x)|^{2}\omega^{-1}(t,x)\frac{dtdx}{t^{1-2(\alpha-\beta+1)}}\leq 2\|f\|^{2}_{T_{\alpha,\beta}^{1}}.$$
 For each $k\in \mathbb{Z},$ let $E_{k}=\{x\in \mathbb{R}^{n}:
 N\omega(x)>2^{k}\}.$  According to    Dafni-Xiao \cite[Lemma 4.1]{G. Dafni J. Xiao}, there exists
 a sequence of dyadic cubes $\{I_{j,k}\}$ with disjoint interiors
 such that
 $$\sum_{j}l(I_{j,k})^{n-2(\alpha+\beta-1)}\leq 2 \widetilde{\Lambda}^{(\infty)}_{n-2(\alpha+\beta-1)}(E_{k})\
 \text{   and    }
\ T(E_{k})\subset\cup_{j}S^{*}(I_{j,k}).$$
 Here we have used a
 Carleson box:
 $S^{*}(I_{j,k})=\{(t,y)\in \mathbb{R}^{1+n}_{+}: y\in I_{j,k}, t<2\hbox{diam}(I_{j,k})\}$
 to replace the tent $T(I^{*}_{j,k})$ over the dilated
 cube $I_{j,k}^{*}=5\sqrt{n}I_{j,k}.$
   Consequently, if  we define
 $T_{j,k}=S^{*}(I_{j,k})\backslash\cup_{m>k}\cup_{l}S^{*}(I_{l,m}),$
 these will have disjoint interiors for different values of $j$ or
 $k.$
    Now
 $$\cup_{k=-K}^{K}\cup_{j}T_{j,k}=\cup_{j}S^{*}(I_{j,-K})\backslash \cup_{m>K}\cup_{l}S^{*}(I_{l.m})\supseteq
 T(E_{-K})\backslash\cup_{m>K}\cup_{l}S^{*}(I_{l.m}).$$
 Similar to the discussion in the proof of Dafni-Xiao \cite[Theorem 5.4]{G. Dafni J. Xiao},   we have
 $$\cup_{k}\cup_{j}T_{j,k}\supseteq
 \cup_{k}T(E_{k})\backslash\cap_{k}\cup_{m>k}\cup_{l}S^{*}(I_{l,m})=\{(t,x)\in \mathbb{R}^{1+n}_{+}: \omega(t,x)>0\}\backslash T_{\infty}$$
with
$\Lambda_{n-2(\alpha+\beta-1)}^{(\infty)}(T_{\infty})=|T_{\infty}|=0.$
 Since $\omega$ is allowed
to vanish only where $f$ vanishes,  $f=\sum f1_{T_{j,k}}$ a.e. on
$\mathbb{R}^{1+n}_{+}.$
 Defining
 $a_{j,k}=f1_{T_{j,k}}(\lambda_{j,k})^{-1}$ and
$$\lambda_{j,k}=\left((l(I_{j,k}^{*}))^{n-2(\alpha+\beta-1)}\int_{T_{j,k}}|f(t,x)|^{2}\frac{dtdx}{t^{1-2(\alpha-\beta+1)}}\right)^{1/2},$$
we get $f=\sum_{j,k}\lambda_{j,k}a_{j,k}$ almost everywhere. Since
$S^{*}(I_{j,k})\subset T(B_{j,k})$ where $B_{j,k}$ is the ball with
the same center as $I_{j,k}$ and radius $l(I^{\ast}_{j,k})/2$.
$a_{j,k}$ is supported in $T(B_{j,k})$ and
\begin{eqnarray*}
&&\int_{T(B_{j,k})}|a_{j,k}(t,y)|^{2}\frac{dtdy}{t^{1-2(\alpha-\beta+1)}}\\
&\leq&(l(I_{j,k}^{*}))^{-n+2(\alpha+\beta-1)}
\left(\int_{T_{j,k}}|f(t,x)|^{2}\frac{dtdx}{t^{1-2(\alpha-\beta+1)}}\right)^{-1}
\left(\int_{T(B_{j,k})}|f(t,x)|^{2}\frac{dtdx}{t^{1-2(\alpha-\beta+1)}}\right)\\
&\leq&(l(I_{j,k}^{*}))^{-n+2(\alpha+\beta-1)}\leq
{|B_{j,k}|^{-1+{2(\alpha+\beta-1)}/{n}}}.
 \end{eqnarray*}
 Thus each $a_{j,k}$ is a $T^{1}_{\alpha,\beta}-$atom.

Next, we prove that $\{\lambda_{j,k}\}$ is $l^{1}-$summable. Noting
that $\omega\leq 2^{k+1}$ on $T_{j,k}\subset
(\cup_{l}S^{*}(I_{l,k+1}))^{c}\subset (T(E_{k+1}))^{c}$ and applying
the Cauchy-Schwarz inequality,  we obtain
\begin{eqnarray*}
\sum_{j,k}|\lambda_{j,k}|&\leq&\sum_{j,k}(l(I_{j,k}^{*}))^{\frac{n}{2}-(\alpha+\beta-1)}
\left(\int_{T_{j,k}}|f(t,x)|^{2}\frac{dtdx}{t^{1-2(\alpha-\beta+1)}}\right)^{1/2}\\
&\leq&\sum\limits_{j,k}\sup_{T_{j,k}}\omega^{1/2}(l(I^{*}_{j,k}))^{\frac{n}{2}-(\alpha+\beta-1)}\left(\int_{T_{j,k}}|f(t,x)|^{2}\omega^{-1}(t,x)
\frac{dtdx}{t^{1-2(\alpha-\beta+1)}}\right)^{1/2}\\
&\leq&\left(\sum\limits_{j,k}2^{(k+1)}(l(I^{*}_{j,k}))^{n-2(\alpha+\beta-1)}\right)^{1/2}\left(\sum_{j,k}\int_{T_{j,k}}|f(t,x)|^{2}\omega^{-1}(t,x)
\frac{dtdx}{t^{1-2(\alpha-\beta+1)}}\right)^{1/2}\\
&\lesssim&\|f\|_{T^{1}_{\alpha,\beta}}\left(\sum_{k}2^{k}\sum_{j}(l(I_{j,k}))^{n-2(\alpha+\beta-1)}\right)^{1/2}\\
&\lesssim&\|f\|_{T^{1}_{\alpha,\beta}}\left(\sum_{k}2^{k}\Lambda^{(\infty)}_{n-2(\alpha+\beta-1)}(E_{k})\right)^{1/2}\\
&\lesssim&\|f\|_{T^{1}_{\alpha,\beta}}\left(\int_{\mathbb{R}^{n}}N\omega
d
\Lambda^{(\infty)}_{n-2(\alpha+\beta-1)}\right)^{1/2}\lesssim\|f\|_{T^{1}_{\alpha,\beta}}.
\end{eqnarray*}

 Thus  $T^{1}_{\alpha,\beta}$ is  a Banach space since it is
 complete in the quasi-norm (Lemma \ref{equiva metric}) and
$$\|f\|_{T^{1}_{\alpha,\beta}}\approx |\|f|\|_{T^{1}_{\alpha,\beta}}=\inf\left\{\sum_{j}|\lambda_{j}|: f=\sum_{j}\lambda_{j}a_{j}\right\}$$
where the infimum is taken over all possible atomic decompositions
of $f\in T_{\alpha,\beta}^{1}$ and
$|\|\cdot|\|_{T^{1}_{\alpha,\beta }}$ is a norm.\\
(ii) Let  $\omega$ be a nonnegative Borel measurable function on
$\mathbb{R}^{1+n}_{+}$ satisfying $\int_{\mathbb{R}^{n}}N\omega
d\Lambda^{(\infty)}_{\alpha,\beta}\leq 1.$
 For  $g\in T^{\infty}_{\alpha,\beta},$
 $d\mu_{g,n-2(\alpha+\beta-1)}(t,x)=|g(t,x)|^{2}t^{-1-2(\alpha-\beta+1)}dtdx$
 is a $1-{2(\alpha+\beta-1)}/{n}-$Carleson measure.
Then  (\ref{eq46}) tells us, with
 $A\approx|\|\mu_{g,n-2(\alpha+\beta-1)}\||_{n-{2(\alpha+\beta-1)}/{n}}\approx\|g\|^{2}_{T^{\infty}_{\alpha,\beta}},$
 \begin{eqnarray*}\int_{\mathbb{R}^{1+n}_{+}}\omega(t,x)|g(t,x)|^{2}\frac{dtdx}{t^{1+2(\alpha-\beta+1)}}
 \lesssim
 \|g\|_{T_{\alpha,\beta}^{\infty}}^{2}\int_{\mathbb{R}^{n}}N\omega
 d\Lambda_{n-2(\alpha+\beta-1)}^{(\infty)}
 \lesssim\|g\|_{T_{\alpha,\beta}^{\infty}}^{2}.
 \end{eqnarray*}
 Thus  if $f\in T^{1}_{\alpha,\beta},$ then
 \begin{eqnarray*}
\int_{\mathbb{R}^{1+n}_{+}}|f(t,x)g(t,x)|\frac{dtdx}{t}
&\leq&\left(\int_{\mathbb{R}^{1+n}_{+}}|f(t,x)|^{2}\omega^{-1}(t,x)\frac{dtdx}{t^{1-2(\alpha-\beta+1)}}\right)^{1/2}
\|g\|_{T^{\infty}_{\alpha,\beta}}.\end{eqnarray*}
 Hence we  finish the proof of (ii) by  taking the
infimum on the right over all admissible $\omega.$
 \\
 (iii) Form (ii), we know that for every $g\in
 T^{\infty}_{\alpha,\beta},$ the pairing
 $$\langle f,g\rangle=\int_{\mathbb{R}^{1+n}_{+}}f(t,y)g(t,y)\frac{dtdy}{t}$$
 defines a bounded linear functional on $T^{1}_{\alpha,\beta}.$ Now
 we prove the converse. Let $L$ be a bounded linear functional
 on $T^{1}_{\alpha,\beta}.$ Fix a ball $B=B(x_{B},r)\subset
 \mathbb{R}^{n}.$ If $f$ is supported on $T(B)$ with
 $f\in L^{2}(T(B), t^{-1}dtdx)$ then
 \begin{eqnarray*}
\int_{T(B)}|f(t,x)|^{2}\frac{dtdx}{t^{1-2(\alpha-\beta+1)}}&\leq&
r^{2(\alpha-\beta+1)}\int_{T(B)}|f(t,x)|^{2}\frac{dtdx}{t}\\
&\lesssim&\frac{1}{|B|^{1-2(\alpha+\beta-1)/n}}r^{n-2(\alpha+\beta-1)+2(\alpha-\beta+1)}\int_{T(B)}|f(t,x)|^{2}\frac{dtdx}{t}\\
&\lesssim&
\frac{1}{|B|^{1-2(\alpha+\beta-1)/n}}r^{n-4\beta+4}\|f\|^{2}_{L^{2}(T(B),
t^{-1}dtdx)}.
 \end{eqnarray*}
This tells us that  $f(t,x)$ is a multiple of a
$T^{1}_{\alpha,\beta}-$atom and $L$ is a bounded linear functional
on $L^{2}(T(B),t^{-1}dtdx)$ which can be represented by the
inner-product with some function $g_{B}\in L^{2}(T(B), t^{-1}dtdx).$
Taking $B_{j}=B(0,j),$ $j\in\mathbb{N},$ then
$g_{B_{j}}=g_{B_{j+1}}$ on $T(B_{j})$. So we get a single function
$g$ on $\mathbb{R}^{1+n}_{+}$ that is locally in $L^{2}(t^{-1}dtdx)$
such that
$$L(f)=\int_{\mathbb{R}^{1+n}_{+}}f(t,x)g(t,x)\frac{dtdx}{t}$$\
whenever $f\in T^{1}_{\alpha,\beta}$ is  supported  in some tent
$T(B).$
 By the  atomic decomposition, the subset of such $f$ is dense in
$T^{1}_{\alpha,\beta}.$ We only need to prove $g\in
T^{\infty}_{\alpha,\beta}$ with
$\|g\|_{T^{\infty}_{\alpha,\beta}}\lesssim \|L\|.$

For a ball $B\subset \mathbb{R}^{n}$ and  every $\varepsilon>0,$ we
set
$$f_{\varepsilon}(t,x)=t^{-2(\alpha-\beta+1)}\overline{g(t,x)}1_{T^{\varepsilon}(B)}(t,x)$$
where $T^{\varepsilon}(B)$ is the truncated tent
$T(B)\bigcap\{(t,x): t>\varepsilon\}.$ Since  $g\in L^{2}(T(B)),$ we
have
\begin{eqnarray*}
\int_{T(B)}|f_{\varepsilon}(t,x)|^{2}\frac{dtdx}{t^{1-2(\alpha-\beta+1)}}
=\int_{T^{\varepsilon}(B)}|g(t,x)|^{2}\frac{dtdx}{t^{1+2(\alpha-\beta+1)}}\lesssim
\infty.
\end{eqnarray*}
Hence we can obtain  that  $f_{\varepsilon}$ is a multiple of a
$T^{1}_{\alpha,\beta}-$atom with
$$\|f_{\varepsilon}\|_{T^{1}_{\alpha,\beta}}^{2}\lesssim r^{n-2(\alpha+\beta-1)}\int_{T^{\varepsilon}(B)}|g(t,x)|^{2}\frac{dtdx}{t^{1+2(\alpha-\beta+1)}}.$$
 According to  the
representation above, we also get
\begin{eqnarray*}
\int_{T^{\varepsilon}(B)}|g(t,x)|^{2}\frac{dtdx}{t^{1+2(\alpha-\beta+1)}}
&\lesssim&
\|L\|\left(r^{n-2(\alpha+\beta-1)}\int_{T^{\varepsilon}(B)}|g(t,x)|^{2}\frac{dtdx}{t^{1+2(\alpha-\beta+1)}}\right)^{1/2}.
\end{eqnarray*}
This gives us
$$\left(r^{-n+2(\alpha+\beta-1)}\int_{T^{\varepsilon}(B)}|g(t,x)|^{2}\frac{dtdx}{t^{1+2(\alpha-\beta+1)}}\right)^{1/2}\lesssim\|L\|,$$
that is,
 $g\in T^{\infty}_{\alpha,\beta}$ with
$\|g\|_{T^{\infty}_{\alpha,\beta}}\lesssim \|L\|.$ This completes
the proof of Theorem \ref{atomde T}.
\end{proof}

\subsection{The
Preduality of $Q_{\alpha}^{\beta}(\mathbb{R}^{n})$}

In this subsection, we introduce a new space which can be viewed as
the predual space of $Q_{\alpha}^{\beta}(\mathbb{R}^{n}).$ Then, we
give an atomic decomposition for this space. For this purpose we
need the following lemma which is Lemma 1.1 in \cite{Frazier Jawerth
Weiss}.

\begin{lemma} \label{Little} Fix $N\in \mathbb{N}.$ Then there exists a function $\phi:\mathbb{R}^{n}\longrightarrow
\mathbb{R}^{n}$ such that\\
(1) $\text{ supp }(\phi)\subset \{x\in \mathbb{R}^{n}: |x|\leq 1\};$\\
(2) $\phi$ is radial;\\
 (3) $\phi\in C^{\infty}(\mathbb{R}^{n});$\\
 (4)$\int_{\mathbb{R}^{n}}x^{\gamma}\phi(x)dx=0$ if $\gamma\in
 \mathbb{N}^{n},$ $x^{\gamma}=x_{1}^{\gamma^{1}}x_{2}^{\gamma^{2}}\cdots
 x_{n}^{\gamma^{n}},$
 $|\gamma|=\gamma_{1}+\gamma_{2}+\cdots+\gamma_{n};$\\
  (5) $\int_{0}^{\infty}(\widehat{\phi}(t\xi))^{2}\frac{dt}{t}=1$ if
  $\xi\in \mathbb{R}^{n}\backslash\{0\}.$
\end{lemma}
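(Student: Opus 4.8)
The plan is to construct $\phi$ explicitly rather than argue abstractly. I should first flag that condition (4) cannot hold for \emph{all} multi-indices: a nonzero function supported in a ball has an entire Fourier transform, so vanishing of \emph{all} of its moments would force $\widehat\phi\equiv 0$ and hence $\phi\equiv 0$. I will therefore read (4) as $\int_{\mathbb{R}^n}x^\gamma\phi(x)\,dx=0$ for every $\gamma$ with $|\gamma|\le N$ — this is the form actually used later, and the form in which the lemma occurs in \cite{Frazier Jawerth Weiss} — and I will read the target of $\phi$ as $\mathbb{R}$ rather than $\mathbb{R}^n$, constructing a real-valued radial $\phi$.

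I would begin with any radial $\theta\in C_0^\infty(\mathbb{R}^n)$ supported in $\{|x|\le 1\}$ with $\theta\ge 0$ and $\theta\not\equiv 0$. Then $\widehat\theta$ is real, radial and Schwartz, and $\widehat\theta(0)=\int\theta>0$; write $\widehat\theta(\xi)=g(|\xi|)$ with $g\in C^\infty([0,\infty))$, $g(0)>0$, and $g$ rapidly decreasing. To force the vanishing moments I would set $\psi:=\triangle^{N+1}\theta$. This $\psi$ is smooth, radial (the Laplacian of a radial function is radial) and supported in $\{|x|\le 1\}$, so it already satisfies (1)--(3); and since $\triangle^{N+1}$ annihilates every polynomial of degree $<2(N+1)$, integration by parts gives $\int x^\gamma\psi\,dx=\int\theta\,\triangle^{N+1}x^\gamma\,dx=0$ whenever $|\gamma|\le 2N+1$, in particular for $|\gamma|\le N$, so (4) holds as well. (Equivalently, $\widehat\psi(\xi)=\kappa\,|\xi|^{2(N+1)}\widehat\theta(\xi)$ with $\kappa\neq0$ a constant depending only on $n$ and the normalization of the Fourier transform, and this vanishes to order $2(N+1)$ at the origin.)

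The remaining task is to normalize so that (5) holds. For $\xi\ne 0$ I would compute, using radiality of $\widehat\psi$ and the substitution $s=t|\xi|$,
\begin{align*}
\int_0^\infty\big(\widehat\psi(t\xi)\big)^2\,\frac{dt}{t}
&=\kappa^2\int_0^\infty (t|\xi|)^{4(N+1)}\,g(t|\xi|)^2\,\frac{dt}{t}\\
&=\kappa^2\int_0^\infty s^{4(N+1)-1}\,g(s)^2\,ds=:c.
\end{align*}
This $c$ is a finite, strictly positive constant \emph{independent of $\xi$}: it is finite because the exponent $4(N+1)-1\ge 3$ makes the integrand locally integrable at $s=0$ while the rapid decay of $g$ controls it at $s=\infty$, and it is positive because $g$ is continuous with $g(0)>0$. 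I would then take $\phi:=c^{-1/2}\psi$; multiplying by a constant preserves (1)--(4), and $\int_0^\infty(\widehat\phi(t\xi))^2\,dt/t=c^{-1}c=1$ for every $\xi\in\mathbb{R}^n\setminus\{0\}$, which is exactly (5).

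I do not expect a genuine obstacle. The one step requiring a little care is the last: one must verify that the Calder\'on normalization integral $c$ is finite, strictly positive, \emph{and the same number for every $\xi\ne 0$}. The uniformity in $\xi$ is precisely what the radial choice of $\theta$ buys us (via the dilation structure $t\mapsto t|\xi|$ of the integral), while the integrability near $t=0$ is guaranteed by the high order of vanishing of $\widehat\psi$ at the origin, which is in turn engineered by the iterated Laplacian $\triangle^{N+1}$.
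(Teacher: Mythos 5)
Your proof is correct; the paper gives no proof of this lemma (it is quoted verbatim as Lemma 1.1 of Frazier--Jawerth--Weiss), and your construction --- $\psi=\triangle^{N+1}\theta$ for a nonnegative radial bump $\theta$ supported in the unit ball, followed by the Calder\'{o}n normalization $\phi=c^{-1/2}\psi$ with $c=\kappa^{2}\int_{0}^{\infty}s^{4(N+1)-1}g(s)^{2}\,ds$ --- is exactly the standard one found in that reference, with all the relevant checks (finiteness and positivity of $c$, independence of $\xi$ via the dilation substitution, vanishing of moments up to order $2N+1$ by integration by parts) carried out correctly. Your emendations of the statement are also right and necessary: a compactly supported $\phi$ with \emph{all} moments vanishing would have $\widehat{\phi}\equiv 0$ by Paley--Wiener, so (4) must be read as $|\gamma|\leq N$ (which is the only role the fixed $N$ plays in the lemma), and $\phi$ should be real-valued rather than $\mathbb{R}^{n}$-valued.
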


For $\phi$ satisfying the conditions of Lemma \ref{Little} and any
$f\in \mathscr{S}'(\mathbb{R}^{n}),$ we have the well known
 Calder\'{o}n reproducing formula
\begin{equation}\label{calder}
f=\int_{0}^{\infty}f\ast\phi_{t}\ast\phi_{t}\frac{dt}{t}=\lim_{\varepsilon\longrightarrow
0,
N\longrightarrow\infty}\int_{\varepsilon}^{N}f\ast\phi_{t}\ast\phi_{t}\frac{dt}{t}.
\end{equation}

We introduce the notation of $HH^{1}_{-\alpha,\beta}$ in the sense
of distributions.

\begin{definition}
For $\phi$ as in above lemma,  $\alpha>0$ and
$\max\{1/2,\alpha\}<\beta<1$ with $\alpha+\beta-1\geq0$, we define
the Hardy-Hausdorff space $HH^{1}_{-\alpha,\beta}(\mathbb{R}^{n})$
to be the class of all distributions $f\in
\dot{L}^{2}_{-\frac{n}{2}+2(\beta-1)}(\mathbb{R}^{n})$ with
$$\|f\|_{HH^{1}_{-\alpha,\beta}(\mathbb{R}^{n})}:=\|f\ast \phi_{t}(\cdot)\|_{T^{1}_{\alpha,\beta}}<\infty.$$
\end{definition}

\begin{theorem}
$\|\cdot\|_{HH^{1}_{-\alpha,\beta}(\mathbb{R}^{n})}$ is a
quasi-norm. Furthermore, $HH^{1}_{-\alpha,\beta}(\mathbb{R}^{n})$ is
complete under this quasi-norm.
\end{theorem}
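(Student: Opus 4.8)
The plan is to derive both statements from the Banach--space structure of $T^{1}_{\alpha,\beta}$ (Theorem \ref{atomde T}) together with the completeness of the homogeneous Sobolev space $\dot{L}^{2}_{-n/2+2(\beta-1)}=\dot{B}^{-n/2+2(\beta-1)}_{2,2}$. The linchpin, which I would prove first, is the continuous embedding
\[\|f\|_{\dot{L}^{2}_{-n/2+2(\beta-1)}}\lesssim\|f\|_{HH^{1}_{-\alpha,\beta}(\mathbb{R}^{n})},\]
since it makes the quasi-norm nondegenerate and drives the completeness argument. To obtain it, fix any weight $\omega$ admissible in the definition of $\|f\ast\phi_{\cdot}\|_{T^{1}_{\alpha,\beta}}$, that is, $\omega\ge0$ Borel with $\int_{\mathbb{R}^{n}}N\omega\,d\Lambda^{\infty}_{n-2(\alpha+\beta-1)}\le1$. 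For a point $(t_{0},y_{0})\in\mathbb{R}^{1+n}_{+}$ one has $N\omega(x)\ge\omega(t_{0},y_{0})$ for all $x\in B(y_{0},t_{0})$, so the layer-cake formula together with $\Lambda^{\infty}_{d}(B(0,r))\gtrsim r^{d}$ for $d\in(0,n]$ (recall $\alpha+\beta-1\ge0$) gives $1\ge\omega(t_{0},y_{0})\,\Lambda^{\infty}_{n-2(\alpha+\beta-1)}(B(y_{0},t_{0}))\gtrsim\omega(t_{0},y_{0})\,t_{0}^{n-2(\alpha+\beta-1)}$, i.e. $\omega^{-1}(t,y)\gtrsim t^{n-2(\alpha+\beta-1)}$ pointwise. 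Inserting this lower bound into the defining integral of $\|\cdot\|_{T^{1}_{\alpha,\beta}}$, whose $t$-weight is $t^{-(1-2(\alpha-\beta+1))}$, produces the weight $t^{n+3-4\beta}$, which is exactly the exponent occurring in the continuous Littlewood--Paley characterization $\|f\|_{\dot{L}^{2}_{s}}^{2}\approx\int_{\mathbb{R}^{1+n}_{+}}|f\ast\phi_{t}(x)|^{2}t^{-2s-1}\,dt\,dx$ with $s=-n/2+2(\beta-1)$ (valid thanks to condition (5) of Lemma \ref{Little}). Taking the infimum over all admissible $\omega$ yields the embedding.

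The quasi-norm claim then follows immediately: homogeneity is clear from $(\lambda f)\ast\phi_{t}=\lambda(f\ast\phi_{t})$ and the homogeneity of the $T^{1}_{\alpha,\beta}$ quasi-norm; the quasi-triangle inequality, with constant $\sqrt{C_{1}^{-1}(n,d)C_{2}(n,d)}$ from (\ref{capar Capaci}), is obtained by applying Lemma \ref{equiva metric} to the two functions $f\ast\phi_{\cdot}$ and $g\ast\phi_{\cdot}$; and $\|f\|_{HH^{1}_{-\alpha,\beta}}=0$ forces, via the embedding, $\|f\|_{\dot{L}^{2}_{-n/2+2(\beta-1)}}=0$, hence $f=0$ in $\dot{L}^{2}_{-n/2+2(\beta-1)}\subset\mathscr{S}'_{0}$.

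For completeness, let $\{f_{k}\}$ be Cauchy in $HH^{1}_{-\alpha,\beta}$. By the embedding, $\{f_{k}\}$ is Cauchy in the Banach space $\dot{L}^{2}_{-n/2+2(\beta-1)}$, so $f_{k}\to f$ there for some $f\in\dot{L}^{2}_{-n/2+2(\beta-1)}$; in particular $f_{k}\ast\phi_{t}(x)\to f\ast\phi_{t}(x)$ for every $(t,x)\in\mathbb{R}^{1+n}_{+}$, because $g\mapsto g\ast\phi_{t}(x)=\langle g,\phi_{t}(x-\cdot)\rangle$ is a bounded functional on $\dot{L}^{2}_{-n/2+2(\beta-1)}$ (the test function $\phi_{t}(x-\cdot)$ lies in $\mathscr{S}_{0}$ by the moment conditions of Lemma \ref{Little}). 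At the same time $\|f_{j}\ast\phi_{\cdot}-f_{k}\ast\phi_{\cdot}\|_{T^{1}_{\alpha,\beta}}=\|f_{j}-f_{k}\|_{HH^{1}_{-\alpha,\beta}}$, so $\{f_{k}\ast\phi_{\cdot}\}$ is Cauchy in the Banach space $T^{1}_{\alpha,\beta}$ and converges there to some $F$. To finish one identifies $F=f\ast\phi_{\cdot}$: every bounded, compactly supported $g$ on $\mathbb{R}^{1+n}_{+}$ belongs to $T^{\infty}_{\alpha,\beta}$, so by the pairing inequality (\ref{atom t ineq}) the functional $h\mapsto\int_{\mathbb{R}^{1+n}_{+}}h(t,x)g(t,x)\,\frac{dt\,dx}{t}$ is continuous on $T^{1}_{\alpha,\beta}$; evaluating it along $h=f_{k}\ast\phi_{\cdot}\to F$ and comparing with the pointwise limit $f_{k}\ast\phi_{t}\to f\ast\phi_{t}$ (dominated on the compact support of $g$, since $\{f_{k}\}$ is bounded in $\dot{L}^{2}_{-n/2+2(\beta-1)}$) gives $F=f\ast\phi_{\cdot}$ a.e. Therefore $\|f\|_{HH^{1}_{-\alpha,\beta}}=\|F\|_{T^{1}_{\alpha,\beta}}<\infty$, so $f\in HH^{1}_{-\alpha,\beta}$, and $\|f_{k}-f\|_{HH^{1}_{-\alpha,\beta}}=\|f_{k}\ast\phi_{\cdot}-F\|_{T^{1}_{\alpha,\beta}}\to0$.

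The step I expect to be the main obstacle is this final identification of the limit: $T^{1}_{\alpha,\beta}$-convergence does not by itself yield pointwise or almost-everywhere convergence, and there is no a priori reason that the $T^{1}_{\alpha,\beta}$-limit $F$ of the Littlewood--Paley transforms $f_{k}\ast\phi_{\cdot}$ is again of the form $g\ast\phi_{\cdot}$; one must reconcile the weak convergence obtained by testing against $T^{\infty}_{\alpha,\beta}$-functions with the pointwise convergence coming from the Sobolev embedding. The auxiliary facts used along the way --- that $\Lambda^{\infty}_{d}(B(0,r))\gtrsim r^{d}$ for $d\in(0,n]$, that bounded compactly supported functions on $\mathbb{R}^{1+n}_{+}$ lie in $T^{\infty}_{\alpha,\beta}$, and the continuous Littlewood--Paley characterization of $\dot{L}^{2}_{s}$ built from the $\phi$ of Lemma \ref{Little} --- are standard and follow from the material recalled in Section 2.
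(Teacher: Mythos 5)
Your proposal is correct, and it departs from the paper's proof in both of its main steps, so a comparison is worthwhile. For the embedding $\|f\|_{\dot{L}^{2}_{-n/2+2(\beta-1)}}\lesssim\|f\|_{HH^{1}_{-\alpha,\beta}}$ you argue directly from the capacity constraint: every admissible weight obeys the pointwise bound $\omega^{-1}(t,y)\gtrsim t^{n-2(\alpha+\beta-1)}$, and the exponents do close up ($n-2(\alpha+\beta-1)-1+2(\alpha-\beta+1)=n+3-4\beta=-2s-1$ with $s=-n/2+2(\beta-1)$), so the $T^{1}_{\alpha,\beta}$ integral dominates the continuous Littlewood--Paley norm of $\dot{L}^{2}_{-n/2+2(\beta-1)}$. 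The paper obtains only the corresponding Cauchy estimate, and does so by duality: it pairs $f_{j}-f_{k}$ against $\psi\in\mathscr{S}$ through the Calder\'on formula, the $T^{1}$--$T^{\infty}$ inequality (\ref{atom t ineq}), and the chain $\dot{L}^{2}_{n/2-2(\beta-1)}\hookrightarrow Q_{\alpha}^{\beta}$ of Theorem \ref{connection Q B}. Your route is more self-contained and yields the embedding as a quotable inequality. For the upgrade from $\dot{L}^{2}$-convergence to $HH^{1}_{-\alpha,\beta}$-convergence, the paper passes to a subsequence with $\sum_{j}\|f_{j+1}-f_{j}\|_{HH^{1}_{-\alpha,\beta}}<\infty$ and applies the countable quasi-subadditivity of Lemma \ref{equiva metric} to the telescoping series, the identification of the sum with $\rho_{\phi}(f)$ being automatic from the pointwise convergence of the convolutions of an $\mathscr{S}'$-convergent series. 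You instead invoke the Banach completeness of $T^{1}_{\alpha,\beta}$ established in Theorem \ref{atomde T} to produce a limit $F$, and then must identify $F$ with $f\ast\phi_{\cdot}$; your identification --- testing against bounded functions $g$ with compact support in the open half-space, which do belong to $T^{\infty}_{\alpha,\beta}$ since their support forces $t\gtrsim t_{0}$ and hence only balls of radius $\gtrsim t_{0}$ contribute to the supremum, combined with dominated convergence on the pointwise side --- is sound, because both $F$ and $f\ast\phi_{\cdot}$ are locally integrable with respect to $t^{-1}\,dt\,dx$ there. In short, the paper's series argument avoids the limit-identification step at the cost of a subsequence extraction, while your argument leans on the already-proved completeness of $T^{1}_{\alpha,\beta}$ and a weak-type identification; both are valid, and your intermediate embedding is the more reusable byproduct.
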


\begin{proof}
Obviously, $\|\cdot\|_{HH^{1}_{-\alpha,\beta}(\mathbb{R}^{n})}$ is a
quasi-norm according to the linearity  of  $\rho_{\phi}(t,x)=f\ast
\phi_{t}(x)$  and the corresponding property of
$\|\cdot\|_{T^{1}_{\alpha,\beta}}.$ Suppose that $\{f_{j}\}$ is a
Cauchy sequence. By the Calder\'{o}n reproducing formula and Theorem
\ref{connection Q B}, we get
$\dot{L}^{2}_{\frac{n}{2}-2(\beta-1)}\hookrightarrow
Q_{\alpha}^{\beta}$ and for every $\psi\in
\mathscr{S}(\mathbb{R}^{n})$
\begin{eqnarray*}
 |\langle f_{j}-f_{k},
\psi\rangle|
&\lesssim&\|\rho_{\phi}(f_{j}-f_{k})\|_{T^{1}_{\alpha,\beta}}\|\phi_{t}\ast\psi\|_{T^{\infty}_{\alpha,\beta}}\\
&\lesssim&\|\rho_{\phi}(f_{j}-f_{k})\|_{T^{1}_{\alpha,\beta}}\|\psi\|_{Q_{\alpha}^{\beta}}\\
&\lesssim&\|\rho_{\phi}(f_{j}-f_{k})\|_{T^{1}_{\alpha,\beta}}\|\psi\|_{\dot{L}^{2}_{\frac{n}{2}-2(\beta-1)}}.
\end{eqnarray*}
This deduces that $\{f_{j}\}$ is a Cauchy sequence in
$\dot{L}^{2}_{-\frac{n}{2}+2(\beta-1)}.$ By completeness, $f=\lim
f_{n}$ exists in $\dot{L}^{2}_{-\frac{n}{2}+2(\beta-1)}.$ Thus there
exists a subsequence such that $f=f_{1}+\sum_{j\geq
1}(f_{j+1}-f_{j})$ in $\mathscr{S}'(\mathbb{R}^{n})$
 with
 $\sum\|f_{j+1}-f_{j}\|_{HH^{1}_{-\alpha,\beta}(\mathbb{R}^{n})}<\infty.$
 Then we have
 $$\|\rho_{\phi}(f)\|_{T^{1}_{\alpha,\beta}}\lesssim(\|\rho_{\phi}(f_{1})\|_{T^{1}_{\alpha,\beta}}
 +\sum\|\rho_{\phi}(f_{j+1}-f_{j})\|_{T^{1}_{\alpha,\beta}})<\infty$$
 and so $f\in HH^{1}_{-\alpha,\beta}(\mathbb{R}^{n})$. Similarly we
 can prove
 $f_{j}\longrightarrow f $ in $HH^{1}_{-\alpha,\beta}(\mathbb{R}^{n}).$
\end{proof}

\begin{definition}\label{Def HH}
Let $\alpha>0$ and  $\max\{1/2,\alpha\}<\beta<1$ with
$\alpha+\beta-1\geq 0.$ A tempered distribution $a$ is called an
$HH^{1}_{-\alpha,\beta}(\mathbb{R}^{n})$ atom if $a$ is supported in
a cube $I$ and satisfies the following two
conditions: \\
(i) a {\it{local Sobolev$-(\alpha-\beta+1)$}} condition: for all
$\psi\in \mathscr{S}$
$$|\langle a, \psi\rangle|\leq \hbox{diam}(I)^{-\frac{n}{2}+\alpha+\beta-1}
\left(\int_{I}\int_{I}\frac{|\psi(x)-\psi(y)|^{2}}{|x-y|^{2(\alpha-\beta+1)}}dxdy\right)^{1/2};$$
(ii) a cancelation condition: $\langle a, \psi\rangle=0$ for any
$\psi\in \mathscr{S}$ which coincides with a polynomial of degree
$\leq \frac{n}{2}+1$ in a neighborhood of $I$.
\end{definition}
In \cite{G. Dafni J. Xiao},  Dafni-Xiao  established  the following
factional Poincar\'{e} inequality which will help us to understand
the previous definition.
\begin{lemma} \label{fractional Poincare}
Let $\psi\in C^{\infty}(\mathbb{R}^{n})$ and $I$ be a cube. Denote
by $\psi(I)$ the average of $\psi$ over $I.$ If $0\leq
\alpha_{1},\alpha_{2}<\beta$ for a fixed $\beta\in (1/2,1),$ then
\begin{eqnarray*}
\|\psi-\psi(I)\|_{L^{2}}&\leq&n^{n/4}\hbox{diam}(I)^{\alpha_{1}-\beta+1}
\left(\int_{I}\int_{I}\frac{|\psi(x)-\psi(y)|^{2}}{|x-y|^{n+2(\alpha_{1}-\beta+1)}}dxdy\right)^{1/2}\\
&\leq&n^{n/4}\hbox{diam}(I)^{\alpha_{2}-\beta+1}
\left(\int_{I}\int_{I}\frac{|\psi(x)-\psi(y)|^{2}}{|x-y|^{n+2(\alpha_{2}-\beta+1)}}dxdy\right)^{1/2}\\
&\leq&C\hbox{diam}(I)\|\nabla\psi\|_{L^{2}(I)}
\end{eqnarray*}
with $C$ depending only on the dimension and $\alpha_{2}.$ If in
addition $\int_{I}\frac{\partial\psi}{\partial x_{k}}dx=0$ for all
$k=1,\cdots,n,$ then the quantities above are also bounded by
$$C\hbox{diam}(I)\|\nabla\psi-(\nabla\psi)_{I}\|_{L^{2}(I)}
\leq C n^{n/4}\hbox{diam}(I)^{\alpha_{1}-\beta+2}
\left(\int_{I}\int_{I}\frac{|\nabla\psi(x)-\nabla\psi(y)|^{2}}{|x-y|^{n+2(\alpha_{1}-\beta+1)}}dxdy\right)^{1/2}.$$
Here $(\nabla\psi)_{I}$ denotes the vector whose coordinates are the
means $(\frac{\partial\psi}{\partial x_{k}})(I),$ $k=1,\cdots,n.$
\end{lemma}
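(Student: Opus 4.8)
The plan is to prove the four inequalities of the displayed chain one at a time, reading it as: the left‑most bound holds for \emph{every} admissible exponent, any two such bounds are comparable by elementary monotonicity, and each of them is dominated by $C\,\mathrm{diam}(I)\,\|\nabla\psi\|_{L^2(I)}$. Throughout put $s=\alpha_1-\beta+1$ and $s_2=\alpha_2-\beta+1$; since $0\le\alpha_i<\beta<1$ we have $0<s,s_2<1$, which is precisely the range in which the kernels below are locally integrable. For the first inequality I would write, for $x\in I$, $\psi(x)-\psi(I)=|I|^{-1}\int_I(\psi(x)-\psi(y))\,dy$ and apply Cauchy--Schwarz to get $\|\psi-\psi(I)\|_{L^2(I)}^2\le|I|^{-1}\int_I\int_I|\psi(x)-\psi(y)|^2\,dy\,dx$. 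Since $|x-y|\le\mathrm{diam}(I)$ on $I\times I$, inserting $|x-y|^{n+2s}|x-y|^{-n-2s}$ and bounding the numerator factor by $\mathrm{diam}(I)^{n+2s}$, together with $\mathrm{diam}(I)^n=n^{n/2}|I|$, collapses the constant to $n^{n/2}\mathrm{diam}(I)^{2s}$; taking square roots gives the bound with constant $n^{n/4}$. Running the \emph{identical} computation componentwise with $\partial_k\psi$ in place of $\psi$ and summing over $k$ yields $\|\nabla\psi-(\nabla\psi)_I\|_{L^2(I)}\le n^{n/4}\mathrm{diam}(I)^{s}\bigl(\int_I\int_I|\nabla\psi(x)-\nabla\psi(y)|^2|x-y|^{-n-2s}\,dx\,dy\bigr)^{1/2}$, which is the last inequality of the lemma once one multiplies by $C\,\mathrm{diam}(I)$, uses $1+s=\alpha_1-\beta+2$, and observes that the extra hypothesis $\int_I\partial_k\psi\,dx=0$ forces $(\nabla\psi)_I=0$, so $\|\nabla\psi-(\nabla\psi)_I\|_{L^2(I)}=\|\nabla\psi\|_{L^2(I)}$.

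The middle inequality is purely pointwise. Writing each weight as $|x-y|^{-n}\bigl(\mathrm{diam}(I)/|x-y|\bigr)^{2s}$ and using $\mathrm{diam}(I)/|x-y|\ge1$ on $I\times I$, the monotonicity of $r\mapsto r^{2s}$ on $[1,\infty)$ gives $\mathrm{diam}(I)^{2s_1}|x-y|^{-n-2s_1}\le\mathrm{diam}(I)^{2s_2}|x-y|^{-n-2s_2}$ whenever $s_1\le s_2$; integrating $|\psi(x)-\psi(y)|^2$ against this over $I\times I$ and taking square roots gives the stated comparison (and in any case shows both bounds are simultaneously finite, which is all the later sections use).

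The remaining and technically heaviest step is the gradient estimate $\mathrm{diam}(I)^{s}\bigl(\int_I\int_I|\psi(x)-\psi(y)|^2|x-y|^{-n-2s}\,dx\,dy\bigr)^{1/2}\le C\,\mathrm{diam}(I)\,\|\nabla\psi\|_{L^2(I)}$. By the fundamental theorem of calculus $\psi(x)-\psi(y)=\int_0^1\nabla\psi(y+t(x-y))\cdot(x-y)\,dt$, so Cauchy--Schwarz in $t$ gives $|\psi(x)-\psi(y)|^2\le|x-y|^2\int_0^1|\nabla\psi(y+t(x-y))|^2\,dt$, which cancels the $|x-y|^2$ and leaves the kernel $|x-y|^{-n-2s+2}$ (integrable over a cube since $n+2s-2<n$). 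I would split the $t$‑integral at $t=\tfrac12$; on $[\tfrac12,1]$, for fixed $y,t$, substitute $z=y+t(x-y)$, which has Jacobian $t^{-n}$, satisfies $z\in I$ by convexity of $I$, and turns $|x-y|^{-n-2s+2}\,dx$ into $t^{2s-2}|z-y|^{-n-2s+2}\,dz$. After Fubini the double integral is $\le\bigl(\int_{1/2}^1 t^{2s-2}\,dt\bigr)\int_I|\nabla\psi(z)|^2\bigl(\int_I|z-y|^{-n-2s+2}\,dy\bigr)dz$, with $\int_{1/2}^1 t^{2s-2}\,dt<\infty$ and $\int_I|z-y|^{-n-2s+2}\,dy\lesssim l(I)^{2-2s}$; the range $t\in[0,\tfrac12]$ is identical after the symmetry $x\leftrightarrow y$, $t\mapsto 1-t$. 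Collecting powers, $\mathrm{diam}(I)^{2s}\cdot l(I)^{2-2s}\approx l(I)^2\approx\mathrm{diam}(I)^2$, giving the claimed bound with constant depending only on $n$ and $s$ (hence on $n$ and $\alpha_2$, as asserted).

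The main obstacle is the bookkeeping in this last step: one must keep $y+t(x-y)$ inside $I$, transport the $t^{-n}$ Jacobian correctly through the change of variables and Fubini, and verify the convergence of both the $t$‑ and the $y$‑integrals. All of these hinge on $0\le\alpha_1<\beta$, i.e. $0<s<1$; concretely, the strict inequality $\alpha_1<\beta$ is needed twice, to make $|x-y|^{-n-2s+2}$ locally integrable and to make $\int_0^{\cdot}\rho^{1-2s}\,d\rho$ finite. Everything else is elementary.
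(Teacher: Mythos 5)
Your proof is correct and complete. Note that the paper itself gives no proof of this lemma --- it is quoted (with the exponents shifted by $\beta-1$) from Dafni--Xiao's fractional Poincar\'e inequality --- and your argument is, up to bookkeeping, exactly the standard one: Cauchy--Schwarz together with $|x-y|\le\mathrm{diam}(I)$ and $\mathrm{diam}(I)^n=n^{n/2}|I|$ for the first bound (which is where the constant $n^{n/4}$ comes from), pointwise monotonicity of $r\mapsto(\mathrm{diam}(I)/|x-y|)^{2r}$ on $[1,\infty)$ for the second, the fundamental theorem of calculus with the convexity/Jacobian substitution $z=y+t(x-y)$ for the gradient bound, and the componentwise repetition plus $(\nabla\psi)_I=0$ for the addendum. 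The one point worth flagging is not a gap in your proof but in the statement: the middle inequality requires $\alpha_1\le\alpha_2$ (which you correctly impose when you restrict to $s_1\le s_2$); this ordering hypothesis is present in Dafni--Xiao's original formulation and has simply been dropped in the present paper's restatement.
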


\begin{remark}\label{remark 314} Similar to Remark (2) after Lemma 6.2 of Dafni-Xiao \cite{G. Dafni J.
Xiao},
 we can prove that  an
$HH^{1}_{-\alpha,\beta}-$atom $a$ belongs to the homogeneous Sobolev
spaces $\dot{L}^{2}_{-s}$ with $\alpha+\beta-1\leq s\leq
\frac{n}{2}+1.$ Particularly, we have
\begin{eqnarray*}
|\langle a,\psi\rangle|
\lesssim(\hbox{diam}(I))^{-\frac{n}{2}+\alpha+\beta-1}\|\psi\|_{\dot{L}^{2}_{\alpha-\beta+1}}.
\end{eqnarray*}
This deduces $\|a\|_{\dot{L}^{2}_{-(\alpha-\beta+1)}}\lesssim
(\hbox{diam}(I))^{-\frac{n}{2}+\alpha+\beta-1}.$ Meanwhile,
$|\langle
a,\psi\rangle|\lesssim(\hbox{diam}(I))\|\psi\|_{\dot{L}^{2}_{\frac{n}{2}-2\beta+3}}$
 and so
$\|a\|_{\dot{L}^{2}_{-(\frac{n}{2}-2\beta+3)}}\lesssim
\hbox{diam}(I).$
\end{remark}

We can obtain the atomic decomposition of $HH^{1}_{-\alpha,\beta}$
as follows.
\begin{theorem}\label{atom dec H}
Let $\alpha>0$ and  $\max\{1/2,\alpha\}<\beta<1$ with
$\alpha+\beta-1\geq0.$ A tempered distribution $f$ on
$\mathbb{R}^{n}$ belongs to $HH^{1}_{-\alpha,\beta}$ if and only if
there exist $HH^{1}_{-\alpha,\beta}-$atoms $\{a_{j}\}$ and an
$l^{1}-$summable sequence $\{\lambda_{j}\}$ such that
$f=\sum_{j}\lambda_{j}a_{j}$ in the sense of distributions.
Moreover,
$$\|f\|_{HH^{1}_{-\alpha,\beta}}\approx\inf\left\{\sum_{j}|\lambda_{j}|: f=\sum_{j}\lambda_{j}a_{j}\right\}.$$
\end{theorem}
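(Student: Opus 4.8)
The plan is to follow the Coifman--Meyer--Stein and Dafni--Xiao scheme: transfer the atomic decomposition of the tent space $T^{1}_{\alpha,\beta}$ (Theorem~\ref{atomde T}) to $HH^{1}_{-\alpha,\beta}$ by means of the Calder\'on reproducing formula (\ref{calder}), treating the two implications separately.

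For the ``only if'' part, suppose $f\in HH^{1}_{-\alpha,\beta}$. Then $F(t,x):=f\ast\phi_{t}(x)$ belongs to $T^{1}_{\alpha,\beta}$ with $\|F\|_{T^{1}_{\alpha,\beta}}=\|f\|_{HH^{1}_{-\alpha,\beta}}$, so Theorem~\ref{atomde T}(i) gives $T^{1}_{\alpha,\beta}$-atoms $A_{j}$ supported in tents $T(B_{j})$, $B_{j}=B(x_{j},r_{j})$, and an $l^{1}$-sequence $\{\lambda_{j}\}$ with $F=\sum_{j}\lambda_{j}A_{j}$ and $\sum_{j}|\lambda_{j}|\lesssim\|F\|_{T^{1}_{\alpha,\beta}}$. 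I would set $a_{j}(x):=\int_{0}^{\infty}(A_{j}(t,\cdot)\ast\phi_{t})(x)\,\frac{dt}{t}$. Since $\phi_{t}$ lives in $B(0,t)$ and $A_{j}(t,y)\neq0$ forces $B(y,t)\subset B_{j}$, each $a_{j}$ is supported in the cube $I_{j}$ of sidelength $2r_{j}$ centered at $x_{j}$, which contains $B_{j}$. Plugging $F=\sum_{j}\lambda_{j}A_{j}$ into $f=\int_{0}^{\infty}F(t,\cdot)\ast\phi_{t}\,\frac{dt}{t}$ and exchanging the $t$-integral with the sum (legitimate because the partial sums converge in $\dot{L}^{2}_{-\frac{n}{2}+2(\beta-1)}\hookrightarrow\mathscr{S}'$, as in the completeness proof for $HH^{1}_{-\alpha,\beta}$) would yield $f=\sum_{j}\lambda_{j}a_{j}$ in $\mathscr{S}'$. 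It then remains to see that each $a_{j}$ is a fixed multiple of an $HH^{1}_{-\alpha,\beta}$-atom. Using that $\phi$ is radial, $\langle a_{j},\psi\rangle=\int_{T(B_{j})}A_{j}(t,y)(\phi_{t}\ast\psi)(y)\,\frac{dt\,dy}{t}$; the cancellation condition of Definition~\ref{Def HH}(ii) then follows at once, since for $\psi$ equal to a polynomial near $I_{j}$ one has $\phi_{t}\ast\psi(y)=0$ on $T(B_{j})$ because $B(y,t)\subset B_{j}\subset I_{j}$ and $\phi$ annihilates all polynomials (Lemma~\ref{Little}(4)). For the local Sobolev condition of Definition~\ref{Def HH}(i), Cauchy--Schwarz splits the pairing into $\big(\int_{T(B_{j})}|A_{j}|^{2}t^{-1+2(\alpha-\beta+1)}\,dt\,dy\big)^{1/2}$ times $\big(\int_{T(B_{j})}|\phi_{t}\ast\psi(y)|^{2}t^{-1-2(\alpha-\beta+1)}\,dt\,dy\big)^{1/2}$; the first factor is bounded by $|B_{j}|^{-\frac12+\frac{\alpha+\beta-1}{n}}\approx\operatorname{diam}(I_{j})^{-\frac n2+\alpha+\beta-1}$ by the $T^{1}_{\alpha,\beta}$-atom normalization, and for the second, writing $\phi_{t}\ast\psi(y)=\int\phi_{t}(z)(\psi(y-z)-\psi(y))\,dz$ (the mean of $\phi$ vanishes), a Cauchy--Schwarz in $z$ followed by the $t$-integration turns it into a constant times the localized fractional Sobolev seminorm of $\psi$ on the right-hand side of Definition~\ref{Def HH}(i), with $I_{j}$ in place of $I$. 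This gives $\inf\{\sum_{j}|\lambda_{j}|:f=\sum_{j}\lambda_{j}a_{j}\}\lesssim\|f\|_{HH^{1}_{-\alpha,\beta}}$.

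For the ``if'' part and the reverse norm bound, suppose $f=\sum_{j}\lambda_{j}a_{j}$ with $HH^{1}_{-\alpha,\beta}$-atoms $a_{j}$ and $\sum_{j}|\lambda_{j}|<\infty$. Since $f\ast\phi_{t}=\sum_{j}\lambda_{j}(a_{j}\ast\phi_{t})$ and $\|\cdot\|_{T^{1}_{\alpha,\beta}}$ is subadditive (Lemma~\ref{equiva metric}), it suffices to establish the single-atom estimate $\|a\ast\phi_{t}\|_{T^{1}_{\alpha,\beta}}\lesssim1$ for an $HH^{1}_{-\alpha,\beta}$-atom $a$ supported in a cube $I$ of center $x_{I}$ and sidelength $r$. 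As in the proof of Theorem~\ref{atomde T}, I would test against the weight
$$\omega(t,x)=\kappa\,r^{-n+2(\alpha+\beta-1)}\min\Big\{1,\Big(\frac{r}{\sqrt{|x-x_{I}|^{2}+t^{2}}}\Big)^{n-2(\alpha+\beta-1)+\varepsilon}\Big\},$$
which satisfies $\int_{\mathbb{R}^{n}}N\omega\,d\Lambda^{(\infty)}_{n-2(\alpha+\beta-1)}\lesssim\kappa$, so after normalizing $\kappa$ the capacity integral is $\le1$, and then bound $J:=\int_{\mathbb{R}^{1+n}_{+}}|a\ast\phi_{t}(x)|^{2}\omega^{-1}(t,x)\,t^{-1+2(\alpha-\beta+1)}\,dt\,dx$ by splitting $\mathbb{R}^{1+n}_{+}=T(2I)\cup T(2I)^{c}$. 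On $T(2I)$ one has $\omega^{-1}\approx r^{n-2(\alpha+\beta-1)}$ and, by the Littlewood--Paley identity $\int_{0}^{\infty}\int_{\mathbb{R}^{n}}|a\ast\phi_{t}|^{2}t^{-1+2(\alpha-\beta+1)}\,dt\,dx\approx\|a\|_{\dot{L}^{2}_{-(\alpha-\beta+1)}}^{2}$ together with $\|a\|_{\dot{L}^{2}_{-(\alpha-\beta+1)}}\lesssim\operatorname{diam}(I)^{-\frac n2+\alpha+\beta-1}$ from Remark~\ref{remark 314}, the $T(2I)$-contribution is $\lesssim1$. On $T(2I)^{c}$, $a\ast\phi_{t}(x)$ vanishes unless $\operatorname{dist}(x,I)<t$, and there the cancellation of $a$ (Definition~\ref{Def HH}(ii)) combined with its membership in $\dot{L}^{2}_{-s}$ for $\alpha+\beta-1\le s\le\frac n2+1$ (Remark~\ref{remark 314}) will produce a decay in $\sqrt{|x-x_{I}|^{2}+t^{2}}$ strong enough to absorb the polynomial growth of $\omega^{-1}$ at infinity, so that contribution is $\lesssim1$ as well. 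Hence $\|a\ast\phi_{t}\|_{T^{1}_{\alpha,\beta}}^{2}\le J\lesssim1$, and combined with the previous paragraph this yields $\|f\|_{HH^{1}_{-\alpha,\beta}}\approx\inf\{\sum_{j}|\lambda_{j}|:f=\sum_{j}\lambda_{j}a_{j}\}$.

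The step I expect to be the main obstacle is the off-diagonal (``far tent'') estimate on $T(2I)^{c}$: one has to extract from the atom's cancellation conditions and fractional-Sobolev regularity (Lemma~\ref{fractional Poincare}, Remark~\ref{remark 314}) just enough decay of $a\ast\phi_{t}$ to beat the polynomial growth of $\omega^{-1}$, and this is precisely where the moment order $\frac n2+1$ built into Definition~\ref{Def HH}(ii) is used up. A lesser, more bookkeeping, difficulty will be justifying that the Calder\'on integral commutes with the infinite atomic series in $\mathscr{S}'$ and that the resulting distribution indeed lies in $\dot{L}^{2}_{-\frac n2+2(\beta-1)}$; this should follow from the $\dot{L}^{2}_{-\frac n2+2(\beta-1)}$ duality estimates already used to establish completeness of $HH^{1}_{-\alpha,\beta}$. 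The remaining steps will run parallel to the corresponding arguments of Dafni--Xiao, so I would only indicate them.
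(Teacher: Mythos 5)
Your proposal is correct and follows essentially the same route as the paper: the ``if'' direction is exactly the paper's single-atom estimate with the weight $\omega(t,x)=\kappa\,r^{-n+2(\alpha+\beta-1)}\min\{1,(r/\sqrt{|x-x_I|^2+t^2})^{n-2(\alpha+\beta-1)+\varepsilon}\}$ split into the near cylinder (handled by Plancherel and $\|a\|_{\dot L^2_{-(\alpha-\beta+1)}}$) and the far region (where either $a\ast\phi_t=0$ or $t\approx\sqrt{|x-x_I|^2+t^2}$ and the $\dot L^2_{-(\frac n2-2\beta+3)}$ bound from Remark \ref{remark 314} supplies the needed decay), while your ``only if'' direction merely repackages the paper's construction of the pieces $g_{j,k}=\pi_\phi(F\,1_{T_{j,k}})$ as the composition of Theorem \ref{atomde T}(i) with the fact that $\pi_\phi$ sends $T^1_{\alpha,\beta}$-atoms to multiples of $HH^1_{-\alpha,\beta}$-atoms (which the paper proves within Theorem \ref{equ carle 1}(ii)). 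The convergence and interchange issues you flag are handled in the paper exactly as you indicate, via the truncations $f^{\varepsilon,N}$ and pairing against $\psi$ with the $\|\psi\|_{Q^{\beta}_{\alpha}}$ bound.
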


\begin{proof}
{\it{Part 1.}} ``$\Longleftarrow$" By  the completeness of
$HH^{1}_{-\alpha,\beta}(\mathbb{R}^{n}),$   we only need to prove
that if $a$ is an $HH^{1}_{-\alpha,\beta}-$atom then $a$ is in
$HH^{1}_{-\alpha,\beta}(\mathbb{R}^{n})$ with the quasinorm bounded
by a constant. Since $a$ is an $HH^{1}_{-\alpha,\beta}-$atom and
$\alpha+\beta-1\leq \frac{n}{2}-2(\beta-1)\leq \frac{n}{2}+1,$
Remark \ref{remark 314}
 implies  that $a\in \dot{L}^{2}_{-\frac{n}{2}+2(\beta-1)}$ with norm bounded
by a constant. On the other hand, assume that $I$ is the support of
$a$ and $x_{I}$ represents its center. For $\varepsilon\in(0,2),$
let
$$
\omega(t,x)=\kappa(l(I))^{-n+2(\alpha+\beta-1)}\min\left\{1,\left(\frac{l(I)}
{\sqrt{(x-x_{I})^{2}+t^{2}}}\right)^{n-2(\alpha+\beta-1)+\varepsilon}\right\}
$$
where  $\kappa$ is a  constant to be chosen later. Similar to the
proof of Theorem \ref{atomde T}, we have
$$N\omega (x)\leq \kappa(l(I))^{-n+2(\alpha+\beta-1)}
\min\left\{1,
\left(\frac{\sqrt{2}l(I)}{|x-x_{I}|}\right)^{n-2(\alpha+\beta-1)+\varepsilon}\right\}$$
and so $\int_{\mathbb{R}^{n}}N\omega
d\Lambda_{n-2(\alpha+\beta-1)}^{(\infty)}\lesssim \kappa\leq 1$ by
choosing $\kappa$ small enough.

Now, let  $B_{I}=B(x_{I}, \hbox{diam}(I)),
E_{I}=(0,\hbox{diam}(I))\times B_{I}$ and $
E_{I}^{c}=\mathbb{R}^{1+n}_{+}\backslash E_{I}.$ Suppose $S_{a}$ is
the support of $a\ast \phi_{t}(x)$ in $\mathbb{R}^{1+n}_{+}.$ We
have
$$\int_{\mathbb{R}^{1+n}_{+}}|a\ast\phi_{t}(x)|^{2}\omega^{-1}(t,x)\frac{dtdx}{t^{1-2(\alpha-\beta+1)}}
=\left(\int_{E_{I}}+\int_{E^{c}_{I}\cap
S_{a}}\right)|a\ast\phi_{t}(x)|^{2}\omega^{-1}(t,x)\frac{dtdx}{t^{1-2(\alpha-\beta+1)}}.$$
By the definition of  the cylinder $E_{I}$ in
$\mathbb{R}^{1+n}_{+},$ we can find a half-ball centered at $(0,
x_{I})$ to cover $E_{I}.$ Thus we have
$\omega^{-1}\lesssim(l(I))^{n-2(\alpha+\beta-1)}$ on $E_{I}.$ This
fact  implies that
\begin{eqnarray*}
\int_{E_{I}}|a\ast\phi_{t}(x)|^{2}\omega^{-1}(t,x)\frac{dtdx}{t^{1-2(\alpha-\beta+1)}}
\!\!
&\leq&\!\!(l(I))^{n-2(\alpha+\beta-1)}\int_{0}^{\infty}\int_{\mathbb{R}^{n}}|\widehat{a}(\xi)|^{2}|\widehat{\phi}(t\xi)|^{2}d\xi\frac{dt}{t^{1-2(\alpha-\beta+1)}}\\
&\leq&\!\!(l(I))^{n-2(\alpha+\beta-1)}\!\!\!\int_{\mathbb{R}^{n}}|\widehat{a}(\xi)|^{2}|\xi|^{-2(\alpha-\beta+1)}d\xi
\!\!\!\int_{0}^{\infty}
|\widehat{\phi}(t)|^{2}\frac{dt}{t^{1-2(\alpha-\beta+1)}}\\
&\leq&\!\!(l(I))^{n-2(\alpha+\beta-1)}\|a\|_{\dot{L}^{2}_{-(\alpha-\beta+1)}}
\lesssim1.
\end{eqnarray*}
For the integral on $E^{c}_{I}\bigcap S_{a}.$  If
 $z\in I,$ $x\nin B_{I}$ and $t\leq |x-x_{I}|/2,$
 then  $$|x-z|\geq|x-x_{I}|-\hbox{diam(I)/2}\geq |x-x_{I}|/2 \geq
 t,$$
  and $a\ast \phi_{t}(x)=\int a(z)\phi_{t}(x-z)dz=0.$
 Otherwise, we have
\begin{eqnarray*}
|a\ast\phi_{t}(x)|
&\leq&
\|a\|_{\dot{L}^{2}_{-\frac{n}{2}+2\beta-3}}\|\phi_{t}^{x}\|_{\dot{L}^{2}_{\frac{n}{2}-2\beta+3}}
\leq\hbox{diam}(I)t^{-(n-2\beta+3)}\left(\int_{\mathbb{R}^{n}}|\widehat{\phi}(\xi)|^{2}|\xi|^{n-4\beta+6}d\xi\right)^{1/2}\\
&\leq&\hbox{diam}(I)t^{-(n-2\beta+3)}.
\end{eqnarray*}
It is easy to check
$t\approx\sqrt{|x-x_{I}|^{2}+t^{2}}:=r(t,x)>$diam$I$. This implies
that
\begin{eqnarray*}
\omega^{-1}(t,x)
&\approx&\kappa^{-1}(l(I))^{n-2(\alpha+\beta-1)}\frac{t^{n-2(\alpha+\beta-1)+\varepsilon}}{(l(I))^{n-2(\alpha+\beta)+\varepsilon}}
\lesssim(l(I))^{-\varepsilon}{t^{n-2(\alpha+\beta-1)+\varepsilon}}.
\end{eqnarray*}
Then we can get
\begin{eqnarray*}
\int_{E^{c}_{I}\bigcap S_{a}}|a\ast
\phi_{t}(x)|^{2}\omega^{-1}(t,x)\frac{dtdx}{t^{1-2(\alpha-\beta+1)}}&\lesssim&(l(I))^{2-\varepsilon}\int_{E^{c}_{I}\bigcap
S_{a}}t^{\varepsilon-n-3}dtdx\\
&\lesssim&(l(I))^{2-\varepsilon}\int_{r(t,x)\geq \hbox{diam}(I)}r(t,x)^{\varepsilon-n-3}dt\\
&\lesssim&(l(I))^{2-\varepsilon+\varepsilon-2}\lesssim 1.
\end{eqnarray*}

{\it{Part 2.}} ``$\Longrightarrow$" Suppose $f\in
HH^{1}_{-\alpha,\beta}(\mathbb{R}^{n}).$ Note that the  Calder\'{o}n
reproducing formula (\ref{calder}) holds in the sense of
distributions. Since the support of $\phi$ is the unit ball, we can
denote
$$f^{\varepsilon,N}(x)=\int_{S^{\varepsilon,N}}F(t,y)\phi_{t}(x-y)\frac{dtdy}{t}$$
where $F(t,y)=f\ast\phi_{t}(y)$ and $S^{\varepsilon, N}$ is the
strip $\{(t,x)\in \mathbb{R}^{1+n}_{+}: \varepsilon\leq t\leq N\}.$
Similar to the proof of Theorem \ref{atomde T}, there exists an
$\omega\geq 0 $ on $\mathbb{R}^{1+n}_{+}$ such that
$\int_{\mathbb{R}^{n}}N\omega
d\Lambda_{n-2(\alpha+\beta-1)}^{(\infty)}\leq 1$ and
$$\int_{\mathbb{R}^{1+n}_{+}}|F(t,x)|^{2}\omega^{-1}(t,x)\frac{dtdx}{t^{1-2(\alpha-\beta+1)}}\leq 2\|F\|_{T^{1}_{\alpha,\beta}}.$$
Let  $T_{j,k}$ be the corresponding  structures  over the set
$E_{k}=\{N\omega > 2^{k}\}$ as those in Theorem \ref{atomde T} (i).
Noting that $T_{j,k}$ have mutually disjoint interiors and $F=\sum
F\chi_{T_{j,k}}$ a.e. on $\mathbb{R}^{1+n}_{+},$ we let
$$g^{\varepsilon,N}_{j,k}(x)=\int_{S^{\varepsilon,N}\bigcap T_{j,k}}F(t,y)\phi_{t}(x-y)\frac{dtdy}{t}.$$
Since $T_{j,k}\subset T(I^{*}_{j,k})$,   these smooth functions in
$x$ is supported in $\{x: \Gamma(x)\cap T_{j,k}\neq
\emptyset\}\subset I^{*}_{j,k}$ and have the same number moments as
$\phi$.  We want to verify that there are distributions $g_{j,k}$
such that $g^{\varepsilon,N}_{j,k}\longrightarrow g_{j,k}$ as
$\varepsilon\longrightarrow 0$ and $N\longrightarrow \infty$  with
$f=\sum_{j,k}g_{j,k}$ in $\mathscr{S}'(\mathbb{R}^{n}).$ To see
this, noting that $\omega\leq 2^{k+1}$ on $T_{j,k},$ we have
\begin{eqnarray*}\label{es 64}
&&|\langle g^{\varepsilon,N}_{j,k},\psi\rangle|
=\left|\int_{\mathbb{R}^{n}}\left(\int_{S^{\varepsilon,N}\bigcap
T_{j,k}}F(t,y)\phi_{t}(x-y)\frac{dtdy}{t}\right)\psi(x) dx\right|\\
&\leq&2^{(k+1)/2}\left(\int_{S^{\varepsilon,N}\bigcap
T_{j,k}}|F(t,y)|^{2}\omega^{-1}(t,y)\frac{dtdy}{t^{1-2(\alpha-\beta+1)}}\right)^{1/2}
\left(\int_{S^{\varepsilon,N}\bigcap
T_{j,k}}|\psi\ast\phi_{t}(y)|^{2}\frac{dtdy}{t^{1+2(\alpha-\beta+1)}}\right)^{1/2}
\\
&\leq &2^{(k+1)/2}\left(\int_{S^{\varepsilon,N}\bigcap
T_{j,k}}|F(t,y)|^{2}\omega^{-1}(t,y)\frac{dtdy}{t^{1-2(\alpha-\beta+1)}}\right)^{1/2}
\left(\int_{3I^{*}_{j,k}}\int_{3I^{*}_{j,k}}\frac{|\psi(x)-\psi(y)|^{2}}{|x-y|^{n+2(\alpha-\beta+1)}}dtdy\right)^{1/2}.
\end{eqnarray*}
Similarly, we obtain that  for $\varepsilon_{1}<\varepsilon_{2}$ and
$N_{1}>N_{2},$
$$|\langle g^{\varepsilon_{1}, N_{1}}_{j,k}- g^{\varepsilon_{2}, N_{2}}_{j,k},\psi\rangle|
\leq C_{k}\left(\int_{(S^{\varepsilon_{1},N_{1}}\backslash
S^{\varepsilon_{2},N_{2}})\bigcap
T_{j,k}}|F(t,y)|^{2}\omega^{-1}(t,y)\frac{dtdy}{t^{1-2(\alpha-\beta+1)}}\right)^{1/2}\|\psi\|_{\dot{L}^{2}_{\alpha-\beta+1}}.$$
This gives us that $\|g^{\varepsilon_{1},
N_{1}}_{j,k}-g^{\varepsilon_{2},
N_{2}}_{j,k}\|_{\dot{L}^{2}_{-(\alpha-\beta+1)}}\longrightarrow0.$
as $\varepsilon_{1},$ $\varepsilon_{2}\longrightarrow 0$ and
$N_{1},$ $N_{2} \longrightarrow \infty$. Thus, $g^{\varepsilon,
N}_{j,k}\longrightarrow g_{j,k}\in \dot{L}^{2}_{-(\alpha-\beta+1)}$
in the sense of distributions and $g_{j,k}$ is supported in
$I^{*}_{j,k}$ with
$$\|g_{j,k}\|_{\dot{L}^{2}_{-(\alpha-\beta+1)}(3I^{*}_{j,k})}\lesssim2^{(k+1)/2}\left(\int_{T_{j,k}}
|F(t,y)|^{2}\omega^{-1}(t,y)\frac{dtdy}{t^{1-2(\alpha-\beta+1)}}\right)^{1/2}.$$
Let
$a_{j,k}=g_{j,k}\|g_{j,k}\|^{-1}_{\dot{L}^{2}_{-(\alpha-\beta+1)}(3I^{*}_{j,k})}(l(3I^{*}_{j,k}))^{(\alpha+\beta-1)-\frac{n}{2}}
\ \hbox{and}\
\lambda_{j,k}=\|g_{j,k}\|_{\dot{L}^{2}_{-(\alpha-\beta+1)}(3I^{*}_{j,k})}(l(3I^{*}_{j,k}))^{\frac{n}{2}-(\alpha+\beta-1)}.$
Then
\begin{eqnarray*}
|\langle a_{j,k},
\psi\rangle|
&\lesssim&\frac{1}{\|g_{j,k}\|_{\dot{L}^{2}_{-(\alpha-\beta+1)}(3I^{*}_{j,k})}}(l(3I^{*}_{j,k}))^{(\alpha+\beta-1)-\frac{n}{2}}\\
&\times&\left(\int_{S^{\varepsilon,N}\bigcap
T_{j,k}}|F(t,y)|^{2}w(t,y)^{-1}\frac{dtdy}{t^{1-2(\alpha-\beta+1)}}\right)^{\frac{1}{2}}
\left(\int_{3I^{*}_{j,k}}\int_{3I^{*}_{j,k}}\frac{|\psi(x)-\psi(y)|^{2}}{|x-y|^{n+2(\alpha-\beta+1)}}dxdy\right)^{\frac{1}{2}}.
\end{eqnarray*}
This means that $a_{j,k}$ are $HH^{1}_{-\alpha,\beta}-$atoms. On the
other hand, the Cauchy-Schwarz inequality implies that
\begin{eqnarray*}
\sum_{j,k}|\lambda_{j,k}|
&\lesssim&\left(\sum_{j,k}2^{k+1}(l(3I^{*}_{j,k}))^{n-2(\alpha+\beta-1)}\right)^{1/2}
\left(\sum_{j,k}\int_{T_{j,k}}|F(t,y)|^{2}\omega^{-1}(t,y)\frac{dtdy}{t^{1-2(\alpha-\beta+1)}}\right)^{1/2}\\
&\lesssim&\left(\sum_{j,k}2^{k+1}\Lambda^{(\infty)}_{n-2(\alpha+\beta-1)}(3I^{*}_{j,k})\right)^{1/2}
\left(\sum_{j,k}\int_{\mathbb{R}^{1+n}_{+}}|F(t,y)|^{2}\omega^{-1}(t,y)\frac{dtdy}{t^{1-2(\alpha-\beta+1)}}\right)^{1/2}\\
&\lesssim&\left(\sum_{k}\int_{E_{K}}2^{k+1}d\Lambda_{n-2(\alpha+\beta-1)}^{(\infty)}(E_{k})\right)^{1/2}\|f\|_{HH^{1}_{-\alpha,\beta}}\\
&\lesssim&
\left(\sum_{k}\int_{E_{k}}N\omega(x)d\Lambda^{(\infty)}_{n-2(\alpha+\beta-1)}\right)^{1/2}\|f\|_{HH^{1}_{-\alpha,\beta}}\lesssim\|f\|_{HH^{1}_{-\alpha,\beta}}.
\end{eqnarray*}
The above estimates tell us that $\sum
g_{j,k}=\sum\lambda_{j,k}a_{j,k}$ converges to a distribution $g$ in
$HH^{1}_{-\alpha,\beta}(\mathbb{R}^{n})$.
 We need to verify that $g=f.$
Since for a fix $\psi\in \mathscr{S}(\mathbb{R}^{n}),$ every
$0<\varepsilon<N,$
\begin{eqnarray*}
&&|\langle g^{\varepsilon,N}_{j,k},\psi\rangle|\\
&\lesssim&2^{(k+1)/2}\left(\int_{
T_{j,k}}|F(t,y)|^{2}w(t,y)^{-1}\frac{dtdy}{t^{1-2(\alpha-\beta+1)}}\right)^{1/2}
\left(\int_{3I^{*}_{j,k}}\int_{3I^{*}_{j,k}}\frac{|\psi(x)-\psi(y)|^{2}}{|x-y|^{n+2(\alpha-\beta+1)}}dtdy\right)^{1/2}\\
&\lesssim&2^{(k+1)/2}
(l(3I^{*}_{j,k}))^{\frac{n}{2}-(\alpha+\beta-1)}\left(\int_{
T_{j,k}}|F(t,y)|^{2}w(t,y)^{-1}\frac{dtdy}{t^{1-2(\alpha-\beta+1)}}\right)^{1/2}\|\psi\|_{Q_{\alpha,\beta}(\mathbb{R}^{n})}\\
&\lesssim&\|f\|_{HH^{1}_{-\alpha,\beta}}\|\psi\|_{Q_{\alpha,\beta}(\mathbb{R}^{n})}.
\end{eqnarray*}
Then, $\lim\limits_{\varepsilon\longrightarrow 0,
N\longrightarrow\infty}\sum_{j,k}g^{\varepsilon,N}=\sum_{j,k}g_{j,k}=g.$
Meanwhile, we can also obtain that
\begin{eqnarray*}
\sum\limits_{j,k}\int_{\mathbb{R}^{1+n}_{+}}1_{S^{\varepsilon,N}\bigcap
T_{j,k}}(t,y)F(t,y)\phi_{t}\ast\psi(y)\frac{dtdy}{t}
=\int_{S^{\varepsilon,N}}F(t,y)\phi_{t}\ast\psi(y)\frac{dtdy}{t}=\langle
f^{\varepsilon,N}, \psi\rangle.
\end{eqnarray*}
 This tells us $\sum_{j,k}g_{j,k}^{\varepsilon,N}=f^{\varepsilon,N}\longrightarrow
 f$ in $\mathscr{S}'(\mathbb{R}^{n}).$ Therefore $f=g$ in $\mathscr{S}'(\mathbb{R}^{n}).$
   \end{proof}

\begin{lemma}\label{atom dec H lemma}
(i) If $a$ is an $HH^{1}_{-\alpha,\beta}-$atom, then there exists a
nonnegative function $\omega$ on $\mathbb{R}^{1+n}_{+}$ with
$\int_{\mathbb{R}^{n}}N\omega
d\Lambda_{n-2(\alpha+\beta-1)}^{(\infty)}\leq 1$ and
$$\sigma_{\delta}(a,\omega)=\sup_{|y|\leq \delta}\left(\int_{\mathbb{R}^{1+n}_{+}}
|a\ast\psi_{t}(x-y)-a\ast\psi_{t}(x)|^{2}\omega(t,x)^{-1}\frac{dtdx}{t^{1-2(\alpha-\beta+1)}}\right)^{1/2}\longrightarrow
0.$$
 (ii) $HH^{1}_{-\alpha,\beta}\cap C^{\infty}_{0}(\mathbb{R}^{n})$ is
 dense in $HH^{1}_{-\alpha,\beta}.$
\end{lemma}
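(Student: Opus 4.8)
\emph{Proof strategy.} Part (i) will be proved by producing an explicit admissible weight and splitting the region of integration exactly as in Part~1 of the proof of Theorem \ref{atom dec H}; part (ii) will then follow from (i) together with the atomic decomposition of Theorem \ref{atom dec H}. For (i), let $a$ be an $HH^{1}_{-\alpha,\beta}$-atom supported in a cube $I$ with center $x_{I}$, fix $\varepsilon\in(0,2)$, and take \emph{the same} weight used in Theorem \ref{atom dec H},
$$\omega(t,x)=\kappa(l(I))^{-n+2(\alpha+\beta-1)}\min\Bigl\{1,\bigl(l(I)/r(t,x)\bigr)^{n-2(\alpha+\beta-1)+\varepsilon}\Bigr\},\qquad r(t,x):=\sqrt{|x-x_{I}|^{2}+t^{2}},$$
which for $\kappa$ small enough satisfies $\int_{\mathbb{R}^{n}}N\omega\,d\Lambda^{(\infty)}_{n-2(\alpha+\beta-1)}\le 1$ and is independent of $\delta$ and $y$. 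Writing $B_{I}=B(x_{I},\operatorname{diam}I)$ and $E_{I}=(0,\operatorname{diam}I)\times B_{I}$, I would split the integral defining $\sigma_{\delta}(a,\omega)^{2}$ into the parts over $E_{I}$ and over $E_{I}^{c}$ and show each tends to $0$ as $\delta\to0$, uniformly for $|y|\le\delta$.

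On $E_{I}$ one has $\omega^{-1}(t,x)\lesssim(l(I))^{n-2(\alpha+\beta-1)}$, so by Plancherel in $x$ (using $\widehat{a\ast\psi_{t}(\cdot-y)-a\ast\psi_{t}}(\xi)=(e^{-iy\xi}-1)\widehat a(\xi)\widehat\psi(t\xi)$) and Tonelli to carry out the $t$-integral first,
$$\int_{E_{I}}|a\ast\psi_{t}(x-y)-a\ast\psi_{t}(x)|^{2}\omega^{-1}\frac{dtdx}{t^{1-2(\alpha-\beta+1)}}\lesssim(l(I))^{n-2(\alpha+\beta-1)}\int_{\mathbb{R}^{n}}|e^{-iy\xi}-1|^{2}|\widehat a(\xi)|^{2}|\xi|^{-2(\alpha-\beta+1)}\,d\xi.$$
Since $a\in\dot L^{2}_{-(\alpha-\beta+1)}$ by Remark \ref{remark 314} and $|e^{-iy\xi}-1|^{2}\le\min\{4,\delta^{2}|\xi|^{2}\}$ for $|y|\le\delta$, dominated convergence makes the supremum over $|y|\le\delta$ of the right side tend to $0$ as $\delta\to0$. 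On $E_{I}^{c}$, I would first note that for $\delta<\operatorname{diam}(I)/4$ the integrand vanishes unless $\operatorname{dist}(x,I)<t+|y|$, which forces $t\approx r(t,x)\gtrsim\operatorname{diam}(I)$ and $|x-x_{I}|\lesssim t$; on this set $\omega^{-1}(t,x)\lesssim(l(I))^{-\varepsilon}r(t,x)^{n-2(\alpha+\beta-1)+\varepsilon}$. Combining $\|a\|_{\dot L^{2}_{-(n/2-2\beta+3)}}\lesssim\operatorname{diam}(I)$ (Remark \ref{remark 314}) with the regularity bound $\|\psi_{t}^{\,x-y}-\psi_{t}^{\,x}\|_{\dot L^{2}_{n/2-2\beta+3}}\lesssim(|y|/t)\,t^{-(n-2\beta+3)}$ (a consequence of the vanishing moments of $\psi$, via Plancherel) yields the pointwise bound $|a\ast\psi_{t}(x-y)-a\ast\psi_{t}(x)|\lesssim\operatorname{diam}(I)\,(|y|/t)\,t^{-(n-2\beta+3)}$, and a power count then gives
$$\int_{E_{I}^{c}}|a\ast\psi_{t}(x-y)-a\ast\psi_{t}(x)|^{2}\omega^{-1}\frac{dtdx}{t^{1-2(\alpha-\beta+1)}}\lesssim(l(I))^{2-\varepsilon}|y|^{2}\int_{\operatorname{diam}(I)}^{\infty}t^{n}\,t^{-n-5+\varepsilon}\,dt\lesssim\bigl(|y|/l(I)\bigr)^{2},$$
the $t$-integral converging at infinity because $-5+\varepsilon<-1$. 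Together with the $E_{I}$-piece this gives $\sigma_{\delta}(a,\omega)\to0$.

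For (ii), the atomic decomposition (Theorem \ref{atom dec H}) together with the quasi-subadditivity of $\|\cdot\|_{T^{1}_{\alpha,\beta}}$ (Lemma \ref{equiva metric}) shows that every $f\in HH^{1}_{-\alpha,\beta}$ is the $HH^{1}_{-\alpha,\beta}$-limit of its finite partial sums $\sum_{j\le N}\lambda_{j}a_{j}$ of atoms, so it suffices to approximate a single atom $a$, supported in a cube $I$, by elements of $HH^{1}_{-\alpha,\beta}\cap C^{\infty}_{0}(\mathbb{R}^{n})$. The plan is mollification: choose $\rho\in C^{\infty}_{0}(\mathbb{R}^{n})$ with $\rho\ge0$, $\int\rho=1$, and set $a_{\eta}=a\ast\rho_{\eta}$. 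Then $a_{\eta}\in C^{\infty}_{0}$, it is supported in an $\eta$-neighborhood $I_{\eta}$ of $I$, it inherits the cancellation condition (the $\tilde\rho_{\eta}$-convolution of a polynomial of degree $\le\frac n2+1$ is again such a polynomial near $I$) and, after slightly dilating the cube, the local Sobolev-$(\alpha-\beta+1)$ condition; hence $a_{\eta}$ is a bounded multiple of an $HH^{1}_{-\alpha,\beta}$-atom and in particular lies in $HH^{1}_{-\alpha,\beta}$ by Part~1 of Theorem \ref{atom dec H}. Finally, from $(a-a_{\eta})\ast\phi_{t}(x)=\int\bigl(a\ast\phi_{t}(x)-a\ast\phi_{t}(x-y)\bigr)\rho_{\eta}(y)\,dy$, Jensen's inequality in $y$ inside the weighted $T^{1}_{\alpha,\beta}$-integral with the weight $\omega$ from part (i) gives $\|a-a_{\eta}\|_{HH^{1}_{-\alpha,\beta}}=\|(a-a_{\eta})\ast\phi_{t}\|_{T^{1}_{\alpha,\beta}}\le\sigma_{\eta}(a,\omega)\to0$; letting $N\to\infty$ and then $\eta\to0$ completes the density.

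The main obstacle is the $E_{I}^{c}$ estimate in part (i): one must simultaneously use the compact support of $a\ast\psi_{t}(\cdot)$ (to keep the $x$-integral of order $t^{n}$), the extra factor $|y|/t$ coming from the regularity of $\psi$, and the precise decay of $\omega^{-1}$ away from $I$, and then verify that the resulting power of $t$ is integrable at infinity. The $E_{I}$-piece and the reduction in part (ii) are comparatively routine once part (i) is available.
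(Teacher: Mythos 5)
Your proposal is correct and follows essentially the same route as the paper: the same explicit weight $\omega$ from Theorem \ref{atom dec H}, the same $E_{I}$ versus $E_{I}^{c}$ split with Plancherel plus dominated convergence on $E_{I}$ and the Sobolev-duality pointwise bound $|a\ast\phi_{t}(x-y)-a\ast\phi_{t}(x)|\lesssim \hbox{diam}(I)\,\delta\, t^{2\beta-4-n}$ on $E_{I}^{c}$, and mollification combined with Minkowski/Jensen and the atomic decomposition for density. The only (harmless) extra step is your verification that $a_{\eta}$ is a multiple of an atom, which is not needed once $\|a-a_{\eta}\|_{HH^{1}_{-\alpha,\beta}}\to 0$ is established.
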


\begin{proof} (i) For a fixed $\varepsilon\in (0,2),$  the same  $\omega$
defined in the proof of Theorem \ref{atom dec H},  $y\in
B(0,\delta)$ and $x\in \mathbb{R}^{n},$ we have
$a\ast\phi_{t}(x-y)-a\ast\phi_{t}(x)=\langle a,
\phi_{t}^{x-y}-\phi_{t}^{x}\rangle$ and
\begin{equation}
|(\widehat{\phi}_{t}^{x-y}-\widehat{\phi}_{t}^{x})(\xi)|=|1-e^{2\pi
iy\cdot\xi}||\widehat{\phi}_{t}(\xi)|\leq
C\min\{2,\delta|\xi|\}|\widehat{\phi}_{t}(\xi)|.
\end{equation}
Note that
\begin{eqnarray*}
&&\sup_{|y|<\delta}\left(\int_{\mathbb{R}^{1+n}_{+}}|a\ast\phi_{t}(x-y)-a\ast\phi_{t}(x)|^{2}
\omega^{-1}(t,x)\frac{dtdx}{t^{1-2(\alpha-\beta+1)}}\right)^{1/2}\\
&\lesssim&\left((\sup_{|y|<\delta}\int_{E_{I}}+\sup_{|y|<\delta}\int_{E_{I}^{c}\bigcap
S_{a,\delta}})|a\ast\phi_{t}(x-y)-a\ast\phi_{t}(x)|^{2}
\omega^{-1}(t,x)\frac{dtdx}{t^{1-2(\alpha-\beta+1)}}\right)^{1/2},
\end{eqnarray*}
where $B_{I}$ is the ball $B(x_{I},2\hbox{diam}(I)),$ and
$E_{I}=(0,2\hbox{diam}(I))\times B_{I}.$ By Fourier transforms, we
can estimate the first term as
\begin{eqnarray*}
&&\sup_{|y|<\delta}\int_{E_{I}}|a\ast\phi_{t}(x-y)-a\ast\phi_{t}(x)|^{2}
\omega^{-1}(t,x)\frac{dtdx}{t^{1-2(\alpha-\beta+1)}}\\
&\lesssim&(l(I))^{n-2(\alpha+\beta-1)}\sup_{|y|<\delta}\int_{0}^{\infty}\int_{\mathbb{R}^{n}}|a\ast(\phi_{t}^{y}-\phi_{t})(x)|^{2}
\frac{dtdx}{t^{1-2(\alpha-\beta+1)}}\\
&\lesssim&(l(I))^{n-2(\alpha+\beta-1)}\sup_{|y|<\delta}
\int_{\mathbb{R}^{n}}|\widehat{
a}(\xi)|^{2}\min\{2,\delta|\xi|\}^{2}
\int_{0}^{\infty}|\widehat{\phi}(t|\xi|)|^{2}
\frac{dt}{t^{1-2(\alpha-\beta+1)}}d\xi
\\
&\lesssim&(l(I))^{n-2(\alpha+\beta-1)}\sup_{|y|<\delta}
\int_{\mathbb{R}^{n}}|\widehat{
a}(\xi)|^{2}\delta^{2}|\xi|^{2}|\xi|^{-2(\alpha-\beta+1)}
\int_{0}^{\infty}|\widehat{\psi}(t)|^{2}
\frac{dt}{t^{1-2(\alpha-\beta+1)}}d\xi\rightarrow0
\end{eqnarray*}
 as $\delta\longrightarrow 0$ according to the dominated convergence theorem.

 For the second term. 
 Since $\hbox{supp}(a)=I,$
 when $x\nin
B_{I}$ and $t\leq |x-x_{I}|/4,$ we obtain
$|y|<\hbox{diam}(I)<\frac{1}{2}|x-x_{I}|$ for $y\in B(0,\delta)$
with $\delta<\hbox{diam}(I).$ Therefore
$$|x-y-z|>|x-x_{I}|-|z-x_{I}|-|y|\geq \frac{3}{4}|x-x_{I}|\geq t.$$
On the other hand $|x-z|\geq\frac{3}{4}|x-x_{I}|>t$. These estimates
imply that $a\ast[\phi_{t}(x-y)-\phi_{t}(x)]=0.$ Otherwise, we have
\begin{eqnarray*}
|a\ast\phi_{t}(x-y)-a\ast\phi_{t}(x)|
&\lesssim&\|a\|_{\dot{L}^{2}_{-(\frac{n}{2}-2\beta+3)}}\|\phi_{t}^{x-y}-\phi_{t}^{x}\|_{\dot{L}^{2}_{\frac{n}{2}-2\beta+3}}\\
&\lesssim&\hbox{diam}(I)\left(\int_{\mathbb{R}^{n}}|\widehat{\phi_{t}^{x-y}}
(\xi)-\widehat{\phi_{t}^{x}}(\xi)|^{2}|\xi|^{n-4\beta+6}d\xi\right)^{1/2}\\
&\lesssim&\hbox{diam}(I)\left(\int_{\mathbb{R}^{n}}\min\{2,\delta|\xi|\}^{2}|\widehat{\phi_{t}}(\xi)|^{2}|\xi|^{n-4\beta+6}d\xi\right)^{1/2}\\
&\lesssim&\hbox{diam}(I)\delta\left(\int_{\mathbb{R}^{n}}|\widehat{\phi_{t}}(\xi)|^{2}|\xi|^{n-4\beta+8}d\xi\right)^{1/2}\\
&\lesssim&\hbox{diam}(I)\delta t^{2\beta-4-n}.
\end{eqnarray*}
 Using  the above estimates and the fact $\omega^{-1}\lesssim
 t^{n-2(\alpha+\beta-1)+\varepsilon},$ we have
  \begin{eqnarray*}
&&\int_{E^{c}_{I}\bigcap
S_{a,\delta}}|a\ast\phi_{t}(x-y)-a\ast\phi_{t}(x)|^{2}\omega^{-1}(t,x)\frac{dtdx}{t^{1-2(\alpha-\beta+1)}}\\
&\lesssim&\delta^{2}(l(I))^{2-\varepsilon}\int_{E^{c}_{I}\bigcap
S_{a,\delta}}t^{-n-5+\varepsilon}dtdx\\
&\lesssim&\delta^{2}(l(I))^{2-\varepsilon}\int_{l(I)}^{\infty}\lambda^{\varepsilon-5}d\lambda
\longrightarrow 0\end{eqnarray*} as $\delta\longrightarrow0.$ Thus
$\sigma_{\delta}(a,\omega)\longrightarrow0$ as
$\delta\longrightarrow0.$

 (ii) For an $HH^{1}_{\alpha,\beta}-$atom $a,$ take $\eta\in
 C^{\infty}(\mathbb{R}^{n})$ with support in $B(0,1)$ and $\int
 \eta=1.$ Then   $a\ast\eta_{j}\in C_{0}^{\infty}(\mathbb{R}^{n})$ and $\eta_{j}=j^{n}\eta(jx)$ form an approximate
 identity,  $a\ast\eta_{j}\longrightarrow a$ in $\mathscr{S}'(\mathbb{R}^{n})$
 as $j\longrightarrow\infty.$  For any
 nonnegative function $\omega$ on $\mathbb{R}^{1+n}_{+}$ with
 $\int_{\mathbb{R}^{n}}N\omega d\Lambda_{n-2(\alpha+\beta-1)}^{(\infty)}\leq 1,$
 we have
 \begin{eqnarray*}
&&\left(\int_{\mathbb{R}^{1+n}_{+}}|a\ast\eta_{j}\ast\phi_{t}(x)-a\ast\phi_{t}(x)|^{2}
\omega^{-1}(t,x)\frac{dtdx}{t^{1-2(\alpha-\beta+1)}}\right)^{1/2}\\
&\lesssim&\int_{\mathbb{R}^{n}}|\eta_{j}(y)|\left(\int_{\mathbb{R}^{1+n}_{+}}|a\ast\phi_{t}(x-y)-a\ast\phi_{t}(x)|^{2}
\omega^{-1}(t,x)\frac{dtdx}{t^{1-2(\alpha-\beta+1)}}\right)^{1/2}\\
&\lesssim& \sigma_{\frac{1}{j}}(a,\omega).
 \end{eqnarray*}
From (i), we know that for every $\varepsilon>0$ there exists an
 $\omega$ such that $\sigma_{\frac{1}{j}}(a,\omega)<\varepsilon$
 with $j$ large enough. Taking the infimum over all $\omega$ induces
 $$\|a\ast\eta_{j}-a\|_{HH^{1}_{-\alpha,\beta}}<\varepsilon \ \hbox{for large}\ j,$$
 that is, $a\ast\eta_{j}\longrightarrow a$ in
 $HH^{1}_{-\alpha,\beta}.$
Hence,  we can get the desired
 density from  the fact that   every $f\in
 HH^{1}_{-\alpha,\beta}$ can be approximated by finite sums of atoms.
\end{proof}

\begin{lemma}\label{Carleson m 1}
For $\alpha>0,$  $\max\{\alpha,1/2\}<\beta<1$ with
$\alpha+\beta-1\geq0$, $f\in L_{loc}^{2}(\mathbb{R}^{n})$ and
$\phi\in\mathscr{S}(\mathbb{R}^{n})$ with
$\int_{\mathbb{R}^{n}}\phi(x)dx=0,$ let
$$d\mu_{f,\phi,\alpha,\beta}(t,x)=|(f\ast \phi_{t})(y)|^{2}t^{-1-2(\alpha-\beta+1)}dtdy.$$
Then there is a constant $C$ such that for any cubes $I$ and $J$ in
$\mathbb{R}^{n}$ with center $x_{0}$
and $l(J)\geq 3l(I),$\\
(i)
$$\mu_{f,\phi,\alpha,\beta}(S(I))\leq\int_{J}\int_{J}\frac{|f(x)-f(y)|^{2}}{|x-y|^{n+2(\alpha-\beta+1)}}dxdy+[l(I)]^{n-2(\alpha-\beta)}
\left(\int_{\mathbb{R}^{n}\backslash\frac{2}{3}J}\frac{|f(x)-f(y)|}{|x-x_{0}|^{n+1}}dx\right)^{2}.
$$
(ii) If in addition  $\hbox{supp}(\phi)\subset \{x\in
\mathbb{R}^{n}: |x|\leq 1\}$ then
$$\mu_{f,\phi,\alpha,\beta}(S(I))\leq C\int_{J}\int_{J}\frac{|f(x)-f(y)|^{2}}{|x-y|^{n+2(\alpha-\beta+1)}}dxdy.$$
\end{lemma}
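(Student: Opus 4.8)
The plan is to use the vanishing mean $\int_{\mathbb{R}^n}\phi_t\,dx=0$ to subtract $f(y)$: for $(t,y)\in S(I)$ write
$$f\ast\phi_t(y)=\int_{\mathbb{R}^n}\phi_t(y-z)\bigl(f(z)-f(y)\bigr)\,dz=:A(t,y)+B(t,y),$$
where $A$ carries the integration over $\tfrac23 J$ and $B$ the integration over $\mathbb{R}^n\setminus\tfrac23 J$. Put $s:=\alpha-\beta+1$; since $0<\alpha<\beta<1$ we have $0<s<1$, and this is exactly the range in which the one–variable integrals $\int_a^{\infty}t^{-n-1-2s}\,dt$ and $\int_0^{l(I)}t^{1-2s}\,dt$ converge (the hypothesis $\alpha+\beta-1\ge0$ is not needed here). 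Since $\mu_{f,\phi,\alpha,\beta}(S(I))=\int_0^{l(I)}\!\int_I|f\ast\phi_t(y)|^2 t^{-1-2s}\,dy\,dt$, it suffices to estimate the contributions of $|A|^2$ and $|B|^2$ separately.

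For the local part I would decompose $\tfrac23 J$ dyadically in $|y-z|/t$. On the shell $\{z\in\tfrac23 J:\ 2^{k-1}t\le|y-z|<2^kt\}$ the Schwartz decay of $\phi$ gives $|\phi_t(y-z)|\lesssim_N t^{-n}2^{-kN}$ for every $N$, so Cauchy--Schwarz in $z$ over the shell and then in $k$ (costing a convergent factor $\sum_k2^{-k(N-n/2)}$ once $N>n/2$) yields
$$|A(t,y)|^2\lesssim_N t^{-n}\sum_{k\ge0}2^{-k(N-n/2)}\int_{\{2^{k-1}t\le|y-z|<2^kt\}\cap\frac23 J}|f(z)-f(y)|^2\,dz.$$
Multiplying by $t^{-1-2s}$, integrating in $t$ over $(0,l(I))$ and exchanging the $t$– and $z$–integrals by Fubini, the inner $t$–integral over the $k$‑th shell is $\lesssim 2^{k(n+2s)}|y-z|^{-n-2s}$; choosing $N>\tfrac{3n}{2}+2s$ makes $\sum_k2^{-k(N-\frac n2-n-2s)}$ finite, and since $\tfrac23 J\subset J$ and $I\subset J$ one is left with $\int_0^{l(I)}\!\int_I|A|^2 t^{-1-2s}\,dy\,dt\lesssim\int_J\!\int_J\frac{|f(x)-f(y)|^2}{|x-y|^{n+2s}}\,dx\,dy$, the first term in (i).

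For the tail part the geometry of two concentric cubes with $l(J)\ge3l(I)$ is the point: if $y\in I$ and $z\notin\tfrac23 J$, comparing the coordinate in which $z$ leaves $\tfrac23 J$ gives $|y-z|>l(I)/2$, hence $|y-z|\gtrsim_n|x_0-z|$. Combined with $|\phi_t(y-z)|\lesssim t\,|y-z|^{-(n+1)}$ this gives the pointwise bound $|B(t,y)|\lesssim t\int_{\mathbb{R}^n\setminus\frac23 J}\frac{|f(z)-f(y)|}{|x_0-z|^{n+1}}\,dz$. Squaring, using $\int_0^{l(I)}t^{2}\,t^{-1-2s}\,dt=\frac{l(I)^{2-2s}}{2-2s}$ (here $s<1$) and bounding $\int_I dy$ by $|I|=l(I)^{n}$, one obtains exactly the factor $l(I)^{\,n+2-2s}=l(I)^{\,n-2(\alpha-\beta)}$ in front of the tail integral of (i). Adding the two estimates proves (i); I expect the one genuinely fiddly point to be keeping track of the exponents in the dyadic sum for $A$ (so that both the $k$–sum and the $t$–integral land in their convergent ranges), with the coordinatewise geometry for $B$ a minor second point.

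Part (ii) is the simpler, degenerate case. When $\operatorname{supp}\phi\subset\{|x|\le1\}$ and $(t,y)\in S(I)$, the function $z\mapsto\phi_t(y-z)$ is supported in $\{|y-z|\le t<l(I)\}$, so the relevant $z$ automatically lie in $3I\subset J$ (concentric cubes, $l(J)\ge3l(I)$) and the tail part $B$ vanishes identically. The dyadic decomposition is then unnecessary: from $\|\phi_t\|_{\infty}=t^{-n}\|\phi\|_{\infty}$ and Cauchy--Schwarz one has $|f\ast\phi_t(y)|^2\lesssim t^{-n}\int_{|y-z|\le t}|f(z)-f(y)|^2\,dz$, and multiplying by $t^{-1-2s}$, integrating and applying Fubini in $(t,z)$ with $\int_{|y-z|}^{\infty}t^{-n-1-2s}\,dt=\tfrac{1}{n+2s}|y-z|^{-n-2s}$ gives
$$\mu_{f,\phi,\alpha,\beta}(S(I))\lesssim\int_I\!\int_{3I}\frac{|f(z)-f(y)|^2}{|y-z|^{n+2s}}\,dz\,dy\le C\int_J\!\int_J\frac{|f(x)-f(y)|^2}{|x-y|^{n+2(\alpha-\beta+1)}}\,dx\,dy,$$
which is (ii).
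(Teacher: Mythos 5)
Your proof is correct, but it is worth noting that the paper does not actually prove this lemma at all: its ``proof'' is a one--line citation to Dafni--Xiao \cite[Lemma 3.2]{G. Dafni J. Xiao}, of which the present statement is the special case obtained by replacing their exponent $\alpha$ with $s:=\alpha-\beta+1\in(0,1)$. You supply a complete, self-contained real-variable argument: subtract $f(y)$ using the vanishing mean of $\phi_t$, split at $\tfrac23J$, handle the local piece by dyadic shells in $|y-z|/t$ plus Cauchy--Schwarz and Fubini (landing on $|y-z|^{-n-2s}$ after the $t$-integration), and handle the tail by the pointwise bound $|\phi_t(w)|\lesssim t|w|^{-(n+1)}$ together with the concentric-cube geometry giving $|y-z|\gtrsim|z-x_0|$. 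All the exponent bookkeeping checks out: $0<s<1$ is exactly what makes $\int_0^{l(I)}t^{1-2s}\,dt$ and $\int_{|y-z|}^\infty t^{-n-1-2s}\,dt$ converge, and $l(I)^{n+2-2s}=l(I)^{n-2(\alpha-\beta)}$. Your observation that $\alpha+\beta-1\ge0$ is not needed here is also correct. Part (ii) is handled exactly as one would expect and is the only part the paper actually uses (in Theorem \ref{Carleson m 2}).

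Two small points. First, your dyadic decomposition with shells $2^{k-1}t\le|y-z|<2^kt$, $k\ge0$, omits the innermost ball $\{|y-z|<t/2\}$; it is handled identically (bound $|\phi_t|$ by $t^{-n}$, Cauchy--Schwarz, and $\int_{2|y-z|}^{l(I)}t^{-n-1-2s}\,dt\lesssim|y-z|^{-n-2s}$), so this is cosmetic, but it should be said. Second, the right-hand side of (i) as printed contains a free variable $y$ inside the tail integral (in Dafni--Xiao the integrand is $|f(x)-f_J|$ with $f_J$ the mean over $J$, not $|f(x)-f(y)|$; the paper has mis-transcribed it). Your estimate actually produces $l(I)^{2-2s}\int_I\bigl(\int_{\mathbb{R}^n\setminus\frac23J}|f(z)-f(y)|\,|z-x_0|^{-n-1}dz\bigr)^2dy$, and the step ``bound $\int_I dy$ by $|I|$'' is only legitimate if one reads the stated bound with a supremum over $y\in I$; you should state explicitly which version of the (defective) inequality you are proving. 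Neither issue affects part (ii) or anything downstream in the paper.
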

\begin{proof} This lemma is  a special case of Dafini-Xiao \cite[Lemma 3.2]{G. Dafni J.
Xiao}.
\end{proof}

\begin{theorem}\label{Carleson m 2}
Let $\phi$ be a function as in Lemma \ref{Little}, $\alpha>0$ and
$\max\{\alpha,1/2\}<\beta<1$ with $\alpha+\beta-1\geq0.$ If $f\in
Q_{\alpha}^{\beta}(\mathbb{R}^{n})$ then
$d\mu_{f,\phi,\alpha,\beta}(t,x)=|(f\ast\phi_{t})(x)|^{2}t^{-1-2(\alpha-\beta+1)}dtdx$
is a $1-2(\alpha+\beta-1)/n-$Carleson measure.
\end{theorem}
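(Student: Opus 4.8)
The plan is to verify the Carleson condition directly from its definition. Since $n\bigl(1-2(\alpha+\beta-1)/n\bigr)=n-2(\alpha+\beta-1)$, the exponent $np$ in (\ref{norm carleson measure}) with $p=1-2(\alpha+\beta-1)/n$ equals $n-2(\alpha+\beta-1)$, so it suffices to show that
$$\mu_{f,\phi,\alpha,\beta}(S(I))\lesssim (l(I))^{n-2(\alpha+\beta-1)}\|f\|_{Q_{\alpha}^{\beta}(\mathbb{R}^{n})}^{2}$$
uniformly over all Carleson boxes $S(I)$. The whole argument then reduces to combining the local estimate of Lemma \ref{Carleson m 1} with the very definition (\ref{def Q a b}) of $Q_{\alpha}^{\beta}(\mathbb{R}^{n})$.

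Before invoking Lemma \ref{Carleson m 1} I would first record that $f\in L^{2}_{loc}(\mathbb{R}^{n})$, so that the convolutions $f\ast\phi_{t}$ and the measure $\mu_{f,\phi,\alpha,\beta}$ are meaningful: for any cube $I$ the fractional Poincar\'e inequality (Lemma \ref{fractional Poincare}) gives
$$\|f-f(I)\|_{L^{2}(I)}\lesssim (\hbox{diam}(I))^{\alpha-\beta+1}\left(\int_{I}\int_{I}\frac{|f(x)-f(y)|^{2}}{|x-y|^{n+2(\alpha-\beta+1)}}dxdy\right)^{1/2}<\infty,$$
and since $\phi$ from Lemma \ref{Little} is supported in the unit ball, $f\ast\phi_{t}(x)$ is an integral over the ball $B(x,t)$ of an $L^{2}_{loc}$ function, hence finite. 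Next I would check that $\phi$ satisfies the hypotheses of part (ii) of Lemma \ref{Carleson m 1}: it is Schwartz, it is supported in $\{|x|\leq 1\}$ by property (1), and $\int_{\mathbb{R}^{n}}\phi(x)dx=0$ is property (4) with $\gamma=0$.

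With these preliminaries in place, I would apply Lemma \ref{Carleson m 1}(ii) to the Carleson box $S(I)$ with the concentric cube $J=3I$, which satisfies $l(J)=3l(I)\geq 3l(I)$, to obtain
\[
\mu_{f,\phi,\alpha,\beta}(S(I))\leq C\int_{3I}\int_{3I}\frac{|f(x)-f(y)|^{2}}{|x-y|^{n+2(\alpha-\beta+1)}}dxdy .
\]
By the definition (\ref{def Q a b}) the right-hand side is at most $C\,(3l(I))^{n-2(\alpha+\beta-1)}\|f\|_{Q_{\alpha}^{\beta}(\mathbb{R}^{n})}^{2}$, which is $\lesssim (l(I))^{n-2(\alpha+\beta-1)}\|f\|_{Q_{\alpha}^{\beta}(\mathbb{R}^{n})}^{2}$. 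Taking the supremum over all Carleson boxes yields $|\|\mu_{f,\phi,\alpha,\beta}|\|_{1-2(\alpha+\beta-1)/n}\lesssim\|f\|_{Q_{\alpha}^{\beta}(\mathbb{R}^{n})}^{2}<\infty$, which is the assertion.

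Since the decisive estimate is exactly Lemma \ref{Carleson m 1}(ii), quoted from Dafni--Xiao, there is no serious obstacle here; the only points requiring care are the bookkeeping of the exponent $n-2(\alpha+\beta-1)$ against $n\bigl(1-2(\alpha+\beta-1)/n\bigr)$, the verification that the chosen $\phi$ meets the compact-support and vanishing-mean hypotheses of that lemma, and the preliminary remark that elements of $Q_{\alpha}^{\beta}(\mathbb{R}^{n})$ are locally square integrable so the convolutions are well defined. An alternative to invoking Lemma \ref{Carleson m 1} would be to reprove the pointwise bound on $|f\ast\phi_{t}(x)|$ for $(t,x)\in S(I)$ by splitting $f=(f-f(3I))1_{3I}+(f-f(3I))1_{(3I)^{c}}+f(3I)$, using $\int\phi_{t}=0$ to annihilate the constant piece and the decay of $\phi$ on the far piece, but this merely retraces the proof of the quoted lemma.
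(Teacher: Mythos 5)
Your proof is correct and takes essentially the same route as the paper: the paper's entire proof of this theorem is the single line ``apply Lemma \ref{Carleson m 1}(ii) with $J=3I$,'' and your write-up just supplies the routine verifications (the exponent bookkeeping $n\bigl(1-2(\alpha+\beta-1)/n\bigr)=n-2(\alpha+\beta-1)$, the compact-support and zero-mean hypotheses on $\phi$, and the local square-integrability of $f$) that the paper leaves implicit.
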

\begin{proof}
The proof follows from (ii) Lemma \ref{Carleson m 1} by taking
$J=3I.$ 
\end{proof}
To establish the equivalent (\ref{eq cha q a b})
 we need another  theorem which contains  the converse of Theorem \ref{Carleson m
 2}.

\begin{theorem}\label{equ carle 1}
Consider the operator $\pi_{\phi}$ defined by
\begin{equation}\label{eq pi}
\pi_{\phi}(F)=\int_{0}^{\infty}F(t,\cdot)\ast\phi_{t}\frac{dt}{t}.
\end{equation}
(i) The operator $\pi_{\phi}$ is a bounded and surjective operator
form $T^{\infty}_{\alpha,\beta}$ to $Q_{\alpha}^{\beta}.$ More
precisely, if $F\in T^{\infty}_{\alpha,\beta}$ then the right-hand
side of the above integral converges to a function $f\in
Q_{\alpha}^{\beta}$ and
$$\|f\|_{Q_{\alpha}^{\beta}}\lesssim \|F\|_{T^{\infty}_{\alpha,\beta}}$$
and any $f\in Q_{\alpha,\beta}$ can be thus represented.\\
(ii) The operator $\pi_{\psi}$ initially defined on $F\in
T_{\alpha,\beta}^{1}$ with compact support in $\mathbb{R}^{1+n}_{+}$
extends to a  bounded and surjective operator form
$T^{1}_{\alpha,\beta}$ to $HH^{1}_{-\alpha,\beta}.$
\end{theorem}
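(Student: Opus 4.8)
The plan is to prove the four assertions --- boundedness and surjectivity of $\pi_{\phi}$ in each of (i) and (ii) --- from three ingredients: the Calder\'on reproducing formula (\ref{calder}) for surjectivity, the atomic decompositions of $T^{1}_{\alpha,\beta}$ and $HH^{1}_{-\alpha,\beta}$ (Theorems \ref{atomde T} and \ref{atom dec H}) for boundedness in (ii), and a direct local/global splitting for boundedness in (i). The organizing identity is the pairing
$$\langle\pi_{\phi}(F),\psi\rangle=\int_{\mathbb{R}^{1+n}_{+}}F(t,x)\,(\phi_{t}\ast\psi)(x)\,\frac{dtdx}{t},\qquad \psi\in\mathscr{S},$$
valid because $\phi$ is radial; this is how $\pi_{\phi}(F)$ is read as a distribution when $F$ is not bounded away from $t=0$, the integral being absolutely convergent since the vanishing of all moments of $\phi$ forces $|\phi_{t}\ast\psi(x)|\lesssim t^{N}$ as $t\to 0$ for every $N$. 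Surjectivity is the easy half: given $f$ in the target space, set $F(t,x)=(f\ast\phi_{t})(x)$, so $\pi_{\phi}(F)=f$ in $\mathscr{S}'$ by (\ref{calder}); it then remains only to check $F$ lies in the source space. For (ii) this is built in, $\|F\|_{T^{1}_{\alpha,\beta}}=\|f\|_{HH^{1}_{-\alpha,\beta}}$; for (i), $\|F\|_{T^{\infty}_{\alpha,\beta}}^{2}$ is, up to the equivalence of tents over balls with Carleson boxes, the $1-2(\alpha+\beta-1)/n$--Carleson norm of $d\mu_{f,\phi,\alpha,\beta}$, hence $\lesssim\|f\|_{Q_{\alpha}^{\beta}}^{2}$ by Theorem \ref{Carleson m 2}.

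For boundedness in (ii) I would argue by atoms. Write $F=\sum_{j}\lambda_{j}A_{j}$ with $T^{1}_{\alpha,\beta}$--atoms $A_{j}$ and $\sum_{j}|\lambda_{j}|\approx\|F\|_{T^{1}_{\alpha,\beta}}$ (Theorem \ref{atomde T}(i)); it then suffices to show $\pi_{\phi}(A)$ is a bounded multiple of an $HH^{1}_{-\alpha,\beta}$--atom for any $T^{1}_{\alpha,\beta}$--atom $A$, and to invoke the atomic decomposition of $HH^{1}_{-\alpha,\beta}$ (Theorem \ref{atom dec H}) together with Remark \ref{remark 314}, which places atoms, hence the series, in $\dot{L}^{2}_{-n/2+2(\beta-1)}$. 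If $A$ is supported in $T(B)$, $B=B(x_{B},r)$, then because $\phi$ is supported in the unit ball $\pi_{\phi}(A)$ is supported in $\bar{B}$, and it inherits all vanishing moments from $\phi$, so the cancellation condition of Definition \ref{Def HH} holds. The local Sobolev condition comes from the pairing identity and Cauchy--Schwarz: the factor $(\int_{T(B)}|A|^{2}\frac{dtdx}{t^{1-2(\alpha-\beta+1)}})^{1/2}$ is at most $|B|^{-1/2+(\alpha+\beta-1)/n}$ by the normalization of $A$, while $(\int_{T(B)}|\phi_{t}\ast\psi|^{2}\frac{dtdx}{t^{1+2(\alpha-\beta+1)}})^{1/2}\le\mu_{\psi,\phi,\alpha,\beta}(S(I))^{1/2}$ is controlled, by Lemma \ref{Carleson m 1}(ii) with $J$ a fixed dilate of $B$, by the square root of the double integral in Definition \ref{Def HH}(i). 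The passage from compactly supported $F$, on which $\pi_{\phi}$ is classical, to all of $T^{1}_{\alpha,\beta}$ is a density argument using this uniform bound.

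The real work --- and where I expect the main difficulty --- is boundedness in (i): the estimate $\|\pi_{\phi}(F)\|_{Q_{\alpha}^{\beta}}\lesssim\|F\|_{T^{\infty}_{\alpha,\beta}}$ together with convergence of the defining integral in the quotient space underlying $Q_{\alpha}^{\beta}$. I would use the equivalent seminorm of Lemma \ref{eq def}, fix a cube $I$, pick a large geometric dilate $cI$, and split $F=F1_{S(cI)}+F1_{\mathbb{R}^{1+n}_{+}\backslash S(cI)}=:G_{1}+G_{2}$, writing $\pi_{\phi}(F)=f_{1}+f_{2}$. For $f_{1}$, a Plancherel/Calder\'on computation gives $\|\pi_{\phi}(G)\|_{\dot{L}^{2}_{\alpha-\beta+1}}^{2}\lesssim\int|G(t,x)|^{2}\frac{dtdx}{t^{1+2(\alpha-\beta+1)}}$ --- obtained by Cauchy--Schwarz in $t$ against the weight $|t\xi|^{2(\alpha-\beta+1)}$, using $\int_{0}^{\infty}|\widehat{\phi}(s)|^{2}s^{2(\alpha-\beta+1)}\frac{ds}{s}<\infty$ --- so that (since $\dot{B}^{\alpha-\beta+1}_{2,2}=\dot{L}^{2}_{\alpha-\beta+1}$ and $0<\alpha-\beta+1<1$) the contribution of $f_{1}$ to the $Q_{\alpha}^{\beta}$--seminorm over $I$ is $\lesssim\|f_{1}\|_{\dot{L}^{2}_{\alpha-\beta+1}}^{2}\lesssim|cI|^{1-2(\alpha+\beta-1)/n}\|F\|_{T^{\infty}_{\alpha,\beta}}^{2}$, exactly the right power of $l(I)$. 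For $f_{2}$, the compact support of $\phi$ confines the part of $G_{2}$ that matters near $I$ to $t\gtrsim l(I)$; the kernel bound $|\phi_{t}(x+y-z)-\phi_{t}(x-z)|\lesssim|y|\,t^{-n-1}1_{\{|x-z|\lesssim t\}}$, a dyadic decomposition in $t$, Cauchy--Schwarz on each annulus, and the Carleson bound for $|F|^{2}t^{-1-2(\alpha-\beta+1)}dtdx$ give $|f_{2}(x+y)-f_{2}(x)|\lesssim|y|\,l(I)^{1-2\beta}\|F\|_{T^{\infty}_{\alpha,\beta}}$, where $\beta>1/2$ makes the annulus sum converge; inserting this, the integral $\int_{|y|<l(I)}|y|^{2-n-2(\alpha-\beta+1)}dy$ converges precisely because $\alpha<\beta$, and the powers of $l(I)$ collapse to $\|F\|_{T^{\infty}_{\alpha,\beta}}^{2}$. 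The same estimates applied to $\int_{\varepsilon}^{N}F\ast\phi_{t}\frac{dt}{t}$ and to the tails $t<\varepsilon$, $t>N$ yield the convergence claim, and surjectivity in (i) is the Calder\'on argument above. The delicate points to watch are the exponent bookkeeping in this split, the precise kernel estimate with its support restriction, and the fact that $Q_{\alpha}^{\beta}$ is only a seminorm space, so all identities hold modulo polynomials.
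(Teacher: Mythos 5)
Your part (ii) is essentially the paper's argument: a $T^{1}_{\alpha,\beta}$--atom goes to a bounded multiple of an $HH^{1}_{-\alpha,\beta}$--atom via Cauchy--Schwarz against $\phi_{t}\ast\psi$ and Lemma \ref{Carleson m 1}(ii), and one then sums the atomic series. Your part (i), however, takes a genuinely different route. The paper fixes a cube $I$, tests $f_{y}-f$ against $g\in C_{0}^{\infty}(I)$, and splits the $t$--integral into the three ranges $(0,|y|)$, $(|y|,l(I))$, $(l(I),\infty)$; the first two are handled by $L^{2}$ estimates in $x$ and a gradient bound on $\phi$, while the large--$t$ tail is absorbed by the $T^{1}$--$T^{\infty}$ duality inequality (\ref{atom t ineq}) after verifying that $G_{y}=\phi_{t}\ast(g_{-y}-g)1_{\{t\geq|y|\}}$ lies in $T^{1}_{\alpha,\beta}$ (which requires constructing an auxiliary weight $\omega$ and a Fourier--side computation), and the pieces are reassembled with Hardy's inequality. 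You instead split $F$ itself by the Carleson box $S(cI)$: the local piece is controlled by the Plancherel inequality $\|\pi_{\phi}(G)\|^{2}_{\dot{L}^{2}_{\alpha-\beta+1}}\lesssim\int|G|^{2}t^{-1-2(\alpha-\beta+1)}dtdx$ together with $\dot{B}^{\alpha-\beta+1}_{2,2}=\dot{L}^{2}_{\alpha-\beta+1}$ and $0<\alpha-\beta+1<1$, and the far piece by the pointwise Lipschitz bound $|f_{2}(x+y)-f_{2}(x)|\lesssim|y|\,l(I)^{1-2\beta}\|F\|_{T^{\infty}_{\alpha,\beta}}$ coming from the compact support of $\phi$, a mean--value kernel estimate, and a dyadic sum in $t$ that converges because $\beta>1/2$. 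Your exponent bookkeeping checks out: the far piece contributes $l(I)^{2-4\beta+n+2\beta-2\alpha}=l(I)^{n-2(\alpha+\beta-1)}$, and $\int_{|y|<l(I)}|y|^{1-2(\alpha-\beta+1)}\,d|y|$ converges precisely because $\alpha<\beta$; all the conditions you need are among the standing hypotheses. What your approach buys is the avoidance of the weight construction and the $T^{1}$--membership verification for $G_{y}$, at the harmless price of an $I$--dependent splitting of $F$. Your surjectivity argument --- set $F=f\ast\phi_{t}$ and apply the Calder\'{o}n formula, with membership in the source space supplied by Theorem \ref{Carleson m 2} for (i) and by the definition of $HH^{1}_{-\alpha,\beta}$ for (ii) --- is the intended one (the paper leaves it implicit). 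The only imprecision is that your opening pairing identity should be restricted to $\psi$ with vanishing moments (equivalently read modulo polynomials), since for $F\in T^{\infty}_{\alpha,\beta}$ the $t\to\infty$ tail of $\int_{0}^{\infty}F(t,\cdot)\ast\phi_{t}\,\frac{dt}{t}$ need not converge against arbitrary $\psi\in\mathscr{S}$; you flag this at the end, and the paper makes the same restriction.
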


\begin{proof}
(i) Taking $f=\pi_{\phi}(F)$, we only need to prove
$\sup_{I}D_{f,\alpha,\beta}(I)<\infty$ where
$$D_{f,\alpha,\beta}(I)=[l(I)]^{2\alpha-n+2\beta-2}\int_{|y|<l(I)}\int_{I}|f(x+y)-f(y)|^{2}\frac{dxdy}{|y|^{n+2(\alpha-\beta+1)}}.$$
Denote the function $x\rightarrow f(x+y)$ by $f_{y}$ and note that
the integral in (\ref{eq pi}) is valid in $\mathscr{S}'(R^{n})$
modulo constants, that is,  when it acts on test functions of
integration zero, we obtain
$$f_{y}-f=\int_{0}^{\infty}[(F(t,\cdot)\ast\phi_{t})_{y}-(F(t,\cdot)\ast\phi_{t})]\frac{dt}{t}\text{ in } \mathscr{S}'(R^{n}).$$
Fix a cube $I$ and $y\in B(0,l(I))$. For any $g\in
C_{0}^{\infty}(I),$  we write
\begin{eqnarray*}
|\langle f_{y}-f, g\rangle|
&\leq&\int_{0}^{|y|}\int_{\mathbb{R}^{n}}|F(t,x)||\phi_{t}\ast(g_{-y}-g)(x)|\frac{dt dx}{t}\\
&&+\int^{l(I)}_{|y|}\int_{\mathbb{R}^{n}}|(F(t,\cdot)\ast\phi_{t})(x+y)-(F(t,\cdot)\ast\phi_{t})(x)||g(x)|\frac{dt dx}{t}\\
&&+\int_{l(I)}^{\infty}\int_{\mathbb{R}^{n}}|F(t,x)||\phi_{t}\ast(g_{-y}-g)(x)|\frac{dt dx}{t}\\
&:=&A_{1}(g,y)+A_{2}(g,y)+A_{3}(g,y).
\end{eqnarray*}
For $A_{1}(g,y)$,  $|y|<l(I)$ verifies that $g_{-y}-g$ is supported
in the dilated cube $3I$. Also if $t\leq |y|$ we have that
$\phi_{t}\ast(g_{y}-g)$ is supported in the large cube $J=5I$. Then
we can get
\begin{eqnarray*}
A_{1}(g,y)
&\leq&\int_{0}^{|y|}\left(\int_{J}|F(t,x)|^{2}dx\right)^{1/2}\|\phi_{t}\ast(g_{-y}-g)\|_{L^{2}}\frac{dt}{t}\\
&\lesssim&\|\phi\|_{L^{1}}\|g\|_{L^{2}(I)}\int_{0}^{|y|}\left(\int_{J}|F(t,x)|^{2}dx\right)^{1/2}\frac{dt}{t}.
\end{eqnarray*}
For $A_{2},$ if $|y|<t$,  by changing  variable $z-y=z$, we get
\begin{eqnarray*}
|(F(t,\cdot)\ast\phi_{t})(x+y)-(F(t,\cdot)\ast\phi_{t})(x)|
&\lesssim&\int_{\mathbb{R}^{n}}|\phi(t^{-1}y+z)-\phi(z)||F(t,x-t z)|dz\\
&\lesssim &t^{-1}|y|\sup_{|\xi|\leq 1}|\nabla\phi(\xi)|\int_{|z|\leq
2}|F(t,x-t z)|dz\\
&\lesssim& C_{\phi}t^{-1}|y|\int_{|z|\leq 2}|F(t,x-t z)|dz
\end{eqnarray*}
with $C_{\phi}=\sup|\nabla\phi|<\infty.$ Fubini's theorem and the
fact that $g$ is supported in $I$ imply that
\begin{eqnarray*}
A_{2}(g,y)&\leq& C_{\phi}|y|\int_{|y|}^{l(I)}\int_{|z|\leq
2}\int_{I}|F(t,x-tz)||g(x)|dxdz\frac{dt}{t^{2}}\\
&\lesssim&CC_{\phi}\|g\|_{L^{2}}|y|\int_{|y|}^{l(I)}\int_{|z|\leq
2}\left(\int_{I}|F(t,x-tz_{t})|^{2}dx\right)^{1/2}dz\frac{dt}{t^{2}}
\end{eqnarray*}
where $|z_{t}|\leq 2$ and $C=Vol(B(0,2))$.

For $A_{3},$ let
$G_{y}(t,x)=\phi_{t}\ast(g_{-y}-g)(x)1_{\{(t,x):t\geq|y|\}}.$ Then
 the inequality  (\ref{atom t ineq}) implies that
$$A_{3}=\int_{\mathbb{R}^{1+n}_{+}}|F(t,x)G_{y}(t,x)|\frac{dtdx}{t}\lesssim \|F\|_{T^{\infty}_{\alpha,\beta}}\|G_{y}\|_{T^{1}_{\alpha,\beta}}$$
if  we claim that $G_{y}\in T^{1}_{\alpha,\beta}.$ To prove
$G_{y}\in T^{1}_{\alpha,\beta},$   we follow the proof of Lemma
\ref{atom dec H lemma} (i) and choose $\omega$ be the same function
as that in Theorem \ref{atom dec H} with
$0<2(\alpha+\beta-1)<\varepsilon<2-4+4\beta$. Note that if
$S_{y}:=\text{supp } (G_{y})$, then we obtain
 $\omega^{-1}(x)\simeq
l(I)^{-\varepsilon}t^{n-2(\alpha+\beta-1)+\varepsilon}$. Hence
\begin{eqnarray*}
&&\int_{\mathbb{R}^{n+1}_{+}}|G_{y}(t,x)|^{2}\omega^{-1}(t,x)\frac{dx
dt}{t^{1-2(\alpha-\beta+1)}}\\
&\leq&
l(I)^{-\varepsilon}\int_{l(I)}^{\infty}\int_{\mathbb{R}^{n}}|\phi_{t}\ast(g_{-y}-g)(x)|^{2}dx
t^{n-2(\alpha+\beta-1)+\varepsilon}\frac{
dt}{t^{1-2(\alpha-\beta+1)}}\\
&\leq&
l(I)^{-\varepsilon}\|g\|_{L^{1}}^{2}\int_{l(I)}^{\infty}\|\phi_{t}^{y}-\phi_{t}\|_{L^{2}}^{2}t^{n-4\beta+3+\varepsilon}
dt\\
&\leq&
l(I)^{n-\varepsilon}\|g\|_{L^{2}}^{2}\int_{l(I)}^{\infty}\int_{\mathbb{R}^{n}}|(\widehat{\phi_{t}^{y}}-\widehat{\phi_{t}})(\xi)|^{2}d\xi
t^{n-4\beta+3+\varepsilon}
dt\\
&\leq&
l(I)^{n-\varepsilon}\|g\|_{L^{2}}^{2}\int_{l(I)}^{\infty}\int_{\mathbb{R}^{n}}|1-e^{2\pi
iy\cdot\xi}|^{2}|\widehat{\phi(t|\xi|)}|^{2}d\xi
t^{n-4\beta+4+\varepsilon}\frac{dt}{t}\\
&\leq&
l(I)^{n-\varepsilon}\|g\|_{L^{2}}^{2}\int_{\mathbb{R}^{n}}\frac{|1-e^{2\pi
iy\cdot\xi}|^{2}}{|\xi|^{n-4\beta+4+\varepsilon}}d\xi\int_{0}^{\infty}|\widehat{\phi(t)}|^{2}t^{n-4\beta+4+\varepsilon}\frac{dt}{t}\\
&\leq&
C_{\phi}l(I)^{n-\varepsilon}\|g\|_{L^{2}}^{2}|y|^{-4\beta+4+\varepsilon}.
\end{eqnarray*}
In  the last inequality we have used the fact:
$$\int_{\mathbb{R}^{n}}\frac{|1-e^{2\pi
iy\xi}|^{2}}{|\xi|^{n-4\beta+4+\varepsilon}}d\xi
\lesssim|y|^{\varepsilon-4\beta+4}.$$
 In fact, we can write
\begin{eqnarray*}
\int_{\mathbb{R}^{n}}\frac{|1-e^{2\pi
iy\xi}|^{2}}{|\xi|^{n-4\beta+4+\varepsilon}}d\xi
&\lesssim&\int_{R^{n}}\frac{|1-e^{2\pi
iy\xi}|^{2}}{|y\xi|^{n-4\beta+4+\varepsilon}}|y|^{n-4\beta+4+\varepsilon}\frac{d(y\xi)}{|y|^{n}}\\
&\lesssim&|y|^{\varepsilon-4\beta+4}\left(\int_{|z|\leq
1}+\int_{|z>1|}\right)\frac{|1-e^{2\pi i
z}|^{2}}{|z|^{n-4\beta+4+\varepsilon}}dz\\
&:=&|y|^{\varepsilon-4\beta+4}(I_{1}+I_{2}).
\end{eqnarray*}
It is easy to see that
\begin{eqnarray*} I_{2}=\int_{|z|\geq 1}\frac{|1-e^{2\pi
i z}|^{2}}{|z|^{n+\varepsilon-4\beta+4}}dz\lesssim\int_{|z|\geq
1}\frac{|z|^{n-1}}{|z|^{n+\varepsilon-4\beta+4}}d|z|\lesssim 1,
\end{eqnarray*}
\begin{eqnarray*} I_{1}&=&\int_{|z|< 1}\frac{|1-e^{2\pi
i
z}|^{2}}{|z|^{n+\varepsilon-4\beta+4}}dz\lesssim\int_{|z|<1}\left|\sum_{k=1}^{\infty}\frac{(2\pi
i
z)^{k-1}}{k!}\right|^{2}\frac{|z|^{2}}{|z|^{n+\varepsilon-4\beta+4}}dz\\
&\lesssim&\int_{|z|<1}|z|^{1-\varepsilon+4\beta-4}d|z| \lesssim 1.
\end{eqnarray*}
Then
$\|G_{y}\|_{T^{1}_{\alpha,\beta}}\leq\|g\|_{L^{2}}\sqrt{l(I)^{n-\varepsilon}|y|^{\varepsilon-4\beta+4}}.$
Thus we get
\begin{eqnarray*}
\|f_{y}-f\|_{L^{2}(I)}&\leq&\sup_{g\in C^{\infty}_{0}(I),\|g\|_{2}\leq 1}|\langle f_{y}-f,g\rangle|\\
&\lesssim&
\int_{0}^{|y|}\left(\int_{J}|F(t,x)|^{2}dx\right)^{1/2}\frac{dt}{t}+|y|\int_{|y|}^{l(I)}\left(\int_{I}|F(t,x-tz_{t})|^{2}dx\right)^{1/2}\frac{dt}{t^{2}}\\
&&+\|F\|_{T^{\infty}_{\alpha,\beta}}l(I)^{(n-\varepsilon)/2}|y|^{\varepsilon/2-2\beta+2}.
\end{eqnarray*}
Then, by Hardy's inequality(see Stein \cite{Stein 1}),   we have
\begin{eqnarray*}
&&\int_{|y|<l(I)}\int_{I}|f(x+y)-f(x)|^{2}\frac{dxdy}{|y|^{n+2(\alpha-\beta+1)}}\\
&\lesssim&\int_{0}^{l(I)}\left(\int_{0}^{s}\left(\int_{J}|F(t,x)|^{2}dx\right)^{1/2}\frac{dt}{t}\right)\frac{ds}{s^{1+2(\alpha-\beta+1)}}\\
&&+\int_{0}^{l(I)}\left(\int_{s}^{l(I)}\left(\int_{I}|F(t,x-tz_{t})|^{2}dx\right)^{1/2}\frac{dt}{t^{2}}\right)^{2}\frac{ds}{s^{2(\alpha-\beta+1)-1}}\\
&&+\|F\|^{2}_{T^{\infty}_{\alpha,\beta}}l(I)^{n-\varepsilon}\int_{0}^{l(I)}\frac{s^{n-1}s^{\varepsilon-4\beta+4}}{s^{n+2(\alpha-\beta+1)}}ds\\
&\lesssim&
\int_{0}^{l(I)}\int_{J}|F(t,x)|^{2}t^{-1-2(\alpha-\beta+1)}dxdt\\
&&+\int_{0}^{l(I)}\int_{I}|F(t,x-tz_{t})|^{2}t^{-1-2(\alpha-\beta+1)}dtdx\\
&&+\|F\|^{2}_{T^{\infty}_{\alpha,\beta}}l(I)^{n-\varepsilon}l(I)^{\varepsilon-2(\alpha+\beta-1)}\\
&\lesssim&
l(I)^{n-2(\alpha+\beta-1)}\|F\|^{2}_{T^{\infty}_{\alpha,\beta}}
\end{eqnarray*}
since for each $t\leq l(I),$ $|z_{t}|\leq 2$ implies
$I-tz_{t}\subset J=5I.$  Then we get
$\sup_{I}D_{f,\alpha,\beta}(I)\lesssim
\|F\|^{2}_{T^{\infty}_{\alpha,\beta}}<\infty,$ that is $f\in
Q^{\beta}_{\alpha}$ and
$\|f\|_{Q_{\alpha}^{\beta}}\lesssim\|F\|_{T^{\infty}_{\alpha,\beta}}.$

(ii) Firstly, we verify  that for a $T^{1}_{\alpha,\beta}-$ atom
$a$, the integral in (\ref{eq pi})  converges in
$\dot{L}^{2}_{-n/2-2+2\beta}$ to a distribution which is a multiple
of an $HH^{1}_{-\alpha,\beta}-$atom. Assume  $a(x,t)$ is supported
in $T(B)$ for some $B.$ For  $\varepsilon>0$, let
$$\pi_{\phi}^{\varepsilon}(a)=\int_{\varepsilon}^{\infty}a(t,\cdot)\ast\phi_{t}(x)\frac{dxdt}{t}$$
and   $T^{\varepsilon}(B)$ be the truncated tent $T(B)\cap\{(t,x):
t>\varepsilon\}.$  The Cauchy-Schwarz inequality and (ii) of Lemma
\ref{Carleson m 1}  imply that
\begin{eqnarray*}
|\langle\pi_{\phi}^{\varepsilon}(a),\psi\rangle|
&\leq&\left(\int_{T^{\varepsilon}(B)}|a(t,x)|^{2}\frac{dxdt}{t^{1-2(\alpha-\beta+1)}}\right)^{1/2}
\left(\int_{T^{\varepsilon}(B)}|\psi\ast\phi_{t}(x)|^{2}\frac{dxdt}{t^{1+2(\alpha-\beta+1)}}\right)^{1/2}\\
&\lesssim&\left(l(\widetilde{B})^{2\alpha-n+2\beta-2}\int_{\widetilde{B}}\int_{\widetilde{B}}\frac{|\psi(x)-\psi(y)|^{2}}{t^{n+2(\alpha-\beta+1)}}dxdy\right)^{1/2}
\end{eqnarray*}
hold  for any $\psi\in \mathscr{S}(\mathbb{R}^{n}),$ where
$\widetilde{B}$ is some fixed dilate of the ball $B$. Since the
right-hand side is dominated by
$\|\psi\|_{Q^{\beta}_{\alpha}}\leq\|\psi\|_{\dot{L}^{2}_{n/2+2-2\beta}},$
the same argument also gives, for
$0<\varepsilon_{1}<\varepsilon_{2}$,
$$|\langle\pi_{\phi}^{\varepsilon_{1}}(a)-\pi_{\phi}^{\varepsilon_{2}}(a),\psi\rangle|\leq\left(\int_{T^{\varepsilon_{1}}(B)\setminus T^{\varepsilon_{2}}(B)}
|a(t,x)|^{2}\frac{dxdt}{t^{1-2(\alpha-\beta+1)}}\right)^{1/2}\|\psi\|_{\dot{L}^{2}_{n/2+2-2\beta}}.
$$
Thus $\pi_{\phi}(a)=\lim_{\varepsilon\longrightarrow
0}\pi^{\varepsilon}_{\phi}(a)$ exists in
$\dot{L}^{2}_{-n/2-2+2\beta}$. This distribution is supported in
$\widetilde{B}$ and satisfies condition (i) of Definition \ref{Def
HH} since $\phi$ satisfies the same condition. Therefore
$\pi_{\phi}(a)$ is a multiple of an $HH^{1}_{-\alpha,\beta}$ atom.
For a function $F=\sum_{j}^{}\lambda_{j}a_{j}$ in
$T^{1}_{\alpha,\beta}$ and a test function $\psi\in
\mathscr{S}(\mathbb{R}^{n})$, by Theorem \ref{atomde T}, we have
\begin{eqnarray*}
\int_{\mathbb{R}^{n+1}_{+}}(F(t,\cdot)\ast\phi_{t})(x)\psi(x)\frac{dxdt}{t}
=\sum_{j}\lambda_{j}\langle\pi_{\phi}a_{j},\psi\rangle
=\left\langle\sum_{j}\lambda_{j}\pi_{\phi}a_{j},\psi\right\rangle,
\end{eqnarray*}
since $\rho_{\phi}(\psi)(t,x)=(\phi_{t}\ast\psi)(x)$ is a function
in $T^{\infty}_{\alpha,\beta}.$ So
$\pi_{\phi}(F)=\sum_{j}\lambda_{j}\pi_{\phi}a_{j}\in
\mathscr{S}'(\mathbb{R}^{n})$
 and
  $$\|\pi_{\phi}(F)\|_{HH^{-1}_{-\alpha},\beta}\leq\inf\sum_{j}|\lambda_{j}|\approx\|F\|_{T^{1}_{\alpha,\beta}}$$
  the infimun being taken over all possible atomic decompositions of
  $F$ in $T^{1}_{\alpha,\beta}.$ This finishes the proof of Theorem
  \ref{equ carle 1}.
\end{proof}

By Theorem \ref{Carleson m 2}, Lemma \ref{atom dec H lemma} and
Theorem \ref{equ carle 1}, using a similar argument of   Dafni-Xiao
\cite[Theorem 7.1]{G. Dafni J. Xiao}, we can prove the following
duality theorem.
\begin{theorem}
The duality of $HH^{1}_{-\alpha,\beta}(\mathbb{R}^{n})$ is
$Q^{\beta}_{\alpha}(\mathbb{R}^{n})$ in the following sense: if
$g\in Q^{\beta}_{\alpha}$ then the linear functional
$$L(f)=\int_{\mathbb{R}^{n}}f(x)g(x)dx,$$
defined initially for $f\in
HH^{1}_{-\alpha,\beta}(\mathbb{R}^{n})\cap
C^{\infty}_{0}(\mathbb{R}^{n})$, has a bounded extension to all
elements of $HH^{1}_{-\alpha,\beta}(\mathbb{R}^{n})$ with $\|L\|\leq
C\|g\|_{Q^{\beta}_{\alpha}(\mathbb{R}^{n})}.$ Conversely, if $L$ is
a bounded linear functional on
$HH^{1}_{-\alpha,\beta}(\mathbb{R}^{n})$ then there is a function
$g\in Q^{\beta}_{\alpha}(\mathbb{R}^{n})$ so that
$\|g\|_{Q^{\beta}_{\alpha}(\mathbb{R}^{n})}\leq C\|L\|$ and $L$ can
be written in the above form for every $f\in
HH^{1}_{-\alpha,\beta}(\mathbb{R}^{n})\cap
C^{\infty}_{0}(\mathbb{R}^{n}).$
\end{theorem}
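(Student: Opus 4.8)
The plan is to follow the classical tent--space duality scheme of Dafni--Xiao, resting on three ingredients already at our disposal: the Carleson-measure bound of Theorem \ref{Carleson m 2}, the pairing inequality (\ref{atom t ineq}) together with the tent-space duality of Theorem \ref{atomde T}(iii), and the boundedness and surjectivity of $\pi_\phi$ on both scales (Theorem \ref{equ carle 1}); these are glued together by the Calder\'on reproducing formula (\ref{calder}). Throughout, the role played by the exponent $d$ and the weight $t^{-1}$ in Dafni--Xiao is taken over by $n-2(\alpha+\beta-1)$ and $t^{-1\pm2(\alpha-\beta+1)}$ respectively.

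First I would establish the embedding $Q^\beta_\alpha\hookrightarrow (HH^1_{-\alpha,\beta})^*$. Fix $g\in Q^\beta_\alpha$ and $f\in HH^1_{-\alpha,\beta}\cap C^\infty_0(\mathbb{R}^n)$. Inserting (\ref{calder}) into $L(f)=\int_{\mathbb{R}^n}f(x)g(x)dx$ and using that $\phi$ is radial to transfer one factor $\phi_t$ onto $g$, one gets
$$L(f)=\int_{\mathbb{R}^{1+n}_+}(f\ast\phi_t)(y)\,(g\ast\phi_t)(y)\,\frac{dtdy}{t}.$$
Then (\ref{atom t ineq}) yields $|L(f)|\le C\|f\ast\phi_t\|_{T^1_{\alpha,\beta}}\|g\ast\phi_t\|_{T^\infty_{\alpha,\beta}}=C\|f\|_{HH^1_{-\alpha,\beta}}\|g\ast\phi_t\|_{T^\infty_{\alpha,\beta}}$, while Theorem \ref{Carleson m 2} shows $\|g\ast\phi_t\|_{T^\infty_{\alpha,\beta}}^2$ is comparable to the $1-2(\alpha+\beta-1)/n$-Carleson norm of $d\mu_{g,\phi,\alpha,\beta}$, hence $\lesssim\|g\|_{Q^\beta_\alpha}^2$. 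This gives $|L(f)|\lesssim\|g\|_{Q^\beta_\alpha}\|f\|_{HH^1_{-\alpha,\beta}}$ on the dense subspace $HH^1_{-\alpha,\beta}\cap C^\infty_0$ (Lemma \ref{atom dec H lemma}(ii)), so $L$ extends boundedly to all of $HH^1_{-\alpha,\beta}$.

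Conversely, let $L$ be a bounded functional on $HH^1_{-\alpha,\beta}$. Since $\pi_\phi:T^1_{\alpha,\beta}\to HH^1_{-\alpha,\beta}$ is bounded (Theorem \ref{equ carle 1}(ii)), $F\mapsto L(\pi_\phi F)$ is a bounded functional on $T^1_{\alpha,\beta}$, and Theorem \ref{atomde T}(iii) furnishes $G\in T^\infty_{\alpha,\beta}$ with $\|G\|_{T^\infty_{\alpha,\beta}}\lesssim\|L\|$ and $L(\pi_\phi F)=\int_{\mathbb{R}^{1+n}_+}F(t,y)G(t,y)\frac{dtdy}{t}$ for every $F\in T^1_{\alpha,\beta}$. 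Set $g:=\pi_\phi(G)$; by Theorem \ref{equ carle 1}(i) we have $g\in Q^\beta_\alpha$ with $\|g\|_{Q^\beta_\alpha}\lesssim\|G\|_{T^\infty_{\alpha,\beta}}\lesssim\|L\|$. To identify $L$, take $f\in HH^1_{-\alpha,\beta}\cap C^\infty_0$; then $F:=f\ast\phi_\cdot\in T^1_{\alpha,\beta}$ and $\pi_\phi(F)=f$ by (\ref{calder}), so $L(f)=\int F\,G\,\frac{dtdy}{t}=\int(f\ast\phi_t)(y)\,G(t,y)\,\frac{dtdy}{t}$; on the other hand, unwinding $g=\pi_\phi(G)$ and again moving $\phi_t$ by radiality shows $\int_{\mathbb{R}^n}fg\,dx=\int(f\ast\phi_t)(y)\,G(t,y)\,\frac{dtdy}{t}$, so the two expressions agree.

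The genuine difficulty is entirely technical: justifying the Fubini interchanges above, the convergence of the Calder\'on formula in the relevant topologies (in particular $\pi_\phi(f\ast\phi_\cdot)=f$ and the passage of the $t$-integral through the $x$-integration against $g$), and ensuring that the pairing $\int fg$ is unambiguous although $g$ is only determined modulo polynomials — this is precisely where the cancellation condition (ii) of Definition \ref{Def HH} on $HH^1_{-\alpha,\beta}$-atoms enters. All of these are handled exactly as in Dafni--Xiao \cite[Theorem 7.1]{G. Dafni J. Xiao}: the restriction to compactly supported $f$, combined with the truncated operators $\pi_\phi^\varepsilon$ and the truncated tents already used in the proof of Theorem \ref{equ carle 1}, legitimizes every interchange, and one then passes to the limit using the density statement of Lemma \ref{atom dec H lemma}(ii) and the atomic decomposition of Theorem \ref{atom dec H}.
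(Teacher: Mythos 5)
Your proposal is correct and follows essentially the same route as the paper, which itself only cites Theorem \ref{Carleson m 2}, Lemma \ref{atom dec H lemma} and Theorem \ref{equ carle 1} and refers to the argument of Dafni--Xiao \cite[Theorem 7.1]{G. Dafni J. Xiao}; your write-up is in effect a fleshed-out version of exactly that scheme. The technical points you flag (Fubini/convergence of the Calder\'on formula, well-definedness of the pairing modulo polynomials via the cancellation condition) are the same ones deferred to Dafni--Xiao in the paper, so there is no divergence to report.
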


\section{Well-Posedness of Generalized Navier-Stokes Equations }
In this section, we deal with the well-posedness for the
 generalized  Navier-Stokes system in the setting of $Q_{\alpha}^{\beta}(\mathbb{R}^{n}).$
  Before stating our main result,
we first introduce a new critical spaces, i.e. the derivative spaces
of $Q_{\alpha}^{\beta}(\mathbb{R}^{n}).$ Then we   establish  some
theorems and lemmas which will be used in the proof of the
well-possedness.

\subsection{Some properties of $Q_{\alpha;\infty}^{\beta,-1}$}
\begin{definition}\label{de 1}
 For $\alpha>0$ and $\max\{\alpha,1/2\}<\beta<1$ with $\alpha+\beta-1\geq0$,  we say that a tempered distribution
 $f\in Q_{\alpha;\infty}^{\beta,-1}$ if and only if
 $$\sup_{x\in\mathbb{R}^{n},r\in(0,\infty)}r^{2\alpha-n+2\beta-2}
\int_{0}^{r^{2\beta}}\int_{|y-x|<r}| K_{t}^\beta\ast
f(y)|^{2}t^{-\frac{\alpha}{\beta}}dydt<\infty.
$$
\end{definition}
\begin{remark}
In Definition \ref{de 1}, if we take $\beta=1$, the space
$Q^{1,-1}_{\alpha,\infty}$ becomes the space
$Q^{-1}_{\alpha,\infty}$ introduced by Xiao in \cite{J. Xiao 1}.
\end{remark}
In the next theorem, we prove an useful characterization of
$Q_{\alpha,\infty}^{\beta,-1}$. For this purpose, we need the
following lemma.
\begin{lemma} \label{le 1}
For $\alpha>0$ and $\max\{\alpha,\frac{1}{2}\}<\beta<1$ with
$\alpha+\beta-1\geq0$, let
$f_{j,k}=\partial_{j}\partial_{k}(-\triangle)^{-1}f(j,k=1,2,\cdots,n)$.
If $f\in Q_{\alpha,\infty}^{\beta,-1}$ for
$\beta\in(\frac{1}{2},1)$, then $f_{j,k}\in
Q_{\alpha,\infty}^{\beta, -1}.$
 \end{lemma}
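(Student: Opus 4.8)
The plan is to deduce the lemma from the stronger fact that the double Riesz transform $R_{j}R_{k}=\partial_{j}\partial_{k}(-\triangle)^{-1}$ maps $Q_{\alpha,\infty}^{\beta,-1}$ boundedly into itself, worked out directly from the characterization in Definition \ref{de 1}. Abbreviating
$$\|f\|_{Q_{\alpha,\infty}^{\beta,-1}}^{2}:=\sup_{x\in\mathbb{R}^{n},\,r>0}r^{2\alpha-n+2\beta-2}\int_{0}^{r^{2\beta}}\int_{|y-x|<r}|K_{t}^{\beta}\ast f(y)|^{2}t^{-\alpha/\beta}\,dy\,dt$$
and setting $u(t,\cdot)=K_{t}^{\beta}\ast f$, one uses that $R_{j}R_{k}$ is a Fourier multiplier, hence commutes with $K_{t}^{\beta}\ast\,\cdot\,$; thus $K_{t}^{\beta}\ast f_{j,k}=R_{j}R_{k}u(t,\cdot)$ with $f_{j,k}=R_{j}R_{k}f$, and it suffices to fix a ball $B=B(x_{0},r)$ and bound $r^{2\alpha-n+2\beta-2}\int_{0}^{r^{2\beta}}\int_{B}|R_{j}R_{k}u(t,y)|^{2}t^{-\alpha/\beta}\,dy\,dt$ by $C\|f\|_{Q_{\alpha,\infty}^{\beta,-1}}^{2}$. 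First I would record that for each $t>0$ the function $u(t,\cdot)$ lies in $L^{2}_{loc}$ with at most polynomial growth (a consequence of $\|f\|_{Q_{\alpha,\infty}^{\beta,-1}}<\infty$ once one integrates in $t$), so that the splitting $u(t,\cdot)=u(t,\cdot)1_{2B}+u(t,\cdot)1_{(2B)^{c}}=:u_{1}(t,\cdot)+u_{2}(t,\cdot)$ is legitimate, and the two pieces can be handled separately.

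For the local piece, the $L^{2}(\mathbb{R}^{n})$-boundedness of $R_{j}R_{k}$ gives $\int_{B}|R_{j}R_{k}u_{1}(t,y)|^{2}\,dy\le\int_{2B}|u(t,y)|^{2}\,dy$, whence
$$r^{2\alpha-n+2\beta-2}\int_{0}^{r^{2\beta}}\int_{B}|R_{j}R_{k}u_{1}|^{2}t^{-\alpha/\beta}\,dy\,dt\lesssim(2r)^{2\alpha-n+2\beta-2}\int_{0}^{(2r)^{2\beta}}\int_{B(x_{0},2r)}|u|^{2}t^{-\alpha/\beta}\,dy\,dt\lesssim\|f\|_{Q_{\alpha,\infty}^{\beta,-1}}^{2},$$
the last step being the defining supremum for the ball $B(x_{0},2r)$. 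For the far piece, since $y\in B$ while $u_{2}(t,\cdot)$ is supported in $(2B)^{c}$, the local (Dirac) component of the kernel of $R_{j}R_{k}$ contributes nothing, and its Calder\'{o}n-Zygmund part $K$ satisfies $|K(y-z)|\lesssim|y-z|^{-n}\lesssim|z-x_{0}|^{-n}$ there. Decomposing $(2B)^{c}=\bigcup_{l\ge1}\big(2^{l+1}B\setminus2^{l}B\big)$ and applying Cauchy-Schwarz on each annulus yields, uniformly in $y\in B$,
$$|R_{j}R_{k}u_{2}(t,y)|\lesssim\sum_{l\ge1}(2^{l}r)^{-n/2}\Big(\int_{2^{l+1}B}|u(t,z)|^{2}\,dz\Big)^{1/2}.$$
Taking the $L^{2}\big((0,r^{2\beta}),\,t^{-\alpha/\beta}dt\big)$ norm of the right side and moving the sum outside by Minkowski's inequality, the defining supremum applied to the balls $B(x_{0},2^{l+1}r)$ bounds the $l$-th summand by $\lesssim(2^{l}r)^{-(\alpha+\beta-1)}\|f\|_{Q_{\alpha,\infty}^{\beta,-1}}$; since $\alpha+\beta-1>0$, the resulting geometric series converges to $\lesssim r^{-(\alpha+\beta-1)}\|f\|_{Q_{\alpha,\infty}^{\beta,-1}}$, so that after squaring, multiplying by $|B|\approx r^{n}$, and multiplying by the normalizing factor $r^{2\alpha-n+2\beta-2}=r^{2(\alpha+\beta-1)-n}$, the powers of $r$ cancel and one is again left with $\lesssim\|f\|_{Q_{\alpha,\infty}^{\beta,-1}}^{2}$. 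Combining the two contributions gives $\|f_{j,k}\|_{Q_{\alpha,\infty}^{\beta,-1}}\lesssim\|f\|_{Q_{\alpha,\infty}^{\beta,-1}}$.

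The hard part will be the far piece $u_{2}$: one must first verify that $K_{t}^{\beta}\ast f$ is locally square integrable with controlled growth so that all the pointwise kernel manipulations are legitimate, and the geometric summation over the annuli genuinely requires the strict inequality $\alpha+\beta-1>0$. At the borderline case $\alpha+\beta-1=0$ — where $Q_{\alpha,\infty}^{\beta,-1}$ is a derivative space of $BMO^{\beta}$ — the bare estimate $|K(y-z)|\lesssim|z-x_{0}|^{-n}$ is no longer summable in $l$, and one instead exploits the cancellation $\int_{a<|z|<b}K(z)\,dz=0$ of the double Riesz kernel: replacing $K(y-z)$ by $K(y-z)-K(x_{0}-z)$ produces an extra factor $|z-x_{0}|^{-1}$ that restores summability, exactly as in the classical argument that Riesz transforms act boundedly on $BMO^{-1}$.
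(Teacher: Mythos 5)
Your argument is clean and essentially correct in the regime $\alpha+\beta-1>0$: the commutation of $R_{j}R_{k}$ with $K_{t}^{\beta}\ast{}\cdot{}$, the $L^{2}$-bound for the piece supported in $2B$, and the annular Cauchy--Schwarz estimate for the far piece all go through, and the exponent bookkeeping ($(2^{l}r)^{-n/2}(2^{l}r)^{(n-2\alpha-2\beta+2)/2}=(2^{l}r)^{-(\alpha+\beta-1)}$) is right. This is a genuinely different and more elementary route than the paper's, which never splits $u$ itself: the paper writes $K_{t}^{\beta}\ast f_{j,k}=f_{r}+g_{r}$ with $g_{r}=\phi_{r}\ast R_{j}R_{k}K_{t}^{\beta}\ast f$ a mollification, controlled in $L^{\infty}$ through the embedding $Q^{\beta,-1}_{\alpha;\infty}\hookrightarrow\dot{B}^{1-2\beta}_{\infty,\infty}$ and duality with $\dot{B}^{2\beta-1}_{1,1}$, and only then localizes $f_{r}=(\delta-\phi_{r})\ast R_{j}R_{k}(K_{t}^{\beta}\ast f)$. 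The point of the mollification is that the kernel of $(\delta-\phi_{r})\ast R_{j}R_{k}$ decays like $r|x-w|^{-(n+1)}$ for $|x-w|\gtrsim r$, one full power better than the bare Calder\'{o}n--Zygmund bound $|x-w|^{-n}$, so the paper's dyadic series is $\sum_{k}2^{-k(2\alpha+2\beta-1)}$ and converges for all $\alpha+\beta\geq 1$ (indeed for all $\alpha+\beta>1/2$).

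That extra power is exactly what your proof lacks at the borderline $\alpha+\beta-1=0$, which is permitted by the hypotheses (e.g. $\beta=0.7$, $\alpha=0.3$), and the fix you sketch does not work for this norm. Replacing $K(y-z)$ by $K(y-z)-K(x_{0}-z)$ changes $R_{j}R_{k}u_{2}(t,\cdot)$ by the $t$-dependent constant $c(t)=\int_{(2B)^{c}}K(x_{0}-z)u(t,z)\,dz$. That is harmless for a $BMO$-type seminorm, but the quantity you must bound is $\int_{0}^{r^{2\beta}}\int_{B}|K_{t}^{\beta}\ast f_{j,k}(y)|^{2}t^{-\alpha/\beta}\,dy\,dt$, an $L^{2}$-average of the function itself rather than of its oscillation in $y$; you would therefore still have to control $r^{n}\int_{0}^{r^{2\beta}}|c(t)|^{2}t^{-\alpha/\beta}\,dt$, and $c(t)$ is given by precisely the annular series that fails to converge when $\alpha+\beta-1=0$. (Relatedly, at the borderline the absolute convergence of $\int_{(2B)^{c}}K(y-z)u(t,z)\,dz$ for fixed $t$ is itself in doubt, so the identification of your far piece with $K_{t}^{\beta}\ast f_{j,k}$ minus the local piece would need a separate justification.) To close the gap you must gain a factor $|z-x_{0}|^{-1}$ without discarding a constant --- for instance by adopting the paper's device of subtracting a mollification $\phi_{r}\ast(\cdot)$ of the whole expression and estimating the subtracted term via the Besov embedding --- or else restrict the claim to $\alpha+\beta-1>0$.
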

\begin{proof}
Take $\phi\in C^{\infty}_{0}(\mathbb{R}^{n})$ with $\hbox{supp
}(\phi)\subset B(0,1)=\{x\in \mathbb{R}^{n}: |x|<1\} $ and
$\int_{\mathbb{R}^{n}}\phi(x)dx=1.$ Write
$\phi_{r}(x)=r^{-n}\phi(\frac{x}{r})$ and define
$g_{r}(t,x)=\phi_{r}\ast\partial_{j}\partial_{k}(-\triangle)^{-1}
e^{-t(-\triangle)^\beta}f(x).$ Then
$$e^{-t(-\triangle)^\beta}f_{j,k}(x)=
\partial_{j}\partial_{k}(-\triangle)^{-1}e^{-t(-\triangle)^\beta}f(x)=f_{r}(t,x)+g_{r}(t,x).$$
Since $\dot{B}^{2\beta-1}_{1,1}$ is the predual of the homogeneous
Besov space $\dot{B}^{1-2\beta}_{\infty,\infty}$ and
$Q^{\beta,-1}_{\alpha,\infty}\hookrightarrow
\dot{B}^{1-2\beta}_{\infty,\infty}$(see Remark \ref{R1} and Theorem
\ref{Em X B} below), we have
$$\|g_{r}(t,\cdot)\|_{L^{\infty}}\leq\|\phi\|_{\dot{B}^{2\beta-1}_{1,1}}
\left\|\partial_{j}\partial_{k}(-\triangle)^{-1}e^{-t(-\triangle)^\beta}f\right\|_{\dot{B}^{1-2\beta}_{\infty,\infty}}\lesssim
Cr^{1-2\beta}\|f\|_{\dot{B}^{1-2\beta}_{\infty,\infty}}.$$ Therefore
$$\int_{0}^{r^{2\beta}}\int_{|y-x|<r}|g_{r}(t,y)|^{2}t^{-\alpha/\beta}dydt\lesssim
r^{n-2\alpha-2\beta+2}\|f\|^{2}_{\dot{B}^{1-2\beta}_{\infty,\infty}}
\lesssim
r^{n-2\alpha-2\beta+2}\|f\|^{2}_{Q^{\beta,-1}_{\alpha,\infty}}.$$
  To estimate $f_{r}$ we take $\varphi\in
  C^{\infty}_{0}(\mathbb{R}^{n})$ with $\varphi=1$ on
 $B(0,10)=\{x\in \mathbb{R}^{n}: |x|<10\}$ and define $\varphi_{r,x}=\varphi(\frac{y-x}{r}).$
 Then $f_{r}=F_{r,x}+G_{r,x}$ with
 $$G_{r,x}=\partial_{j}\partial_{k}(-\triangle)^{-1}\varphi_{r,x}e^{-t(-\triangle)^\beta}f-
 \phi_{r}\ast\partial_{j}\partial_{k}(-\triangle)^{-1}\varphi_{r,x}e^{-t(-\triangle)^\beta}f.$$
Using Plancherel's identity, we have
\begin{eqnarray*}
&&\int_{0}^{r^{2\beta}}\|\partial_{j}\partial_{k}(-\triangle)^{-1}\varphi_{r,x}
e^{-t(-\triangle)^\beta}f\|_{L^{2}}^{2}\frac{dt}{t^{\alpha/\beta}}\\
&\lesssim&\int_{0}^{r^{2\beta}}\left(\int_{\mathbb{R}^{n}}\left|\xi_{j}\xi_{k}|\xi|^{-2}
(\varphi_{r,x}e^{-t(-\triangle)^{\beta}}f)\hat(\xi)\right|^{2}d\xi\right)\frac{dt}{t^{\alpha/\beta}}\\
&\lesssim& \int_{0}^{r^{2\beta}}\|(\varphi_{r,x}
e^{-t(-\triangle)^\beta }f)\hat{}\
\|_{L^{2}}^{2}\frac{dt}{t^{\alpha\beta}}\\
&\lesssim&\int_{0}^{r^{2\beta}}\|\varphi_{r,x}e^{-t(-\triangle)^{\beta}}f\|^{2}_{L^{2}}\frac{dt}{t^{\alpha/\beta}}.
\end{eqnarray*}
 Similarly we can prove
 \begin{eqnarray*}
\int_{0}^{r^{2\beta}}\|\phi_{r}\ast
\partial_{j}\partial_{k}(-\triangle)^{-1}\varphi_{r,x}e^{-t(-\triangle)^{\beta}}\|^{2}_{L^{2}}\frac{dt}{t^{\alpha/\beta}}
 \lesssim\int_{0}^{r^{2\beta}}\|\varphi_{r,x}e^{-t(-\triangle)^{\beta}}f\|^{2}_{L^{2}}\frac{dt}{t^{\alpha/\beta}}.
 \end{eqnarray*}
Thus, we obtain
$$\int_{0}^{r^{2\beta}}\|G_{r,\cdot}(t,\cdot)\|^{2}_{L^{2}}\frac{dt}{t^{\alpha/\beta}}
\lesssim\int_{0}^{r^{2\beta}}\|\varphi_{r,x}e^{-t(-\triangle)^{\beta}}f\|^{2}_{L^{2}}\frac{dt}{t^{\alpha/\beta}}.
$$
To bound $F_{r,x},$ noting  that
$$\int_{|y-x|<r}|F_{r,x}(t,y)|^{2}dy\lesssim r^{n+1}
\int_{|w-x|\geq
r}|e^{-t(-\triangle)^{\beta}}f(w)|^{2}|x-w|^{-(n+1)}dw,$$
 we establish
 \begin{eqnarray*}
&&\int_{0}^{r^{2\beta}}\left(\int_{|y-x|<r}|F_{r,x}(t,y)|^{2}dy\right)\frac{dt}{t^{\alpha/\beta}}\\
&\lesssim& r^{n+1}\int_{|w-x|\geq
r}|x-w|^{-(n+1)}\left(\int_{0}^{r^{2\beta}}|e^{-t(-\triangle)^{\beta}}f(w)|^{2}\frac{dt}{t^{\alpha/\beta}}\right)dw\\
&\lesssim&\sum_{k=1}^{\infty}2^{-k(n+1)}\int_{|w-x|\leq
2^{k+1}r}\left(\int_{0}^{r^{2\beta}}|e^{-t(-\triangle)^{\beta}}f(w)|^{2}\frac{dt}{t^{\alpha/\beta}}\right)dw\\
&\lesssim&\sum_{k=1}^{\infty}2^{-k(n+1)}\left(\int_{0}^{(2^{k+1}r)^{2\beta}}\int_{|w-x|\leq
2^{k+1}r}|e^{-t(-\triangle)^{\beta}}f(w)|^{2}\frac{dt}{t^{\alpha/\beta}}\right)dw\\
&\lesssim&r^{n-2\alpha-2\beta+2}\|f\|_{Q_{\alpha;\infty}^{\beta,-1}}\sum_{k=1}^{\infty}2^{-k(2\alpha+2\beta-1)}
\lesssim r^{n-2\alpha-2\beta+2}\|f\|_{Q_{\alpha;\infty}^{\beta,-1}}.
\end{eqnarray*}
Now we have proved that
$$\int_{0}^{r^{2\beta}}\int_{|y-x|<r}|f_{r}(t,y)|^{2}\frac{dydt}{t^{\alpha/\beta}}\lesssim r^{n-2\alpha-2\beta+2}\|f\|^{2}_{Q_{\alpha;\infty}^{\beta,-1}},$$
that is, $f_{j,k}\in Q_{\alpha;\infty}^{\beta,-1}.$
\end{proof}

Using Lemma 4.2, we can prove the following theorem. By this
theorem, we can regard $Q^{\beta,-1}_{\alpha,\infty}$ as derivatives
of $Q^{\beta}_{\alpha}.$
\begin{theorem}  \label{th 2}
 For $\alpha>0$ and $\max\{\alpha,1/2\}<\beta<1$ with $\alpha+\beta-1\geq0$. $Q_{\alpha;\infty}^{\beta, -1}=\nabla\cdot (Q_{\alpha}^{\beta})^{n},$ where a
 tempered distribution $f\in \mathbb{R}^{n}$ belongs to
 $\nabla\cdot (Q_{\alpha}^{\beta})^{n}$ if and only if there are $f_{j}\in
 Q_{\alpha}^{\beta}$ such that $f=\sum_{j=1}^{n}\partial_{j}f_{j}.$
\end{theorem}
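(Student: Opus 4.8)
The plan is to deduce the theorem directly from the semigroup characterization (\ref{char 1}) of $Q_\alpha^\beta$ together with the boundedness of the second-order Riesz transforms furnished by Lemma \ref{le 1}, treating the two inclusions in $Q_{\alpha;\infty}^{\beta, -1}=\nabla\cdot (Q_{\alpha}^{\beta})^{n}$ separately.

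\emph{The inclusion $\nabla\cdot (Q_{\alpha}^{\beta})^{n}\subseteq Q_{\alpha;\infty}^{\beta,-1}$.} I would start from $f=\sum_{j=1}^n\partial_j f_j$ with each $f_j\in Q_\alpha^\beta$. Since the Fourier multipliers $e^{-t|\xi|^{2\beta}}$ and $i\xi_j$ commute, $K_t^\beta\ast f=\sum_{j=1}^n\partial_j\bigl(e^{-t(-\triangle)^\beta}f_j\bigr)$, whence
$$|K_t^\beta\ast f(y)|^2\le n\sum_{j=1}^n|\partial_j e^{-t(-\triangle)^\beta}f_j(y)|^2\le n\sum_{j=1}^n|\nabla e^{-t(-\triangle)^\beta}f_j(y)|^2 .$$
Multiplying by $r^{2\alpha-n+2\beta-2}$, integrating over $(0,r^{2\beta})\times\{|y-x|<r\}$ against $t^{-\alpha/\beta}\,dy\,dt$, and taking the supremum over $x$ and $r$, the right-hand side is controlled by $C\sum_j\|f_j\|_{Q_\alpha^\beta}^2$ thanks to (\ref{char 1}); hence $f\in Q_{\alpha;\infty}^{\beta,-1}$ with the corresponding norm bound.

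\emph{The inclusion $Q_{\alpha;\infty}^{\beta,-1}\subseteq\nabla\cdot (Q_{\alpha}^{\beta})^{n}$.} Given $f\in Q_{\alpha;\infty}^{\beta,-1}$, I would set $f_j=-\partial_j(-\triangle)^{-1}f$; from $\sum_{j=1}^n\partial_j^2(-\triangle)^{-1}=-\mathrm{Id}$ on $\mathscr S'$ one gets $f=\sum_{j=1}^n\partial_j f_j$, so it remains only to show $f_j\in Q_\alpha^\beta$. Using commutativity of multipliers again, the $k$-th component of $\nabla e^{-t(-\triangle)^\beta}f_j$ equals $-e^{-t(-\triangle)^\beta}f_{j,k}$, where $f_{j,k}=\partial_j\partial_k(-\triangle)^{-1}f$. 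By Lemma \ref{le 1} each $f_{j,k}$ lies in $Q_{\alpha;\infty}^{\beta,-1}$, so summing the defining quantities of $f_{j,1},\dots,f_{j,n}$ shows that $r^{2\alpha-n+2\beta-2}\int_0^{r^{2\beta}}\int_{|y-x|<r}|\nabla e^{-t(-\triangle)^\beta}f_j(y)|^2 t^{-\alpha/\beta}\,dy\,dt$ is bounded uniformly in $x,r$ by $\sum_k\|f_{j,k}\|_{Q_{\alpha;\infty}^{\beta,-1}}^2<\infty$. By the equivalence (\ref{char 1}) this gives $f_j\in Q_\alpha^\beta$, which finishes the proof.

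The substantive input is Lemma \ref{le 1}, the $Q_{\alpha;\infty}^{\beta,-1}$-boundedness of the operators $\partial_j\partial_k(-\triangle)^{-1}$; granting it and the characterization (\ref{char 1}), Theorem \ref{th 2} is essentially bookkeeping with Fourier multipliers. The only points needing a little care are verifying that $f_j=-\partial_j(-\triangle)^{-1}f$ is a genuine element of $Q_\alpha^\beta$ rather than merely a tempered distribution modulo polynomials — which follows once the Carleson estimate above is in hand — and checking that the identities $e^{-t(-\triangle)^\beta}\partial_j=\partial_j e^{-t(-\triangle)^\beta}$ and $\sum_j\partial_j^2(-\triangle)^{-1}=-\mathrm{Id}$ are legitimately applied to the distributions at hand.
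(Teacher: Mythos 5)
Your proposal is correct and follows essentially the same route as the paper: the inclusion $\nabla\cdot(Q_\alpha^\beta)^n\subseteq Q_{\alpha;\infty}^{\beta,-1}$ via the semigroup characterization (\ref{char 1}) of $Q_\alpha^\beta$, and the converse by setting $f_k=-\partial_k(-\triangle)^{-1}f$, invoking Lemma \ref{le 1} to place each $f_{j,k}=\partial_j\partial_k(-\triangle)^{-1}f$ in $Q_{\alpha;\infty}^{\beta,-1}$ so that the gradient condition of (\ref{char 1}) yields $f_k\in Q_\alpha^\beta$, and checking $\sum_k\partial_kf_k=f$ on the Fourier side. Your write-up is in fact slightly more explicit than the paper's about where (\ref{char 1}) is used in each direction.
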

\begin{proof} For any $f\in \nabla\cdot(Q_{\alpha}^{\beta})^{n},$
there exist $f_{1}, f_{2},\cdots, f_{n}\in Q_{\alpha}^{\beta}$ such
that $f=\sum_{j=1}^{n}\partial_{j}f_{j}.$ We have
$$\|f\|_{Q_{\alpha;\infty}^{\beta,-1}}\leq \sum_{j=1}^{n}\|\partial_{j}f_{j}\|_{Q_{\alpha;\infty}^{\beta,-1}}
\lesssim\sum_{j=1}^{n}\|f_{j}\|_{Q_{\alpha}^{\beta}}.$$ On the other
hand, if $f\in Q_{\alpha;\infty}^{\beta,-1}$ and
$f_{j,k}=\partial_{j}\partial_{k}(-\triangle)^{-1}f,$ then
$f_{j,k}\in Q_{\alpha;\infty}^{\beta,-1}$ according to Lemma \ref{le
1}. Thus  we have $f_{k}=-\partial_{k}(-\triangle)^{-1}f\in
Q_{\alpha}^{\beta}$ and
$$\widehat{(\sum_{k=1}^{n}\partial_{k}f_{k})}(\xi)=-\sum_{k=1}^{n}i\xi_{k}\widehat{f_{k}}(\xi)=-\sum_{k=1}^{n}i\xi_{k}
\times i\xi_{k}|\xi|^{-2}\widehat{f}(\xi)=\widehat{f}(\xi).$$
\end{proof}
\begin{remark}\label{R1}
$Q_{\alpha;\infty}^{\beta,-1}(\mathbb{R}^{n})$ is critical for
equations (\ref{eq1e}) since
$Q_{\alpha;\infty}^{\beta,-1}(\mathbb{R}^{n})$ is the derivative
space of $Q_{\alpha}^{\beta}(\mathbb{R}^{n})$ and
$Q_{\alpha}^{\beta}(\mathbb{R}^{n})$ is invariant under the scaling
$f(x)\longrightarrow \lambda^{2\beta-2}f(\lambda x).$
\end{remark}

In the following theorem we  apply the arguments in the proof of the
``minimality of $\dot{B}^{0}_{1,1}$" used by Frazier-Jaweth-Weiss in
\cite{Frazier Jawerth Weiss} to prove that
$\dot{B}^{1-2\beta}_{\infty,\infty}$  contains all  critical spaces
for  equations (\ref{eq1e}). The special case $\beta=1$ of this
theorem was proved by Cannone in \cite{Cannone}.

\begin{theorem}\label{Em X B}
If a translation invariant Banach space of tempered distributions
$X$ is a critical space of the generalized Navier-Stokes equations
(\ref{eq1e}). Then $X$ is continuously embedded in the Besov space
$\dot{B}^{1-2\beta}_{\infty,\infty}.$
\end{theorem}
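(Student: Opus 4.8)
The plan is to adapt the ``minimality'' argument of Frazier--Jawerth--Weiss \cite{Frazier Jawerth Weiss} (who treated $\dot B^{0}_{1,1}$) and of Cannone \cite{Cannone} (the case $\beta=1$): the homogeneous Besov norm of $\dot B^{1-2\beta}_{\infty,\infty}$ is reproduced, up to a fixed constant, by a single Littlewood--Paley block once we rescale, and criticality of $X$ makes all dyadic scales equivalent. First I record the functional--analytic input. Being a Banach space of tempered distributions, $X$ embeds continuously into $\mathscr S'(\mathbb R^{n})$; since $X$ is barrelled and $\mathscr S(\mathbb R^{n})$ is a Fréchet space, the separately continuous bilinear pairing $(f,\varphi)\mapsto\langle f,\varphi\rangle$ on $X\times\mathscr S(\mathbb R^{n})$ is jointly continuous, so there are a constant $C$ and a continuous seminorm $p$ on $\mathscr S(\mathbb R^{n})$ with
\[
|\langle f,\varphi\rangle|\le C\,\|f\|_{X}\,p(\varphi),\qquad f\in X,\ \varphi\in\mathscr S(\mathbb R^{n}).
\]
Writing $\widetilde\Psi_{j}(y)=\Psi_{j}(-y)$, we have $\Delta_{j}f(x)=(\Psi_{j}\ast f)(x)=\langle f,\tau_{x}\widetilde\Psi_{j}\rangle=\langle\tau_{-x}f,\widetilde\Psi_{j}\rangle$, and the translation invariance $\|\tau_{h}f\|_{X}=\|f\|_{X}$ gives, for every $j$,
\[
\|\Delta_{j}f\|_{L^{\infty}}\le C\,p(\widetilde\Psi_{j})\,\|f\|_{X}.
\]
Used directly this bound degenerates as $j\to\infty$ (the seminorms $p(\widetilde\Psi_{j})$ blow up), so criticality must enter through a rescaling.

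The key step is the scaling reduction. Fix $f\in X$ and $j\in\mathbb Z$ and set $g=f_{2^{-j}}$, that is $g(x)=2^{-j(2\beta-1)}f(2^{-j}x)$, the rescaling (\ref{scl 2}) with $\lambda=2^{-j}$; since $X$ is critical, $g\in X$ with $\|g\|_{X}=\|f\|_{X}$. An elementary computation on the Fourier side, using $\widehat{\Psi_{j}}(\xi)=\widehat{\Psi_{0}}(2^{-j}\xi)$, gives
\[
\widehat{\Delta_{0}g}(\xi)=2^{-j(2\beta-1)}2^{jn}\,\widehat{\Delta_{j}f}(2^{j}\xi),
\qquad\text{hence}\qquad
\Delta_{0}g(x)=2^{-j(2\beta-1)}(\Delta_{j}f)(2^{-j}x),
\]
so that $\|\Delta_{0}g\|_{L^{\infty}}=2^{j(1-2\beta)}\|\Delta_{j}f\|_{L^{\infty}}$. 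Applying the $j=0$ case of the previous display to $g$ in place of $f$, we obtain
\[
2^{j(1-2\beta)}\|\Delta_{j}f\|_{L^{\infty}}=\|\Delta_{0}g\|_{L^{\infty}}\le C\,p(\widetilde\Psi_{0})\,\|g\|_{X}=C\,p(\widetilde\Psi_{0})\,\|f\|_{X},
\]
with a constant $C\,p(\widetilde\Psi_{0})$ that is finite and independent of $j$ and $f$. Taking the supremum over $j\in\mathbb Z$ and recalling $\|f\|_{\dot B^{1-2\beta}_{\infty,\infty}}=\sup_{j}2^{j(1-2\beta)}\|\Delta_{j}f\|_{L^{\infty}}$ yields $\|f\|_{\dot B^{1-2\beta}_{\infty,\infty}}\lesssim\|f\|_{X}$, which is the asserted continuous embedding.

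I expect the only genuinely delicate point to be the first one: extracting the uniform estimate $|\langle f,\varphi\rangle|\le C\|f\|_{X}p(\varphi)$ from the bare hypothesis that $X$ is a Banach space of tempered distributions (equivalently, pinning down what ``Banach space of tempered distributions'' is taken to mean); in much of the literature this uniform bound is built into the definition, and otherwise it follows from the joint continuity of separately continuous bilinear maps on barrelled $\times$ Fréchet spaces. Everything afterwards is routine: rewriting $\Delta_{j}f$ as a pairing, transferring the translation onto $f$, and the change-of-variables behind the scaling identity. One should also note that $1-2\beta<0$ since $\beta\in(1/2,1)$, so $\dot B^{1-2\beta}_{\infty,\infty}$ consists of genuine distributions (modulo polynomials) and finiteness of the Littlewood--Paley seminorm is exactly the content of the embedding; in particular no low-frequency reconstruction issue arises.
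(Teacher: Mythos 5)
Your proof is correct and follows essentially the same "minimality" scheme as the paper: a single pairing bound from $X\hookrightarrow\mathscr S'$, translation invariance to upgrade it to an $L^{\infty}$ bound on a convolution, and the criticality scaling to make that bound uniform over all dyadic scales. The only difference is cosmetic: you test against the Littlewood--Paley block $\widetilde\Psi_{0}$ so that the final supremum is the Besov norm by definition, whereas the paper tests against the kernel $K_{1}^{2\beta}$ and then invokes the heat-semigroup characterization of $\dot B^{s}_{\infty,\infty}$ ($s<0$) from Miao--Yuan--Zhang; also, your worry about joint continuity is unnecessary, since only the one fixed test function $\widetilde\Psi_{0}$ is ever paired against elements of $X$, for which continuity of the single linear functional $f\mapsto\langle f,\widetilde\Psi_{0}\rangle$ suffices.
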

\begin{proof}
It follows from the assumption that $X\hookrightarrow \mathscr{S}'$
and for any $f\in X$
\begin{equation}\label{X em B2}
\|f(\cdot)\|_{X}=\|\lambda^{2\beta-1}f(\lambda\cdot-x_{0})\|_{X},
\lambda>0, x_{0}\in\mathbb{R}^{n}.
\end{equation}
 $X\hookrightarrow \mathscr{S}'$ implies that there exists
a constant $C$ such that
$$|\langle K_{1}^{2\beta},f\rangle|\leq C\|f\|_{X}, \forall f\in X.$$
According to the transformation invariant of $X,$ we have
$$\|e^{-(-\triangle)^{\beta}} f\|_{L^{\infty}}=\|K_{1}^{2\beta}\ast f\|_{L^{\infty}}\leq C\|f\|_{X} \quad\text{for }\forall f\in X.$$
Using the fact  $\widehat{f(\lambda
x)}(\xi)=\lambda^{-n}\widehat{f}(\xi/\lambda),$  the definition of
$e^{-(-\triangle)^{\beta}}f(x)$ and the scaling property (\ref{X em
B2}), we obtain that
$$\lambda^{2\beta-1}\|e^{-\lambda^{2\beta}(-\triangle)^{\beta}}f\|_{L^{\infty}}\leq C\|f\|_{X}.$$
It follows from Miao-Yuan-Zhang \cite[Prorposition 2.1]{C. Miao B.
Yuan B. Zhang} that for $s<0,$ $f\in \dot{B}_{\infty,\infty}^{s}$ if
and only if
 $$\sup_{r>0}r^{-s}\|e^{-r^{2\beta}(-\triangle)^{\beta}}f\|_{L^{\infty}}<\infty.$$
Thus $X\hookrightarrow \dot{B}^{1-2\beta}_{\infty,\infty}.$
\end{proof}

\begin{theorem}\label{Q Contain Bes}
Let $\alpha>0$ and $\max\{\alpha,\frac{1}{2}\}<\beta<1$ with
$\alpha+\beta-1\geq0$. If $1\leq q\leq \infty,$ $2<p<\infty$ and
$\alpha+\beta<1+\frac{n}{p}<2\beta$, then
$\dot{B}_{p,q}^{1+\frac{n}{p}-2\beta}$ and
$\dot{B}_{2,q}^{1+\frac{n}{2}-2\beta}$ are continuously embedded in
$Q_{\alpha;\infty}^{\beta,-1}.$
\end{theorem}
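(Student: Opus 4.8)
The plan is to obtain the embedding from the identification $Q_{\alpha;\infty}^{\beta,-1}=\nabla\cdot(Q_{\alpha}^{\beta})^{n}$ (Theorem \ref{th 2}) together with the Besov-into-$Q_{\alpha}^{\beta}$ inclusion of Theorem \ref{connection Q B}(ii). First I would reduce the case of $\dot{B}_{2,q}^{1+n/2-2\beta}$ to that of $\dot{B}_{p,q}^{1+n/p-2\beta}$: since $2<p<\infty$ and
$$1+\frac{n}{2}-2\beta=\Big(1+\frac{n}{p}-2\beta\Big)+n\Big(\frac{1}{2}-\frac{1}{p}\Big),$$
Theorem \ref{besov emdeing th}(ii) gives the continuous inclusion $\dot{B}_{2,q}^{1+n/2-2\beta}\hookrightarrow\dot{B}_{p,q}^{1+n/p-2\beta}$, so it suffices to show $\dot{B}_{p,q}^{1+n/p-2\beta}\hookrightarrow Q_{\alpha;\infty}^{\beta,-1}$.

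Fix $a\in\dot{B}_{p,q}^{s}$ with $s:=1+n/p-2\beta$. The hypothesis $1+n/p<2\beta$ forces $s<0$, so $\dot{B}_{p,q}^{s}$ consists of genuine tempered distributions. The operator $-\partial_{k}(-\triangle)^{-1}$ has the $C^{\infty}$ symbol $-i\xi_{k}|\xi|^{-2}$, which is homogeneous of degree $-1$ on $\mathbb{R}^{n}\setminus\{0\}$, and therefore maps $\dot{B}_{p,q}^{s}$ boundedly into $\dot{B}_{p,q}^{s+1}=\dot{B}_{p,q}^{2+n/p-2\beta}$. Putting $a_{k}:=-\partial_{k}(-\triangle)^{-1}a$ for $k=1,\dots,n$, we get $\|a_{k}\|_{\dot{B}_{p,q}^{2+n/p-2\beta}}\lesssim\|a\|_{\dot{B}_{p,q}^{s}}$, and moreover
$$\sum_{k=1}^{n}\partial_{k}a_{k}=-\sum_{k=1}^{n}\partial_{k}^{2}(-\triangle)^{-1}a=(-\triangle)(-\triangle)^{-1}a=a\quad\text{in }\mathscr{S}'(\mathbb{R}^{n}),$$
as in the Fourier-side computation in the proof of Theorem \ref{th 2}.

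Next I would apply Theorem \ref{connection Q B}(ii) with $\gamma_{1}=2+n/p-2\beta$ and $\gamma_{2}=n/p$: indeed $\gamma_{2}>0$, $\gamma_{1}-\gamma_{2}=2-2\beta$, $n/\gamma_{2}=p$, and $\gamma_{1}>\alpha-\beta+1$ because this inequality is equivalent to the hypothesis $\alpha+\beta<1+n/p$. Hence $\dot{B}_{p,q}^{2+n/p-2\beta}=\dot{B}^{\gamma_{1}}_{n/\gamma_{2},q}\subseteq Q_{\alpha}^{\beta}$, so each $a_{k}\in Q_{\alpha}^{\beta}$ with $\|a_{k}\|_{Q_{\alpha}^{\beta}}\lesssim\|a\|_{\dot{B}_{p,q}^{s}}$. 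Since $a=\sum_{k}\partial_{k}a_{k}$ with $a_{k}\in Q_{\alpha}^{\beta}$, Theorem \ref{th 2} yields $a\in\nabla\cdot(Q_{\alpha}^{\beta})^{n}=Q_{\alpha;\infty}^{\beta,-1}$ and
$$\|a\|_{Q_{\alpha;\infty}^{\beta,-1}}\le\sum_{k=1}^{n}\|\partial_{k}a_{k}\|_{Q_{\alpha;\infty}^{\beta,-1}}\lesssim\sum_{k=1}^{n}\|a_{k}\|_{Q_{\alpha}^{\beta}}\lesssim\|a\|_{\dot{B}_{p,q}^{1+n/p-2\beta}},$$
which, together with the reduction above, proves both embeddings. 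The argument is largely bookkeeping of Besov indices to fit the hypotheses of Theorems \ref{besov emdeing th}(ii), \ref{connection Q B}(ii) and \ref{th 2}; the only point demanding some care is the boundedness of the order $-1$ lifting operator $-\partial_{k}(-\triangle)^{-1}$ on homogeneous Besov spaces and the validity of $a=\sum_{k}\partial_{k}a_{k}$ as a true tempered distribution (not merely modulo polynomials), which is precisely why the sign condition $1+n/p<2\beta$, i.e.\ $s<0$, is assumed.
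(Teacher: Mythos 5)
Your proposal is correct, but it takes a genuinely different route from the paper. The paper proves $\dot{B}_{p,q}^{1+n/p-2\beta}\hookrightarrow Q_{\alpha;\infty}^{\beta,-1}$ head-on: it reduces to $q=\infty$, invokes the heat-semigroup characterization of negative-order homogeneous Besov spaces ($\sup_{r>0}r^{-s/2\beta}\|e^{-r(-\triangle)^{\beta}}f\|_{L^{p}}<\infty$ for $s=1+n/p-2\beta<0$, from Miao--Yuan--Zhang), applies H\"older's inequality on the ball $|y-x|<r$ to pass from the $L^{2}$ average to the $L^{p}$ norm, and then checks that the resulting integral $\int_{0}^{r^{2\beta}}t^{(1+n/p-2\beta)/\beta}t^{-\alpha/\beta}\,dt$ converges and scales like $r^{n-2(\alpha+\beta-1)}$ --- the hypothesis $\alpha+\beta<1+n/p$ is exactly the convergence condition for this integral. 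You instead lift $a$ to the vector of primitives $a_{k}=-\partial_{k}(-\triangle)^{-1}a\in\dot{B}_{p,q}^{2+n/p-2\beta}$, push these into $Q_{\alpha}^{\beta}$ via Theorem \ref{connection Q B}(ii) (where the same hypothesis reappears as $\gamma_{1}>\alpha-\beta+1$), and descend via the easy half of Theorem \ref{th 2}; your index bookkeeping checks out, including $\gamma_{1}-\gamma_{2}=2-2\beta$ and the reduction of the $\dot{B}_{2,q}$ case by Theorem \ref{besov emdeing th}(ii), which is identical to the paper's treatment of that case. The trade-off: the paper's argument is self-contained modulo the caloric characterization of $\dot{B}^{s}_{p,\infty}$, whereas yours reuses the structure theory already built ($Q_{\alpha;\infty}^{\beta,-1}=\nabla\cdot(Q_{\alpha}^{\beta})^{n}$ and the positive-order Besov-to-$Q_{\alpha}^{\beta}$ embedding) at the cost of one standard fact the paper never states --- that the degree $-1$ multiplier $\partial_{k}(-\triangle)^{-1}=R_{k}(-\triangle)^{-1/2}$ is bounded from $\dot{B}^{s}_{p,q}$ to $\dot{B}^{s+1}_{p,q}$ --- together with the realization issue (distribution versus distribution modulo polynomials) that you correctly flag and that is handled at the same level of rigor as the paper's own proof of Theorem \ref{th 2}. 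Note also that you only need the trivial inclusion $\nabla\cdot(Q_{\alpha}^{\beta})^{n}\subseteq Q_{\alpha;\infty}^{\beta,-1}$ with its norm bound, not the harder converse resting on Lemma \ref{le 1}.
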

\begin{proof} We first prove  $\dot{B}_{p,q}^{1+\frac{n}{p}-2\beta}\hookrightarrow Q_{\alpha;\infty}^{\beta,-1}.$
Since $\dot{B}_{p,q}^{1+\frac{n}{p}-2\beta}\subset
\dot{B}_{p,\infty}^{1+\frac{n}{p}-2\beta}$. Assume that $q=\infty,$
 it follows
form $1+\frac{n}{p}-2\beta<0$ and Proposition 2.1 of \cite{C. Miao
B. Yuan B. Zhang} that for any
$f\in\dot{B}_{p,\infty}^{1+\frac{n}{p}-2\beta}$,
 $$\sup_{r>0}r^{-(1+\frac{n}{p}-2\beta)/2\beta}\|e^{-r(-\triangle)^{\beta}}f\|_{L^{p}}<\infty.$$
Then we have
\begin{eqnarray*}
&&\int_{0}^{r^{2\beta}}\int_{|y-x|<r}|e^{-t(-\triangle)^{\beta}}f(y)|^{2}t^{-\alpha/\beta}dydt\\
&\lesssim&r^{n(p-2)/p}\int_{0}^{r^{2\beta}}\|e^{-t(-\triangle)^{\beta}}f\|_{L^{p}}^{2}t^{-\alpha/\beta}dt\\
&\lesssim&r^{n(p-2)/p}\int_{0}^{r^{2\beta}}
\left(\sup_{t>0}t^{-(1+\frac{n}{p}-2\beta)/2\beta}\|e^{-t(-\triangle)^{\beta}}f\|_{L^{p}}\right)^{2}
t^{(1+\frac{n}{p}-2\beta)/\beta}t^{-\alpha/\beta}dt\\
&\lesssim&r^{n(p-2)/p}\int_{0}^{r^{2\beta}}
t^{(1+\frac{n}{p}-2\beta)/\beta}t^{-\alpha/\beta}dt\\
&\lesssim&r^{n-2(\alpha+\beta-1)}.
\end{eqnarray*}
Thus $f\in Q_{\alpha;\infty}^{\beta,-1}.$ Now we prove
$\dot{B}_{2,q}^{1+\frac{n}{2}-2\beta}\hookrightarrow
Q_{\alpha;\infty}^{\beta,-1}.$  Since $0< \alpha<\beta$ and
$1/2<\beta<1,$ we can find
 $p\in(2,\infty)$ large enough such that $\alpha+\beta<1+\frac{n}{p}<2\beta$ and
 $1+\frac{n}{2}-2\beta
 =1+\frac{n}{p}-2\beta+n\left(\frac{1}{2}-\frac{1}{p}\right).$
Then (ii) of  Theorem \ref{besov emdeing th} implies
$\dot{B}_{2,q}^{1+\frac{n}{2}-2\beta}\hookrightarrow
\dot{B}_{p,q}^{1+\frac{n}{p}-2\beta}\hookrightarrow
Q_{\alpha;\infty}^{\beta,-1}.$
\end{proof}

\subsection{Several Technical Lemmas}
We prove several technical  lemmas used in the proof of our
well-posedness result.
\begin{lemma} \label{le 2} Given $\alpha\in(0,1).$ For a fixed $T\in (0,\infty]$
and a function $f(\cdot, \cdot)$ on $\mathbb{R}^{1+n}_{+},$ let
$A(t)=\int_{0}^{t}e^{-(t-s)(-\triangle)^{\beta}}(-\triangle)^{\beta}f(s,x)ds.$
Then
\begin{equation}\label{eq1c}
\int_{0}^{T}\|A(t,\cdot)\|^{2}_{L^{2}}\frac{dt}{t^{\alpha/\beta}}\lesssim\int_{0}^{T}\|f(t,\cdot)\|^{2}_{L^{2}}\frac{dt}{t^{\alpha/\beta}}.
\end{equation}
\end{lemma}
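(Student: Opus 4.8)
The plan is to pass to the Fourier side in $x$ and reduce the statement to a weighted one–dimensional estimate for the fractional heat kernel, uniform in the frequency. By Plancherel's theorem, since
\[
\widehat{A(t)}(\xi)=|\xi|^{2\beta}\int_{0}^{t}e^{-(t-s)|\xi|^{2\beta}}\widehat{f}(s,\xi)\,ds,
\]
it suffices to prove, with a constant independent of $\xi\in\mathbb{R}^{n}$,
\[
\int_{0}^{T}\Big|\,|\xi|^{2\beta}\int_{0}^{t}e^{-(t-s)|\xi|^{2\beta}}g(s)\,ds\,\Big|^{2}\frac{dt}{t^{\alpha/\beta}}\ \lesssim\ \int_{0}^{T}|g(t)|^{2}\frac{dt}{t^{\alpha/\beta}},
\]
where $g(s):=\widehat{f}(s,\xi)$; integrating this in $\xi$ and using Plancherel together with Tonelli then yields (\ref{eq1c}). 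We may assume the right-hand side of (\ref{eq1c}) is finite, which justifies the Fourier-side manipulations.

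Next I would eliminate the parameter $\lambda:=|\xi|^{2\beta}$ by scaling. Setting $t=\tau/\lambda$, $s=\sigma/\lambda$ and $G(\sigma):=g(\sigma/\lambda)$, a direct change of variables shows that $|\xi|^{2\beta}\int_{0}^{t}e^{-(t-s)|\xi|^{2\beta}}g(s)\,ds=(\mathcal{T}G)(\tau)$ with
\[
(\mathcal{T}G)(\tau):=\int_{0}^{\tau}e^{-(\tau-\sigma)}G(\sigma)\,d\sigma ,
\]
and that both sides of the previous display get multiplied by the same factor $\lambda^{\alpha/\beta-1}$, with $T$ replaced by $\lambda T$. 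Since $(\mathcal{T}G)(\tau)$ for $\tau\in(0,\lambda T)$ depends only on $G|_{(0,\lambda T)}$, extending $G$ by zero outside $(0,\lambda T)$ reduces everything to the single, parameter-free inequality
\[
\big\|\mathcal{T}G\big\|_{L^{2}((0,\infty),\,\tau^{-\gamma}d\tau)}\ \lesssim\ \big\|G\big\|_{L^{2}((0,\infty),\,\tau^{-\gamma}d\tau)},\qquad \gamma:=\alpha/\beta,
\]
with an absolute constant. This step is precisely where the uniformity in $\xi$ is achieved.

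To prove this weighted bound I would apply Schur's test. Conjugating $\mathcal{T}$ by the weight $\tau^{-\gamma/2}$ turns it into the integral operator on $L^{2}(d\tau)$ with kernel $\widetilde{K}(\tau,\sigma)=(\sigma/\tau)^{\gamma/2}e^{-(\tau-\sigma)}1_{\{0<\sigma<\tau\}}$. With the test function $\phi(\tau)=\tau^{-1/2}$, the two Schur conditions become
\[
\int_{0}^{\tau}\sigma^{(\gamma-1)/2}e^{-(\tau-\sigma)}\,d\sigma\ \lesssim\ \tau^{(\gamma-1)/2}
\qquad\text{and}\qquad
\int_{\sigma}^{\infty}\tau^{-(\gamma+1)/2}e^{-(\tau-\sigma)}\,d\tau\ \lesssim\ \sigma^{-(\gamma+1)/2}.
\]
The second is immediate: on $\{\tau\ge\sigma\}$ one has $\tau^{-(\gamma+1)/2}\le\sigma^{-(\gamma+1)/2}$ and $\int_{\sigma}^{\infty}e^{-(\tau-\sigma)}\,d\tau=1$. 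For the first I would split the integral over $(0,\tau/2)$ and $(\tau/2,\tau)$: on $(0,\tau/2)$ use $e^{-(\tau-\sigma)}\le e^{-\tau/2}$ and $e^{-\tau/2}\tau\lesssim1$, so the contribution is $\lesssim e^{-\tau/2}\tau^{(\gamma+1)/2}\lesssim\tau^{(\gamma-1)/2}$; on $(\tau/2,\tau)$ use $\sigma^{(\gamma-1)/2}\lesssim\tau^{(\gamma-1)/2}$ and $\int_{\tau/2}^{\tau}e^{-(\tau-\sigma)}\,d\sigma\le1$ (the factor $\sigma^{(\gamma-1)/2}$ is integrable near $\sigma=0$ since $\gamma>0$). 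Schur's test then delivers the desired bound with an absolute constant.

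The only genuinely delicate point is the uniformity of the constant in the frequency $\xi$; the scaling reduction above makes it transparent, and the remaining ingredient is the elementary Schur test exploiting the exponential decay of $e^{-(\tau-\sigma)}$. Everything else is Plancherel and Fubini–Tonelli, so I expect no further obstacle.
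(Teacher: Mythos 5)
Your proof is correct. You and the paper both begin with Plancherel and reduce the claim to a one--dimensional weighted $L^{2}$ bound in $t$ for each fixed frequency, but the finishing moves differ. The paper applies the Cauchy--Schwarz inequality with respect to the measure $|\xi|^{2\beta}e^{-(t-s)|\xi|^{2\beta}}\,ds$, uses $\int_{0}^{t}|\xi|^{2\beta}e^{-(t-s)|\xi|^{2\beta}}\,ds\leq 1$, interchanges the order of integration, and absorbs the weight by the elementary monotonicity $t^{-\alpha/\beta}\leq s^{-\alpha/\beta}$ for $s\leq t$ together with $\int_{s}^{\infty}|\xi|^{2\beta}e^{-(t-s)|\xi|^{2\beta}}\,dt=1$; this takes a few lines and needs only $\alpha/\beta>0$. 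You instead normalize $|\xi|^{2\beta}=1$ by scaling --- which indeed makes the uniformity in $\xi$ completely transparent --- and then run Schur's test with the test function $\tau^{-1/2}$ on the weight-conjugated kernel $(\sigma/\tau)^{\gamma/2}e^{-(\tau-\sigma)}1_{\{0<\sigma<\tau\}}$. Your two Schur conditions are verified correctly: the split at $\tau/2$ together with $\sup_{\tau>0}\tau e^{-\tau/2}<\infty$ and the integrability of $\sigma^{(\gamma-1)/2}$ near $0$ (valid since $\gamma>-1$) handle the first, and $\tau^{-(\gamma+1)/2}\leq\sigma^{-(\gamma+1)/2}$ on $\{\tau\geq\sigma\}$ handles the second, so the argument works for every $\gamma=\alpha/\beta>0$ and covers the stated range. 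The paper's route is marginally shorter (its Cauchy--Schwarz step amounts to a Schur test with constant test function on a dominating kernel); yours isolates the scaling structure more cleanly and would transfer verbatim to any convolution-in-time kernel with the same homogeneity and integrability.
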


\begin{proof}
According to the definition of $e^{-(t-s)(-\triangle)^{\beta}}$, by
Fubini's and Plancehrel's theorem, we have
\begin{eqnarray*}
I_{A}&=&\int_{0}^{\infty}\|A(t,\cdot)\|^{2}_{L^{2}}\frac{dt}{t^{\alpha/\beta}}\\
&=&\int_{0}^{\infty}\|\int_{0}^{t}|\xi|^{2\beta}e^{-(t-s)|\xi|^{2\beta}}\widehat{f(s,\xi)}d\xi\|^{2}_{L^{2}}\frac{dt}{t^{\alpha/\beta}}\\
&\lesssim&
\int_{0}^{\infty}\left(\int_{\mathbb{R}^{n}}
\left(\int_{0}^{t}\frac{|\xi|^{2\beta}}{\exp{(t-s)|\xi|^{2\beta}}}|\widehat{f(s,\xi)}|ds\right)^{2}d\xi\right)\frac{dt}{t^{\alpha/\beta}}\\
&\lesssim&
\int_{\mathbb{R}^{n}}\left(\int_{0}^{\infty}\left(\int_{0}^{\infty}1_{\{0\leq
s\leq
t\}}\frac{|\xi|^{2\beta}}{\exp{(t-s)|\xi|^{2\beta}}}|\widehat{f(s,\xi)}|ds\right)^{2}\frac{dt}{t^{\alpha/\beta}}\right)d\xi\\
&\lesssim&\!\!\!\!\int_{\mathbb{R}^{n}}\!\!\!\left(\!\!\int_{0}^{\infty}\!\!\left(\int_{0}^{\infty}\!\!1_{\{0\leq
s\leq
t\}}\frac{|\xi|^{2\beta}}{e^{{(t-s)|\xi|^{2\beta}}}}ds\right)\left(\!\int_{0}^{t}\frac{|\xi|^{2\beta}}
{e^{(t-s)|\xi|^{2\beta}}}|\widehat{f(s,\xi)}|^{2}ds
\right)ds\frac{dt}{t^{\alpha/\beta}}\right)d\xi.
\end{eqnarray*}
Since $\int_{0}^{t}|\xi|^{2\beta}e^{-(t-s)|\xi|^{2\beta}}ds\leq
e^{-t|\xi|^{2\beta}}(e^{t|\xi|^{2\beta}}-1)\leq 1,$ we have
\begin{eqnarray*}
I_{A}&\lesssim&\int_{\mathbb{R}^{n}}\left(\int_{0}^{\infty}\left(\int_{0}^{t}\frac{|\xi|^{2\beta}}{\exp{(t-s)|\xi|^{2\beta}}}|\widehat{f(s,\xi)}|^{2}ds
\right)ds\frac{dt}{t^{\alpha/\beta}}\right)d\xi\\
&\lesssim&
\int_{\mathbb{R}^{n}}\left(\int_{0}^{\infty}|\widehat{f(s,\xi)}|^{2}e^{s|\xi|^{2\beta}}\left(\int_{s}^{\infty}\frac{|\xi|^{2\beta}}{\exp{t|\xi|^{2\beta}}}
\right)dt\frac{ds}{s^{\alpha/\beta}}\right)d\xi\\
&\lesssim&
\int_{\mathbb{R}^{n}}\left(\int_{0}^{\infty}|\widehat{f(s,\xi)}|^{2}e^{s|\xi|^{2\beta}}(-e^{-t|\xi|^{2\beta}}|_{s}^{\infty})
\frac{ds}{s^{\alpha/\beta}}\right)d\xi\\
&\lesssim& \int_{0}^{\infty}||{f(t,\cdot)}\|_{L^{2}}^{2}
\frac{dt}{s^{\alpha/\beta}}.
\end{eqnarray*}
\end{proof}
\begin{lemma}\label{le5}
For $\beta\in(1/2,1)$ and $N(t,x)$ defined on $(0,1)\times
\mathbb{R}^{n},$ let $A(N)$ be the quantity
$$A(\alpha,\beta, N)=\sup_{x\in \mathbb{R}^{n},r\in (0,1)}r^{2\alpha-n+2\beta-2}\int_{0}^{r^{2\beta}}\int_{|y-x|<r}|f(t,x)|\frac{dxdt}{t^{\alpha/\beta}}.$$
Then  for each $k\in \mathbb{N}_{0}:=\mathbb{N}\cup \{0\}$ there
exists a constant $b(k)$ such that the following inequality holds:
\begin{equation}
\int^{1}_{0}\left\|t^{\frac{k}{2}}(-\triangle)^{\frac{k\beta+1}{2}}e^{-\frac{t}{2}(-\triangle)^{\beta}}\int_{0}^{t}N(s,\cdot)ds\right\|_{L^{2}}^{2}
\frac{dt}{t^{\alpha/\beta}}\leq b(k)A(\alpha,\beta,N)
\int^{1}_{0}\int_{\mathbb{R}^{n}}|N(s,x)|\frac{dxds}{s^{\alpha/\beta}}.
\end{equation}
\end{lemma}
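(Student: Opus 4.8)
The plan is to expand the squared $L^2$-norm into an inner product, use the semigroup property to convert it into a kernel estimate, and then bound the resulting triple integral by pairing the Carleson-type quantity $A(\alpha,\beta,N)$ against the $L^1$-type quantity $\int_0^1\int|N|\,s^{-\alpha/\beta}\,dx\,ds$. We may assume both are finite, otherwise there is nothing to prove. First I would set $T_t:=t^{k/2}(-\triangle)^{(k\beta+1)/2}e^{-\frac{t}{2}(-\triangle)^{\beta}}$, a self-adjoint Fourier multiplier, so that $(T_t)^{*}T_t=(T_t)^2=t^{k}(-\triangle)^{k\beta+1}e^{-t(-\triangle)^{\beta}}$ has symbol $t^{k}|\xi|^{2k\beta+2}e^{-t|\xi|^{2\beta}}$. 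Rescaling $\xi\mapsto t^{-1/(2\beta)}\xi$ in the Fourier integral, the convolution kernel $\Phi_t$ of $(T_t)^2$ is $\Phi_t(x)=t^{-(n+2)/(2\beta)}\Psi(t^{-1/(2\beta)}x)$, where $\Psi$ is the inverse Fourier transform of $|\eta|^{2k\beta+2}e^{-|\eta|^{2\beta}}$; since that function is rapidly decreasing, smooth off the origin, and has there a singularity of order $2k\beta+2>0$, one gets $|\Psi(z)|\lesssim_k(1+|z|)^{-(n+2k\beta+2)}$ and hence $|\Phi_t(x)|\lesssim_k t^{-(n+2)/(2\beta)}(1+t^{-1/(2\beta)}|x|)^{-(n+2k\beta+2)}$.

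Next I would expand the square, apply Plancherel and Fubini, and symmetrize in $(s,x)\leftrightarrow(s',y)$ (legitimate since $\Phi_t$ is real and even) to bound the left-hand side $L$ of the lemma by
\begin{equation*}
L\le 2\int_0^1\frac{dt}{t^{\alpha/\beta}}\int_0^{t}ds'\int_0^{s'}ds\iint|\Phi_t(x-y)|\,|N(s,x)|\,|N(s',y)|\,dx\,dy .
\end{equation*}
Interchanging the order of integration so that $|N(s',y)|$ sits outside, it then suffices to establish the pointwise estimate
\begin{equation*}
\int_{s'}^{1}\frac{dt}{t^{\alpha/\beta}}\int_0^{s'}ds\int_{\mathbb{R}^n}|\Phi_t(x-y)|\,|N(s,x)|\,dx\ \lesssim_k\ A(\alpha,\beta,N)\,(s')^{-\alpha/\beta},\qquad y\in\mathbb{R}^n,\ s'\in(0,1),
\end{equation*}
since then $L\lesssim_k A(\alpha,\beta,N)\int_0^1\int_{\mathbb{R}^n}|N(s',y)|\,(s')^{-\alpha/\beta}\,dy\,ds'$, which is exactly the asserted inequality.

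To prove the pointwise estimate I would fix $t\in(s',1)$ and split $\mathbb{R}^n$ into $\{|x-y|<t^{1/(2\beta)}\}$ and the dyadic annuli $\{2^{j-1}t^{1/(2\beta)}\le|x-y|<2^jt^{1/(2\beta)}\}$, $j\ge1$, on the $j$-th of which $|\Phi_t(x-y)|\lesssim_k t^{-(n+2)/(2\beta)}2^{-j(n+2k\beta+2)}$. On the ball $B(y,2^jt^{1/(2\beta)})$ the parabolic interval $(0,(2^jt^{1/(2\beta)})^{2\beta})$ contains $(0,s')$ because $s'<t\le(2^jt^{1/(2\beta)})^{2\beta}$, and $s^{-\alpha/\beta}\ge(s')^{-\alpha/\beta}$ for $s<s'$; hence the definition of $A(\alpha,\beta,N)$ bounds $\int_0^{s'}\int_{B(y,2^jt^{1/(2\beta)})}|N(s,x)|\,dx\,ds$ by $(s')^{\alpha/\beta}A(\alpha,\beta,N)$ times $(2^jt^{1/(2\beta)})^{\,n-2(\alpha+\beta-1)}$ when $2^jt^{1/(2\beta)}\le1$, and — after covering the ball by $\lesssim(2^jt^{1/(2\beta)})^{n}$ unit balls and using the $r\to1$ case of the defining supremum together with monotone convergence — by $(s')^{\alpha/\beta}A(\alpha,\beta,N)(2^jt^{1/(2\beta)})^{n}$ when $2^jt^{1/(2\beta)}>1$. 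Multiplying by the annular bound on $|\Phi_t|$ and summing over $j$ — the series converges because the exponents $2k\beta+2\alpha+2\beta$ (governing $2^jt^{1/(2\beta)}\le1$) and $2k\beta+2$ (governing $2^jt^{1/(2\beta)}>1$) are both positive, and, using $t<1$ and $k\ge0$, the second regime contributes no more than the first — yields $\int_0^{s'}ds\int_{\mathbb{R}^n}|\Phi_t(x-y)|\,|N(s,x)|\,dx\lesssim_k A(\alpha,\beta,N)(s')^{\alpha/\beta}t^{-\alpha/\beta-1}$. Since $\int_{s'}^{1}t^{-2\alpha/\beta-1}\,dt\lesssim(s')^{-2\alpha/\beta}$, the pointwise estimate follows.

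The hard part will be this pointwise estimate: one must match the decay of $\Phi_t$ against the $r^{\,n-2(\alpha+\beta-1)}$-growth of the mass of $N$ on balls of radius $r$ encoded by $A(\alpha,\beta,N)$ so that all powers of $t$, $2^j$ and $s'$ cancel exactly, and it is here that the hypotheses $k\ge0$ and $\alpha+\beta-1\ge0$ enter — to guarantee uniform convergence of the geometric series in $j$ and to control the "large annulus" regime $2^jt^{1/(2\beta)}>1$ handled by a one-scale covering. Everything else is a routine use of Fubini, Plancherel, and the scaling of the kernel.
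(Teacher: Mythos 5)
Your proof is correct, and its skeleton is the same as the paper's: expand the squared $L^{2}$-norm into a double time integral, symmetrize over $s<s'$, represent the resulting operator by a convolution kernel with polynomial decay at scale $t^{1/(2\beta)}$, and pair that decay against the Carleson quantity $A(\alpha,\beta,N)$ on one factor of $N$ and the weighted $L^{1}$-norm on the other. The execution differs in two respects. The paper first discards the weight via $t^{-\alpha/\beta}\le s^{-\alpha/\beta}$ (for $t>s$) and then computes $\int_{s}^{1}t^{k}(-\triangle)^{k\beta+1}e^{-t(-\triangle)^{\beta}}\,dt$ in closed form as $(-\triangle)^{1-\beta}\bigl(L_{k}(1)-L_{k}(s)\bigr)$ with $L_{k}(t)=\sum_{m=0}^{k}b_{m}(k)t^{m}(-\triangle)^{m\beta}e^{-t(-\triangle)^{\beta}}$, so that only one kernel estimate, at the single scale $s^{1/(2\beta)}$ and over unit lattice cells, is needed; you instead keep $t$ free, estimate the kernel of $T_{t}^{*}T_{t}$ at scale $t^{1/(2\beta)}$ with dyadic annuli, and perform $\int_{s'}^{1}t^{-2\alpha/\beta-1}\,dt\lesssim (s')^{-2\alpha/\beta}$ at the very end. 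Your route avoids the explicit antiderivative $L_{k}$ and is in that sense more self-contained, but the growing annuli force you to treat radii $2^{j}t^{1/(2\beta)}>1$ separately (the supremum defining $A$ only ranges over $r\in(0,1)$), which you handle correctly by a unit-ball covering; the paper never leaves the scale $t^{1/(2\beta)}<1$ and so avoids this case. One small imprecision: the parenthetical ``using $t<1$ and $k\ge 0$'' for why the large-annulus regime is dominated by the small one should invoke $t<1$ together with $\alpha+\beta-1\ge 0$ (the $j$-sum there produces $t^{-1/\beta}$, which is $\le t^{-\alpha/\beta-1}$ exactly when $\alpha+\beta\ge 1$); you do identify this hypothesis correctly in your closing paragraph, so this is a wording issue rather than a gap.
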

\begin{proof}
Using the inner-product $\langle\cdot,\cdot\rangle$ in $L^{2}$ with
respect to the spatial variable $x\in \mathbb{R}^{n},$ we obtain
\begin{eqnarray*}
&&I=\int^{1}_{0}\left\|t^{\frac{k}{2}}(-\triangle)^{\frac{k\beta+1}{2}}e^{-\frac{t}{2}(-\triangle)^{\beta}}\int_{0}^{t}N(s,\cdot)ds\right\|_{L^{2}}^{2}
\frac{dt}{t^{\alpha/\beta}}\\
&&=\int_{0}^{1}\langle\int_{0}^{t}t^{\frac{k}{2}}(-\triangle)^{\frac{k\beta+1}{2}}e^{-\frac{t}{2}(-\triangle)^{\beta}}N(s,\cdot)d
s,\int_{0}^{t}t^{\frac{k}{2}}(-\triangle)^{\frac{k\beta+1}{2}}e^{-\frac{t}{2}(-\triangle)^{\beta}}N(h,\cdot)d
h\rangle_{L^{2}}\frac{dt}{t^{\alpha/\beta}}\\
&&=2\mathcal{R}e\left( \int\int_{0<h<s<1}\langle
N(s,\cdot),\int_{s}^{1}t^{k}(-\triangle)^{k\beta+1}e^{-t(-\triangle)^{\beta}}N(h,\cdot)dt\rangle_{L^{2}}\frac{dhds}{s^{\alpha/\beta}}\right)\\
&&=2\mathcal{R}e\left(\int\int_{0<h<s<1}\langle
N(s,\cdot),(-\triangle)^{1-\beta}\int_{s}^{1}(t(-\triangle)^{\beta})^{k}
e^{-t(-\triangle)^{\beta}}N(h,\cdot)d(t(-\triangle^{\beta}))\rangle_{L^{2}}\frac{dhds}{s^{\alpha/\beta}}\right)\\
&&\leq\int_{0}^{1}\left|\langle
N(s,\cdot),(-\triangle)^{1-\beta}\int_{0}^{s}(L_{k}(1)-L_{k}(s))N(h,\cdot)dh\rangle_{L^{2}}\right|\frac{ds}{s^{\alpha/\beta}}
\end{eqnarray*}
where
$L_{k}(t)=\sum_{m=0}^{k}b_{m}(k)t^{m}(-\triangle)^{m\beta}e^{-t(-\triangle)^{\beta}}.$

We consider the $\nu-$th derivative of the kernel $K^{\beta}_{t}(x)$
and let
$$(K_{1}^{\beta})^{\nu}(x)=(-\triangle)^{\nu/2}K_{1}^{\beta}(x)\quad \text{ and }
\quad
(K_{t}^{\beta})^{\nu}(x)=(-\triangle)^{\nu/2}K_{t}^{\beta}(x).$$
Using the estimates
$$(K_{1}^{\beta})^{\nu}(x)\lesssim\frac{1}{(1+|x|)^{n+\nu}}\quad\text{
and }\quad
(K_{t}^{\beta})^{\nu}(x)=t^{-\frac{\nu}{2\beta}}t^{-\frac{n}{2\beta}}(K^{\beta}_{1})^{\nu}\left(\frac{x}{t^{1/2\beta}}\right)$$
(see Miao-Yuan-Zhang\cite[Lemma 2.2 and Remark 2.1]{C. Miao B. Yuan
B. Zhang}), we get the kernel of the above operator satisfies the
estimate:
\begin{eqnarray*}
(-\triangle)^{1-\beta}L_{k}(t)(x,y)&\lesssim&\sum_{m=0}^{k}t^{m-\frac{2m\beta+n+2-2\beta}{2\beta}}
\frac{b_{m}(k)}{\left(1+t^{-1/2\beta}|x-y|\right)^{n+2m\beta+2-2\beta}}\\
&\lesssim&
t^{-\frac{n+2-2\beta}{2\beta}}\sum_{m=0}^{k}\frac{b_{m}(k)}{\left(1+t^{-1/2\beta}|x-y|\right)^{n+2m\beta+2-2\beta}},
\end{eqnarray*}
 we have
\begin{eqnarray*}
&&\left|\int_{0}^{s}(-\triangle)^{1-\beta}L_{k}(s)N(h,x)dh\right|\\
&\lesssim&
s^{-\frac{n+2-2\beta}{2\beta}}\int^{s}_{0}\int_{\mathbb{R}^{n}}\sum_{m=0}^{k}b_{m}(k)
\frac{|N(h,y)|dydh}{\left(1+s^{-1/2\beta}|x-y|\right)^{n+2m\beta+2-2\beta}}\\
&\lesssim&s^{-\frac{n+2-2\beta}{2\beta}}\sum_{m=0}^{k}b_{m}(k)\sum_{k\in\mathbb{Z}^{n}}\int_{0}^{s}\int_{\frac{x-y}{t^{1/2\beta}}\in
k+[0,1]^{n}}\frac{|N(h,y)|dydh}{\left(1+s^{-1/2\beta}|x-y|\right)^{n+2m\beta+2-2\beta}}\\
&\lesssim&b(k)\sup_{x\in
R^{n}}\sup_{0<t<1}t^{-\frac{n+2-2\beta}{2\beta}}\int_{0}^{t}\int_{|x-y|<t^{1/2\beta}}|N(h,y)|dydh\\
&\lesssim&b(k)\sup_{x\in
R^{n}}\sup_{0<\rho<1}\rho^{2\alpha-n+2\beta-2}\int_{0}^{\rho^{2\beta}}\int_{|x-y|<\rho}|N(h,y)|\frac{dydh}{h^{\alpha/\beta}}.
\end{eqnarray*}
Hence we can get
$$I\lesssim b(k)\left(\int_{0}^{1}\int_{\mathbb{R}^{n}}|N(s,x)|\frac{dsdx}{s^{\alpha/\beta}}\right)A(\alpha,\beta,N).$$ This completes
the proof.
\end{proof}
\begin{remark}
Similarly when $k=0$, we can prove the following inequality:
\begin{equation}
\int^{1}_{0}\left\|(-\triangle)^{\frac{1}{2}}e^{-t(-\triangle)^{\beta}}\int_{0}^{t}N(s,\cdot)ds\right\|_{L^{2}}^{2}
\frac{dt}{t^{\alpha/\beta}}\lesssim A(\alpha,\beta,N)
\int^{1}_{0}\int_{\mathbb{R}^{n}}|N(s,x)|\frac{dxds}{s^{\alpha/\beta}}.
\end{equation}

\end{remark}
\begin{lemma}{\label{le4}} For $1\leq j,k\leq n$ and $t>0,$ the operator $Q_{j,k,t}^{\beta}=\frac{1}{\triangle}\partial_{j}\partial_{k}e^{-t(-\triangle)^{\beta}}$
is a convolution operator with the kernel
$K_{j,k,t}^{\beta}(x)=\frac{1}{t^{{n}/{2\beta}}}K_{j,k}^{\beta}(\frac{x}{t^{{1}/{2\beta}}})$
for a smooth function $K_{j,k}^{\beta}$ such that for all $\alpha\in
\mathbb{N}^{n}$
$$(1+|x|)^{n+|\alpha|}\partial^{\alpha}K_{j,k}^{\beta}\in L^{\infty}(\mathbb{R}^{n}).$$
\end{lemma}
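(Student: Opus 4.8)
The plan is to reduce the statement to known facts about the kernel $K_1^\beta$ of the semigroup $e^{-(-\triangle)^\beta}$. First I would observe that $Q_{j,k,t}^\beta = \tfrac{1}{\triangle}\partial_j\partial_k e^{-t(-\triangle)^\beta}$ is a Fourier multiplier with symbol $-\xi_j\xi_k|\xi|^{-2}e^{-t|\xi|^{2\beta}}$, so it is indeed a convolution operator. Writing $m_{j,k}(\xi) = -\xi_j\xi_k|\xi|^{-2}$, this symbol factors as $m_{j,k}(\xi)\cdot e^{-t|\xi|^{2\beta}}$, i.e. the Riesz-type multiplier $m_{j,k}$ composed with the heat-type semigroup; equivalently $K_{j,k,t}^\beta = m_{j,k}(D) K_t^\beta$, where $K_t^\beta$ has the well-known scaling $K_t^\beta(x) = t^{-n/2\beta}K_1^\beta(x/t^{1/2\beta})$ and the derivative bounds $(K_1^\beta)^{(\nu)}(x)\lesssim (1+|x|)^{-n-\nu}$ quoted in the excerpt (from Miao--Yuan--Zhang). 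Since $m_{j,k}$ is homogeneous of degree $0$ and smooth away from the origin, the scaling $\lambda\mapsto\lambda x$ commutes with $m_{j,k}(D)$ up to the Jacobian, which immediately yields the claimed scaling identity $K_{j,k,t}^\beta(x) = t^{-n/2\beta}K_{j,k}^\beta(x/t^{1/2\beta})$ with $K_{j,k}^\beta := K_{j,k,1}^\beta = m_{j,k}(D)K_1^\beta$. So everything comes down to showing that $K_{j,k}^\beta$ is smooth and that $(1+|x|)^{n+|\alpha|}\partial^\alpha K_{j,k}^\beta \in L^\infty$ for every multi-index $\alpha$.

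Next I would establish these pointwise bounds on $K_{j,k}^\beta$. For smoothness: $\widehat{K_{j,k}^\beta}(\xi) = m_{j,k}(\xi)e^{-|\xi|^{2\beta}}$ is $C^\infty$ on $\mathbb{R}^n\setminus\{0\}$ with rapid (in fact super-exponential) decay at infinity together with all derivatives, and although $m_{j,k}$ is merely bounded near $\xi=0$, it is a classical Mihlin--Hörmander multiplier, so $K_{j,k}^\beta$ is a genuine $C^\infty$ function away from the origin; near the origin, smoothness of $K_{j,k}^\beta$ in $x$ follows because the symbol is integrable (bounded and compactly supportable after the decay) so $\partial^\alpha K_{j,k}^\beta(x) = \int (2\pi i\xi)^\alpha m_{j,k}(\xi)e^{-|\xi|^{2\beta}}\,d\xi$ converges for every $\alpha$. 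For the decay, split $K_{j,k}^\beta = m_{j,k}(D)K_1^\beta$ and write $m_{j,k}(D) = c\,\delta + R_jR_k$-type operator (Kronecker plus double Riesz transform applied with the $(-\triangle)^{-1}\partial_j\partial_k$ normalization), i.e. $K_{j,k}^\beta(x) = $ (constant multiple of) $K_1^\beta(x) + (\text{singular kernel}\ast K_1^\beta)(x)$. The Kronecker term inherits the bounds of $K_1^\beta$ directly. For the Riesz-transform term, one uses that the Calderón--Zygmund kernel of $R_jR_k$ is homogeneous of degree $-n$, smooth off $0$, with mean zero on spheres, and that $K_1^\beta$ together with all derivatives decays like $(1+|x|)^{-n-\nu}$; a standard splitting of the convolution integral into the regions $|y|<|x|/2$, $|x-y|<|x|/2$, and $|y|\sim|x|$ (using the cancellation of the CZ kernel in the first region and the decay of $K_1^\beta$ in the others) gives $|\partial^\alpha K_{j,k}^\beta(x)|\lesssim (1+|x|)^{-n-|\alpha|}$. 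Differentiation $\partial^\alpha$ falls either on the CZ kernel (raising its homogeneity to $-n-|\alpha|$, still CZ-type) or on $K_1^\beta$ (preserving the decay class), so the induction on $|\alpha|$ is routine.

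The main obstacle, and the only point requiring genuine care, is the behavior near $x=0$ combined with the low-frequency non-smoothness of the symbol $m_{j,k}$: one must confirm that composing the double Riesz transform with $K_1^\beta$ does not destroy the decay rate at infinity (the Riesz transform is not a nice kernel at infinity unless one exploits the rapid decay and cancellation of $K_1^\beta$), and does not introduce a singularity at the origin (handled by the integrability of the symbol and its polynomial weights). Both are standard Calderón--Zygmund-plus-Schwartz-tail arguments, but they are the technical heart; everything else (the scaling identity, the multiplier representation) is formal. I would therefore organize the write-up as: (1) identify the symbol and derive the scaling; (2) decompose $m_{j,k}(D) = c\,\mathrm{Id} + (\text{double Riesz transform})$; (3) prove $(1+|x|)^{n+|\alpha|}\partial^\alpha K_{j,k}^\beta\in L^\infty$ by the region-splitting estimate above, invoking the bounds on $(K_1^\beta)^{(\nu)}$ from the excerpt.
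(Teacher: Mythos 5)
Your proposal is correct in substance but takes a genuinely different route from the paper at the one nontrivial step, namely the decay $|\partial^{\alpha}K^{\beta}_{j,k}(x)|\lesssim |x|^{-n-|\alpha|}$ at infinity. You both begin identically: the symbol $\xi_j\xi_k|\xi|^{-2}e^{-|\xi|^{2\beta}}$ times any polynomial weight is in $L^1(d\xi)$, so every $\partial^{\alpha}K^{\beta}_{j,k}$ is bounded and continuous, which settles smoothness and the region $|x|\le 1$. For $|x|>1$ the paper works on the frequency side: it splits off the high-frequency part $(I-S_0)K^{\beta}_{j,k}$, which is Schwartz because the symbol is smooth away from $\xi=0$, and writes the low-frequency part as a sum of dyadic pieces $\Delta_l K^{\beta}_{j,k}=2^{ln}\omega^{\beta}_{j,k,l}(2^l\cdot)$ with $\{\omega^{\beta}_{j,k,l}\}_{l<0}$ bounded in $\mathscr{S}$; summing $2^{l(n+|\alpha|)}\min\{1,(2^l|x|)^{-N}\}$ over $l<0$ gives $|x|^{-n-|\alpha|}$ in one stroke for every $\alpha$ and requires no information about $K_1^{\beta}$ beyond its symbol. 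You instead work on the physical side, writing $\partial_j\partial_k(-\triangle)^{-1}=-R_jR_k$ as a multiple of the identity plus a principal-value Calder\'on--Zygmund kernel and convolving with $K_1^{\beta}$, using the cancellation of the CZ kernel on $\{|y|<|x|/2\}$ and the quoted Miao--Yuan--Zhang bounds elsewhere. This works, and has the merit of being elementary, but two points deserve care in the write-up. First, $K_1^{\beta}$ is \emph{not} Schwartz for $\beta\in(1/2,1)$ (it decays only like $|x|^{-n-2\beta}$), so the phrase ``Schwartz-tail'' is misleading; fortunately your argument only ever invokes the polynomial bounds $|\partial^{\nu}K_1^{\beta}(x)|\lesssim(1+|x|)^{-n-\nu}$, which are available. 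Second, the region splitting with all $|\alpha|$ derivatives placed on $K_1^{\beta}$ only yields $|x|^{-n}$ on the region $|x-y|<|x|/2$ (there $\int_{|z|<|x|/2}|\partial^{\alpha}K_1^{\beta}(z)|\,dz$ is merely $O(1)$); to recover the full $|x|^{-n-|\alpha|}$ you must, as you indicate, transfer the derivatives onto the CZ kernel on that region (where it is smooth and $O(|x|^{-n-|\alpha|})$), e.g.\ via a smooth cutoff at scale $|x|$. With that bookkeeping made explicit your induction on $|\alpha|$ closes, so the proposal is a valid alternative; the paper's Littlewood--Paley route is shorter precisely because it never needs the pointwise kernel bounds for $K_1^{\beta}$ or the cancellation of the Riesz kernel.
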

\begin{proof}
Since
$\widehat{K_{j,k}^{\beta}}(\xi)=\frac{\xi_{j}\xi_{k}}{|\xi|^{2}}e^{-|\xi|^{2\beta}},$
we have
$\widehat{\partial^{\alpha}K_{j,k}^{\beta}}(\xi)\lesssim|\xi|^{|\alpha|}\frac{\xi_{j}\xi_{k}}{|\xi|^{2}}e^{-|\xi|^{2\beta}}$
and
$\int\widehat{\partial^{\alpha}K_{j,k}^{\beta}}(\xi)d\xi<\infty.$
Thus $\partial^{\alpha}K_{j,k}^{\beta}(x)\in
L^{\infty}(\mathbb{R}^{n}).$

For $|x|\leq1$, we have
$$|(1+|x|)^{n+|\alpha|}\partial^{\alpha}K_{j,k}^{\beta}(x)|\lesssim |\partial^{\alpha}K_{j,k}(x)|\lesssim1.$$
For $|x|>1$, we write
$K_{j,k}^{\beta}=(I-S_{0})K_{j,k}^{\beta}+\sum_{l<0}\Delta_{l}K_{j,k}^{\beta}$
where $(I-S_{0})K_{j,k}^{\beta}\in \mathcal {S}$ and
$\Delta_{l}K_{j,k}^{\beta}=2^{ln}\omega_{j,k,l}^{\beta}(2^{l}x)$
with
$\widehat{\omega_{j,k,l}^{\beta}}=\psi(\xi)\frac{\xi_{j}\xi_{k}}{|\xi|^{2}}e^{-|2^{l}\xi|^{2\beta}}\in
L^{1}.$
 Then  the set $\{\omega_{j,k,l}^{\beta}:l<0\}$ is bounded in $\mathcal
 {S}$ and there exists an uniform constant $C_{N}$ such that
$$(1+2^{l}|x|)^{N}2^{l(n+|\alpha|)}|\partial^{\alpha}\triangle_{l}K_{j,k}^{\beta}(x)|\leq C_{N}.$$
 Thus
$$|\partial^{\alpha}S_{0}K_{j,k}(x)|\lesssim\sum_{2^{l}|x|\leq1}2^{l(n+|\alpha|)}+\sum_{2^{l}|x|>1}2^{l(n+|\alpha|-N)}|x|^{-N}\lesssim|x|^{-n-|\alpha|} .$$
\end{proof}

\subsection{Well-Posedness}
In this subsection, we establish the well-posedness result for the
solutions to the equations (\ref{eq1e}). Throughout this subsection,
we always assume $\beta\in(\frac{1}{2},1)$. In fact our results can
be also applied to the case $\beta=1$, that is, the classical
Naiver-Stokes equations. Hence our results can be regarded as a
generalization of the result of Koch-Tataru  \cite{H. Koch D.
Tataru} when $\alpha=0,\beta=1$ and that of Xiao \cite{J. Xiao 1}
when $\alpha\in(0,1),\beta=1$.
\begin{definition}  \label{X space} Let $\alpha>0$ and
 $\max\{1/2,\alpha\}<\beta<1$ with $\alpha+\beta-1\geq 0$.\\
(i)  A tempered distribution $f$ on $R^{n}$ belongs to
$Q_{\alpha;T}^{\beta,-1}(\mathbb{R}^{n})$ provided
 $$\|f\|_{Q_{\alpha;T}^{\beta,-1}(\mathbb{R}^{n})}=\sup_{x\in\mathbb{R}^{n},r\in(0,T)}\left(r^{2\alpha-n+2\beta-2}
\int_{0}^{r^{2\beta}}\int_{|y-x|<r}| K_{t}^\beta\ast
f(y)|^{2}t^{-\frac{\alpha}{\beta}}dydt\right)^{1/2}<\infty;
$$
 (ii) A tempered distribution $f$ on $R^{n}$ belongs to
$\overline{VQ^{\beta,-1}_{\alpha}}(\mathbb{R}^{n})$ provided
$\lim\limits_{T\longrightarrow
0}\|f\|_{Q^{\beta,-1}_{\alpha;T}(\mathbb{R}^{n})}=0;$\\
 (iii) A function $g$ on $R^{1+n}_{+}$ belongs to the space
$X^{\beta}_{\alpha;T}(\mathbb{R}^{n})$ provided
\begin{eqnarray*}
\|g\|_{X^{\beta}_{\alpha;T}(\mathbb{R}^{n})}&=&\sup_{t\in(0,T)}t^{1-\frac{1}{2\beta}}\|g(t,\cdot)\|_{L^{\infty}(\mathbb{R}^{n})}\\
&+&\sup_{x\in
R^{n},r^{2\beta}\in(0,T)}\left(r^{2\alpha-n+2\beta-2}\int_{0}^{r^{2\beta}}\int_{|y-x|<r}|g(t,y)|^{2}t^{-\alpha/\beta}dydt\right)^{1/2}<\infty.
\end{eqnarray*}
\end{definition}
\begin{theorem} \label{th 3}
Let $n\geq 2,$ $\alpha>0$ and $\max\{\alpha,1/2\}<\beta<1$ with $\alpha+\beta-1\geq0$. Then\\
(i) The fractional Navier-Stokes system (\ref{eq1e}) has a unique
small global mild solution in $(X^{\beta}_{\alpha;\infty})^{n}$ for
all initial data $a$ with $\nabla\cdot a=0$ and
$\|a\|_{(Q_{\alpha;\infty}^{\beta,-1})^{n}}$ being small.\\
(ii) For any $T\in(0,\infty)$ there is an $\varepsilon>0$ such that
the fractional Navier-Stokes system (\ref{eq1e}) has a unique small
mild solution in $(X_{\alpha,T}^{\beta})^{n}$ on $(0,T)\times
\mathbb{R}^{n}$ when the initial data $a$ satisfies $\nabla\cdot
a=0$ and $\|a\|_{(Q_{\alpha;T}^{\beta,-1})^{n}}\leq \varepsilon.$ In
particular for all $a\in (\overline{VQ_{\alpha}^{\beta,-1}})^{n}$
with $\nabla\cdot a=0$ there exists a unique small local mild
solution in $(X_{\alpha;T}^{\beta})^{n}$ on $(0,T)\times
\mathbb{R}^{n}.$
 \end{theorem}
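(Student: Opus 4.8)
The proof is a Picard iteration for the mild (integral) formulation
\[
u = e^{-t(-\triangle)^\beta}a + B(u,u),\qquad B(u,v)(t):= -\int_0^t e^{-(t-s)(-\triangle)^\beta}\,P\nabla\cdot(u\otimes v)(s)\,ds,
\]
carried out in the Banach space $(X^\beta_{\alpha;T})^n$ of Definition \ref{X space}. By the standard contraction principle, if the bilinear map $B$ is bounded on $(X^\beta_{\alpha;T})^n$ with $\|B(u,v)\|\le\eta\|u\|\,\|v\|$ and if $\|e^{-t(-\triangle)^\beta}a\|_{(X^\beta_{\alpha;T})^n}\le\varepsilon<\tfrac1{4\eta}$, then there is a unique solution with $\|u\|\le 2\varepsilon$. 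Thus everything reduces to a \emph{linear estimate} $\|e^{-t(-\triangle)^\beta}a\|_{X^\beta_{\alpha;T}}\lesssim\|a\|_{Q^{\beta,-1}_{\alpha;T}}$ and a \emph{bilinear estimate} $\|B(u,v)\|_{X^\beta_{\alpha;T}}\lesssim\|u\|_{X^\beta_{\alpha;T}}\|v\|_{X^\beta_{\alpha;T}}$. For the linear estimate, the Carleson part of the $X^\beta_{\alpha;T}$-norm of $e^{-t(-\triangle)^\beta}a$ is literally $\|a\|_{Q^{\beta,-1}_{\alpha;T}}$; for the pointwise part $\sup_{t<T}t^{1-1/(2\beta)}\|e^{-t(-\triangle)^\beta}a\|_{L^\infty}$ I would use the embedding $Q^{\beta,-1}_{\alpha;\infty}\hookrightarrow\dot B^{1-2\beta}_{\infty,\infty}$ (Remark \ref{R1}, Theorem \ref{Em X B}) together with the semigroup characterization $\|f\|_{\dot B^{1-2\beta}_{\infty,\infty}}\approx\sup_{r>0}r^{2\beta-1}\|e^{-r^{2\beta}(-\triangle)^\beta}f\|_{L^\infty}$ from \cite{C. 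Miao B. Yuan B. Zhang}, substituting $t=r^{2\beta}$; a truncated version handles finite $T$.

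The bilinear estimate is the crux; write $X=X^\beta_{\alpha;T}$ for brevity. For the pointwise part, Lemma \ref{le4} shows that $e^{-(t-s)(-\triangle)^\beta}P\nabla$ acting on a tensor field is convolution with a kernel bounded, up to constants, by $(t-s)^{-(n+1)/(2\beta)}\bigl(1+(t-s)^{-1/(2\beta)}|x|\bigr)^{-(n+1)}$, hence $\|e^{-(t-s)(-\triangle)^\beta}P\nabla\cdot(u\otimes v)(s)\|_{L^\infty}\lesssim(t-s)^{-1/(2\beta)}\|u(s)\|_{L^\infty}\|v(s)\|_{L^\infty}$; inserting $\|u(s)\|_{L^\infty}\|v(s)\|_{L^\infty}\le s^{-2(1-1/(2\beta))}\|u\|_X\|v\|_X$ and evaluating the Beta integral $\int_0^t(t-s)^{-1/(2\beta)}s^{-2(1-1/(2\beta))}\,ds\approx t^{-(1-1/(2\beta))}$ — which converges precisely because $\tfrac12<\beta<1$ — gives the bound on the first term of the norm. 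For the Carleson part I would fix a ball $B=B(x_0,r)$, split $u\otimes v=(u\otimes v)\mathbf 1_{4B}+(u\otimes v)\mathbf 1_{(4B)^c}=:N_{\mathrm{loc}}+N_{\mathrm{far}}$, and split the Duhamel integral according to scale. The contribution of $N_{\mathrm{loc}}$ is treated by writing $P\nabla$ as a finite sum of Calderón--Zygmund operators composed with $(-\triangle)^{1/2}$ (resp.\ with $(-\triangle)^\beta$), factoring the semigroup so as to expose a smoothing factor $e^{-(t-s)(-\triangle)^\beta/2}$, and then invoking Lemma \ref{le 2} (maximal regularity in the weight $t^{-\alpha/\beta}$) and the square-function estimate Lemma \ref{le5} together with its $k=0$ remark to dominate $\int_0^{r^{2\beta}}\!\!\int_{|y-x_0|<r}|B(u,v)|^2t^{-\alpha/\beta}$ by $A(\alpha,\beta,|u\otimes v|)\int_0^{r^{2\beta}}\!\!\int|u\otimes v|\,s^{-\alpha/\beta}$; each factor is then $\lesssim r^{n-2(\alpha+\beta-1)}\|u\|_X^2\|v\|_X^2$ by Cauchy--Schwarz, the $L^\infty$-decay of $u,v$, and the defining Carleson bound of $X$. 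For $N_{\mathrm{far}}$, the separation $|y-z|\gtrsim r$ for $y\in B$, $z\notin 4B$, combined with the kernel decay of Lemma \ref{le4}, yields a pointwise bound on $B(u,v)(t,y)$ by a rapidly convergent sum of annular averages of $|u\otimes v|$, each again controlled by $\|u\|_X\|v\|_X$ times the correct power of $r$; summing the geometric series (convergent thanks to $\alpha>0$ and $\alpha+\beta-1\ge0$) closes the estimate.

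With both estimates in hand, the contraction principle gives (i) when $\|a\|_{(Q^{\beta,-1}_{\alpha;\infty})^n}$ is small; part (ii) follows from the same argument run on $(0,T)$, and for $a\in(\overline{VQ^{\beta,-1}_\alpha})^n$ the smallness hypothesis $\|a\|_{(Q^{\beta,-1}_{\alpha;T})^n}\le\varepsilon$ holds automatically once $T$ is small, by definition of $\overline{VQ^{\beta,-1}_\alpha}$. Uniqueness in the small ball is part of the contraction lemma, and $\nabla\cdot u=0$ is preserved because $P$ projects onto divergence-free fields and annihilates gradients. I expect the genuinely delicate point to be the Carleson part of the bilinear estimate: organizing the dyadic-in-scale decomposition so that the nonlocal operator $e^{-(t-s)(-\triangle)^\beta}P\nabla$ meshes with the Carleson-box localization, and checking that every exponent produced by Lemmas \ref{le 2}, \ref{le5} and \ref{le4} lands in the convergent range forced by $\alpha>0$, $\alpha+\beta-1\ge0$ and $\tfrac12<\beta<1$.
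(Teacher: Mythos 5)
Your proposal follows the paper's proof in all essentials: Picard iteration reduced to a bilinear bound on $(X^{\beta}_{\alpha;T})^{n}$, with the Carleson part of $\|B(u,v)\|$ handled by splitting $u\otimes v$ into a piece supported near the ball and a far piece, the far piece controlled by a dyadic annular decomposition plus the kernel decay of Lemma \ref{le4}, and the local piece controlled by the pair of Lemmas \ref{le 2} and \ref{le5} --- exactly the paper's $B_{1},B_{2},B_{3}$ decomposition. Two remarks. First, your $L^{\infty}$ estimate is genuinely simpler than the paper's: you bound $\|e^{-(t-s)(-\triangle)^{\beta}}P\nabla\cdot(u\otimes v)\|_{L^{\infty}}\lesssim (t-s)^{-1/(2\beta)}s^{-2+1/\beta}\|u\|_{X}\|v\|_{X}$ and integrate the resulting Beta function, which converges at $s=0$ precisely because $\beta<1$ strictly; the paper instead splits $(0,t)$ into $(0,t/2)\cup(t/2,t)$ and uses the Carleson component of the norm on $(0,t/2)$, an argument that also survives the endpoint $\beta=1$. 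Both are valid in the stated range. Second, the one place your sketch is too loose to compile into a proof is the mechanism that makes Lemmas \ref{le 2} and \ref{le5} applicable to the local piece: ``exposing a smoothing factor $e^{-(t-s)(-\triangle)^{\beta}/2}$'' does not by itself produce either the explicit factor $(-\triangle)^{\beta}f(s)$ that Lemma \ref{le 2} requires inside the Duhamel integral, or the time average $\int_{0}^{t}N_{\mathrm{loc}}(s)\,ds$ that Lemma \ref{le5} acts on. What is actually needed is the identity $e^{-(t-s)A}=e^{-(t-s)A}(I-e^{-sA})+e^{-tA}$ with $A=(-\triangle)^{\beta}$: the first summand becomes $\int_{0}^{t}e^{-(t-s)A}(-\triangle)^{\beta}\bigl[(-\triangle)^{1/2-\beta}(I-e^{-sA})N_{\mathrm{loc}}\bigr]ds$ up to a Calder\'on--Zygmund factor, where $(-\triangle)^{1/2-\beta}(I-e^{-sA})$ is bounded on $L^{2}$ with norm $\lesssim s^{1-1/(2\beta)}$ (this is where $\beta>1/2$ enters), so Lemma \ref{le 2} applies and the extra power of $s$ is absorbed by the $L^{\infty}$ part of the norm; the second summand is $e^{-tA}(-\triangle)^{1/2}$ of the time average, which is exactly where Lemma \ref{le5} with $k=0$ enters. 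With that identity supplied, your outline coincides with the paper's argument.
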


 \begin{proof}
By Picard's contraction principle, it sufficient to verify the
bilinear operator
 $$B(u,v)=\int^{t}_{0}e^{-(t-s)(-\triangle)^{\beta}}P\nabla\cdot(u\otimes v)ds$$
is bounded from
$(X_{\alpha;T}^{\beta})^{n}\times(X_{\alpha;T}^{\beta})^{n}$ to
$(X_{\alpha;T}^{\beta})^{n}.$\\
 {\it{Part 1. $L^{2}-$bound.}} We want to establish  that
 if $x\in \mathbb{R}^{n}$ and $r^{2\beta}\in (0,T)$ then
 \begin{equation}
r^{2\alpha-n+2\beta-2}\int_{0}^{r^{2\beta}}\int_{|y-x|<r}|B(u,v)|^{2}dy\frac{ds}{s^{\alpha/\beta}}\lesssim
\|u\|^{2}_{(X_{\alpha;T}^{\beta})^{n}}\|v\|^{2}_{(X_{\alpha;T}^{\beta})^{n}}.
 \end{equation}
To this aim, define $1_{r,x}(y)=1_{|y-x|<10r}(y),$ i.e., the
indicate function on the ball $\{y\in \mathbb{R}^{n}: |y-x|<10r\}.$
We divide $B(u,v)$ into three parts:
$B(u,v)=B_{1}(u,v)+B_{2}(u,v)+B_{3}(u,v),$ where
\begin{eqnarray*}
&&B_{1}(u,v)=\int_{0}^{s}e^{-(s-h)(-\triangle)^{\beta}}P\nabla\cdot
((1-1_{r,x})u\otimes v)dh,\\
&&B_{2}(u,v)=(-\triangle)^{-1/2}P\nabla\cdot\int_{0}^{s}e^{-(s-h)(-\triangle)^{\beta}}(-\triangle)
((-\triangle)^{-1/2}(I-e^{-h(-\triangle)^{\beta}})(1_{r,x})u\otimes
v)dh\\
&&B_{3}(u,v)=(-\triangle)^{-1/2}P\nabla\cdot(-\triangle)^{1/2}e^{-s(-\triangle)^{\beta}}\int_{0}^{s}((1_{r,x})u\otimes
 v)dh.
 \end{eqnarray*}
At first, we estimate $B_{2}(u,v)$ as
 \begin{eqnarray*}
 I&=&\int_{0}^{r^{2\beta}}\|B_{2}(u,v)\|_{L^{2}}^{2}\frac{dt}{t^{\alpha/\beta}}\\
 &\lesssim&\int_{0}^{r^{2\beta}}\|\int_{0}^{s}e^{-(s-h)(-\triangle)^{\beta}}(-\triangle)
((-\triangle)^{-1/2}(I-e^{-h(-\triangle)^{\beta}})(1_{r,x})u\otimes
v)dh\|_{L^{2}}^{2}\frac{dt}{t^{\alpha/\beta}}\\
 &\lesssim&\int_{0}^{r^{2\beta}}\|\int_{0}^{s}e^{-(s-h)(-\triangle)^{\beta}}(-\triangle)^{\beta}
((-\triangle)^{1/2-\beta}(I-e^{-h(-\triangle)^{\beta}})(1_{r,x})u\otimes
v)dh\|_{L^{2}}^{2}\frac{dt}{t^{\alpha/\beta}}\\
&\lesssim&\int_{0}^{r^{2\beta}}\|
(-\triangle)^{1/2-\beta}(I-e^{-h(-\triangle)^{\beta}})(1_{r,x})u\otimes
vdh\|_{L^{2}}^{2}\frac{dt}{t^{\alpha/\beta}}.
 \end{eqnarray*}
 Since
 $\sup_{s\in(0,\infty)}s^{1-2\beta}(1-e^{-s^{2\beta}})<\infty$
 for $\frac{1}{2}<\beta<1,$
 we can obtain that
 $(-\triangle)^{1/2-\beta}(I-e^{-s(-\triangle)^{\beta}})$ is bounded
 on $L^{2}$ with operator norm $\lesssim s^{1-\frac{1}{2\beta}}.$
 Write $(1_{r,x})u(s,x)\otimes v(s,x)=M(s,x)$.
 Thus, using the Cahchy-Schwarz inequality, we have
 \begin{eqnarray*}
 I&\lesssim&\int^{r^{2\beta}}_{0}s^{2-\frac{1}{\beta}}\|M(s,\cdot)\|^{2}_{L^{2}}\frac{ds}{s^{\alpha/\beta}}\\
&\lesssim&\int^{r^{2\beta}}_{0}s^{2-\frac{1}{\beta}}\int_{|y-x|<r}|u(s,y)v(s,y)|^{2}dy\frac{ds}{s^{\alpha/\beta}}\\
&\lesssim&\left(\sup_{s\in(0,T)}s^{1-\frac{1}{2\beta}}\|u(s,y)\|_{\infty}\right)
\left(\sup_{s\in(0,T)}s^{1-\frac{1}{2\beta}}\|v(s,y)\|_{\infty}\right)\\
&&\times\left(\int^{r^{2\beta}}_{0}\int_{|y-x|<r}|u(s,y)|^{2}dy\frac{ds}{s^{\alpha/\beta}}\right)
\left(\int^{r^{2\beta}}_{0}\int_{|y-x|<r}|v(s,y)|^{2}dy\frac{ds}{s^{\alpha/\beta}}\right)\\
&\lesssim&
r^{n-2\alpha-2(\beta-1)}\|u\|^{2}_{(X_{\alpha;T}^{\beta})^{n}}\|v\|^{2}_{(X_{\alpha;T}^{\beta})^{n}}.
\end{eqnarray*}
Now by Lemma \ref{le5} with $k=0$, we estimate the term $B_{3}$ as
follows.
\begin{eqnarray*}
&&\int_{0}^{r^{2\beta}}\|B_{3}(u,v)\|_{L^{2}}^{2}\frac{dt}{t^{\alpha/\beta}}\\
&\lesssim&
\int_{0}^{r^{2\beta}}\left\|(-\triangle)^{1/2}e^{-t(-\triangle)^{\beta}}(\int_{0}^{t}M(s,\cdot)dh)\right\|_{L^{2}}^{2}\frac{dt}{t^{\alpha/\beta}}\\
&\lesssim&
r^{n-2\alpha+6\beta-2}\int_{0}^{1}\left\|(-\triangle)^{1/2}e^{-\tau(-\triangle)^{\beta}}
(\int_{0}^{\tau}M(r^{2\beta}\theta,r\cdot)d\theta)\right\|_{L^{2}}^{2}\frac{d\tau}{\tau^{\alpha/\beta}}\\
&\lesssim&
r^{n-2\alpha+6\beta-2}(\int_{0}^{1}\|M(r^{2\beta}s,r\cdot)\|_{L^{1}}\frac{ds}{s^{\alpha/\beta}})C(\alpha,\beta;f)\\
&=&r^{n-2\alpha+6\beta-2}\times II\times A(\alpha,\beta;
M(r^{2\beta} s,ry)).
\end{eqnarray*}
For $II$, we have
\begin{eqnarray*}
II&=&r^{2\alpha-n-2\beta}\int^{r^{2\beta}}_{0}\int_{|z-x|<r}|M(t,z)|\frac{dzdt}{t^{\alpha/\beta}}\\
&\lesssim&r^{2-4\beta}\|u\|_{(X_{\alpha;
T}^{\beta})^{n}}\|v\|_{(X_{\alpha; T}^{\beta})^{n}}.
\end{eqnarray*}
For $C(\alpha,\beta;M(r^{2\beta} s,ry)),$ we have
\begin{eqnarray*}
C(\alpha,\beta;M(r^{2\beta}
s,ry))&\lesssim&\rho^{2\alpha-n+2(\beta-1)}\int_{0}^{\rho^{2\beta}}\int_{|y-x|<\rho}|M(r^{2\beta
}s,ry)|\frac{dyds}{s^{\alpha/\beta}}\\
&\lesssim&\rho^{2\alpha-n+2(\beta-1)}r^{2\alpha-n-2\beta}\int_{0}^{(r\rho)^{2\beta}}\int_{|z-x|<r\rho}|M(t,z)|\frac{dzdt}{t^{\alpha/\beta}}
\\
&\lesssim&r^{2-4\beta}(r\rho)^{2\alpha-n+2(\beta-1)}\int_{0}^{(r\rho)^{2\beta}}\int_{|z-x|<r\rho}|M(t,z)|\frac{dzdt}{t^{\alpha/\beta}}\\
&\lesssim&r^{2-4\beta}\|u\|_{(X_{\alpha;
T}^{\beta})^{n}}\|v\|_{(X_{\alpha; T}^{\beta})^{n}}.
\end{eqnarray*}
 Therefore we get
 $$\int_{0}^{r^{2\beta}}\|B_{3}(u,v)\|_{L^{2}}^{2}\frac{dt}{t^{\alpha/\beta}}
 \lesssim r^{n-2\alpha+6\beta-2}r^{2-4\beta}r^{2-4\beta}\|u\|^{2}_{(X^{\beta}_{\alpha;T})^{n}}\|v\|^{2}_{(X^{\beta}_{\alpha;T})^{n}}
 =r^{n-2\alpha-2\beta+2}\|u\|^{2}_{(X^{\beta}_{\alpha,T})^{n}}\|v\|^{2}_{(X^{\beta}_{\alpha;T})^{n}},$$
that is,
$$r^{2\alpha-n+2(\beta-2)}\int^{r^{2\beta}}_{0}\|B_{3}(u,v)\|^{2}_{L^{2}}\frac{dt}{t^{\alpha/\beta}}
\lesssim\|u\|^{2}_{(X^{\beta}_{\alpha,T})^{n}}\|v\|^{2}_{(X^{\beta}_{\alpha;T})^{n}}.$$
For the estimate of $B_{1}$. According to Lemma \ref{le4}, we have
\begin{eqnarray*}
&&e^{-t(-\triangle)^{\beta}}P\nabla\cdot f(x)=\int \nabla
K_{j,k,t}^{\beta}(x-y)f(y)dy\\
\text{ and}&&\quad\nabla K_{j,k,t}^{\beta}(x-y)\lesssim
\frac{1}{t^{\frac{n}{2\beta}+\frac{1}{2\beta}}}\frac{1}{\left(1+t^{-1/2\beta}|x-y|\right)^{n+1}}
\lesssim\frac{1}{(t^{1/2\beta}+|x-y|)^{n+1}}.
\end{eqnarray*}
\text{Thus}\quad
 \begin{eqnarray*}
 |B_{1}(u,v)|&\leq&\left|\int_{0}^{s}e^{-(s-h)(-\triangle)^{\beta}}P\nabla\cdot ((1-1_{r,x})u\otimes
 v)dh\right|\\
 &\lesssim&\int_{0}^{s}\int_{|z-x|\geq 10
 r}\frac{|u(h,z)||v)(h,z)|}{((s-h)^{1/2\beta}+|z-y|)^{n+1}}dzdh.
\end{eqnarray*}
When $|z-x|\geq 10r,$ $0<s<r^{2\beta}$  and $|y-x|<r,$ we have
$|y-z|\geq |z-x|-|y-x|\geq 9r >9|y-x|. $ Thus $|x-z|\leq
|x-y|+|y-z|\leq \frac{1}{9}|y-z|+|y-z|=\frac{10}{9}|y-z|.$ This
gives us
\begin{eqnarray*}
|B_{1}(u,v)|&\lesssim&\int_{0}^{r^{2\beta}}\int_{|z-x|\geq
10r}\frac{|u(h,z)||v(h,z)|}{|x-z|^{n+1}}dzdh=I_{1}\times I_{2}.
\end{eqnarray*}
where
\begin{eqnarray*}
I_{1}&=&\left(\int_{0}^{r^{2\beta}}\int_{|z-x|\geq
10r}\frac{|u(h,z)|^{2}}{|x-z|^{n+1}}dzdh\right)^{1/2}\\
&\lesssim&\left(\sum_{j=3}^{\infty}\int_{0}^{r^{2\beta}}\int_{2^{j}r\leq
|z-x|\leq
2^{j+1}r}\frac{|u(h,z)|^{2}}{(2^{j}r)^{n+1}}dzdh\right)^{1/2}\\
&\lesssim&\left(\sum_{j=3}^{\infty}\frac{1}{(2^{j}r)^{n+1}}(r^{2\beta})^{\alpha/\beta}(2^{j}r)^{2\beta-2}(2^{j}r)^{2-2\beta}
\int_{0}^{r^{2\beta}}\int_{2^{j}r\leq |z-x|\leq
2^{j+1}r}|u(h,z)|^{2}\frac{dzdh}{h^{\alpha/\beta}}\right)^{1/2}\\
&\lesssim&\left(\sum_{j=3}^{\infty}{(2^{j}r)^{2\alpha-n}}(2^{j}r)^{-1}(2^{j}r)^{2\beta-2}(2^{j}r)^{2-2\beta}
\int_{0}^{r^{2\beta}}\int_{ |z-x|\leq
2^{j+1}r}|u(h,z)|^{2}\frac{dzdh}{h^{\alpha/\beta}}\right)^{1/2}\\
&\lesssim&\left(\frac{1}{r^{2\beta-1}}\right)^{1/2}\|u\|_{(X_{\alpha;
T}^{\beta})^{n}}.
\end{eqnarray*}
Similarly, we obtain $I_{2}\lesssim
\left(\frac{1}{r^{2\beta-1}}\right)^{1/2}\|v\|_{(X^{\beta}_{\alpha;T})^{n}}.$
Thus $|B_{1}(u,v)|\lesssim \frac{1}{r^{2\beta-1}}\|u\|_{(X_{\alpha;
T}^{\beta})^{n}}\|v\|_{(X_{\alpha; T}^{\beta})^{n}}.$ When
$0<\alpha<\beta,$ we have
\begin{eqnarray*}
\int_{0}^{r^{2\beta}}\int_{|y-x|<r}|B_{1}(u,v)|^{2}\frac{dydt}{t^{\alpha/\beta}}&\lesssim&
\frac{1}{r^{4\beta-2}}r^{n}\int^{r^{2\beta}}_{0}\frac{dt}{t^{\alpha/\beta}}\|u\|^{2}_{(X_{\alpha; T}^{\beta})^{n}}\|v\|^{2}_{(X_{\alpha; T}^{\beta})^{n}}\\
&\lesssim&r^{n-2\alpha-2\beta+2}\|u\|^{2}_{(X_{\alpha;
T}^{\beta})^{n}}\|v\|^{2}_{(X_{\alpha;
T}^{\beta})^{n}}.\end{eqnarray*} This implies that
$$r^{2\alpha-n+2(\beta-1)}\int_{0}^{r^{2\beta}}\int_{|y-x|<r}|B_{1}(u,v)|^{2}\frac{dydt}{t^{\alpha/\beta}}\lesssim\|u\|^{2}_{(X_{\alpha; T}^{\beta})^{n}}
\|v\|^{2}_{(X_{\alpha; T}^{\beta})^{n}}.$$
 {\it{Part 2. $L^{\infty}-$bound.}} The aim of this part is to prove
 $$\|B(u,v)\|_{L^{\infty}}\lesssim t^{\frac{1}{2\beta}-1}\|u\|_{(X_{\alpha; T}^{\beta})^{n}}\|v\|_{(X_{\alpha; T}^{\beta})^{n}}, \forall t\in (0,T).$$
  If $\frac{t}{2}\leq s<t$ then
 $$\|e^{-(t-s)(-\triangle)^{\beta}}P\nabla\cdot (u\otimes v)\|_{L^{\infty}}\lesssim\frac{\|u\|_{L^{\infty}}\|v\|_{L^{\infty}}}{(t-s)^{\frac{1}{2\beta}}}
 \lesssim(t-s)^{-\frac{1}{2\beta}}s^{\frac{1}{\beta}-2}\|u\|_{(X_{\alpha; T}^{\beta})^{n}}\|v\|_{(X_{\alpha; T}^{\beta})^{n}}.$$
 If $0<s<\frac{t}{2}$ then $t-s\approx t$ and so
 \begin{eqnarray*}
 |e^{-(t-s)(-\triangle)^{\beta}}P\nabla\cdot(u\otimes v)|
 &\lesssim&\int_{\mathbb{R}^{n}}\frac{|u(s,y)||v(s,y)|}{\left((t-s)^{\frac{1}{2\beta}}+|x-y|\right)^{n+1}}dy\\
 &\lesssim&\int_{\mathbb{R}^{n}}\frac{|u(s,y)||v(s,y)|}{\left(t^{\frac{1}{2\beta}}+|x-y|\right)^{n+1}}dy\\
&\lesssim&\sum_{k\in \mathbb{Z}^{n}}\int_{x-y\in
t^{\frac{1}{2\beta}}(k+[0,1]^{n})}\frac{|u(s,y)||v(s,y)|}{(t^{\frac{1}{2\beta}}(1+|k|))^{(n+1)}}dyds.
 \end{eqnarray*}
 This gives us
 \begin{eqnarray*}
 |B(u,v)|&\lesssim&\int_{0}^{t/2}|e^{-(t-s)(-\triangle)^{\beta}}P\nabla\cdot (u\otimes
 v)|ds+\int_{t/2}^{t}|e^{-(t-s)(-\triangle)^{\beta}}P\nabla\cdot (u\otimes
 v)|ds\\
 &\lesssim&\sum_{k\in
\mathbb{Z}^{n}}(t^{\frac{1}{2\beta}}(1+|k|))^{-(n+1)}\int_{0}^{t/2}\int_{x-y\in
t^{\frac{1}{2\beta}}(k+[0,1]^{n})}|u(s,y)||v(s,y)|dy\\
&&+\int^{t}_{t/2}(t-s)^{-\frac{1}{2\beta}}s^{\frac{1}{\beta}-2}ds\|u\|_{(X_{\alpha;
T}^{\beta})^{n}}\|v\|_{(X_{\alpha; T}^{\beta})^{n}}\\
&:=&I_{3}+I_{4}.
\end{eqnarray*} Here,
\begin{eqnarray*}
I_{4}&\lesssim&\int^{t}_{t/2}(t-s)^{-\frac{1}{2\beta}}s^{\frac{1}{\beta}-2}ds\|u\|_{(X_{\alpha;
T}^{\beta})^{n}}\|v\|_{(X_{\alpha; T}^{\beta})^{n}}\\
&\lesssim&t^{\frac{1}{\beta}-2}t^{1-\frac{1}{2\beta}}\|u\|_{(X_{\alpha;
T}^{\beta})^{n}}\|v\|_{(X_{\alpha; T}^{\beta})^{n}}\\
&\lesssim&t^{\frac{1}{2\beta}-1}\|u\|_{(X_{\alpha;
T}^{\beta})^{n}}\|v\|_{(X_{\alpha; T}^{\beta})^{n}}.
\end{eqnarray*}
On the other hand, we have
\begin{eqnarray*}
I_{3}&\lesssim&\sum_{k\in
\mathbb{Z}^{n}}(t^{\frac{1}{2\beta}}(1+|k|))^{-(n+1)}\left(\int_{0}^{t/2}\int_{|x-y|\lesssim
t^{\frac{1}{2\beta}}}|u(s,y)|^{2}dyds\right)^{1/2}\\
&&\times\left(\int_{0}^{t/2}\int_{|x-y|\lesssim
t^{\frac{1}{2\beta}}}|v(s,y)|^{2}dyds\right)^{1/2}\\
&:=&\sum_{k\in
\mathbb{Z}^{n}}(t^{\frac{1}{2\beta}}(1+|k|))^{-(n+1)}I_{3,1}\times
I_{3,2}.
\end{eqnarray*}
Here,
\begin{eqnarray*}
I_{3,1}&=&\left(\int_{0}^{t/2}\int_{|x-y|\lesssim
t^{\frac{1}{2\beta}}}|u(s,y)|^{2}dyds\right)^{1/2}\\
&=&\left(t^{\frac{1}{2\beta}(n-2\beta+2)}t^{\frac{1}{2\beta}(2\alpha-n+2\beta-2)}\int_{0}^{t/2}\int_{|x-y|\lesssim
t^{\frac{1}{2\beta}}}|u(s,y)|^{2}\frac{dyds}{s^{\alpha/\beta}}\right)^{1/2}\\
&\lesssim&t^{\frac{1}{4\beta}(n-2\beta+2)}\|u\|_{(X_{\alpha;
T}^{\beta})^{n}}.
\end{eqnarray*}
Similarly, we get $I_{3,2}\lesssim
t^{\frac{1}{4\beta}(n-2\beta+2)}\|v\|_{(X_{\alpha;
T}^{\beta})^{n}}.$
 These estimates about $I_{3,1}$ and $I_{3,2}$ imply that
 \begin{eqnarray*}
 I_{3}\lesssim t^{-\frac{1}{2\beta}(n+1)}t^{\frac{1}{2\beta}(n-2\beta+2)}\|u\|_{(X_{\alpha;
T}^{\beta})^{n}}\|v\|_{(X_{\alpha; T}^{\beta})^{n}} \lesssim
t^{\frac{1}{2\beta}-1}\|u\|_{(X_{\alpha;
T}^{\beta})^{n}}\|v\|_{(X_{\alpha; T}^{\beta})^{n}}.
 \end{eqnarray*}
 Thus $t^{1-\frac{1}{2\beta}}\|B(u,v)\|_{L^{\infty}}\lesssim\|u\|_{(X_{\alpha;
T}^{\beta})^{n}}\|v\|_{(X_{\alpha; T}^{\beta})^{n}}.$

Therefore,   we establish the boundedness of $B(u,v)$ and finish the
proof of (i) and (ii) by taking $T=\infty$ and $T\in (0,\infty),$
respectively.

\end{proof}
\section{Regularity of Generalized Navier-Stokes
equations} In this section, we study the regularity of the solutions
to the equations (\ref{eq1e}) with $\beta\in(1/2,1)$. For $\beta=1$,
that is, the classical Naiver-Stokes equations, the regularity has
been studied by several authors.
  In \cite{P. Germain N. Pavlovic
G.Staffilani}, Germain-Pavlovi$\acute{c}$-Staffilani analyzed the
regularity properties of the solutions constructed by Koch-Tataru.
More precisely, they showed that under certain smallness condition
of initial data in $BMO^{-1}$, the solution $u$ to the classical
Naiver-Stokes equations constructed in \cite{H. Koch D. Tataru}
satisfies the following regularity property:
$$t^{\frac{k}{2}}\nabla^{k}u\in X^{0},  \text{for all }  k\in\mathbb{N}_{0}:=\mathbb{N}\cup\{0\}$$
where $X^{0}$ denotes the space where the solution constructed by
Koch and Tataru belongs.

In this section, we establish a similar result for the solutions of
the equations (\ref{eq1e}) evolving  initial data in
$Q_{\alpha;\infty}^{\beta,-1}$ with $\beta\in(1/2,1]$. In fact we
get the solution $u$ to the equations (\ref{eq1e}) satisfies:
$$t^{\frac{k}{2\beta}}\nabla^{k}u\in X^{\beta,0}_{\alpha}\text{ for all } k$$
where $X^{\beta,0}_{\alpha}$ is the space
$X_{\alpha;\infty}^{\beta}$ constructed in (iii) of Definition
\ref{X space} for $\beta\in(1/2,1)$ and $X_{\alpha;\infty}^{1}$ in
Xiao \cite{J. Xiao 1} for $\beta=1.$ For convenience of the study,
we introduce a class of spaces $X^{\beta,k}_{\alpha}$ as follows.
\begin{definition}\label{X k space}
For a nonnegative integer $k$ and $\beta\in(1/2,1],$ we introduce
the space $X^{\beta,k}_{\alpha}$ which is equipped with the
following norm:
$$\|u\|_{X^{\beta,k}_{\alpha}}=\|u\|_{N^{\beta,k}_{\alpha,\infty}}+\|u\|_{N^{\beta,k}_{\alpha,C}}$$
where
\begin{eqnarray*}
&&\|u\|_{N^{\beta,k}_{\alpha,\infty}}=\sup_{\alpha_{1}+\cdots+\alpha_{n}=k}\sup_{t}t^{\frac{2\beta-1+k}{2\beta}}\|\partial_{x_{1}}^{\alpha_{1}}
\cdots\partial_{x_{n}}^{\alpha_{n}}u(\cdot,t)\|_{L^{\infty}},\\
&&\|u\|_{N^{\beta,k}_{\alpha,C}}=\sup_{\alpha_{1}+\cdots+\alpha_{n}=k}\sup_{x_{0},r}\left(r^{2\alpha-n+2\beta-2}\int_{0}^{r^{2\beta}}\int_{|y-x_{0}|<r}
|t^{\frac{k}{2\beta}}\partial_{x_{1}}^{\alpha_{1}}
\cdots\partial_{x_{n}}^{\alpha_{n}}u(t,y)|^{2}\frac{dy
dt}{t^{\alpha/\beta}}\right)^{1/2}.
\end{eqnarray*}
\end{definition}
In the following, we will denote
$\nabla^{k}u=\partial_{x_{1}}^{\alpha_{1}}
\cdots\partial_{x_{n}}^{\alpha_{n}}u$ with
$(\alpha_{1},\alpha_{2},\ldots,\alpha_{n})\in \mathbb{N}^{n}_{0}$
and $k=\alpha_{1}+\cdots+\alpha_{n}.$

\subsection{Several Technical Lemmas}
Before stating the main result of this section, we prove several
preliminary lemmas associated with the fractional heat semigroup
$e^{-t(-\triangle)^{\beta}}$. Recall that
$e^{-t(-\triangle)^{\beta}}f(x)=K^{\beta}_{t}\ast f(x)$ where
$K^{\beta}_{t}$
 is defined by $\widehat{(K^{\beta}_{t})}(\xi)=e^{-t|\xi|^{2\beta}}$ and  $P$ is the
Helmboltz-Weyl projection.
\begin{lemma}\label{g Ossen}
Let $\beta\in(1/2,1).$ There exists a constant $C>0$ depending only
on $n$ such that
$$|\partial^{k}_{x}P\nabla K^{\beta}_{t}(x)|\leq C^{k}k^{k/2\beta}t^{-k/2\beta}(k^{-\frac{1}{2\beta}}t^{\frac{1}{2\beta}}+|x|)^{-n-1}$$
for all $t>0,x\in \mathbb{R}^{n}$ and $k\in\mathbb{N}.$
\end{lemma}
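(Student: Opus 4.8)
The plan is to work on the Fourier side, where $\widehat{P\nabla K^{\beta}_{t}}(\xi)$ is a sum of terms of the form $i\xi_{j}\,m_{jk}(\xi)\,e^{-t|\xi|^{2\beta}}$ with $m_{jk}(\xi)=\delta_{jk}-\xi_{j}\xi_{k}|\xi|^{-2}$ homogeneous of degree zero and smooth away from the origin. Differentiating $k$ times in $x$ pulls down a polynomial factor in $\xi$ of degree $k$, so we are reduced to estimating, for each multi-index with $|\gamma|=k$, the kernel whose Fourier transform is $\xi^{\gamma}\cdot(i\xi_{j}m_{jk}(\xi))\,e^{-t|\xi|^{2\beta}}$. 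After the parabolic rescaling $\xi\mapsto t^{-1/2\beta}\xi$ (which produces the overall factor $t^{-(n+1+k)/2\beta}$ matching the claimed homogeneity, since $\partial^{k}_{x}P\nabla K^{\beta}_{t}(x)=t^{-(n+1+k)/2\beta}(\partial^{k}P\nabla K^{\beta}_{1})(x/t^{1/2\beta})$), it suffices to prove the bound at $t=1$ with the correct dependence on $k$, namely $|\partial^{k}_{x}P\nabla K^{\beta}_{1}(x)|\lesssim C^{k}k^{k/2\beta}(k^{-1/2\beta}+|x|)^{-n-1}$.

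First I would record the scaling identity above and note that, because $(k^{-1/2\beta}t^{1/2\beta}+|x|)^{-n-1}$ is exactly what the rescaling of $(k^{-1/2\beta}+|y|)^{-n-1}$ gives, the whole problem collapses to the $t=1$ estimate. Then I would split into two regimes. For $|x|\le k^{-1/2\beta}$ (equivalently $|x|$ small), I bound the kernel by its $L^{1}$ norm in $\xi$: the integrand is $|\xi|^{k+1}|m_{jk}(\xi)|e^{-|\xi|^{2\beta}}$ up to constants, and $\int_{\mathbb{R}^{n}}|\xi|^{k+1}e^{-|\xi|^{2\beta}}\,d\xi$ is, after the substitution $\rho=|\xi|$, a Gamma-type integral $\int_0^\infty \rho^{n+k}e^{-\rho^{2\beta}}\,d\rho\approx \frac{1}{2\beta}\Gamma\!\big(\tfrac{n+k+1}{2\beta}\big)$. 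Using Stirling, $\Gamma(\tfrac{n+k+1}{2\beta})^{1/1}\lesssim C^{k}(k/2\beta)^{(n+k+1)/2\beta}$, which after absorbing the fixed powers is $\lesssim C^{k}k^{k/2\beta}$; and in this regime $(k^{-1/2\beta}+|x|)^{-n-1}\approx k^{(n+1)/2\beta}$, so one checks the two sides match up to adjusting $C$. (A cleaner bookkeeping: estimate $|\partial^{k}_x P\nabla K^\beta_1(x)|\le \int |\xi|^{k+1}e^{-|\xi|^{2\beta}}d\xi$ directly and compare with $k^{(n+1)/2\beta}$.)

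For $|x|\ge k^{-1/2\beta}$ the decay in $x$ must be produced by integration by parts in $\xi$: writing $e^{2\pi i x\cdot\xi}=|2\pi x|^{-2(n+1)}(-\Delta_\xi)^{n+1}e^{2\pi i x\cdot\xi}$ and moving the Laplacian onto the symbol $a_k(\xi):=\xi^{\gamma}\,i\xi_j m_{jk}(\xi)e^{-|\xi|^{2\beta}}$, I gain $|x|^{-2(n+1)}$ at the cost of $\int |(-\Delta_\xi)^{n+1}a_k(\xi)|\,d\xi$. The key technical point is to track the $k$-dependence of this integral: each $\xi$-derivative either lowers the polynomial degree by one (harmless), hits $m_{jk}$ (producing a bounded homogeneous factor of degree $-1$, contributing only $|\xi|^{-1}$-type singularities that are integrable near $0$ in dimension $n\ge2$ and cause no growth in $k$), or hits $e^{-|\xi|^{2\beta}}$ (producing factors like $|\xi|^{2\beta-1}e^{-|\xi|^{2\beta}}$, again with $k$-independent constants since the number of such hits is bounded by the fixed number $2(n+1)$). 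Hence $\int|(-\Delta_\xi)^{n+1}a_k(\xi)|\,d\xi\lesssim C_n^{\,1}\int |\xi|^{k+1+\,\text{(fixed)}}e^{-c|\xi|^{2\beta}}d\xi\lesssim C^{k}k^{(k+n+2)/2\beta}$ by the same Stirling computation as before. Comparing $|x|^{-2(n+1)}\cdot C^{k}k^{(k+n+2)/2\beta}$ with the target $C^{k}k^{k/2\beta}(k^{-1/2\beta}+|x|)^{-n-1}\approx C^{k}k^{k/2\beta}|x|^{-n-1}$ in this regime, one needs $|x|^{-(n+1)}k^{(n+2)/2\beta}\lesssim (k^{-1/2\beta}+|x|)^{-n-1}$; since here $|x|\ge k^{-1/2\beta}$, we have $|x|^{-(n+1)}\le k^{(n+1)/2\beta}$... actually the clean way is: for $k^{-1/2\beta}\le|x|\le1$ compare directly, and for $|x|\ge1$ use the integration-by-parts bound with the extra $|x|$-decay to beat $|x|^{-n-1}$, choosing the number of integrations by parts as any integer $>\,(n+1)/2$ rather than exactly $n+1$ to optimize. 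The main obstacle, and the part requiring genuine care, is precisely this: showing that differentiating the symbol the required number of times does not introduce factorial growth in $k$ beyond the stated $k^{k/2\beta}$, which hinges on the observation that only the fixed number of $\xi$-derivatives from the integration by parts interact with the $\xi$-growing polynomial and the Gaussian, while the $k$-dependence enters solely through one Gamma function evaluated by Stirling. I would assemble the two regimes and invoke the scaling identity to conclude.
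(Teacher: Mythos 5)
Your route is genuinely different from the paper's. The paper also reduces to $t=1$ by the same parabolic scaling, but then never differentiates a symbol of degree growing with $k$: it factors $\partial_x^k P\nabla K_1^{\beta}=P\nabla K_{1/2}^{\beta}\ast\partial_x^k K_{1/2}^{\beta}$ by the semigroup property, proves the single base estimate $|\partial_i K_{1/2}^{\beta}(x)|\lesssim (1+|x|)^{-n-1}$ by non-stationary phase with a frequency cutoff at scale $\delta=|x|^{-1}$, rescales it to time $\tfrac{1}{2k}$, and then assembles $\partial_x^k$ by convolving $k$ copies of $\partial_i K^{\beta}_{1/2k}$, the factor $k^{k/2\beta}$ coming from $k-1$ applications of the inequality $\int(a+|x-y|)^{-n-1}(a+|y|)^{-n-1}dy\leq Ca^{-1}(a+|x|)^{-n-1}$ with $a=k^{-1/2\beta}$. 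Your version keeps all $k$ derivatives on one Fourier symbol and extracts the $k^{k/2\beta}$ growth from a single Gamma-function evaluation via Stirling; this is cleaner conceptually and avoids the paper's somewhat delicate iterated-convolution bookkeeping (where the time scales across iterations have to be tracked), at the price of having to control $\xi$-derivatives of a symbol of degree $k+1$.

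There is one concrete gap you must close: the symbol $a_k(\xi)=\xi^{\gamma}\,i\xi_j m_{jk}(\xi)e^{-|\xi|^{2\beta}}$ is \emph{not} smooth at $\xi=0$ — both the Riesz factor $m_{jk}(\xi)=\xi_j\xi_k|\xi|^{-2}$ and, for $\beta\in(1/2,1)$, the function $e^{-|\xi|^{2\beta}}$ fail to be $C^{\infty}$ there (already $\partial^2 e^{-|\xi|^{2\beta}}\sim|\xi|^{2\beta-2}$ is singular). Applying $(-\Delta_\xi)^{n+1}$ can therefore produce terms as bad as $|\xi|^{k+1-2(n+1)}$ or $|\xi|^{k+1+2\beta-2(n+1)}$ near the origin, so $\int|(-\Delta_\xi)^{n+1}a_k|\,d\xi$ is finite only once $k$ exceeds roughly $n+1$; your claim that the $m_{jk}$-derivatives contribute "only $|\xi|^{-1}$-type singularities that are integrable" ignores that up to $2(n+1)$ of them can pile up on the same factor. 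You need either a low-frequency cutoff $\rho(\xi/\delta)$ with $\delta\sim|x|^{-1}$ (exactly the device the paper uses in its base-case estimate, integrating by parts only on the high-frequency piece and bounding the low-frequency piece trivially by $\delta^{n+k+1}$), or a separate fixed-$k$ argument for the finitely many $k\lesssim n$. With that repair, and after absorbing the polynomial-in-$k$ combinatorial factors from differentiating $\xi^{\gamma}$ into $C^{k}$ as you indicate, your argument goes through.
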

\begin{proof}
By a dilation argument, we have
$$\partial^{k}_{x}P\nabla K^{\beta}_{t}(x)=t^{\frac{-k-1}{2\beta}}t^{-\frac{n}{2\beta}}\partial^{k}_{x}P\nabla K^{\beta}_{1}(x/t^{\frac{1}{2\beta}}).$$
If we could prove $|\partial_{x}^{k}P\nabla K_{1}^{\beta}(x)|\leq
C^{k}k^{k/2\beta}(k^{-\frac{1}{2\beta}}+|x|)^{-n-1},$ then we have
\begin{eqnarray*}
|\partial^{k}_{x}P\nabla
K^{\beta}_{t}(x)|&\leq&t^{-\frac{k+1}{2\beta}}t^{-\frac{n}{2\beta}}C^{k}k^{k/2\beta}\left(k^{-\frac{1}{2\beta}}+\left|\frac{x}{t^{1/2\beta}}\right|\right)^{-n-1}\\
&\leq&c^{k}k^{k/2\beta}t^{-\frac{k}{2\beta}}(t^{\frac{1}{2\beta}}k^{-\frac{1}{\beta}}+|x|)^{-n-1}.
\end{eqnarray*}
Hence we obtain the desired.

By the semigroup property, it is easy to see that
$\partial_{x}^{k}P\nabla K_{1}^{\beta}=P\nabla
K_{1/2}^{\beta}\ast\partial_{x}^{k}K^{\beta}_{1/2}.$ So we need to
prove the following two estimates:
\begin{equation}\label{section 5 ineq 1}
\quad |P\nabla K^{\beta}_{1/2}(x)|\leq C(1+|x|)^{-n-1}
\end{equation}
\begin{equation}\label{section 5 ineq 2}
\quad |\partial_{x}^{k}K^{\beta}_{1/2}(x)|\leq
C^{k-1}k^{\frac{k-1}{2\beta}}(k^{-\frac{1}{2\beta}}+|x|)^{-n-1}.
\end{equation}
For (\ref{section 5 ineq 1}). Taking $\alpha=1$ in the Lemma
\ref{le4}, we have $$(1+|x|)^{n+1}|P\nabla K^{\beta}_{1/2}(x)|\leq
C,$$ that is, (\ref{section 5 ineq 1}) is obvious.
For (\ref{section 5 ineq 2}), we claim that
$|\partial_{i}K^{\beta}_{1/2}(x)|\leq C(1+|x|)^{-n-1}.$ In fact when
$|x|<1$,
$$(1+|x|)^{n+1}|\partial_{i}K^{\beta}_{1/2}(x)|\leq 2^{n+1}\int_{\mathbb{R}^{n}}|i\xi_{i}|e^{-|\xi|^{2\beta}/2}d\xi\lesssim C.$$
When $|x|>1$, we define the operator
$$L(x,D)=\frac{x\cdot\nabla_{\xi}}{i|x|^{2}}, \text{ that is }L(x,D)e^{ix\cdot\xi}=e^{ix\cdot\xi}$$
and choose a $C^{\infty}_{c}(\mathbb{R}^{n})-$function $\rho(x)$
satisfying:
$$\rho(\xi)= \left\{\begin{array}{ll} 1, &|\xi|\leq 1,\\
 0, &  |\xi|>2,
\end{array}\right.$$
we have
\begin{eqnarray*}
|\partial_{i}K^{\beta}_{1/2}(x)|&\leq&\left|\int_{\mathbb{R}^{n}}\rho\left(\frac{\xi}{\delta}\right)i\xi_{i}e^{-|\xi|^{2\beta}/2}e^{ix\cdot\xi}d\xi\right|\\
&+&\left|\int_{\mathbb{R}^{n}}[1-\rho\left(\frac{\xi}{\delta}\right)]i\xi_{i}e^{-|\xi|^{2\beta}/2}e^{ix\cdot\xi}d\xi\right|\\
&:=&I_{3}+I_{4}.
\end{eqnarray*}
For $I_{3},$ we have
$$I_{3}\lesssim \int_{\mathbb{R}^{n}}\rho\left(\frac{\xi}{\delta}\right)|\xi|e^{-|\xi|^{2\beta}/2}d\xi
\lesssim\int_{|\xi|<2\delta}\delta d\xi\lesssim \delta^{n+1}.$$ For
$I_{4},$ using the integration by parts and
$L^{\ast}=-\frac{x\cdot\nabla_{\xi}}{i|x|^{2}},$ we have
\begin{eqnarray*}
I_{4}&=&\left|\int_{\mathbb{R}^{n}}(L^{\ast})^{N}\left([1-\rho\left(\frac{\xi}{\delta}\right)]i\xi_{i}e^{-|\xi|^{2\beta}/2}\right)
e^{ix\cdot\xi}d\xi\right|\\
&\lesssim&C_{N}|x|^{-N}\int_{\mathbb{R}^{n}}\left|\sum_{k=0}^{N}C^{k}_{N}\nabla_{\xi}^{k}\left[1-\rho\left(\frac{\xi}{\delta}\right)\right]
\nabla_{\xi}^{N-k}(i\xi e^{-|xi|^{2\beta}/2})\right|d\xi\\
&\lesssim&C_{N}|x|^{-N}\int_{|\xi|>\delta}
\sum_{k=1}^{N}|\xi|^{2\beta k-N+1}e^{-|\xi|^{2\beta}/2}d\xi\\
&+&C_{N}|x|^{-N}\int_{\delta\leq|\xi|\leq2\delta}\sum_{k=1}^{N}C_{N}^{k}\delta^{-k}
\sum_{l=0}^{N-k}C^{l}_{N-k}|\xi|^{2\beta l-N+k+1}
e^{-|\xi|^{2\beta}/2}d\xi\\
&\lesssim&C_{N}|x|^{-N}\int_{|\xi|>\delta}|\xi|^{1-N}d\xi+C_{N}|x|^{-N}\int_{\delta<|\xi|<2\delta}\delta^{-k}|\xi|^{1-N+k}d\xi\\
&\lesssim&C_{N}|x|^{-N}\delta^{n+1-N}.
\end{eqnarray*}
So we get, taking $\delta=|x|^{-1}$,
$$|\partial_{i}K^{\beta}_{1/2}(x)|\lesssim \delta^{n+1}+C_{N}|x|^{-N}\delta^{n+1-N}\lesssim C_{N}|x|^{-(n+1)}\lesssim C_{N}(1+|x|)^{-(n+1)}.$$

 Then we have
$$|\partial_{i}K_{\frac{1}{2k}}^{\beta}(x)|\lesssim k^{\frac{1}{2\beta}}k^{\frac{n}{2\beta}}
|\partial_{i}K_{1/2}^{\beta}(k^{1/2\beta}x)|\lesssim
C(k^{-\frac{1}{2\beta}}+|x|)^{-n-1}.$$ Because the following
integral inequality(see \cite{H. Miura O. Sawada}):
$$\int_{\mathbb{R}^{n}}(a+|x-y|)^{-n-1}(b+|y|)^{-n-1}dy\leq ca^{-1}(a+|x|)^{-n-1} \text{ for } 0<a\leq b,$$
we have
\begin{eqnarray*}
|\partial_{i,j}^{2}K_{\frac{1}{k}}^{\beta}(x)|&=&\left|\int_{\mathbb{R}^{n}}\partial_{i}K^{\beta}_{\frac{1}{2k}}(x-y)
\partial_{j}K^{\beta}_{\frac{1}{2k}}(y)dy\right|\\
&\lesssim&\int_{\mathbb{R}^{n}}(k^{-\frac{1}{2\beta}}+|x-y|)^{-n-1}(k^{-\frac{1}{2\beta}}+|y|)^{-n-1}dy\\
&\lesssim&k^{\frac{1}{2\beta}}(k^{-\frac{1}{2\beta}}+|x|)^{-n-1}.
\end{eqnarray*}
Operating the process above $k-1$ times, we get
$|\partial_{x}^{k}K^{\beta}_{1/2}(x)|\lesssim
k^{\frac{k-1}{2\beta}}(k^{-\frac{1}{2\beta}}+|x|)^{-n-1}$ and
\begin{eqnarray*}
|\partial_{x}^{k}P\nabla
K^{\beta}_{1}(x)|&=&\left|\int_{\mathbb{R}^{n}}P\nabla
K_{1/2}(x-y)\partial^{k}_{x}K^{\beta}_{1/2}(y)dy\right|\\
&\lesssim&\int_{\mathbb{R}^{n}}
c'(1+|x-y|)^{-n-1}c^{k-1}k^{\frac{k-1}{2\beta}}(k^{-\frac{1}{2\beta}}+|y|)^{-n-1}dy\\
&\lesssim&C^{k}k^{\frac{1}{2\beta}}(k^{-\frac{1}{2\beta}}+|x|)^{-n-1}k^{\frac{k-1}{2\beta}}\\
&\lesssim&C^{k}k^{\frac{k}{2\beta}}(k^{-\frac{1}{2\beta}}+|x|)^{-n-1}.
\end{eqnarray*}
This completes the proof of this lemma.
\end{proof}
The following lemma can be regarded as a generalization of
Proposition 3.2 of \cite{P. Germain N. Pavlovic G.Staffilani}.
\begin{lemma}\label{lemmma pq}
If $r$ is a natural number, $\alpha\in(0,1)$ and
$\max\{\alpha,\frac{1}{2}\}<\beta\leq 1$, the operator
$$P^{\beta}_{r}f(t,x)=\int_{0}^{t}e^{-(t-s)(-\triangle)^{\beta}}(t^{\frac{1}{2\beta}}-s^{\frac{1}{2\beta}})^{r}\nabla^{r+2\beta}f(s,x)ds$$
is bounded on $L^{2}([0,T],L^{2}(R^{n},
dx),\frac{dt}{t^{\alpha/\beta}})$ for any $T\in [0,\infty]$ with
constants $p(r)$ and $q(r)$.
\end{lemma}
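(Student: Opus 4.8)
The plan is to reduce the estimate for $P^\beta_r$ to the kind of weighted $L^2$ inequality already proved in Lemma \ref{le 2} and Lemma \ref{le5}, together with the pointwise kernel bounds of Lemma \ref{g Ossen}. First I would write $P^\beta_r f(t,x)$ by expanding the binomial $(t^{1/2\beta}-s^{1/2\beta})^r=\sum_{m=0}^r\binom{r}{m}(-1)^m t^{(r-m)/2\beta}s^{m/2\beta}$, so that it suffices to bound, for each fixed $0\le m\le r$, the operator
\begin{equation*}
f\longmapsto t^{(r-m)/2\beta}\int_0^t e^{-(t-s)(-\triangle)^\beta}\nabla^{r+2\beta}\big(s^{m/2\beta}f(s,\cdot)\big)\,ds
\end{equation*}
on $L^2\big([0,T],L^2(\mathbb R^n),\tfrac{dt}{t^{\alpha/\beta}}\big)$. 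Absorbing $s^{m/2\beta}f(s,x)$ into a new function does not quite work because of the weight $t^{-\alpha/\beta}$, but one can instead split $\nabla^{r+2\beta}=(-\triangle)^{\beta}\cdot\nabla^{r}\cdot(-\triangle)^{-\beta}\nabla^{2\beta}/(-\triangle)^{\beta}$ formally; more cleanly, write $\nabla^{r+2\beta}=(-\triangle)^{\beta}\,\nabla^{r}$ up to a bounded Fourier multiplier (homogeneous of degree $0$), so the operator becomes essentially $\nabla^r\int_0^t e^{-(t-s)(-\triangle)^\beta}(-\triangle)^\beta(\cdots)\,ds$, which is exactly the form $A(t)$ treated in Lemma \ref{le 2}, modulo the extra derivatives $\nabla^r$ and the powers of $t$ and $s$.

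Next I would handle the derivative loss $\nabla^r$ and the powers of $t$ by the semigroup smoothing encoded in Lemma \ref{g Ossen} (or equivalently by the one-variable-at-a-time argument of Lemma \ref{le5}): one factors $e^{-(t-s)(-\triangle)^\beta}=e^{-\frac{t-s}{2}(-\triangle)^\beta}e^{-\frac{t-s}{2}(-\triangle)^\beta}$ and uses that $(t-s)^{r/2\beta}\nabla^r e^{-\frac{t-s}{2}(-\triangle)^\beta}$ is bounded on $L^2$ uniformly in $t-s$ (this follows from $|\partial^k_x P\nabla K^\beta_\tau(x)|\lesssim C^k k^{k/2\beta}\tau^{-k/2\beta}(k^{-1/2\beta}\tau^{1/2\beta}+|x|)^{-n-1}$ in Lemma \ref{g Ossen}, whose $L^1$-norm in $x$ is $\lesssim C^k k^{k/2\beta}\tau^{-k/2\beta}$, hence after rescaling contributes the constant $q(r)\approx C^r r^{r/2\beta}$). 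The remaining half-heat-semigroup $e^{-\frac{t-s}{2}(-\triangle)^\beta}(-\triangle)^\beta$, integrated in $s$, is then estimated exactly as in the proof of Lemma \ref{le 2}: apply Plancherel in $x$, Fubini, and the Schur-type bound $\int_0^t|\xi|^{2\beta}e^{-(t-s)|\xi|^{2\beta}}ds\le 1$, splitting the square by Cauchy–Schwarz. One must track how the powers $t^{(r-m)/2\beta}$ and $s^{m/2\beta}$ combine with $(t-s)$ — the point is that $t^{(r-m)/2\beta}\le (t-s)^{(r-m)/2\beta}+s^{(r-m)/2\beta}$ up to constants only when $s\le t/2$ or $s\ge t/2$ respectively, so I would split the $s$-integral at $s=t/2$ and treat the two ranges separately, in each range trading a power of $t$ or $s$ against a power of $t-s$ consumed by the smoothing factor, and using that on $s\ge t/2$ one has $s\approx t$, while on $s\le t/2$ one has $t-s\approx t$.

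The main obstacle I expect is precisely this bookkeeping of the time weights: one has three competing time scales ($t$, $s$, and $t-s$) together with the singular weight $t^{-\alpha/\beta}$, and one needs to make sure that after using Lemma \ref{le 2}'s estimate the leftover weight on the right-hand side is exactly $s^{-\alpha/\beta}$ (no more, no less) and that no divergent $\int_0 dt/t^{\alpha/\beta}$ is produced — this is where the restriction $\alpha<\beta$ (so $\alpha/\beta<1$) and $\beta>1/2$ are used, as in Lemma \ref{le5}. A secondary technical point is the rigorous justification that $\nabla^{r+2\beta}f(s,\cdot)$ and the integral defining $P^\beta_r f$ make sense and that the formal manipulations (Plancherel, Fubini, splitting the multiplier) are legitimate for $f$ in a dense subclass, after which the bound extends by density; but this is routine once the quantitative estimate is in hand. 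Collecting the constants from the binomial expansion ($\le 2^r$), from Lemma \ref{g Ossen} ($\lesssim C^r r^{r/2\beta}$), and from Lemma \ref{le 2} (absolute), one obtains the claimed bounds $p(r),q(r)$.
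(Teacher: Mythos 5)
Your opening reduction is where the argument breaks. Expanding $(t^{1/2\beta}-s^{1/2\beta})^{r}=\sum_{m}\binom{r}{m}(-1)^{m}t^{(r-m)/2\beta}s^{m/2\beta}$ and then estimating each term separately discards exactly the cancellation that makes $P^{\beta}_{r}$ bounded. The full factor vanishes like $(t-s)^{r/2\beta}$ as $s\to t$ --- indeed $t^{1/2\beta}-s^{1/2\beta}\le (t-s)^{1/2\beta}$ because $u\mapsto u^{1/2\beta}$ is subadditive when $2\beta\ge 1$ --- and this vanishing is precisely what pays for the $r$ derivatives in $\nabla^{r+2\beta}$ in excess of the $2\beta$ that the semigroup can absorb as in Lemma \ref{le 2}. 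After your expansion, the term with $m=r$ is $\int_{0}^{t}e^{-(t-s)(-\triangle)^{\beta}}\nabla^{r+2\beta}\bigl(s^{r/2\beta}f(s,\cdot)\bigr)ds$, whose kernel has no decay at all as $s\to t$; on the Fourier side the relevant Schur integral $\int_{0}^{t}e^{-(t-s)|\xi|^{2\beta}}|\xi|^{r+2\beta}s^{r/2\beta}\,ds\approx |\xi|^{r}t^{r/2\beta}$ is unbounded in $\xi$, so this individual operator is simply not bounded on $L^{2}([0,T],L^{2},dt/t^{\alpha/\beta})$. Your proposed fix --- splitting at $s=t/2$ and ``trading'' powers of $t$ or $s$ against $t-s$ --- cannot work in the range $s\in(t/2,t)$: there $t^{(r-m)/2\beta}s^{m/2\beta}\approx t^{r/2\beta}$ does not vanish as $s\to t$, while your operator-norm bound for $(t-s)^{r/2\beta}\nabla^{r}e^{-\frac{t-s}{2}(-\triangle)^{\beta}}$ (which is itself correct) costs a non-integrable factor $(t-s)^{-r/2\beta}$ with nothing left to cancel it. So the reduction ``it suffices to bound each $m$'' is false, and the proof does not go through.

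The repair is a one-line replacement of the expansion, and it is the paper's proof: keep $(t^{1/2\beta}-s^{1/2\beta})^{r}$ intact and bound it by $(t-s)^{r/2\beta}$. This is the only place $\beta>1/2$ is genuinely used --- not the weight bookkeeping you describe; the weight is handled trivially by $t^{-\alpha/\beta}\le s^{-\alpha/\beta}$ for $s\le t$ after Fubini. After Plancherel and Cauchy-Schwarz in $s$, the Schur bound $\int_{0}^{t}e^{-(t-s)|\xi|^{2\beta}}(t-s)^{r/2\beta}|\xi|^{r+2\beta}ds\le\int_{0}^{\infty}e^{-v}v^{r/2\beta}dv=\Gamma(1+\tfrac{r}{2\beta})$ is uniform in $\xi$ and $t$, and this single estimate yields the lemma with $p(r),q(r)$ controlled by $\Gamma(1+\tfrac{r}{2\beta})$. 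None of the heavier machinery you invoke (Lemma \ref{g Ossen}, Lemma \ref{le5}, the semigroup factorization) is needed.
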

\begin{proof}
By Planceherl's theorem and H\"{o}lder's inequality, we have
\begin{eqnarray*}
&&\int_{0}^{\infty}\|P_{r}^{\beta}f(t,\cdot)\|^{2}_{L^{2}}\frac{dt}{t^{\alpha/\beta}}\\
&&\lesssim\int_{0}^{\infty}\int_{\mathbb{R}^{n}}\left(\int_{0}^{t}e^{-(t-s)|\xi|^{2\beta}}(t^{\frac{1}{2\beta}}-s^{\frac{1}{2\beta}})^{r}|\xi|^{r+2\beta}\widehat{f}(s,\xi)ds\right)^{2}
\frac{dt}{t^{\alpha/\beta}}d\xi\\
&&\lesssim\int_{0}^{\infty}\int_{\mathbb{R}^{n}}\left(\int_{0}^{t}e^{-(t-s)|\xi|^{2\beta}}(t^{\frac{1}{2\beta}}-s^{\frac{1}{2\beta}})^{r}|\xi|^{r+2\beta}ds\right)\\
&&\times
\left(\int_{0}^{t}e^{-(t-s)|\xi|^{2\beta}}(t^{\frac{1}{2\beta}}-s^{\frac{1}{2\beta}})^{r}|\xi|^{r+2\beta}|\widehat{f}(s,\xi)|^{2}ds\right)\frac{dt}{t^{\alpha/\beta}}d\xi.
\end{eqnarray*}
Because $t^{1/2\beta}-s^{1/2\beta}\leq(t-s)^{1/2\beta}$ for
$2\beta>1$ and $0<s<t$, it is easy to see that
\begin{eqnarray*}
\int_{0}^{t}e^{-(t-s)|\xi|^{2\beta}}(t^{\frac{1}{2\beta}}-s^{\frac{1}{2\beta}})^{r}|\xi|^{r+2\beta}ds
&\leq&
\int_{0}^{t}e^{-(t-s)|\xi|^{2\beta}}(t-s)^{\frac{r}{2\beta}}|\xi|^{r+2\beta}ds\\
&\lesssim&\int_{0}^{\infty}e^{-v}v^{r/2\beta}dv\lesssim 1.
\end{eqnarray*}
Then we have, by $t^{1/2\beta}-s^{1/2\beta}\leq(t-s)^{1/2\beta}$ for
$2\beta>1$ and $0<s<t$ again,
\begin{eqnarray*}
&&\int_{0}^{\infty}\|P_{r}^{\beta}f(t,\cdot)\|^{2}_{L^{2}}\frac{dt}{t^{\alpha/\beta}}\\
&\lesssim&\int_{\mathbb{R}^{n}}\int_{0}^{\infty}|\widehat{f}(s,\xi)|^{2}
\left(\int_{s}^{\infty}e^{-(t-s)|\xi|^{2\beta}}(t-s)^{\frac{r}{2\beta}}|\xi|^{r+2\beta}dt\right)\frac{dsd\xi}{s^{\alpha/\beta}}\\
&\lesssim&\int_{\mathbb{R}^{n}}\int_{0}^{\infty}|\widehat{f}(s,\xi)|^{2}
\left(\int_{0}^{\infty}e^{-u|\xi|^{2\beta}}u^{\frac{r}{2\beta}}|\xi|^{r+2\beta}du\right)\frac{dsd\xi}{s^{\alpha/\beta}}\\
&\lesssim&\int_{0}^{\infty}\int_{\mathbb{R}^{n}}|\widehat{f}(s,\xi)|^{2}\frac{ds}{s^{\alpha/\beta}}d\xi.
\end{eqnarray*}
This completes the proof of this lemma.
\end{proof}
\begin{lemma}\label{linear part}
For any $k\geq 0$, $\alpha>0$ and $\max\{\alpha,1/2\}<\beta<1$ with
$\alpha+\beta-1\geq0$, there exists a constant $C(k)$ such that
$$\|e^{-t(-\triangle)^{\beta}}u\|_{X^{\beta,k}_{\alpha}}\leq C(k)\|u\|_{Q^{\beta,-1}_{\alpha;\infty}}.$$
\end{lemma}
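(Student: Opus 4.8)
The plan is to reduce the claimed bound $\|e^{-t(-\triangle)^{\beta}}u\|_{X^{\beta,k}_{\alpha}}\lesssim \|u\|_{Q^{\beta,-1}_{\alpha;\infty}}$ to the definition of the two seminorms comprising $\|\cdot\|_{X^{\beta,k}_{\alpha}}$, namely $\|\cdot\|_{N^{\beta,k}_{\alpha,\infty}}$ and $\|\cdot\|_{N^{\beta,k}_{\alpha,C}}$, and to treat the derivative $\nabla^{k}$ by absorbing it into the kernel. The basic observation is that $\nabla^{k}e^{-t(-\triangle)^{\beta}}u = (\nabla^{k}K^{\beta}_{t})\ast u$, and by the dilation identity $(\nabla^{k}K^{\beta}_{t})(x)=t^{-(n+k)/2\beta}(\nabla^{k}K^{\beta}_{1})(x/t^{1/2\beta})$ together with the pointwise estimates on $\nabla^{k}K^{\beta}_{1}$ already available (of the same flavour as Lemma \ref{g Ossen} and the decay estimates used in Lemma \ref{le4}), the convolution kernel of $t^{k/2\beta}\nabla^{k}e^{-t(-\triangle)^{\beta}}$ obeys a bound $\lesssim t^{-n/2\beta}(1+t^{-1/2\beta}|x-y|)^{-n-1}$ with a constant $C(k)$. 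Thus, up to the harmless factor $t^{k/2\beta}$ built into the definition of the seminorms, $t^{k/2\beta}\nabla^{k}e^{-t(-\triangle)^{\beta}}$ behaves like an averaging operator at scale $t^{1/2\beta}$ against an integrable, rapidly decaying kernel, and the problem is reduced to the case $k=0$, i.e. to controlling $\|e^{-t(-\triangle)^{\beta}}u\|_{X^{\beta,0}_{\alpha}}$ by $\|u\|_{Q^{\beta,-1}_{\alpha;\infty}}$.

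For the $N^{\beta,k}_{\alpha,\infty}$ part I would argue as follows. We must show $\sup_{t}t^{(2\beta-1+k)/2\beta}\|\nabla^{k}e^{-t(-\triangle)^{\beta}}u\|_{L^{\infty}}\lesssim\|u\|_{Q^{\beta,-1}_{\alpha;\infty}}$. By Theorem \ref{Em X B} (or more directly Theorem \ref{Q Contain Bes} and the chain of embeddings), $Q^{\beta,-1}_{\alpha;\infty}\hookrightarrow\dot{B}^{1-2\beta}_{\infty,\infty}$, and by the Miao--Yuan--Zhang characterization of $\dot B^{s}_{\infty,\infty}$ via the fractional heat semigroup quoted in the proof of Theorem \ref{Em X B}, $f\in\dot B^{1-2\beta}_{\infty,\infty}$ iff $\sup_{r>0}r^{2\beta-1}\|e^{-r^{2\beta}(-\triangle)^{\beta}}f\|_{L^{\infty}}<\infty$. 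Splitting $e^{-t(-\triangle)^{\beta}}=e^{-(t/2)(-\triangle)^{\beta}}e^{-(t/2)(-\triangle)^{\beta}}$ and putting the derivatives $\nabla^{k}$ together with the factor $t^{k/2\beta}$ onto the first semigroup factor (whose kernel is, as above, $L^{1}$ with norm independent of $t$) leaves $e^{-(t/2)(-\triangle)^{\beta}}u$, whose $L^{\infty}$ norm is $\lesssim t^{1/2\beta-1}\|u\|_{\dot B^{1-2\beta}_{\infty,\infty}}\lesssim t^{1/2\beta-1}\|u\|_{Q^{\beta,-1}_{\alpha;\infty}}$; multiplying by $t^{(2\beta-1+k)/2\beta}\cdot t^{-k/2\beta}=t^{1-1/2\beta}$ closes this estimate.

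For the Carleson part $N^{\beta,k}_{\alpha,C}$, after the reduction to $k=0$ the quantity to be bounded is exactly $\sup_{x_{0},r}r^{2\alpha-n+2\beta-2}\int_{0}^{r^{2\beta}}\int_{|y-x_{0}|<r}|e^{-t(-\triangle)^{\beta}}u(t,y)|^{2}t^{-\alpha/\beta}\,dy\,dt$, which is precisely $\|u\|^{2}_{Q^{\beta,-1}_{\alpha;\infty}}$ by Definition \ref{de 1}; so for $k=0$ there is nothing to prove. For $k\geq1$ one must show that applying $t^{k/2\beta}\nabla^{k}e^{-t(-\triangle)^{\beta}}$ does not increase this local $L^{2}$-quantity beyond a constant multiple. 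The clean way is to write $t^{k/2\beta}\nabla^{k}e^{-t(-\triangle)^{\beta}}=G_{t}\ast e^{-(t/2)(-\triangle)^{\beta}}$ where $G_{t}=t^{k/2\beta}\nabla^{k}K^{\beta}_{t/2}$ has an $L^{1}$ norm bounded uniformly in $t$ and an integrable radial majorant at scale $t^{1/2\beta}$; one then estimates the ball integral of $|G_{t}\ast v_{t}|^{2}$ (with $v_{t}=e^{-(t/2)(-\triangle)^{\beta}}u$) by decomposing $\mathbb{R}^{n}$ into annuli $\{|z-x_{0}|\sim 2^{j}r\}$, using the decay of $G_{t}$ (recall $t^{1/2\beta}\le r$ on the domain of integration), Cauchy--Schwarz, and a geometric summation — this is the same annular argument used for $B_{1}$ and for the term $I_{1}$ in the proof of Theorem \ref{th 3} and for $F_{r,x}$ in Lemma \ref{le 1}. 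Each annulus contributes a ball integral of $|v_{t}|^{2}$ over $|z-x_{0}|<2^{j+1}r$, which by the case-$k=0$ identity and the fact that $v_{t}=e^{-(t/2)(-\triangle)^{\beta}}u$ is again controlled by $\|u\|^{2}_{Q^{\beta,-1}_{\alpha;\infty}}$ times $(2^{j}r)^{n-2\alpha-2\beta+2}$, and the extra decay $(2^{j})^{-(n+1)}$ from $G_{t}$ makes the sum over $j$ converge. The main obstacle I anticipate is bookkeeping: keeping track of the powers of $t$, $r$ and $2^{j}$ through the derivative-to-kernel transfer and the annular decomposition so that the scaling exponent $n-2\alpha-2\beta+2$ comes out exactly, and verifying that the constant $C(k)$ coming from $\|G_{t}\|_{L^{1}}$ (equivalently from the $k^{k/2\beta}$-type growth in Lemma \ref{g Ossen}) is finite for each fixed $k$ — which it is, since $k$ is held fixed.
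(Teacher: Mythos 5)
Your proposal is correct, and the $L^{\infty}$ half is essentially the paper's argument: both use the embedding $Q^{\beta,-1}_{\alpha;\infty}\hookrightarrow\dot{B}^{1-2\beta}_{\infty,\infty}$ together with the semigroup characterization of negative-order Besov spaces, and the power of $t$ closes for the same reason. For the Carleson half, however, you take a genuinely different route. The paper invokes Theorem \ref{th 2} to write $u=\sum_{j}\partial_{j}f_{j}$ with $f_{j}\in Q^{\beta}_{\alpha}$, observes that $\psi=\nabla^{k}\partial_{j}e^{-(-\triangle)^{\beta}}$ satisfies the conditions (\ref{prp psi}), and then applies the Carleson-measure characterization (\ref{eq cha q a b}) of $Q^{\beta}_{\alpha}$ directly to that test function; the whole estimate is then a one-line consequence of the machinery of Section 3 plus a change of variables $t=s^{2\beta}$. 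You instead stay entirely at the level of the semigroup definition of $Q^{\beta,-1}_{\alpha;\infty}$: you factor $t^{k/2\beta}\nabla^{k}e^{-t(-\triangle)^{\beta}}=G_{t}\ast e^{-(t/2)(-\triangle)^{\beta}}$ with $\|G_{t}\|_{L^{1}}\lesssim_{k}1$ and scale-$t^{1/2\beta}$ decay, and run a near/far annular decomposition, controlling each annulus $|z-x_{0}|\sim 2^{j}r$ by the $k=0$ Carleson quantity over the dilated ball. This is exactly the mechanism of the $F_{r,x}$ estimate in Lemma \ref{le 1} and of $I_{1}$ in Theorem \ref{th 3}, so it is sound; the geometric sum converges because the kernel decay supplies $2^{-j(n+1)}$ against the growth $(2^{j}r)^{n-2\alpha-2\beta+2}$ of the ball quantities, i.e.\ one needs $2\alpha+2\beta-1>0$, which holds under your hypotheses. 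The trade-off: the paper's proof is shorter but leans on the heavy results of Sections 3--4 (the representation $Q^{\beta,-1}_{\alpha;\infty}=\nabla\cdot(Q^{\beta}_{\alpha})^{n}$ and the tent-space characterization), while yours is more self-contained and only uses kernel bounds of the type in Lemmas \ref{le4} and \ref{g Ossen}. Just make sure, in the final write-up, that the near-part Cauchy--Schwarz is done against the measure $|G_{t}(y-z)|\,dz$ so that the dilated ball $B(x_{0},10r)$ (with $t\le r^{2\beta}\le(10r)^{2\beta}$) absorbs the local contribution with a constant independent of $r$.
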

\begin{proof}
Because
$\|u\|_{X_{\alpha}^{\beta,k}}=\|u\|_{N^{\beta,k}_{\alpha,\infty}}+\|u\|_{N^{\beta,k}_{\alpha,C}}$,
we split the proof into two parts.

For the $L^{\infty}$ part of the norm. Because
$Q^{\beta,-1}_{\alpha;\infty}\hookrightarrow
\dot{B}^{1-2\beta}_{\infty,\infty}$ and $\nabla^{k}:
\dot{B}^{1-2\beta}_{\infty,\infty}\longrightarrow\dot{B}^{1-2\beta-k}_{\infty,\infty},$
we have
\begin{eqnarray*}
&&\|\nabla^{k}e^{-t(-\triangle)^{\beta}}u\|_{\infty}\leq
t^{\frac{1-2\beta-k}{2\beta}}\|\nabla^{k}u\|_{\dot{B}^{1-2\beta-k}_{\infty,\infty}}
\leq
t^{\frac{1-2\beta-k}{2\beta}}\|u\|_{\dot{B}^{1-2\beta}_{\infty,\infty}}
\leq
t^{\frac{1-2\beta-k}{2\beta}}\|u\|_{Q^{\beta,-1}_{\alpha;\infty}}.
\end{eqnarray*}
Then we can get
$t^{\frac{2\beta-1+k}{2\beta}}\|\nabla^{k}e^{-t(-\triangle)^{\beta}}u\|_{\infty}\leq\|u\|_{Q^{\beta,-1}_{\alpha;\infty}}.$

For the Carleson part. Because $u\in
Q^{\beta,-1}_{\alpha;\infty}=\nabla\cdot(Q^{\beta}_{\alpha})^{n},$
there exists a sequence $\{f_{j}\}\subset Q^{\beta}_{\alpha}$ such
that $u=\sum_{j}\partial_{j}f_{j}.$ We only need to prove
\begin{equation}\label{reg char}
\sup_{x\in
\mathbb{R}^{n},r>0}r^{2\alpha-n+2\beta-2}\int_{0}^{r^{2\beta}}\int_{|y-x|<r}
\left|t^{\frac{k}{2\beta}}\nabla^{k}e^{-t(-\triangle)^{\beta}}\partial_{j}f_{j}(x)\right|^{2}
\frac{dydt}{t^{\alpha/\beta}}\lesssim
C(k)\|\partial_{j}f_{j}\|^{2}_{Q^{\beta,-1}_{\alpha;\infty}}
=C(k)\|f_{j}\|^{2}_{Q^{\beta}_{\alpha}}.
\end{equation}
Taking $\psi(x)=\nabla^{k}\partial_{j}e^{-(-\triangle)^{\beta}}(x)
=\int_{\mathbb{R}^{n}}(i\xi)^{k}i\xi_{j}e^{-|\xi|^{2\beta}}e^{2\pi
ix\cdot\xi}d\xi,$ we can justify the function $\psi(x)$
 with
 $\widehat{\psi_{t}}(\xi)=(it\xi)^{k}(it\xi_{j})e^{-t^{2\beta}|\xi|^{2\beta}}$
 satisfying the conditions in (\ref{prp psi}):
$$|\psi(x)|\lesssim (1+|x|)^{-n-1},\quad  \psi(x)\in L^{1} \quad \hbox{and}\  \int_{\mathbb{R}^{n}}\psi(x)dx=0.$$
 By the equivalent characterization of
$Q^{\beta}_{\alpha}$ (see(\ref{eq cha q a b})), we have
$$\sup_{x,r}r^{2\alpha-n+2\beta-2}\int_{0}^{r}\int_{|y-x|<r}\left|s^{k}\nabla^{k}e^{-s^{2\beta}(-\triangle)^{\beta}}(s\partial_{j})f_{j}(x)\right|^{2}
\frac{dyds}{s^{1+2\alpha-2(\beta-1)}}\lesssim
C(k)\|f_{j}\|^{2}_{Q^{\beta}_{\alpha}}.$$
By a change of variable:
$t=s^{2\beta},$ we get the desired result (\ref{reg char}).
\end{proof}

\subsection{Regularity} Now we state the main theorem of this
section.
\begin{theorem}\label{threg}
 Let $\alpha>0$ and $\max\{\alpha,1/2\}<\beta<1$ with $\alpha+\beta-1\geq0$. There exists an $\varepsilon=\varepsilon(n)$ such that if
$\|u_{0}\|_{Q^{\beta,-1}_{\alpha;\infty}}<\varepsilon,$ the
solution $u$ to equations (\ref{eq1e}) verifies:
$$t^{\frac{k}{2\beta}}\nabla^{k}u\in X^{\beta,0}_{\alpha}\text{ for any } k\geq 0.$$
\end{theorem}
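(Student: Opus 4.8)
The plan is to run an induction on $k$, using the bilinear structure of the equation together with the technical lemmas already established. The solution $u$ is the fixed point produced by Picard iteration for $u = e^{-t(-\triangle)^{\beta}}u_0 - B(u,u)$ with $B(u,v)=\int_0^t e^{-(t-s)(-\triangle)^{\beta}}P\nabla\cdot(u\otimes v)\,ds$. By Theorem \ref{th 3}, $u\in (X^{\beta}_{\alpha;\infty})^n = (X^{\beta,0}_{\alpha})^n$ with norm controlled by $\varepsilon$, which is the base case $k=0$. Fix $k\geq 1$ and assume $t^{j/2\beta}\nabla^j u\in X^{\beta,0}_{\alpha}$ for all $j<k$ with quantitative bounds. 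I would differentiate the integral equation: applying $t^{k/2\beta}\nabla^k$ to $u$ produces $t^{k/2\beta}\nabla^k e^{-t(-\triangle)^{\beta}}u_0$, handled by Lemma \ref{linear part}, plus $t^{k/2\beta}\nabla^k B(u,u)$. The nonlinear term is the heart of the matter.

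\textbf{Handling the nonlinear term.} For the bilinear part I would write, schematically, $t^{k/2\beta}\nabla^k B(u,u) = t^{k/2\beta}\int_0^t \nabla^k e^{-(t-s)(-\triangle)^{\beta}}P\nabla\cdot(u\otimes u)\,ds$ and distribute the $k$ derivatives by Leibniz's rule among the two factors of $u\otimes u$ and (possibly) the kernel. The key device is to split $t^{k/2\beta} = (t^{1/2\beta}-s^{1/2\beta}+s^{1/2\beta})^k$ and expand binomially, so that each term carries either a power of $(t^{1/2\beta}-s^{1/2\beta})$ — which combines with extra derivatives on the heat kernel and is absorbed by Lemma \ref{lemmma pq} (the boundedness of $P^{\beta}_r$ on $L^2([0,T],L^2,\tfrac{dt}{t^{\alpha/\beta}})$) — or a power $s^{j/2\beta}$ that lands on a factor $\nabla^{j}u$, producing exactly the inductively controlled quantity $s^{j/2\beta}\nabla^j u\in X^{\beta,0}_{\alpha}$. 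One then estimates the resulting expressions: the $L^{\infty}$ part of the $X^{\beta,0}_{\alpha}$ norm is obtained by the same pointwise kernel bounds as in Part 2 of the proof of Theorem \ref{th 3}, now invoking the sharp derivative estimate $|\partial_x^m P\nabla K^{\beta}_t(x)|\lesssim C^m m^{m/2\beta} t^{-m/2\beta}(m^{-1/2\beta}t^{1/2\beta}+|x|)^{-n-1}$ from Lemma \ref{g Ossen} to control the kernel derivatives; the $L^2$/Carleson part is obtained by the decomposition into near and far regions (analogues of $B_1,B_2,B_3$) exactly as in Part 1 of the proof of Theorem \ref{th 3}, with Lemma \ref{le5} replaced by its higher-order counterpart (the case of general $k$), Lemma \ref{le 2}, and Lemma \ref{lemmma pq} supplying the needed $L^2$-boundedness in the time-weighted norm. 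Because $\|u\|_{X^{\beta,0}_{\alpha}}\lesssim\varepsilon$, each bilinear estimate gains a factor $\varepsilon$ (or a product of lower-order norms already bounded), so for $\varepsilon$ small the induction closes and the constants $C(k)$ can be tracked explicitly.

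\textbf{Main obstacle.} The principal difficulty is bookkeeping the combinatorics of the Leibniz expansion while keeping the constants controlled: distributing $k$ derivatives over $u\otimes u$ forces one to handle all splittings $\nabla^{j_1}u\cdot\nabla^{j_2}u$ with $j_1+j_2\leq k$, each paired with $\nabla^{k-j_1-j_2}$ on the kernel, and one must verify that every such term is either a genuine $P^{\beta}_r$-type operator acting on an inductively-bounded input or a pointwise-estimable convolution; the factorial-type growth $m^{m/2\beta}$ in Lemma \ref{g Ossen} must be shown to be compatible with the $\varepsilon$-smallness so that the series of constants does not blow up. A secondary technical point is ensuring the Carleson (tent) estimates localize correctly after differentiation — i.e.\ that cutting off to a ball of radius $r$ and controlling the far-field contribution still works when extra derivatives fall on the kernel, which is where the decay $(m^{-1/2\beta}t^{1/2\beta}+|x|)^{-n-1}$ in Lemma \ref{g Ossen} is essential. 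Once these are in place, the statement $t^{k/2\beta}\nabla^k u\in X^{\beta,0}_{\alpha}$ follows for all $k\geq 0$ by induction.
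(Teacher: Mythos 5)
Your proposal is correct and follows essentially the same route as the paper: the linear term via Lemma \ref{linear part}, the binomial splitting $t^{k/2\beta}=(t^{1/2\beta}-s^{1/2\beta}+s^{1/2\beta})^{k}$ feeding into the $P^{\beta}_{r}$ operators of Lemma \ref{lemmma pq}, the kernel bounds of Lemma \ref{g Ossen} for the $L^{\infty}$ part, the near/far (and cutoff) decomposition with Lemma \ref{le5} for the Carleson part, and an induction on $k$ closed by the smallness of the data (the paper formalizes this via the Picard iterates in Lemma \ref{le6}, and resolves the constant-growth obstacle you flag by choosing the time-splitting parameter $m=m(k)=k^{(k-3)/(n+k+1)}$ and by arranging that only the $k$-independent constant $C_{1}$ multiplies the top-order norms).
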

\begin{proof}
 We can see that the solution to the equations
(\ref{eq1e}) can be represented as
$$u(t,x)=e^{-t(-\triangle)^{\beta}}u(0,x)-B(u,u)(t,x),$$ where
$$B(u,v)(t,x)=\int_{0}^{t}e^{-(t-s)(-\triangle)^{\beta}}P\nabla\cdot(u(s,x)\otimes v(s,x))ds.$$
Here $u\otimes v$ denotes the tensor product of $u$ and $v$. For the
linear term
$e^{-t(-\triangle)^{\beta}}u(0,x):=e^{-t(-\triangle)^{\beta}}u_{0},$
by Proposition \ref{linear part}, we have
$\|e^{-t(-\triangle)^{\beta}}u_{0}\|_{X^{\beta,k}_{\alpha}}\leq
C(k)\|u_{0}\|_{Q^{\beta,-1}_{\alpha}}.$ Now we estimate the
nonlinear term. We write
$\widetilde{X}^{\beta,k}_{\alpha}=\cap_{l=0}^{k}X^{\beta,k}_{\alpha}$
equipped with the norm
$\sum_{l=0}^{k}\|\cdot\|_{X^{\beta,l}_{\alpha}}.$ We shall prove
that the bilinear operator maps
$$B(u,v):\widetilde{X}^{\beta,k}_{\alpha}\times \widetilde{X}^{\beta,k}_{\alpha}\longrightarrow\widetilde{X}^{\beta,k}_{\alpha}.$$
{{\it{Part 1}} $N^{\beta,k}_{\alpha,\infty}$ norm.} Here we shall
prove that
\begin{eqnarray*}
\|B(u,v)\|_{N^{\beta,k}_{\alpha,\infty}}&\lesssim&C_{0}(k)\|u\|_{X^{\beta,0}_{\alpha}}\|v\|_{X^{\beta,0}_{\alpha}}
+C(k)\sum_{l=1}^{k-1}\|u\|_{N^{\beta,l}_{\alpha,\infty}}\|v\|_{N^{\beta,k-l}_{\alpha,\infty}}\\
&+&C_{1}\|u\|_{X^{\beta,0}_{\alpha}}\|v\|_{X^{\beta,k}_{\alpha}}+C_{1}\|u\|_{X^{\beta,0}_{\alpha}}\|v\|_{X^{\beta,k}_{\alpha}}.
\end{eqnarray*}
If $0<s<t(1-\frac{1}{m}),$ $\frac{t}{m}<t-s<t$. By
Lemma \ref{g Ossen}, we have
\begin{eqnarray*}
&&I=\int_{0}^{t(1-\frac{1}{m})}\left|\nabla^{k}e^{-(t-s)(-\triangle)^{\beta}}P\nabla\cdot(u(s,x)\otimes
v(s,x))\right|ds\\
&&=C^{k}k^{\frac{k}{2\beta}}\int_{0}^{t(1-\frac{1}{m})}\int_{\mathbb{R}^{n}}
\frac{|u(s,y)||v(s,y)|}{(t-s)^{\frac{k}{2\beta}}(t-s)^{\frac{n+1}{2\beta}}
[k^{-\frac{1}{2\beta}}+\frac{|x-y|}{(t-s)^{1/2\beta}}]^{n+1}}dyds\\
&&\leq
C^{k}k^{\frac{k}{2\beta}}\left(\frac{m}{t}\right)^{(n+k+1)/2\beta}\sum_{q\in\mathbb{Z}^{n}}\int_{0}^{t(1-\frac{1}{m})}
\int_{\frac{x-y}{t^{1/2\beta}}\in
q+[0,1]^{n}}\frac{|u(s,y)||v(s,y)|}{[k^{-\frac{1}{2\beta}}+|q|]^{n+1}}dyds.
\end{eqnarray*}
Because $\sum_{q\in
\mathbb{Z}^{n}}\frac{1}{[k^{-\frac{1}{2\beta}}+|q|]^{n+1}}\approx
k^{1/2\beta}$, we have

\begin{eqnarray*}
&&I\leq
C^{k}k^{k/2\beta}\left(\frac{m}{t}\right)^{(n+k+1)/2\beta}k^{\frac{1}{2\beta}}\int_{0}^{t(1-\frac{1}{m})}\int_{x-y\in
t^{\frac{1}{2\beta}}(q+[0,1]^{n})}|u(s,y)||v(s,y)|dyds\\
&&\leq
C^{k}k^{k/2\beta}\left(\frac{m}{t}\right)^{(n+k+1)/2\beta}k^{\frac{1}{2\beta}}t^{\alpha/\beta}\int_{0}^{t}
\int_{|x-y|<t^{1/2\beta}}|u(s,y)||v(s,y)|\frac{dyds}{s^{\alpha/\beta}}\\
&&\leq
C^{k}k^{\frac{k+1}{2\beta}}m^{\frac{n+k+1}{2\beta}}t^{\frac{-k-2\beta+1}{2\beta}}\|u\|_{X^{\beta,0}_{\alpha}}\|v\|_{X^{\beta,0}_{\alpha}}\\
&&:=C_{0}(k)t^{\frac{-k-2\beta+1}{2\beta}}\|u\|_{X^{\beta,0}_{\alpha}}\|v\|_{X^{\beta,0}_{\alpha}}
\end{eqnarray*}
where
$C_{0}(k)=C^{k}k^{\frac{k+1}{2\beta}}m^{\frac{n+k+1}{2\beta}}.$

If $t(1-\frac{1}{m})\leq s\leq t,$ by Young's inequality, we have
\begin{eqnarray*}
|\nabla^{k}e^{-(t-s)(-\triangle)^{\beta}}P\nabla\cdot(u(s,x)\otimes
v(s,x))|&=&|P\nabla
e^{-(t-s)(-\triangle)^{\beta}}\nabla^{k}(u(s,x)\otimes v(s,x))| \\
&\leq&\|P\nabla
e^{-(t-s)(-\triangle)^{\beta}}(x)\|_{L^{1}}\|\nabla^{k}(u(s,x)\otimes
v(s,x))\|_{L^{\infty}}.
\end{eqnarray*}
By the estimate for the generalized Oseen kernel:
$$|P\nabla^{l+1}e^{-(-\triangle)^{\beta}}(x)|\lesssim\frac{1}{(1+|x|)^{n+1+l}} \text{ and }
P\nabla^{l+1}e^{-u(-\triangle)^{\beta}}=u^{-\frac{n+1+l}{2\beta}}P\nabla^{l+1}e^{-(-\triangle)^{\beta}}(\frac{x}{u^{1/2\beta}}),$$
we have
$$|P\nabla^{l+1}e^{-u(-\triangle)^{\beta}}|\lesssim u^{-\frac{n+l+1}{2\beta}}\frac{1}{\left(1+\frac{|x|}{u^{1/2\beta}}\right)^{l+n+1}}
\lesssim\frac{1}{\left(u^{1/2\beta}+|x|\right)^{l+n+1}}.$$ Then we
take $l=0$ and have
\begin{eqnarray*}
\|P\nabla
e^{-u(-\triangle)^{\beta}}\|_{L^{1}}&\lesssim&\int_{0}^{\infty}\frac{|x|^{n-1}}{(u^{\frac{1}{2\beta}}+|x|)^{n+1}}d|x|
\lesssim\frac{1}{u^{1/2\beta}}.
\end{eqnarray*}
Hence we can get
\begin{eqnarray*}
|\nabla^{k}e^{-(t-s)(-\triangle)^{\beta}}P\nabla(u(s,x)\otimes
v(s,x))|&\lesssim&\frac{1}{(t-s)^{1/2\beta}}\sum_{l=0}^{k}\left(\begin{array}{ccc}k\\l\end{array}\right)\|\nabla^{l}u(s,\cdot)\|_{L^{\infty}}\|\nabla^{k-l}v(s,\cdot)\|_{L^{\infty}}\\
&\lesssim&\frac{1}{(t-s)^{1/2\beta}}\sum_{l=0}^{k}\left(\begin{array}{ccc}k\\l\end{array}\right)
\frac{\|u\|_{N^{\beta,l}_{\alpha,\infty}}\|v\|_{N^{\beta,k-l}_{\alpha,\infty}}}{s^{(2\beta-1+l)/2\beta}s^{(2\beta-1+k-l)/2\beta}}.
\end{eqnarray*}
So we have
\begin{eqnarray*}
&&\left|\int_{t(1-\frac{1}{m})}^{t}\nabla
e^{-(t-s)(-\triangle)^{\beta}}P\nabla^{k+1}(u(s,x)\otimes
v(s,x))ds\right|\\
&&\lesssim\sum_{l=0}^{k}\left(\begin{array}{ccc}k\\l\end{array}\right)\|u\|_{N^{\beta,l}_{\alpha,\infty}}\|v\|_{N^{\beta,k-l}_{\alpha,\infty}}
\int_{t(1-\frac{1}{m})}^{t}\frac{1}{(t-s)^{1/2\beta}}\frac{1}{s^{(4\beta-2+k)/2\beta}}ds.
\end{eqnarray*}
For the integral in the last inequality, we make the change of
variable: $s=zt$. Because $t\left(1-\frac{1}{m}\right)<s<t$ implies
$\left(1-\frac{1}{m}\right)<z<1,$ we have
\begin{eqnarray*}
II&=&\int_{t\left(1-\frac{1}{m}\right)}^{t}\frac{1}{(t-s)^{1/2\beta}}\frac{1}{s^{\frac{4\beta-2+k}{2\beta}}}ds\\
&\lesssim&
t^{\frac{1-2\beta-k}{2\beta}}(1-\frac{1}{m})^{-\frac{4\beta-2+k}{2\beta}}\int_{1-\frac{1}{m}}^{1}(1-z)^{-\frac{1}{2\beta}}dz\\
&=&t^{\frac{1-2\beta-k}{2\beta}}(1-\frac{1}{m})^{-\frac{4\beta-2+k}{2\beta}}(\frac{1}{m})^{1-\frac{1}{2\beta}}.
\end{eqnarray*}
Denote
$g(m)=\left(1-\frac{1}{m}\right)^{-\frac{4\beta-2+k}{2\beta}}\left(\frac{1}{m}\right)^{1-\frac{1}{2\beta}}$
and take $m=m(k)=k^{\frac{k-3}{n+k+1}}.$ We can prove that
$g(m)\longrightarrow 0$ as $k\longrightarrow\infty.$ Then we have
$II\lesssim Ct^{\frac{1-2\beta-k}{2\beta}}$ for $k\geq 1.$

Therefore we have
\begin{eqnarray*}
&&\left|\int_{t\left(1-\frac{1}{m}\right)}^{t}\nabla
e^{-(t-s)(-\triangle)^{\beta}}P\nabla^{k+1}(u(s,x)\times
v(s,x))ds\right|\\
&&\lesssim
Ct^{\frac{1-2\beta-k}{2\beta}}\left[\sum_{l=1}^{k}\left(\begin{array}{ccc}k\\l\end{array}\right)
\|u\|_{N^{\beta,l}_{\alpha,\infty}}\|v\|_{N^{\beta,k-l}_{\alpha,\infty}}
+\|u\|_{N^{\beta,0}_{\alpha,\infty}}\|v\|_{N^{\beta,k}_{\alpha,\infty}}+
\|u\|_{N^{\beta,k}_{\alpha,\infty}}\|v\|_{N^{\beta,0}_{\alpha,\infty}}\right].
\end{eqnarray*}
{{\it {Part 2}} $N^{\beta,k}_{\alpha,C}$ norm.} We split $B(u,v)$ as
follows: $B(u,v)=B_{1}(u,v)+B_{2}(u,v)$ with
\begin{eqnarray*}
&&B_{1}(u,v)(t,x)=\int_{0}^{t}e^{-(t-s)(-\triangle)^{\beta}}P\nabla\left[1-\phi\left(\frac{x-x_{0}}{R^{1/2\beta}}\right)\right]u(s,x)\otimes
v(s,x)ds\\
&&B_{2}(u,v)(t,x)=\int_{0}^{t}e^{-(t-s)(-\triangle)^{\beta}}P\nabla\phi\left(\frac{x-x_{0}}{R^{1/2\beta}}\right)u(s,x)\otimes
v(s,x)ds
\end{eqnarray*}
where
$\phi_{R^{\frac{1}{2\beta}},x_{0}}=\phi((x-x_{0})/R^{\frac{1}{2\beta}})$
for a smooth function $\phi$ supported in $B(0,15)$ and equals to 1
on $B(0,10).$

For the estimate for $B_{1}$. Because
$|P\nabla^{k+1}e^{-(t-s)(-\triangle)^{\beta}}(x)|\lesssim\frac{K(k)}{\left[(t-s)^{1/2\beta}+|x-y|\right]^{n+k+1}}$
and $0<t<R$, we have
\begin{eqnarray*}
&&|t^{\frac{k}{2\beta}}\nabla^{k}B_{1}(u,v)(t,x)|\\
&\lesssim&
t^{\frac{k}{2\beta}}\left|\nabla^{k}\int_{0}^{t}\int_{|y-x_{0}|\geq
10R^{1/2\beta}}e^{-(t-s)(-\triangle)^{\beta}}P\nabla(x-y)u(s,y)v(s,y)dyds\right|\\
&\lesssim&K(k)t^{\frac{k}{2\beta}}\int_{0}^{t}\int_{|y-x_{0}|\geq10R^{1/2\beta}}\frac{|u(s,y)||v(s,y)|dyds}{\left[(t-s)^{1/2\beta}+|x-y|\right]^{n+k+1}}\\
&\lesssim&K(k)R^{\frac{k}{2\beta}}\int_{0}^{R}\int_{|y-x_{0}|\geq10R^{1/2\beta}}\frac{|u(s,y)||v(s,y)|}{R^{\frac{n+k+1}{2\beta}}
\left[(\frac{t-s}{R})^{\frac{1}{2\beta}}+\frac{|x-y|}{R^{1/2\beta}}\right]^{n+k+1}}dyds\\
&\lesssim&K(k)R^{\frac{k}{2\beta}-\frac{n+k+1}{2\beta}}\sum_{q\in
\mathbb{Z}^{n}}\frac{1}{|q|^{n+k+1}}R^{\alpha/\beta}\int_{0}^{R}\int_{|y-x_{0}|<R^{1/2\beta}}|u(s,y)||v(s,y)|\frac{dyds}{s^{\alpha/\beta}}\\
&\lesssim&K(k)D(k)R^{\frac{1-2\beta}{2\beta}}\|u\|_{X^{\beta,0}_{\alpha}}\|v\|_{X^{\beta,0}_{\alpha}}.
\end{eqnarray*}
Then we have, taking $R=r^{2\beta}$,
\begin{eqnarray*}
&&r^{2\alpha-n+2\beta-2}\int_{0}^{r^{2\beta}}\int_{|y-x|<r}\left|t^{\frac{k}{2\beta}}\nabla^{k}B_{1}(u,v)(t,y)\right|^{2}\frac{dydt}{t^{\alpha/\beta}}\\
&&\lesssim(K(k)D(k))^{2}
r^{2\alpha-n+2\beta-2}\int_{0}^{r^{2\beta}}\int_{|y-x|<r}r^{2-4\beta}\|u\|_{X^{\beta,0}_{\alpha}}^{2}\|v\|^{2}_{X^{\beta,0}_{\alpha}}
\frac{dydt}{t^{\alpha/\beta}}\\
&&\lesssim(K(k)D(k))^{2}\|u\|_{X^{\beta,0}_{\alpha}}^{2}\|v\|^{2}_{X^{\beta,0}_{\alpha}}.
\end{eqnarray*}
For the estimate for $B_{2}.$ We further split $B_{2}$ as
$B_{2}=B^{1}_{2}+B^{2}_{2}$ with
\begin{eqnarray*}
&&B^{1}_{2}=\frac{1}{\sqrt{-\triangle}}P\nabla\int_{0}^{t}e^{-(t-s)(-\triangle)^{\beta}}\frac{\triangle}{\sqrt{-\triangle}}\left(I-e^{-s(-\triangle)}\right)
\left(\phi_{R^{\frac{1}{2\beta}},x_{0}}u(s,x)\otimes v(s,x)\right)ds,\\
&&B^{2}_{2}=\frac{1}{\sqrt{-\triangle}}P\nabla
e^{-t(-\triangle)^{\beta}}\int_{0}^{t}\phi_{R^{\frac{1}{2\beta}},x_{0}}u(s,x)\otimes
v(s,x)ds.
\end{eqnarray*}
At first we estimate the term
$t^{\frac{k}{2\beta}}\nabla^{k}B^{1}_{2}.$ Without loss of the
generalization, we assume $k$ is odd. The proof of the case that $k$
is even is similar. If $k$ is odd, we have $k=2K+1$ for $K\in
\mathbb{Z}_{+}.$ Because $\frac{1}{2}<\beta<1$, we have
\begin{eqnarray*}
t^{\frac{k}{2\beta}}&=&\left(t^{\frac{1}{2\beta}}-s^{\frac{1}{2\beta}}+s^{\frac{1}{2\beta}}\right)^{2K}
\left(t^{\frac{1}{2\beta}}-s^{\frac{1}{2\beta}}+s^{\frac{1}{2\beta}}\right)\\
&=&\sum_{l=0}^{2K}\left(\begin{array}{ccc}2K\\l\end{array}\right)\left(t^{\frac{1}{2\beta}}-s^{\frac{1}{2\beta}}\right)^{2K-l}s^{\frac{l}{2\beta}}
\left(t^{\frac{1}{2\beta}}-s^{\frac{1}{2\beta}}+s^{\frac{1}{2\beta}}\right)\\
&=&\sum_{l=0}^{2K}\left(\begin{array}{ccc}2K\\l\end{array}\right)
\left(t^{\frac{1}{2\beta}}-s^{\frac{1}{2\beta}}\right)^{2K-l+1}s^{\frac{l}{2\beta}}
+\sum_{l=0}^{2K}\left(\begin{array}{ccc}2K\\l\end{array}\right)(t^{\frac{1}{2\beta}}-s^{\frac{1}{2\beta}})^{2K-l}s^{\frac{l+1}{2\beta}}.
\end{eqnarray*}
Then we have, setting
$M(s,x)=\phi_{R^{\frac{1}{2\beta}},x_{0}}(x)u(s,x)\otimes v(s,x),$
\begin{eqnarray*}
t^{\frac{t}{2\beta}}\nabla^{k}B^{1}_{2}
&=&\sum_{l=0}^{2K-1}\left(\begin{array}{ccc}2K\\l\end{array}\right)
\frac{P\nabla}{\sqrt{-\triangle}}P^{\beta}_{2K-l+1}\left((-\triangle)^{\frac{1}{2}-\beta}
(I-e^{-s(-\triangle)^{\beta}})s^{\frac{l}{2\beta}}\nabla^{l}M(s,x)\right)\\
&+&\frac{P\nabla}{\sqrt{-\triangle}}P_{1}^{\beta}\left((-\triangle)^{\frac{1}{2}-\beta}
(I-e^{-s(-\triangle)^{\beta}})s^{\frac{2K}{2\beta}}\nabla^{2K}M(s,x)\right)\\
&+&\sum_{l=0}^{2K-1}\left(\begin{array}{ccc}2K\\l\end{array}\right)
\frac{P\nabla}{\sqrt{-\triangle}}P^{\beta}_{2K-l}\left((-\triangle)^{\frac{1}{2}-\beta}
(I-e^{-s(-\triangle)^{\beta}})s^{\frac{l+1}{2\beta}}\nabla^{l+1}M(s,x)\right)\\
&+&\frac{P\nabla}{\sqrt{-\triangle}}P_{0}^{\beta}\left((-\triangle)^{\frac{1}{2}-\beta}
(I-e^{-s(-\triangle)^{\beta}})s^{\frac{2K+1}{2\beta}}\nabla^{2K+1}M(s,x)\right).
\end{eqnarray*}
Since $\sup_{s\in (0,\infty)}s^{1-2\beta}(1-e^{-s^{2\beta}})<\infty$
for $\frac{1}{2}<\beta<1$, we can obtain that
$(-\triangle)^{1/2-\beta}(I-e^{-s(-\triangle)^{\beta}})$ is bounded
on $L^{2}$ with operator norm $\lesssim s^{1-\frac{1}{2\beta}}$. By
Lemma \ref{lemmma pq} and the $L^{2}-$boundedness of Riesz
transform, we have
\begin{eqnarray*}
&&r^{2\alpha-n+2\beta-2}\int_{0}^{r^{2\beta}}\int_{|x-x_{0}|<r}\left|\frac{P\nabla}{\sqrt{-\triangle}}P^{\beta}_{2K-l}(-\triangle)^{\frac{1}{2}-\beta}
(I-e^{-s(-\triangle)^{\beta}})s^{\frac{l+1}{2\beta}}\nabla^{l+1}M(s,x)\right|^{2}\frac{dxds}{s^{\alpha/\beta}}\\
&&\leq
p(2K-l)r^{2\alpha-n+2\beta-2}\int_{0}^{r^{2\beta}}\int_{\mathbb{R}^{n}}\left|(-\triangle)^{\frac{1}{2}-\beta}
(I-e^{-s(-\triangle)^{\beta}})s^{\frac{l+1}{2\beta}}\nabla^{l+1}M(s,x)\right|^{2}\frac{dxds}{s^{\alpha/\beta}}\\
&&\leq
p(2K-l)r^{2\alpha-n+2\beta-2}\int_{0}^{r^{2\beta}}\int_{|x-x_{0}|<r}
\left|s^{1-\frac{1}{2\beta}}s^{\frac{l+1}{2\beta}}\nabla^{l+1}M(s,x)\right|^{2}\frac{dxds}{s^{\alpha/\beta}}.
\end{eqnarray*}
Because $0<s<r^{2\beta}$ and
\begin{eqnarray*}
&&s^{\frac{l}{2\beta}+1}\nabla^{l+1}M(s,x)=s^{\frac{l}{2\beta}+1}\nabla^{l+1}(\phi_{R^{\frac{1}{2\beta}},x_{0}}u(s,x)\otimes
v(s,x))\\
&&=\sum_{m+\eta\leq
l+1}\left[s^{\frac{2\beta-1+m}{2\beta}}\nabla^{m}u(s,x)\right]\left[s^{\frac{\eta}{2\beta}}\nabla^{\eta}v(s,x)\right]
\left[s^{\frac{l}{2\beta}+1-\frac{2\beta-1+m}{2\beta}-\frac{\eta}{2\beta}}\nabla^{l+1-m-\eta}\phi_{R^{\frac{1}{2\beta}},x_{0}}\right],
\end{eqnarray*}
then we can get, taking $R=r^{2\beta}$,
$$r^{2\alpha-n+2\beta-2}\int_{0}^{r^{2\beta}}\int_{|x-x_{0}|<r}
\left|s^{1-\frac{1}{2\beta}}s^{\frac{l+1}{2\beta}}\nabla^{l+1}M(s,x)\right|^{2}\frac{dxds}{s^{\alpha/\beta}}\leq\sum_{m+\eta\leq
l+1}\|u\|_{N^{\beta,m}_{\alpha,\infty}}^{2}\|v\|^{2}_{N^{\beta,\eta}_{\alpha,
C}}.$$ In a similar way, we have
\begin{eqnarray*}
&&r^{2\alpha-n+2\beta-2}\int_{0}^{r^{2\beta}}\int_{|x-x_{0}|<r}\left|\frac{P\nabla}{\sqrt{-\triangle}}P_{0}^{\beta}
\left((-\triangle)^{\frac{1}{2}-\beta}
(I-e^{-s(-\triangle)^{\beta}})s^{\frac{2K+1}{2\beta}}\nabla^{2K+1}M(s,x)\right)\right|^{2}\frac{dxds}{s^{\alpha/\beta}}\\
&&\leq
p(0)r^{2\alpha-n+2\beta-2}\int_{0}^{r^{2\beta}}\int_{\mathbb{R}^{n}}\left|s^{1-\frac{1}{2\beta}}s^{\frac{2K+1}{2\beta}}\nabla^{2K+1}M(s,x)\right|^{2}
\frac{dxds}{s^{\alpha/\beta}}\\
&&\leq
p(0)r^{2\alpha-n+2\beta-2}\int_{0}^{r^{2\beta}}\int_{|x-x_{0}|<r}\left|s^{\frac{2K}{2\beta}+1}\sum_{m+\eta\leq
2K+1}\nabla^{m}u\nabla^{\eta}v\nabla^{2K+1-m-\eta}\phi_{R^{\frac{1}{2\beta}},x_{0}}\right|^{2}\frac{dxds}{s^{\alpha/\beta}}\\
&&\leq
p(0)\left(\|u\|^{2}_{N^{\beta,0}_{\alpha,\infty}}\|v\|^{2}_{N^{\beta,2K+1}_{\alpha,C}}+
\|v\|^{2}_{N^{\beta,0}_{\alpha,\infty}}\|u\|^{2}_{N^{\beta,2K+1}_{\alpha,C}}\right)
+r(K)\|u\|^{2}_{\widetilde{X}^{\beta,2K}_{\alpha}}\|v\|^{2}_{\widetilde{X}^{\beta,2K}_{\alpha}}.
\end{eqnarray*}
Similarly we can estimate the terms associated with $P^{\beta}_{1}$
and $P^{\beta}_{2K-l+1}$. Combining all the estimates together, we
can prove
\begin{eqnarray*}
&&\left(r^{2\alpha-n+2\beta-2}\int_{0}^{r^{2\beta}}\int_{|y-x_{0}|<r}\left|t^{\frac{k}{2\beta}}\nabla^{k}B^{1}_{2}(u,v)(t,x)\right|^{2}
\frac{dxdt}{t^{\alpha/\beta}}\right)^{1/2}\\
&&\leq
C_{1}\|u\|_{X^{\beta,0}_{\alpha}}\|v\|_{\widetilde{X}^{\beta,k}_{\alpha}}+C_{1}\|v\|_{X^{\beta,0}_{\alpha}}\|u\|_{\widetilde{X}^{\beta,k}_{\alpha}}
+C(k)\|u\|_{\widetilde{X}^{\beta,k-1}_{\alpha}}\|u\|_{\widetilde{X}^{\beta,k-1}_{\alpha}}.
\end{eqnarray*}
Now we estimate the term $B^{2}_{2}.$ Taking the change of
variables: $s=r^{2\beta}\theta$, $x=rz$ and $t=r^{2\beta}\tau,$ we
have
\begin{eqnarray*}
I&=&r^{2\alpha-n+2\beta-2}\int_{0}^{r^{2\beta}}\int_{|y-x_{0}|<r}\left|t^{\frac{k}{2\beta}}\nabla^{k}B^{2}_{2}(u,v)(t,x)\right|^{2}\frac{dxdt}{t^{\alpha/\beta}}\\
&=&r^{2\alpha-n+2\beta-2}\int_{0}^{r^{2\beta}}\int_{|y-x_{0}|<r}\left|t^{\frac{k}{2\beta}}\nabla^{k}
\frac{P\nabla}{\sqrt{-\triangle}}\sqrt{-\triangle}e^{-t(-\triangle)^{\beta}}\int_{0}^{t}M(s,x)ds\right|^{2}\frac{dxdt}{t^{\alpha/\beta}}\\
&\leq&
r^{2\alpha-n+2\beta-2}\int_{0}^{r^{2\beta}}\int_{\mathbb{R}^{n}}\left|t^{\frac{k}{2\beta}}\nabla^{k+1}
e^{-t(-\triangle)^{\beta}}\int_{0}^{t}M(s,x)ds\right|^{2}\frac{dxdt}{t^{\alpha/\beta}}\\
&=&r^{2\alpha-n+2\beta-2}\int_{0}^{1}\int_{\mathbb{R}^{n}}\left|(\tau
r^{2\beta})^{\frac{k}{2\beta}}\frac{1}{r^{k+1}}\nabla_{z}^{k+1}e^{-\tau(-\triangle_{z})^{\beta}}
\int^{\tau}_{0}M(r^{2\beta}\theta,rz)r^{2\beta}d\theta\right|^{2}
\frac{r^{n+2\beta}dzd\tau}{r^{2\alpha}\tau^{\alpha/\beta}}\\
&=&r^{8\beta-4}\int_{0}^{1}\int_{\mathbb{R}^{n}}\left|\tau^{k/2\beta}\nabla_{z}^{k+1}e^{-\tau(-\triangle_{z})^{\beta}}\int^{\tau}_{0}M(r^{2\beta}\theta,rz)
\right|^{2}\frac{dzd\tau}{\tau^{\alpha/\beta}}.
\end{eqnarray*}
Denote by $\nabla_{z}^{\nu}e^{-\tau(-\triangle_{z})^{\beta/2}}(x,y)$
the kernel of the operator
$\nabla_{z}^{\nu}e^{-\tau(-\triangle_{z})^{\beta}/2}, \nu>0.$
Because $\frac{1}{2}<\beta\leq 1$, we have
$$\left|\tau^{\frac{k(1-\beta)}{2\beta}}\nabla_{z}^{k(1-\beta)}e^{-\tau(-\triangle_{z})^{\beta}/2}(x,y)\right|\lesssim\tau^{\frac{k(1-\beta)}{2\beta}}
\frac{1}{(\tau/2)^{\frac{k(1-\beta)+n}{2\beta}}}\frac{1}{(1+\frac{|x-y|}{\tau^{1/2\beta}})^{n+k(1-\beta)}}\in
L^{1}(\mathbb{R}^{n})$$ uniformly in $\tau.$ By Young's inequality
and Lemma \ref{le5}, we have
\begin{eqnarray*}
I&=&r^{8\beta-4}\int_{0}^{1}\int_{\mathbb{R}^{n}}\left|\tau^{\frac{k(1-\beta)}{2\beta}}\nabla_{z}^{k(1-\beta)}
e^{-\tau(-\triangle_{z})^{\beta}/2}\tau^{\frac{k}{2}}\nabla^{k\beta+1}_{z}e^{-\tau(-\triangle_{z})^{\beta}/2}\int^{\tau}_{0}M(r^{2\beta}\theta,rz)
\right|^{2}\frac{dzd\tau}{\tau^{\alpha/\beta}}\\
&\leq&r^{8\beta-4}\int_{0}^{1}\int_{\mathbb{R}^{n}}\left|\tau^{\frac{k}{2}}\nabla^{k\beta+1}_{z}
e^{-\tau(-\triangle_{z})^{\beta}/2}\int^{\tau}_{0}M(r^{2\beta}\theta,rz)
\right|^{2}\frac{dzd\tau}{\tau^{\alpha/\beta}}\\
&\leq&r^{8\beta-4}b(k)A(\alpha,\beta,M)\int_{0}^{1}\int_{\mathbb{R}^{n}}|M(r^{2\beta}\theta,rz)|\frac{dzd\theta}{\theta^{\alpha/\beta}}\\
&:=&r^{8\beta-4}b(k)A(\alpha,\beta,M)I_{M}.
\end{eqnarray*}
For $A(\alpha,\beta,M),$ we have
\begin{eqnarray*}
A(\alpha,\beta,
M)&=&\rho^{2\alpha-n+2\beta-2}\int_{0}^{\rho^{2\beta}}\int_{|y-x|<\rho}\left|M(r^{2\beta}s,ry)\right|\frac{ds
dy}{s^{\alpha/\beta}}\\
&\leq&r^{2-4\beta}(r\rho)^{2\alpha-n+2\beta-2}\int_{0}^{(r\rho)^{2\beta}}\int_{|z-rx|<r\rho}|M(t,z)|\frac{dzdt}{t^{\alpha/\beta}}\\
&\leq&r^{2-4\beta}\|u\|_{X^{\beta,0}_{\alpha}}\|v\|_{X^{\beta,0}_{\alpha}}.
\end{eqnarray*}
For $I_{M}$, we have
\begin{eqnarray*}
\int_{0}^{1}\int_{\mathbb{R}^{n}}|M(r^{2\beta}\theta,rz)|\frac{dzd\theta}{\theta^{\alpha/\beta}}
&\leq&\int_{0}^{r^{2\beta}}\int_{\mathbb{R}^{n}}|M(t,z)|\frac{r^{-2\beta-n}dtdz}{r^{-2\alpha}t^{\alpha/\beta}}\leq
r^{2-4\beta}\|u\|_{X^{\beta,0}_{\alpha}}\|v\|_{X^{\beta,0}_{\alpha}}.
\end{eqnarray*}
Then we get
$$r^{2\alpha-n+2\beta-2}\int_{0}^{r^{2\beta}}\int_{|y-x_{0}|<r}\left|t^{\frac{k}{2\beta}}\nabla^{k}B^{2}_{2}(u,v)(t,x)\right|^{2}\frac{dxdt}{t^{\alpha/\beta}}
\leq
b(k)\|u\|^{2}_{X^{\beta,0}_{\alpha}}\|v\|^{2}_{X^{\beta,0}_{\alpha}}.$$
Now we have proved that
\begin{eqnarray*}
\|B(u,v)\|_{X^{\beta,k}_{\alpha}}&\leq&
C_{0}(k)\|u\|_{X^{\beta,0}_{\alpha}}\|v\|_{X^{\beta,0}_{\alpha}}+C(k)\|u\|_{\widetilde{X}^{\beta,k-1}_{\alpha}}
\|v\|_{\widetilde{X}^{\beta,k-1}_{\alpha}}\\
&+&C_{1}\|u\|_{X^{\beta,0}_{\alpha}}\|v\|_{X^{\beta,k}_{\alpha}}
+C_{1}\|u\|_{X^{\beta,k}_{\alpha}}\|v\|_{X^{\beta,0}_{\alpha}}.
\end{eqnarray*}
Similar to the method applied in Lemma 4.3 of \cite{P. Germain N.
Pavlovic G.Staffilani}, if we construct the approximating sequence
$u^{j}$ by
$$u^{-1}=0,\ u^{0}=e^{-t(-\triangle)^{\beta}}u_{0},\ u^{j+1}=u^{0}+B(u^{j},u^{j}),$$
 we can get the following lemma and hence complete the proof of
Theorem \ref{threg}.
\begin{lemma}\label{le6}
 Let $\alpha>0$ and $\max\{\frac{1}{2},\alpha\}<\beta<1$ with $\alpha+\beta-1\geq0$. Suppose $u_{0}$ be small enough in $Q^{\beta,-1}_{\alpha;\infty}.$
Then for any $k\geq 0$, there exist constants $D_{k}$ and $E_{k}$
such that
$$
\|u^{j}\|_{\widetilde{X}_{\alpha}^{\beta,k}}\lesssim D_{k}\quad
\text{and} \quad
\|u^{j+1}-u^{j}\|_{\widetilde{X}_{\alpha}^{\beta,k}}\lesssim
E_{k}(\frac{2}{3})^{j}.
$$
 In particular, for any $k\geq
0$, $u^{j}$ converges in $\widetilde{X}^{\beta,k}_{\alpha}.$
\end{lemma}
\end{proof}

\end{document}